\definecolor{puces}{cmyk}{0,0,0,.5} 
\theoremstyle{plain}
\newtheorem{thm}{Theorem}[section]
\newtheorem{lem}[thm]{Lemma}
\newtheorem{coro}[thm]{Corollary}
\newtheorem{prop}[thm]{Proposition}
\theoremstyle{remark} 
\newtheorem{rem}[thm]{Remark} 
\newtheorem{rems}[thm]{Remarks}
\DeclareMathOperator{\Hom}{\mathscr{H}\text{\kern -3pt {\calligra\large om}}\,}
\newcommand{\car}{\mbox{\relsize{-5}$\Box$}}
\numberwithin{equation}{section}
    \author{Emilien Zabeth}
\address{Université Clermont Auvergne, CNRS, LMBP\\
F-63000 Clermont-Ferrand, France}
\email{emilien.zabeth@uca.fr}
\begin{document}
\title{Block decomposition via the geometric Satake equivalence}
\begin{abstract}
We give a new proof for the description of the blocks in the category of representations of a reductive algebraic group $\mathbf{G}$ over a field of positive characteristic $\ell$ (originally due to Donkin), by working in the Satake category of the Langlands dual group and applying Smith-Treumann theory as developed by Riche and Williamson. On the representation theoretic side, our methods enable us to give a bound for the length of a minimum chain linking two weights in the same block, and to give a new proof for the block decomposition of a quantum group at an $\ell$-th root of unity.
\end{abstract}

\setcounter{tocdepth}{1}

\maketitle
	\tableofcontents
	\section{Introduction}
	In this paper, we give a proof for the description of the block decomposition of a reductive algebraic group over a field of prime characteristic $\ell$ using the geometry of the affine Grassmannian. We apply results from \cite{RW22}, where Riche and Williamson used and developed Smith-Treumann theory for sheaves to give a geometric proof of the Linkage principle, which is the first step towards the block decomposition. Most of our paper is dedicated to the study of equivalence classes on some subsets of the affine Weyl group. These equivalence classes will be defined using homomorphisms between indecomposable parity complexes on some partial affine flag varieties, and we will show that their description implies the desired description for the block decomposition. Our methods will moreover allow us to give a bound for the length of a minimum chain linking two weights in the same block. Finally, using the fact that some of our techniques work in any characteristic (for the field of coefficients of our sheaves), we will give a proof for the description of the block decomposition of a quantum group at an $\ell$-th root of unity.
	
	In the rest of this introduction, we start by recalling the description of the blocks for a simply-connected simple algebraic group (due to Donkin), before giving a brief summary of some key results from \cite{RW22} and an overview of our proof.
	\subsection{The block decomposition of a simply-connected simple algebraic group}
	Let $\mathbf{G}$ be a simply-connected simple\footnote{By simple we mean that the root system of $\mathbf{G}$ is irreducible.} algebraic group over a field $\mathbf{k}$ of prime characteristic $\ell$, with a split maximal torus and a Borel subgroup $\mathbf{T}\subset \mathbf{B}$. Let also $\mathfrak{R}_+\subset\mathfrak{R}\subset\mathbf{X}$ denote the set of positive roots (with respect to the Borel subgroup $B^+$ satisfying $B^+\cap B=T$) and roots inside the character lattice of $\mathbf{T}$, and $W_0$ be the Weyl group associated with $(G,T)$. The category $\mathrm{Rep}(\mathbf{G})$ of finite dimensional algebraic $\mathbf{G}$-modules is a highest weight category, admitting the set of dominant characters $\mathbf{X}_+$ of $\mathbf{T}$ as a weight poset. For each $\lambda\in\mathbf{X}_+$, denote by $L_\lambda$ the associated simple $\mathbf{G}$-module (which is the simple socle of the induced $\mathbf{G}$-module of highest weight $\lambda$), and consider the equivalence relation $\sim$ on $\mathbf{X}_+$ generated by the relation $\mathscr{R}_1$:
	$$\lambda\mathscr{R}_1\mu\Longleftrightarrow \mathrm{Ext}_{\mathrm{Rep}(\mathbf{G})}^1(L_\lambda,L_\mu)\neq 0. $$
	For any $\lambda\in\mathbf{X}_+$, let $\overline{\lambda}\in \mathbf{X}_+/\sim$ be the associated equivalence class and define $\mathrm{Rep}_{\overline{\lambda}}(\mathbf{G})$ as the Serre subcategory generated by the family $(L_\mu,~\mu\sim\lambda)$ (this coincides with the full subcategory of $\mathrm{Rep}(\mathbf{G})$ consisting of $\mathbf{G}$-modules whose composition factors are of the form $L_\mu$, with $\mu\sim\lambda$). The so-called block decomposition (cf. \cite[Lemma 7.1, Part II]{jantzen2003representations}) of $\mathrm{Rep}(\mathbf{G})$ is then: 
	$$\mathrm{Rep}(\mathbf{G})= \bigoplus_{\overline{\lambda}\in \mathbf{X}_+/\sim}\mathrm{Rep}_{\overline{\lambda}}(\mathbf{G}).$$
	\begin{rem}
	    This formalism also makes sense when $\mathrm{char}(\mathbf{k})=0$, in which case the semi-simplicity of the category $\mathrm{Rep}(\mathbf{G})$ (cf. \cite[Theorem 22.42]{milne2017algebraic}) tells us that $\overline{\lambda}=\{\lambda\}$ for every $\lambda\in\mathbf{X}_+$, so that the block $\mathrm{Rep}_{\overline{\lambda}}(\mathbf{G})$ is just the additive subcategory generated by $L_\lambda$.
	\end{rem}  
	Fix $\lambda\in\mathbf{X}_+$ and denote by $\rho\in\mathbf{X}$ the half-sum of the positive roots (this belongs to $\mathbf{X}$ thanks to our assumption that $\mathbf{G}$ is simply connected). The linkage principle (cf. \cite[Corollary 6.17]{jantzen2003representations}) tells us that \begin{equation}\label{linkage inclusion}
	    \overline{\lambda}\subset W\bullet_\ell\lambda\cap\mathbf{X}_+,
	\end{equation} where $W:=W_{0}\ltimes \mathbb{Z}\mathfrak{R}$ is the affine Weyl group associated with $G$ and $\bullet_\ell$ is the usual $\ell$-dilated dot action on $\mathbf{X}$, i.e. we have
	$$wt_\mu\bullet_{\ell}\lambda:=w(\lambda+\ell\mu+\rho)-\rho$$ for any $\lambda\in \mathbf{X}$, $w\in W_{0}$ and $\mu\in \mathbb{Z}\mathfrak{R}$. A non-obvious result is that the inclusion (\ref{linkage inclusion}) turns out to be an equality when $\lambda$ is not contained in a special facet of $\mathbf{X}\otimes_\mathbb{Z}\mathbb{R}$ (which concerns the majority of weights), i.e. when $\lambda+\rho\in\mathbf{X}\backslash~\ell\cdot\mathbf{X}$. An easy case, which has been treated in \cite{humphreys1978blocks}, is when $\lambda$ is contained inside of an alcove, which means that $\langle\lambda+\rho,\alpha\rangle\notin\ell\mathbb{Z}$ for all $\alpha$ in the dual root system $\mathfrak{R}^\vee$ (where $\langle\cdot,\cdot\rangle$ denotes the usual perfect pairing between $\mathbf{X}$ and the cocharacter lattice $\mathbf{X}^\vee$). The case where $\langle\lambda+\rho,\alpha\rangle\in\ell\mathbb{Z}$ for some $\alpha\in \mathfrak{R}^\vee$ (but still with $\lambda+\rho\in\mathbf{X}\backslash~\ell\cdot\mathbf{X}$) is however much more involved, and was treated by Donkin in \cite{donkin1980blocks}.
	
	The inclusion (\ref{linkage inclusion}) is not an equality in general, and the exact description of the blocks is still due to Donkin (cf. \cite{donkin1980blocks}; as we will recall in Remark \ref{rem regular singular} below, the proof uses the case where $\lambda$ is not contained in a special facet): let $r(\lambda+\rho)$ be the smallest integer satisfying $\lambda+\rho\in\ell^{r(\lambda+\rho)}\cdot\mathbf{X}\backslash~\ell^{r(\lambda+\rho)+1}\cdot\mathbf{X}$ and $W^{(r(\lambda+\rho))}$ be the affine Weyl group with translation part dilated by $\ell^{r(\lambda+\rho)}$ (cf. subsetion \ref{section special} for the precise definition), then we have:
	$$\overline{\lambda}=W^{(r(\lambda+\rho))}\bullet_\ell\lambda\cap\mathbf{X}_+ .$$
	\begin{rem}
	More recently, M. De Visscher gave a shorter proof for the block decomposition \cite{article}. Several of her ideas play a key role in our proof. Her proof requires some restrictions on $\ell$ and $\mathfrak{R}$; we circumvent these restrictions in this paper.
	\end{rem}
	The main goal of this article is to give a proof of this result by working in the setting of constructible sheaves via the geometric Satake equivalence. As we will see in the end (cf. subsection \ref{A new proof of Donkin's Theorem}), one can deduce from this case the block decomposition for a general reductive group (this description was known, but not explicitly written down in \cite{donkin1980blocks}).
	\subsection{A geometric proof of the linkage principle}\label{A geometric proof of the linkage principle}
	Let $\mathbb{F}$ be an algebraically closed field of prime characteristic $p\neq\ell$, and $G$ be the Langlands dual group of $\mathbf{G}$ over $\mathbb{F}$ (in the main body of the text, we will change the notation and replace $\mathbf{G}$ with $G^\vee$). The affine grassmannian $\mathrm{Gr}$ is an ind-scheme over $\mathbb{F}$, which can be defined as the fppf-quotient of the loop group ind-scheme $LG$ (representing the functor $R\mapsto G(R((z)))$, where $z$ is an indeterminate) by the positive loop group scheme $L^+G$ (representing the functor $R\mapsto G(R[[z]])$). The geometric Satake equivalence (cf. \cite{Mirkovic2004GeometricLD}), asserts that there is an equivalence of monoidal categories
	$$(\mathrm{Perv}_{L^+G}(\mathrm{Gr},\mathbf{k}),\star)\xrightarrow{\sim} (\mathrm{Rep}(\mathbf{G}),\otimes_\mathbf{k}),$$
	where the left-hand side denotes the \textit{Satake category} (equipped with a convolution product), consisting of perverse (\'etale) sheaves on the affine Grassmannian $\mathrm{Gr}$, with coefficients in $\mathbf{k}$ and which respect an equivariance condition for the left action of the positive loop group $L^+G$ on $\mathrm{Gr}$.
	
	In \cite{RW22}, Riche and Williamson managed to give a new proof of the linkage principle by working in $\mathrm{Perv}_{L^+G}(\mathrm{Gr},\mathbf{k})$. Moreover, their methods allowed them to give a new character formula for tilting objects (valid in all characteristics), which involves certain $\ell$-Kazhdan-Lusztig polynomials. This seems to be the first instance of the geometric Satake equivalence being able to bring us some knowledge on the combinatorics of the category $\mathrm{Rep}(\mathbf{G})$ in positive characteristic. Their proof applies Treumann's ``Smith theory for sheaves" to the Iwahori-Whittaker incarnation of the Satake category, which is a highest weight category $\mathrm{Perv}_{\mathcal{IW}}(\mathrm{Gr},\mathbf{k})$ with\footnote{One needs to assume that there exists a primitive $p$-th roots of $1$ in $\mathbf{k}$  to define $\mathrm{Perv}_{\mathcal{IW}}(\mathrm{Gr},\mathbf{k})$.} weight poset $\mathbf{X}_{++}:=\rho+\mathbf{X}_+$ admitting an equivalence of highest weight categories  
	\begin{equation}\label{iwgeosat}
	    \mathrm{Perv}_{L^+G}(\mathrm{Gr},\mathbf{k})\xrightarrow{\sim} \mathrm{Perv}_{\mathcal{IW}}(\mathrm{Gr},\mathbf{k}).
	\end{equation}
	This equivalence, which comes from \cite{JEP_2019__6__707_0}, sends an indecomposable tilting object associated with $\lambda\in\mathbf{X}_+$ to the indecomposable tilting object associated with $\lambda+\rho$, which we denote by $\mathscr{T}^{\mathcal{IW}}_{\lambda+\rho}$.
 \begin{rem}
     Smith-Treumann theory originates in the work of Smith in the 1930's concerning the cohomology of topological spaces with coefficients in $\mathbb{Z}/\ell\mathbb{Z}$, and was more recently revisited by Treumann in the setting of constructible sheaves (\cite{Treu19}). The fact that the latter results can be applied to the theory of parity sheaves was first pointed out by Leslie-Lonergan in \cite{Les21}. See \cite[§1.5]{RW22} for more comments.
 \end{rem}
	
	Let $\mu_\ell$ denote the $\mathbb{F}$-group scheme of $\ell$-th root of unity, $(\mathrm{Gr})^{\mu_\ell}$ be the fixed points for the action of $\mu_\ell\subset\mathbb{G}_\mathrm{m}$ on $\mathrm{Gr}$ by rescaling the indeterminate, and put $$\mathbf{a}_\ell:=\{\mu\in\mathbf{X}\otimes_\mathbb{Z}\mathbb{R}~:~0<\langle \mu,\alpha^\vee\rangle<\ell~\forall\alpha\in\mathfrak{R}_+\}.$$ The set $\mathbf{a}_\ell$ is called the fundamental alcove, and its closure is a fundamental domain for the $\ell$-dilated ``box" action $\car_\ell$ of $W$ on $\mathbf{X}\otimes_\mathbb{Z}\mathbb{R}$, defined by
	$$wt_\mu\car_{\ell}\lambda:=w(\lambda+\ell\mu)$$ 
	for any $\lambda\in \mathbf{X}\otimes_\mathbb{Z}\mathbb{R}$, $w\in W_{0}$ and $\mu\in \mathbb{Z}\mathfrak{R}$. Notice that, for any $\lambda\in \mathbf{X}\otimes_\mathbb{Z}\mathbb{R}$ and $w\in W$, we have
	\begin{equation}\label{boxvsbullet}
	    w\bullet_\ell(\lambda-\rho)=w\car_\ell\lambda-\rho. 
	\end{equation}
	One of the main ingredients used in \cite{RW22} is the decomposition into connected components
	$$(\mathrm{Gr})^{\mu_\ell}=\bigsqcup_{\lambda\in \overline{\mathbf{a}_\ell}\cap\mathbf{X}}\mathrm{Fl}^{\ell,\circ}_{\mathbf{g}_\lambda}, $$
	where $\mathrm{Fl}^{\ell,\circ}_{\mathbf{g}_\lambda}$ denotes the identity component in the partial affine flag variety associated with the facet $\mathbf{g}_\lambda\subset \overline{\mathbf{a}_\ell}$ containing $\lambda$ (cf. \cite[Proposition 4.7]{RW22}). This partial affine flag variety is an ind-scheme defined as the fppf-quotient of the loop group ind-scheme $L_\ell G$ representing the functor $R\mapsto G(R((z^\ell)))$ by a positive loop group scheme $L_\ell^+P_{\mathbf{g}_\lambda}$, representing $R\mapsto P_{\mathbf{g}_\lambda}(R[[z^\ell]])$ and associated with the ``parahoric" group scheme $P_{\mathbf{g}_\lambda}$ arising from Bruhat-Tits theory. When $\lambda\in\mathbf{a}_\ell$, one recovers for instance the full affine flag variety for $G$ and if $\lambda=0$, then we get a copy of the affine Grassmannian.  
	
	The other main result is the construction of a fully-faithful functor 
	$$\Phi:\mathrm{Tilt}_{\mathcal{IW}}(\mathrm{Gr},\mathbf{k})\to \mathrm{Sm}_{\mathcal{IW}}((\mathrm{Gr})^{\mu_\ell},\mathbf{k}), $$
	from the subcategory of tilting objects of $\mathrm{Perv}_{\mathcal{IW}}(\mathrm{Gr},\mathbf{k})$ to the so-called \textit{Smith category} on $(\mathrm{Gr})^{\mu_\ell}$, which involves a pull-back along the immersion $(\mathrm{Gr})^{\mu_\ell}\hookrightarrow \mathrm{Gr}$ followed by passing to a certain Verdier quotient. 
	
	As a consequence of the full-faithfulness (and of the fact that both categories are Krull-Schmidt), $\Phi$ sends indecomposable objects to indecomposable objects. Thus, one can see that for every $\lambda,\mu\in\mathbf{X}_{++}$, the space $\mathrm{Hom}_{\mathrm{Tilt}_{\mathcal{IW}}(\mathrm{Gr},\mathbf{k})}(\mathscr{T}^{\mathcal{IW}}_{\lambda},\mathscr{T}^{\mathcal{IW}}_{\mu})$ is non-zero only if the supports of $\Phi(\mathscr{T}^{\mathcal{IW}}_{\lambda})$ and $\Phi(\mathscr{T}^{\mathcal{IW}}_{\mu})$ lie in the same connected component $\mathrm{Fl}^{\ell,\circ}_{\mathbf{g}_{\gamma}}$ of $(\mathrm{Gr})^{\mu_\ell}$ for some $\gamma\in \overline{\mathbf{a}_\ell}\cap\mathbf{X}$, and observe that this happens only if $\lambda,\mu\in W\car_\ell\gamma$. This means that we must have $W\car_\ell\lambda= W\car_\ell\mu$, which is equivalent to $W\bullet_\ell(\lambda-\rho)= W\bullet_\ell(\mu-\rho)$ thanks to (\ref{boxvsbullet}). In view of the equivalence (\ref{iwgeosat}) and of the geometric Satake equivalence, this means that we have
	$$\mathrm{Hom}_{\mathrm{Rep}(\mathbf{G})}(T_{\lambda-\rho},T_{\mu-\rho})\neq0\Longrightarrow W\bullet_\ell(\lambda-\rho)=W\bullet_\ell(\mu-\rho), $$
	where $T_{\lambda-\rho}$ (resp. $T_{\mu-\rho}$) denotes the indecomposable tilting $\mathbf{G}$-module associated with $\lambda-\rho$ (resp. $\mu-\rho$). Standard arguments on highest weight categories (see Theorem \ref{relations thm} below) then show that this statement is equivalent to the linkage principle (\ref{linkage inclusion}). Moreover, those same standard arguments and equivalences of categories show that, if we denote by $\overline{\lambda}\subset\mathbf{X}_{++}$ the equivalence class of $\lambda$ for the equivalence relation on $\mathbf{X}_{++}$ induced by the relation $\mathscr{R}_2$:
	$$\gamma\mathscr{R}_2\gamma'\Longleftrightarrow  \mathrm{Hom}_{\mathrm{Tilt}_{\mathcal{IW}}(\mathrm{Gr},\mathbf{k})}(\mathscr{T}^{\mathcal{IW}}_{\gamma},\mathscr{T}^{\mathcal{IW}}_{\gamma'})\neq 0,$$
	then Donkin's theorem on blocks is equivalent to
	$$\overline{\lambda}=W^{(r(\lambda))}\car_\ell\lambda\cap\mathbf{X}_{++}. $$
	\subsection{Summary of the proof} In the sequel, we will push this study further to get the full description of the blocks. Fix $\lambda,\mu\in\mathbf{X}_{++}$. A first step in this direction was actually made in an earlier unpublished version of \cite{RW22}; namely, we have an isomorphism (cf. Proposition \ref{red to non special})
		$$\mathrm{Hom}_{\mathrm{Tilt}_\mathcal{IW}(\mathrm{Gr},\mathbf{k})}(\mathscr{T}^\mathcal{IW}_{\ell\cdot\lambda},\mathscr{T}^\mathcal{IW}_{\ell\cdot\mu})\simeq \mathrm{Hom}_{\mathrm{Tilt}_\mathcal{IW}(\mathrm{Gr},\mathbf{k})}(\mathscr{T}^\mathcal{IW}_{\lambda},\mathscr{T}^\mathcal{IW}_{\mu}).$$
	This isomorphism enables us to only focus on the case where $\lambda\in\mathbf{X}\backslash~\ell\cdot\mathbf{X}$, for which we want to show that the inclusion $\overline{\lambda}\subset W\car_\ell\lambda\cap\mathbf{X}_{++}$, provided by the linkage principle, is an equality. So from now on, let us assume that $r(\lambda)=r(\mu)=0$, with $W\car_\ell\lambda=W\car_\ell\mu$; we are thus reduced to showing that $\lambda\sim\mu$. We let $\gamma$ denote the unique element of $W\car_\ell\lambda\cap \overline{\mathbf{a}_\ell}$.
	\begin{rem}\label{rem regular singular}
	    For any $r\in\mathbb{Z}_{\geq 1}$ and $M\in\mathrm{Rep}(\mathbf{G})$, let $M^{[r]}$ denote the twist of $M$ by the $r$-th power of the Frobenius endomorphism. This first step is the analogue of the second step in \cite{article}, which uses the fact that the functor $M\mapsto M^{[r]}\otimes L_{(\ell^r-1)\cdot\rho}$ induces an equivalence of categories from $\mathrm{Rep}_{\overline{\lambda}}(\mathbf{G})$ to $\mathrm{Rep}_{\overline{\lambda'}}(\mathbf{G})$, where $\lambda':=(\ell^r-1)\cdot\rho+\ell^r\cdot\lambda$.
	\end{rem}
	Let $\mathbf{g}\subset\overline{\mathbf{a}_\ell}$ be a facet, $W_\mathbf{g}\subset W$ denote the stabilizer of $\mathbf{g}$ for $\car_\ell$, and $\mathrm{Par}_{\mathcal{IW}_\ell}(\mathrm{Fl}^{\ell,\circ}_{\mathbf{g}},\mathbf{k})$ denote the additive category of Iwahori-Whittaker-equivariant parity complexes\footnote{In the body of the paper, we will mostly work in the equivalent context where $\mathbf{g}$ is a facet for the box action $\car_1$ included in the closure of $\mathbf{a}_1$ and $\mathrm{Fl}^{1,\circ}_{\mathbf{g}}$ replaces the isomorphic ind-variety $\mathrm{Fl}^{\ell,\circ}_{\ell\cdot\mathbf{g}}$.} on $\mathrm{Fl}^{\ell,\circ}_{\mathbf{g}}$ (cf. subsection \ref{Fixed points of the affine Grassmannian and connected components}). The isomorphism classes of indecomposable objects of this category are labelled (up to a shift) by the set 
	$${_\mathrm{f}W}^\mathbf{g}=\{w\in W:w~\text{is minimal in}~W_0w~\text{and maximal in}~wW_\mathbf{g}\}, $$
	and we will denote by $\mathcal{E}_{\ell,w}^\mathbf{g}$ the indecomposable parity complex associated with $w$. Moreover, the assignment $w\mapsto w\car_\ell\lambda$ induces a bijection ${_\mathrm{f}W}^{\mathbf{g}_\gamma}\xrightarrow{~}W\car_\ell\gamma\cap\mathbf{X}_{++}$. In this article, we will extensively study the equivalence relation $\sim_\mathbf{g}$ on ${_\mathrm{f}W}^\mathbf{g}$ generated by the relation $\mathscr{R}_\mathbf{g}$: 
	\begin{equation}
	    w\mathscr{R}_{\mathbf{g}}w'\Longleftrightarrow \mathrm{Hom}^\bullet_{\mathrm{Par}_{\mathcal{IW}_\ell}(\mathrm{Fl}^{\ell,\circ}_{\mathbf{g}},\mathbf{k})}(\mathcal{E}^{\mathbf{g}}_{\ell,w},\mathcal{E}^{\mathbf{g}}_{\ell,w'})\neq 0~\forall w,w'\in {_\mathrm{f}W}^{\mathbf{g}}.
	\end{equation}
	The reason for this interest is that we have an isomorphism
	\begin{equation}\label{eqintro}
	    \mathrm{Hom}_{\mathrm{Tilt}_\mathcal{IW}(\mathrm{Gr},\mathbf{k})}(\mathscr{T}^\mathcal{IW}_{u\car_\ell\gamma},\mathscr{T}^\mathcal{IW}_{u'\car_\ell\gamma})\simeq \mathrm{Hom}^\bullet_{\mathrm{Par}_{\mathcal{IW}_\ell}(\mathrm{Fl}^{\ell,\circ}_{\mathbf{g}_\gamma},\mathbf{k})}(\mathcal{E}_{\ell,u}^{\mathbf{g}_\gamma},\mathcal{E}_{\ell,u'}^{\mathbf{g}_\gamma})
	\end{equation}
	 for all $u,u'\in {_\mathrm{f}W}^{\mathbf{g}_\gamma}$, proved in \cite{RW22} and which arises once again from Smith-Treumann theory (cf. Proposition \ref{iso parity}).
	 Thus, if we let $v$ and $v'$ be the elements of ${_\mathrm{f}W}^{\mathbf{g}_\gamma}$ such that $v\car_\ell\gamma=\lambda$, $v'\car_\ell\gamma=\mu$, we have that
	 $$\lambda\sim\mu\Longleftrightarrow v\sim_{\mathbf{g}_\gamma}v'. $$
	 So we are reduced to proving that the set ${_\mathrm{f}W}^{\mathbf{g}_\gamma}$ consists of a single equivalence class for $\sim_{\mathbf{g}_\gamma}$. The following statement (which is Theorem \ref{cas general}) is the main result of this article.
	 \begin{thm}\label{thm 1.3}
	     If $\mathbf{g}\subset\overline{\mathbf{a}_\ell}$ is a non-special facet, then ${_\mathrm{f}W}^{\mathbf{g}}$ consists of a single equivalence class for $\sim_\mathbf{g}$.
	 \end{thm}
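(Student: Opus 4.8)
The plan is to prove Theorem \ref{thm 1.3} by a combination of induction on the semisimple rank and a careful analysis of the structure of the parahoric parity complexes $\mathcal{E}^{\mathbf{g}}_{\ell,w}$ via ``wall-crossing'' functors. First I would reduce to an irreducible root system, since a non-special facet in a reducible system is non-special in at least one irreducible factor, and a $\sim_{\mathbf{g}}$-chain built in that factor lifts to the whole group once one observes that ${_\mathrm{f}W}^{\mathbf{g}}$ factors compatibly. Next, the key tool will be the translation/wall-crossing functors on $\mathrm{Par}_{\mathcal{IW}_\ell}(\mathrm{Fl}^{\ell,\circ}_{\mathbf{g}},\mathbf{k})$ coming from pushforward--pullback along a projection $\mathrm{Fl}^{\ell,\circ}_{\mathbf{g}}\to\mathrm{Fl}^{\ell,\circ}_{\mathbf{h}}$ where $\mathbf{h}\supset\mathbf{g}$ is a facet of one larger dimension (equivalently, a parahoric containing the one for $\mathbf{g}$). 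Because these functors preserve parity complexes (they are given by proper pushforward along smooth fibrations plus shifts), $\mathcal{E}^{\mathbf{g}}_{\ell,w}$ and $\mathcal{E}^{\mathbf{g}}_{\ell,w'}$ will be linked by $\sim_{\mathbf{g}}$ whenever their images under such a functor share an indecomposable summand; this lets one ``move'' a weight across a wall of $\overline{\mathbf{a}_\ell}$.

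The heart of the argument is then purely combinatorial: I would show that any two elements of ${_\mathrm{f}W}^{\mathbf{g}}$ can be connected by a chain of such wall-crossings. Concretely, for $w\in {_\mathrm{f}W}^{\mathbf{g}}$ and a simple affine reflection $s$ such that $ws>w$ (with $ws$ still in ${_\mathrm{f}W}^{\mathbf{g}}$), the Bott--Samelson-type computation shows $\mathcal{E}^{\mathbf{g}}_{\ell,ws}$ occurs as a summand of the wall-crossing applied to $\mathcal{E}^{\mathbf{g}}_{\ell,w}$ (the relevant Hom space contains the identity-to-top composite and is nonzero), so $w\sim_{\mathbf{g}}ws$; the non-speciality of $\mathbf{g}$ is exactly what guarantees that the geometry is nontrivial enough — i.e. that there exists at least one wall of $\overline{\mathbf{a}_\ell}$ not fixed by $W_{\mathbf{g}}$, so the relevant fibration has positive-dimensional fibers and the wall-crossing functor is not the identity. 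Using the results of \cite{riche2020smithtreumann} identifying Hom-spaces in the parahoric parity category with Hom-spaces between $\mathcal{IW}$-tilting objects, and the known fact (from the linkage/regular case, e.g. as in \cite{humphreys1978blocks} and \cite{article}) that $\sim_{\mathbf{g}}$ has a single class when $\mathbf{g}=\mathbf{a}_\ell$ is the open alcove, I would then argue by descending induction on $\dim\mathbf{g}$: a chain realized on a facet of dimension $\dim\mathbf{g}+1$ pulls back along the wall-crossing to a chain on $\mathbf{g}$, and conversely the ``new'' generators created by crossing the wall one extra time fill in the difference between ${_\mathrm{f}W}^{\mathbf{g}}$ and ${_\mathrm{f}W}^{\mathbf{h}}$.

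I expect the main obstacle to be controlling precisely \emph{which} indecomposable summands appear in the wall-crossing of $\mathcal{E}^{\mathbf{g}}_{\ell,w}$ — this is governed by $\ell$-Kazhdan--Lusztig combinatorics and in bad characteristic the decomposition can be complicated — but for the present statement one only needs the existence of \emph{one} nonzero Hom between the right pair of indecomposables, which follows from the standard fact that the wall-crossing of $\mathcal{E}^{\mathbf{g}}_{\ell,w}$ (for $ws>w$) has top and socle containing $\mathcal{E}^{\mathbf{g}}_{\ell,ws}$, without needing the full decomposition. A secondary difficulty is bookkeeping the passage between the ``$\ell$-dilated'' picture on $\mathrm{Fl}^{\ell,\circ}_{\mathbf{g}}$ and the undilated picture on $\mathrm{Fl}^{1,\circ}_{\mathbf{g}}$ flagged in the footnote, and making sure the sets ${_\mathrm{f}W}^{\mathbf{g}}$ and ${_\mathrm{f}W}^{\mathbf{h}}$ and the map between them behave well under the reduction to irreducible factors; these are routine but require care to state cleanly.
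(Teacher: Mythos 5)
There is a genuine gap, and it is structural rather than technical. Your chain-building step assumes that for $w\in{_\mathrm{f}W}^{\mathbf{g}}$ and a simple reflection $s$ with $ws>w$, the element $ws$ again lies in ${_\mathrm{f}W}^{\mathbf{g}}$ and is linked to $w$ by a wall-crossing. That is exactly the step that only works in the regular case $\mathbf{g}=\mathbf{a}_\ell$ (Proposition \ref{prop block sing}): for a general facet, right multiplication by a simple reflection does not preserve maximality in $wW_{\mathbf{g}}$, so there is no obvious smaller element of ${_\mathrm{f}W}^{\mathbf{g}}$ related to $w$ --- this is precisely the difficulty the paper isolates right after the regular case. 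Your descending induction on $\dim\mathbf{g}$ does not repair this: $\sim_{\mathbf{g}}$ is generated by nonvanishing Homs between objects indexed by ${_\mathrm{f}W}^{\mathbf{g}}$ \emph{only}, so a chain realized on a larger facet passes through intermediate elements that need not lie in ${_\mathrm{f}W}^{\mathbf{g}}$, and nothing replaces them. If such a transfer worked it would equally ``prove'' the statement for special facets, where it is false (Theorem \ref{cas final}). This exposes a second problem: your proposed use of non-speciality --- the existence of a wall of $\overline{\mathbf{a}_\ell}$ not fixed by $W_{\mathbf{g}}$ --- holds for every proper facet, including special ones, so it cannot be the hypothesis doing the work.

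Concretely, two phenomena are missing from your sketch. First, the cancellation effect near the walls of the dominant chamber: the pushforward of $\nabla_w$ along $\pi_{\mathbf{q}}$ vanishes unless the maximal element of $wW_{\mathbf{q}}$ is again in ${_\mathrm{f}W}$ (Lemma \ref{lem1}(3)), so the Hom spaces you expect to be nonzero by a ``top and socle'' argument can genuinely vanish; the paper must first move every element deep into the dominant cone via the operation $A\mapsto\hat A$ and the identity $n_{A,\hat A}(1)=1$ (Proposition \ref{away}). Second, when $\mathbf{g}$ is a zero-dimensional non-special facet there are no proper facets $\mathbf{q}\subsetneq\overline{\mathbf{g}}$ to cross through, and the paper needs a separate and substantially harder argument via Lusztig's periodic polynomials and reflections about special points (Proposition \ref{generic poly}, Corollary \ref{coro periodic}); your sketch does not cover this case at all. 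Finally, your reduction to the irreducible case is backwards: ${_\mathrm{f}W}^{\mathbf{g}}$ is a single class only if \emph{every} irreducible factor contributes a single class (Proposition \ref{reduction to irred}), not just one --- harmless here since the theorem is stated for an irreducible root system, but a chain built in one factor does not connect elements differing in another.
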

	 A facet $\mathbf{g}$ is called \textit{special} if it is of the form $\mathbf{g}=\{\ell\cdot\delta\}$ for some $\delta\in\mathbf{X}$. Since $r(\lambda)=0$, the facet $\mathbf{g}_\gamma$ is non-special, so this theorem implies the desired result. There might also be some non-special facets  $\mathbf{g}\subset\overline{\mathbf{a}_\ell}$ which do not contain an element of $\mathbf{X}$; this is for instance the case of $\mathbf{a}_\ell$ when $\ell$ is less than or equal to the Coxeter number of $\mathbf{G}$. So this theorem gives a more general result than the block decomposition. We will also describe completely the equivalence classes in ${_\mathrm{f}W}^{\mathbf{g}}$ in the case where $\mathbf{g}$ is a special facet and $\mathfrak{R}^\vee$ is not irreducible (see Theorem \ref{cas final}). 
	 
	 Fix a non-special facet $\mathbf{g}\subset\overline{\mathbf{a}_\ell}$. By standard considerations on parity complexes, computing the dimension of the $\mathrm{Hom}$-space on the right-hand side of (\ref{eqintro}) boils down to computing the dimension of the stalks of $\mathcal{E}_{\ell,w}^{\mathbf{g}}$ and $\mathcal{E}_{\ell,w'}^{\mathbf{g}}$, which are given by evaluating some anti-spherical $\ell$-Kazhdan-Lusztig polynomials at $1$ (cf. \cite[Part III]{riche2018tilting}). As the dimension of the $\mathrm{Hom}$-space can only increase when passing from $\mathrm{char}(\mathbf{k})=0$ to $\mathrm{char}(\mathbf{k})=\ell>0$, we will get the following crucial implication (which is our Corollary \ref{anti spherical implies R}):
	 $$\forall w,w'\in {_\mathrm{f}W}^{\mathbf{g}},~n_{w',w}(1)\neq0\Rightarrow w\mathscr{R}_{\mathbf{g}}w',$$
	 where $(n_{x,y},~x,y\in {_\mathrm{f}W})$ denotes the usual anti-spherical Kazhdan-Lusztig polynomials studied in \cite[Theorem 3.1]{Soergel1997KazhdanLusztigPA}. This implication allows us to view our problem as a question of combinatorics for the anti-spherical Kazhdan-Lusztig polynomials. The proof of Theorem \ref{thm 1.3} then roughly goes into two steps:
	 \begin{enumerate}
	     \item \textit{Step 1: moving away from the walls of the dominant cone}. We show that, for any $u\in {_\mathrm{f}W}^{\mathbf{g}}$, there exists some $\hat{u}\in {_\mathrm{f}W}^{\mathbf{g}}$ satisfying $$n_{u,\hat{u}}(1)\neq 0~\text{and}~\hat{u}\car_\ell\mathbf{g}\subset\ell\cdot\rho+\overline{\mathscr{C}^+_0},$$ where $\overline{\mathscr{C}^+_0}$ denotes the closure of the dominant cone (see Proposition \ref{away}). This will allow us to only focus on polynomials $n_{w',w}$ with $w,w'\in {_\mathrm{f}W}^{\mathbf{g}}$ satisfying $$w'\car_\ell\mathbf{g},w\car_\ell\mathbf{g}\subset\ell\cdot\rho+\overline{\mathscr{C}^+_0},$$
	     over which we have a much better control.
	     \item\textit{Step 2: linking close elements}. Let $v\in {_\mathrm{f}W}^{\mathbf{g}}$ be such that $ v\car_\ell\mathbf{g}\subset \ell\cdot\rho+ \overline{\mathbf{a}_\ell}$ and pick $w\in {_\mathrm{f}W}^{\mathbf{g}}$ such that $w\car_\ell\mathbf{g}\subset\ell\cdot\rho+\overline{\mathscr{C}^+_0}$. We will prove that we have $w\sim_\mathbf{g} v$ (in view of the first step, this will conclude the proof, since any element of ${_\mathrm{f}W}^{\mathbf{g}}$ will then be in relation with $v$). In order to do that, we will let $A_0$ be an alcove such that $w\car_\ell\mathbf{g}\subset\overline{A_0}$,
	     $$A_r:=\ell\cdot\rho+\mathbf{a}_\ell,A_{r-1},\cdots,A_0 $$
	     be a sequence of alcoves included in $\ell\cdot\rho+\mathscr{C}^+_0$, where $A_{i+1}$ is obtained by reflecting $A_{i}$ along one of its walls for every $i\in\llbracket0,r-1\rrbracket$, and denote by $w_i$ the element of ${_\mathrm{f}W}^{\mathbf{g}}$ satisfying $w_i\car_\ell\mathbf{g}\subset\overline{A_i}$ for all $i$ (so $w_0=w$ and $w_r=v$)\footnote{Notice that we don't necessarily have $w_i\car_\ell \mathbf{a}_i=A_i$.}. See Figure \ref{Affine hyperplanes C2} for a representation of the situation in types $C_2$ and $A_2$, where $\mathbf{g}$ is a wall. We will show that we have
	     $$w_i\sim_\mathbf{g}w_{i-1}~\forall i\in\llbracket0,r-1\rrbracket. $$
	     More specifically, we will find an element $u_i\in{_\mathrm{f}W}^{\mathbf{g}}$ such that $n_{w_i,u_i}(1)\neq 0$ and $n_{w_{i-1},u_i}(1)\neq 0$ (occasionally we might have $u_i=w_{i-1}$, but not always).
	 \end{enumerate}
	 \begin{figure}[!h]	
	 \begin{tabular}{c c}
	 (a) &
	 \begin{tikzpicture}[>=stealth]
		\foreach \x in {1,...,12}{
			\draw[ultra thin, color=gray, fill opacity=1] (\x,1)--(\x,8);
		}
	\foreach \x in {1,...,8}{
		\draw[ultra thin, color=gray, fill opacity=1] (1,\x)--(12,\x);
	}

\foreach \x in {1,...,4}{
	\draw[ultra thin, color=gray, fill opacity=1] (1,2*\x)--(2*\x,1);
	\draw[ultra thin, color=gray, fill opacity=1] (1,2*\x)--(9-2*\x,8);
	\draw[ultra thin, color=gray, fill opacity=1] (2*\x+2,1)--(11,10-2*\x);
}
\foreach \x in {1,...,3}{
	\draw[ultra thin, color=gray, fill opacity=1] (2*\x+4,1)--(12,9-2*\x);
}
\foreach \x in {1,...,1}{
	\draw[ultra thin, color=gray, fill opacity=1](\x+10,8)--(12,6+\x);
}

 \foreach \x in {1,...,2}{
	\draw[ultra thin, color=gray, fill opacity=1] (2*\x+1,8)--(2*\x+8,1);
	\draw[ultra thin, color=gray, fill opacity=1] (2*\x+5,8)--(12,2*\x+1);
	\draw[ultra thin, color=gray, fill opacity=1] (2*\x+7,8)--(2*\x,1);
}

\draw[very thin, fill=gray!30] (8,5)--(9,5)--(9,4)--(8,5);
\draw[very thin, fill=gray!30] (8,5)--(8,4)--(9,4)--(8,5);
\draw[very thin, fill=gray!30] (8,5)--(8,4)--(7,4)--(8,5);
\draw[very thin, fill=gray!30] (8,5)--(7,5)--(7,4)--(8,5);
\draw[very thin, fill=gray!30] (6,5)--(7,5)--(7,4)--(6,5);
\draw[very thin, fill=gray!30] (6,5)--(6,4)--(7,4)--(6,5);
\draw[very thin, fill=gray!30] (7,4)--(6,4)--(6,3)--(7,4);
\draw[very thin, fill=gray!30] (5,4)--(6,4)--(6,3)--(5,4);
\draw[very thin, fill=gray!30] (5,4)--(5,3)--(6,3)--(5,4);
\draw[very thin, fill=gray!30] (5,4)--(5,3)--(4,3)--(5,4);
\draw (1,2) node[circle,inner sep=2, fill=black] {} node[xshift=-8pt,yshift=-8pt,inner sep=1,fill=white] {$0$};
\draw[very thick, color=red!60, fill opacity=0] (4,3)--(5,4);
\draw (4,3) node[circle,inner sep=2, fill=black] {} node[xshift=-8pt,yshift=-8pt,inner sep=1] {$\ell\cdot\rho$};
\draw (8.75,4.75) node[scale=1,rotate=0]{$w$};
\draw (7.75,4.25) node[scale=1,rotate=0]{$w_2$};
\draw (6.75,4.75) node[scale=1,rotate=0]{$w_4$};
\draw (6.75,3.25) node[scale=1,rotate=0]{$w_6$};
\draw (5.75,3.75) node[scale=1,rotate=0]{$w_7$};
\draw (4.75,3.25) node[scale=1,rotate=0]{$v$};
\draw[black, dashed] (4,3)--(12,3);
\draw[black, dashed] (5,4)--(9,8);
\draw[very thick] (1,2)--(7,8);
\draw[very thick] (1,2)--(12,2);

\draw[very thick, color=red!60] (6,3)--(5,4);
\draw[very thick, color=red!60] (6,3)--(7,4);
\draw[very thick, color=red!60] (6,5)--(7,4);
\draw[very thick, color=red!60] (8,5)--(7,4);
\draw[very thick, color=red!60] (8,5)--(9,4);
	
\end{tikzpicture}
\\
\\
(b) &
\begin{tikzpicture}[scale=2]
\begin{rootSystem}{A}
\draw[black,very thick] \weight{-2}{5} -- \weight{-2}{-2} --
\weight{5}{-2};
\draw[black,dashed] \weight{-1}{5} -- \weight{-1}{-1} --
\weight{5}{-1};
\draw[very thin, fill=gray!30] \weight{0}{3}--\weight{1}{2}--\weight{0}{2}--\weight{0}{3};
\draw[very thin, fill=gray!30] \weight{1}{2}--\weight{1}{1}--\weight{0}{2}--\weight{1}{2};
\draw[very thin, fill=gray!30] \weight{0}{1}--\weight{1}{1}--\weight{0}{2}--\weight{0}{1};
\draw[very thin, fill=gray!30] \weight{0}{1}--\weight{1}{0}--\weight{1}{1}--\weight{0}{1};
\draw[very thin, fill=gray!30] \weight{0}{1}--\weight{0}{0}--\weight{1}{0}--\weight{0}{1};
\draw[very thin, fill=gray!30] \weight{0}{0}--\weight{1}{0}--\weight{1}{-1}--\weight{0}{0};
\draw[very thin, fill=gray!30] \weight{0}{0}--\weight{1}{-1}--\weight{0}{-1}--\weight{0}{0};
\draw[very thin, fill=gray!30] \weight{0}{0}--\weight{-1}{0}--\weight{0}{-1}--\weight{0}{0};
\draw[very thin, fill=gray!30] \weight{-1}{-1}--\weight{-1}{0}--\weight{0}{-1}--\weight{-1}{-1};

\draw[very thick, color=red!60] \weight{0}{2}--\weight{1}{2};
\draw[very thick, color=red!60] \weight{0}{2}--\weight{0}{1};
\draw[very thick, color=red!60] \weight{0}{1}--\weight{1}{0};
\draw[very thick, color=red!60] \weight{1}{0}--\weight{1}{-1};
\draw[very thick, color=red!60] \weight{0}{-1}--\weight{1}{-1};
\draw[very thick, color=red!60] \weight{0}{-1}--\weight{-1}{0};

\draw \weight{0.35}{2.35} node[scale=.8,rotate=0]{$w$};
\draw \weight{0.35}{1.35} node[scale=.8,rotate=0]{$w_2$};
\draw \weight{0.65}{.65} node[scale=.8,rotate=0]{$w_3$};
\draw \weight{1.35}{-.65} node[scale=.8,rotate=0]{$w_5$};
\draw \weight{0.35}{-.65} node[scale=.8,rotate=0]{$w_6$};
\draw \weight{-.35}{-.35} node[scale=.8,rotate=0]{$v$};

\draw \weight{-2}{-2} node[circle,inner sep=1.5, fill=black, fill opacity=1] {} node[xshift=-6pt,yshift=-6pt,inner sep=1] { $0$};
\draw \weight{-1}{-1} node[circle,inner sep=1.5, fill=black] {} node[xshift=-6pt,yshift=-6pt,inner sep=1] { $\ell\cdot\rho$};
\end{rootSystem}
\end{tikzpicture}

\end{tabular}
	\caption{\textit{Hyperplanes arrangement when $\mathfrak{R}^\vee$  is of type $C_2$ for (a) and $A_2$ for (b)}. The walls of the dominant cone $\mathscr{C}_0^+$ (resp. of $\ell\cdot\rho+\mathscr{C}_0^+$) are represented by thick black lines (resp. dashed lines), each alcove $A_i$ for ($i\in\llbracket 0,r\rrbracket)$ is gray, each alcove $w_i\car_\ell\mathbf{a}_\ell$ is labelled with $w_i$, and the facets $\{w_i\car_\ell\mathbf{g},~i\in\llbracket 0,r\rrbracket\}$ are represented by red lines. Notice that on both of these examples, it sometimes happens that there are two alcoves $A_i$ and $A_{i+1}$ containing a same red faced in their closure for some $i$, so that we have $w_i=w_{i+1}$.}  \label{Affine hyperplanes C2}
\end{figure}
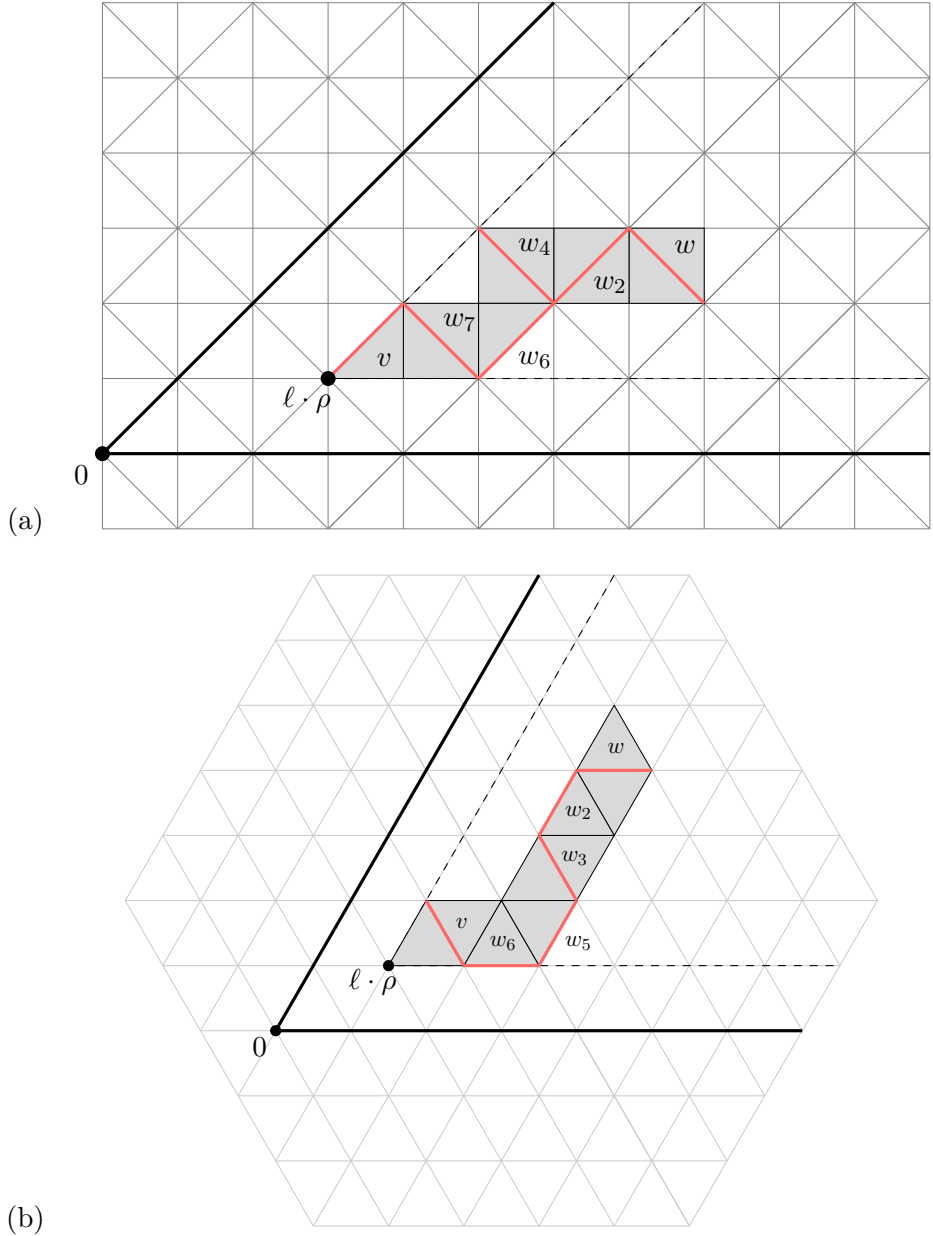

\begin{rems}\begin{enumerate}
    \item The need for \textit{Step 1} is due to the ``cancellation effect'' occurring for the anti-spherical Kazhdan-Lusztig polynomials when we are close to the walls of the dominant chamber, as it was observed in the similar context of \cite{andersen1986inversion} (cf. the introduction and section 9 of this reference).
    \item The second step will actually be split into two parts: the case where $\mathbf{g}$ is not a point, and the case where $\mathbf{g}$ is a point (and still a non-special facet). The latter uses the theory of ``periodic polynomials'' (originally due to Lusztig) as displayed in \cite{Soergel1997KazhdanLusztigPA}, and is the most involved part of the paper (cf. subsection \ref{Facets which are points}).
    \item These steps were inspired to us by the steps used in \cite[§2]{article}.
    \item We will actually use ``geometric" arguments (namely, parity complexes on partial affine flag varieties) to prove some ``combinatorial" facts concerning the anti-spherical $\ell$-Kazhdan-Lusztig polynomials (see Proposition \ref{prop max coset}), which hold in particular for the ordinary anti-spherical Kazhdan-Lusztig polynomials. However, we do not yet have an analogous geometric incarnation for the combinatorics we use concerning the periodic polynomials (cf. Proposition \ref{period and antisph}), but hope to come back to this question in the future.
\end{enumerate}
 \end{rems}	 

\subsection{Consequences}
Our new proof of Donkin's Theorem enables us to give a bound on the length of a minimum chain linking two weights in the same block for $\mathrm{Rep}(\mathbf{G})$, in the same fashion as \cite[Corollary 3.1]{article}, but without any restriction on the root system nor on the characteristic of $\mathbf{k}$, see Proposition \ref{bound group}. Such a result was not available via Donkin's original proof, as his arguments required going arbitrarily far inside of the dominant cone. 

Moreover, our determination of the equivalence classes in ${_\mathrm{f}W}^\mathbf{g}$ for $\sim_\mathbf{g}$ also applies to the case where $\mathrm{char}(\mathbf{k})=0$, from which we are able to deduce the block decomposition for a quantum group at an $\ell$-th root of unity in subsection \ref{Block decomposition of a quantum group} (this result was originally found in \cite{thams1994blocks}).
	 \subsection{Structure of the paper}
	 After some preliminaries on highest weight categories in section \ref{relation section}, we recall in section \ref{Recollections on (Iwahori-Whittaker) equivariant derived categories} the construction of Iwahori-Whittaker equivariant derived categories on partial affine flag varieties of the form $\mathrm{Fl}^{\circ}_{\mathbf{g}}$, for a facet $\mathbf{g}\subset\overline{\mathbf{a}_1}$. In particular, we introduce indecomposable parity complexes, and study the effect of pushforward and pullback of these objects under the canonical proper morphism $\pi_{\mathbf{g}}:\mathrm{Fl}^{\circ}_{\mathbf{a}_1}\to \mathrm{Fl}^{\circ}_{\mathbf{g}}$ (subsection \ref{Pushforward and pullback of indecomposable parity complexes}). This allows us to introduce and study the equivalence relation $\sim_\mathbf{g}$ on the set ${_\mathrm{f}W}^\mathbf{g}$. Section \ref{Determination of the blocks} is the heart of the paper and is dedicated to the study of the equivalence classes for $\sim_\mathbf{g}$. In particular, we apply the plan described above: \textit{Step 1} is dealt with at the end of subsection \ref{geometry}, while \textit{Step 2} is taken care of in  subsection \ref{Facets which are not points} (when $\mathbf{g}$ is not a point) and  subsection \ref{Facets which are points} (when $\mathbf{g}$ is a non-special point). The case where $\mathbf{g}$ is a special facet is done at the end of section \ref{smith section}, as we need Smith-Treumann theory to deduce it from the previous cases. Finally, we harvest the consequences for representation theory in the last section: block decomposition for reductive groups in subsection \ref{A new proof of Donkin's Theorem} and for quantum groups in subsection \ref{Block decomposition of a quantum group}.
	 \subsection{Acknowledgements}
	 I am deeply grateful to Simon Riche for his guidance through many suggestions and countless careful re-readings, and to Geordie Williamson for helpful discussions and comments. I also want to thank Joel Gibson for putting together the interactive visualization tool \textit{Lievis} and making it accessible on his web page (cf. \cite{Lievis}): a lot (if not all) of the intuition behind the results of this paper concerning the anti-spherical Kazhdan-Lusztig polynomials came from there. This work was conducted during PhD studies at the Universit\'e Clermont Auvergne, co-supervised by Simon Riche and Geordie Williamson.

     This project has received funding from the European Research Council (ERC) under the European Union’s Horizon 2020 research and innovation program (grant agreement No. 101002592).
	\section{Blocks for a highest weight category}\label{relation section}
	In this section, $\mathbf{k}$ is any field. The goal of these formal preliminaries is to introduce two equivalence relations on the weight poset of a highest weight category. Both relations allow us to give block decompositions of the category, and we will show that these relations and decompositions are the same. These results will later be applied in sections \ref{smith section} and \ref{Applications to representation theory} to categories of perverse sheaves and representations. We start by briefly recalling some properties of a highest weight category.
	\subsection{Recollections on highest weight categories}
	Let $\mathcal{A}$ be a highest weight category over $\mathbf{k}$, with weight poset $(\Lambda,\leq)$ (cf. \cite[3.7]{riche} for a detailed treatment of highest weight categories). In particular, we assume that each object of $\mathcal{A}$ has finite length and that the vector space $\mathrm{Hom}_\mathcal{A}(M,N)$ is finite dimensional for every $M,N\in \mathcal{A}$. To each $s\in\Lambda$ corresponds a simple object $L_s$, a \textit{standard object} $\Delta_s$ and a \textit{costandard object} $\nabla_s$. By definition, the association $s\mapsto L_s$ induces a bijection between $\Lambda$ and the isomorphism classes of simple objects of $\mathcal{A}$. For any $M\in\mathcal{A}$ and $s\in\Lambda$, we will denote by $[M:L_s]$ the number of times $L_s$ appears in a composition series of $M$.
	
	We denote by $\mathrm{Tilt}(\mathcal{A})$ the full additive subcategory of $\mathcal{A}$ consisting of tilting objects, i.e. those admitting both a filtration with sub-quotients being standard objects and a filtration with sub-quotients being costandard objects. For any $s\in\Lambda$ and $M\in \mathrm{Tilt}(\mathcal{A})$, we will denote by $(M:\nabla_s)$, resp. $(M:\Delta_s)$, the number of times $\nabla_s$ appears in a filtration of $M$ with costandard, resp. standard, sub-quotients. This number does not depend on the choice of such a filtration, as one can show that it is equal to $\mathrm{dim}_\mathbf{k}\mathrm{Hom}_\mathcal{A}(\Delta_s,M)$, resp. $\mathrm{dim}_\mathbf{k}\mathrm{Hom}_\mathcal{A}(M,\nabla_s)$. Indecomposable tilting objects are also parameterized (up to isomorphism) by $\Lambda$; we denote by $T_s$ the unique indecomposable tilting object such that 
	$$ [T_s:L_s]=1\quad\text{and}\quad \forall t\in\Lambda,~ [T_s:L_t]\neq0 \Rightarrow t\leq s.$$
	We have morphisms $L_s\twoheadleftarrow\Delta_s\hookrightarrow  T_s$ for all $s\in\Lambda$. Finally recall that the canonical functor 
	\begin{equation}\label{equivalence0}
		\mathrm{K}^b\mathrm{Tilt}(\mathcal{A})\to D^b(\mathcal{A})
	\end{equation}
	is an equivalence of categories (cf. \cite[Proposition 7.17]{riche}).
	
	\subsection{Equivalence relations on $\Lambda$}\label{Equivalence relations on Lambda}
	We are going to consider two equivalence relations on the set $\Lambda$. The first one, denoted by $\sim_1$, is generated by the relation $\mathscr{R}_1$, defined by 
	$$s\mathscr{R}_1 t\Leftrightarrow \mathrm{Ext}_\mathcal{A}^1(L_s,L_t)\neq 0.$$
	Here, $\mathrm{Ext}_\mathcal{A}^1(B,A)$ denotes the Ext-group of isomorphism classes of extensions of $B$ by $A$, for two objects $A,B$. Recall (cf. \cite[Lemma 13.27.6]{stacks-project}) that this coincides with the $\mathbf{k}$-vector space $\mathrm{Hom}_{D^b(\mathcal{A})}(B,A[1])$. For all $s\in\Lambda$, we denote by  $\overline{s}$ the associated equivalence class and by $\mathcal{A}_{\overline{s}}$ the Serre sub-category generated by the $L_t$'s, for $t\in\overline{s}$.
	\begin{prop}
		The canonical functor $$\bigoplus_{\overline{s}\in\Lambda/\sim_1}\mathcal{A}_{\overline{s}}\to \mathcal{A}$$ is an equivalence of categories.
	\end{prop}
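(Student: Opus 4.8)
The plan is to prove that the category $\mathcal{A}$ decomposes as the direct sum of the Serre subcategories $\mathcal{A}_{\overline{s}}$ indexed by the $\sim_1$-equivalence classes, in the standard way one proves block decompositions for finite-length abelian categories. Concretely, since every object $M \in \mathcal{A}$ has finite length, it suffices to show that each $M$ admits a (necessarily unique) direct sum decomposition $M = \bigoplus_{\overline{s}} M_{\overline{s}}$ with $M_{\overline{s}} \in \mathcal{A}_{\overline{s}}$, and that $\mathrm{Hom}_{\mathcal{A}}(N, N') = 0$ whenever $N \in \mathcal{A}_{\overline{s}}$ and $N' \in \mathcal{A}_{\overline{t}}$ with $\overline{s} \neq \overline{t}$; the asserted functor is then automatically an equivalence.

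First I would prove the $\mathrm{Hom}$-vanishing. Suppose $N \in \mathcal{A}_{\overline{s}}$ and $N' \in \mathcal{A}_{\overline{t}}$ with $\overline{s} \neq \overline{t}$, and let $f \colon N \to N'$ be a morphism. The image of $f$ is a subquotient of $N$ and of $N'$, hence lies in $\mathcal{A}_{\overline{s}} \cap \mathcal{A}_{\overline{t}}$; but the composition factors of an object of $\mathcal{A}_{\overline{s}}$ are among $\{L_u : u \in \overline{s}\}$ and similarly for $\overline{t}$, and since $\overline{s} \cap \overline{t} = \emptyset$ the only object in the intersection is $0$. Hence $f = 0$. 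The same argument, applied instead to $\mathrm{Ext}^1$, shows that $\mathrm{Ext}^1_{\mathcal{A}}(N, N') = 0$ for $N, N'$ in subcategories attached to distinct classes: an extension of $N'$ by $N$ has composition factors among $\{L_u : u \in \overline{s}\} \cup \{L_u : u \in \overline{t}\}$, and I would argue (by induction on length, using that $\mathrm{Ext}^1_{\mathcal{A}}(L_u, L_v) \neq 0$ forces $u \sim_1 v$, which is the very definition of $\sim_1$) that no such nonsplit extension exists.

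Next I would establish the decomposition of objects. I proceed by induction on the length of $M$. If $M$ is simple, say $M \cong L_s$, then $M \in \mathcal{A}_{\overline{s}}$ and there is nothing to do. In general, pick a simple subobject $L_s \hookrightarrow M$ and let $M' = M / L_s$, which by induction decomposes as $M' = \bigoplus_{\overline{t}} M'_{\overline{t}}$. The extension class of $0 \to L_s \to M \to M' \to 0$ lives in $\mathrm{Ext}^1_{\mathcal{A}}(M', L_s) = \bigoplus_{\overline{t}} \mathrm{Ext}^1_{\mathcal{A}}(M'_{\overline{t}}, L_s)$, and by the $\mathrm{Ext}^1$-vanishing just proved, all summands with $\overline{t} \neq \overline{s}$ vanish; hence $M \cong M'_{\overline{s}, \mathrm{ext}} \oplus \bigoplus_{\overline{t} \neq \overline{s}} M'_{\overline{t}}$, where $M'_{\overline{s},\mathrm{ext}}$ is the extension of $M'_{\overline{s}}$ by $L_s$, which lies in $\mathcal{A}_{\overline{s}}$. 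This gives the desired decomposition of $M$; uniqueness follows from the $\mathrm{Hom}$-vanishing (any two decompositions are related by an isomorphism that must be block-diagonal). Finally, functoriality of the decomposition $M \mapsto (M_{\overline{s}})_{\overline{s}}$ follows from $\mathrm{Hom}$-vanishing once more, and one checks directly that $\bigoplus_{\overline{s}} \mathcal{A}_{\overline{s}} \to \mathcal{A}$ is essentially surjective (by the object decomposition) and fully faithful (by $\mathrm{Hom}$-vanishing plus the fact that on a fixed class it is the identity), hence an equivalence.

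The only genuinely substantive point is the $\mathrm{Ext}^1$-vanishing across distinct classes, and I expect that to be the main (though still routine) obstacle: it requires a short induction on length reducing to the case of simple objects, where it is literally the definition of the relation $\mathscr{R}_1$ generating $\sim_1$. Everything else is formal manipulation with Serre subcategories and finite-length objects; alternatively, one can cite the standard reference \cite[Lemma 7.1, Part II]{jantzen2003representations} once the $\mathrm{Ext}^1$-vanishing is in place, since this is exactly the abstract block decomposition lemma.
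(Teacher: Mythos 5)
Your proposal is correct and follows essentially the same route as the paper: establish $\mathrm{Hom}$- and $\mathrm{Ext}^1$-vanishing between objects of $\mathcal{A}_{\overline{s}}$ and $\mathcal{A}_{\overline{t}}$ for $\overline{s}\neq\overline{t}$ (the latter by induction on length, reducing to the definition of $\mathscr{R}_1$ on simples), then decompose an arbitrary object by induction on length using that vanishing to split off the relevant summand. The only cosmetic differences are that you peel off a simple subobject and argue via the extension class, where the paper peels off a simple quotient and uses a pushout diagram, and your image/subquotient argument for the $\mathrm{Hom}$-vanishing is a slightly slicker packaging of the paper's length induction.
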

	\begin{proof}
		The full faithfulness comes from the fact that, for each $(M,N)\in\mathcal{A}_{\overline{s}}\times\mathcal{A}_{\overline{t}}$ with $\overline{s}\neq\overline{t}$, we have $\mathrm{Hom}_\mathcal{A}(M,N)=0$. This is obvious when $M$ and $N$ are simple, and can be shown by induction on the length of these objects in the general case. With this fact and another induction on the length, one can also show using long exact sequences of cohomology that 
		\begin{equation}\label{induction lenghth}
		    \mathrm{Ext}^1_\mathcal{A}(M,N)=0.
		\end{equation}
		
		It remains to show that the functor is essentially surjective. We still proceed by induction on the length of $M$, the case of simple objects being obvious. Let $N\subset M$ and $t\in\Lambda$ be such that we have a short exact sequence
		\begin{equation}\label{exact sequence}
		    0 \rightarrow N\xrightarrow{\varphi} M\rightarrow L_t\rightarrow 0.
		\end{equation}
		By the induction hypothesis, we have a canonical isomorphism $$N\simeq \bigoplus_{\overline{s}\in\Lambda/\sim_1}N_{\overline{s}},$$ with $N_{\overline{s}}\in \mathcal{A}_{\overline{s}}$ for all $\overline{s}$.
		Now, let $p:N\to N_{\overline{t}}$ be the canonical projection and $M'\in\mathcal{A}$ be such that we have the following commutative diagram, with exact rows:
		$$\xymatrix{
    0\ar[r] & N\ar[r]^{\varphi} \ar[d]^{p} &  M \ar[r] \ar[d]^{\psi} & L_t \ar[r]\ar[d]^{\mathrm{id}} & 0 \\
     0\ar[r] & N_{\overline{t}}\ar[r] &  M' \ar[r] & L_t \ar[r] & 0.
  }$$
  The morphism $\psi$ is clearly surjective, and restricting $\varphi$ to $M'':=\oplus_{\overline{s}\neq\overline{t}}N_{\overline{s}}$ yields an exact sequence
  $$0 \rightarrow M''\rightarrow M\rightarrow M'\rightarrow 0. $$
  Finally, observe that $M'$, resp. $M''$, belongs to  $\mathcal{A}_{\overline{t}}$, resp. $\oplus_{\overline{s}\neq\overline{t}}\mathcal{A}_{\overline{s}}$, so that $\mathrm{Ext}^1_\mathcal{A}(M',M'')=0$ thanks to (\ref{induction lenghth}). Thus, we have $M\simeq M'\oplus M''$, and this concludes the proof.
	\end{proof}
	The existence of a quasi-inverse functor allows us to define projection functors $Q_{\overline{s}}:\mathcal{A}\to \mathcal{A}_{\overline{s}}$ for every $\overline{s}$. We thus have an isomorphism of functors
	\begin{equation}\label{eq1}
		\mathrm{id}_\mathcal{A}\simeq \bigoplus_{\overline{s}\in\Lambda/\sim_1} Q_{\overline{s}}
	\end{equation}
	where there exists no non-zero morphism between the essential images of any two distinct functors $Q_{\overline{s}}$ and $Q_{\overline{t}}$. In particular, each $Q_{\overline{s}}$ is exact.
	
	The second equivalence relation is denoted $\sim_2$. It is generated by the relation $\mathscr{R}_2$:
	$$s\mathscr{R}_2 t\Leftrightarrow \mathrm{Hom}_\mathcal{A}(T_s,T_t)\neq 0.$$
	For every weight $s$, we denote by $(s)$ the associated equivalence class and by $\mathrm{Tilt}_{(s)}(\mathcal{A})$ the additive sub-category generated by the family $(T_t)_{t\in(s)}$. Since any tilting object is a sum of indecomposable tilting objects, it is easy to see that the canonical functor  $\bigoplus_{(s)\in\Lambda/\sim_2}\mathrm{Tilt}_{(s)}(\mathcal{A})\to \mathrm{Tilt}(\mathcal{A})$ is an equivalence, so that we can define projection functors $\pi_{(s)}:\mathrm{Tilt}(\mathcal{A})\to \mathrm{Tilt}_{(s)}(\mathcal{A})$, inducing an isomorphism of functors 
	$$\mathrm{id}_{\mathrm{Tilt}(\mathcal{A})}\simeq\bigoplus_{(s)\in\Lambda/\sim_2}\pi_{(s)}.$$
	
	The $\pi_{(s)}$'s induce endofunctors of the category $\mathrm{K}^b\mathrm{Tilt}(\mathcal{A})$ (that we still denote by $\pi_{(s)}$), and those still induce a decomposition of the identity functor. Conjugating the $\pi_{(s)}$'s with the equivalence (\ref{equivalence0}), the obtained functors  (which we still denote by $\pi_{(s)}$) provide a decomposition 
	\begin{equation}\label{eq2}
		\mathrm{id}_{D^b(\mathcal{A})}\simeq\bigoplus_{(s)\in\Lambda/\sim_2}\pi_{(s)}.
	\end{equation}	
	This isomorphism implies that each functor $\pi_{(s)}$ preserves the sub-category $\mathcal{A}$, so we get an isomorphism $\mathrm{id}_{\mathcal{A}}\simeq\bigoplus_{(s)}\pi_{(s)}$. It is also clear that each $\pi_{(s)}$ is an exact functor of the category $\mathcal{A}$ and that there is no non-zero morphism between the essential images of two distinct such functors.
	\begin{lem}\label{lem1}
		Let $s\in\Lambda$. For all $t\in\Lambda$, we have isomorphisms
		$$\pi_{(t)}(L_s) \simeq\left\{
		\begin{array}{ll}
			L_s~\text{if}~s\in(t)\\
			0~\text{otherwise}
		\end{array}
		\right.
		\qquad Q_{\overline{t}}(T_s) \simeq\left\{
		\begin{array}{ll}
			T_s~\text{if}~s\in\overline{t}\\
			0~\text{otherwise}.
		\end{array}
		\right.
		$$
	\end{lem}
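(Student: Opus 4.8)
The plan is to prove the two statements by dualizing each other and using the projectors' basic properties established above: exactness, the decomposition of the identity, and the vanishing of morphisms between essential images of distinct projectors. First I would prove the claim about $\pi_{(t)}(L_s)$. Since $\mathrm{id}_\mathcal{A} \simeq \bigoplus_{(u)} \pi_{(u)}$, we have $L_s \simeq \bigoplus_{(u)} \pi_{(u)}(L_s)$; as $L_s$ is simple (hence indecomposable), exactly one summand is nonzero and equals $L_s$, the rest being $0$. So it suffices to identify which $(u)$ that is, i.e.\ to show $\pi_{(s)}(L_s) \neq 0$. For this I would use the surjection $\Delta_s \twoheadrightarrow L_s$ composed with the inclusion $\Delta_s \hookrightarrow T_s$; more directly, recall $T_s$ has $L_s$ as a composition factor with multiplicity one and all other factors $L_t$ with $t \leq s$. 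Applying the exact functor $\pi_{(s)}$ to a composition series of $T_s$ and using that $\pi_{(s)}(T_s) = T_s$ (since $T_s \in \mathrm{Tilt}_{(s)}(\mathcal{A})$ by definition of $\sim_2$), one sees that $\pi_{(s)}$ kills precisely those composition factors $L_t$ with $t \notin (s)$ and preserves those with $t \in (s)$; in particular $\pi_{(s)}(L_s) = L_s$, which forces the asserted formula (and incidentally shows $s \in (t) \Leftrightarrow t \in (s)$, consistent with $(s)$ being an equivalence class).

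For the statement about $Q_{\overline t}(T_s)$, I would argue dually. We have $T_s \simeq \bigoplus_{\overline u} Q_{\overline u}(T_s)$ by (\ref{eq1}); since $T_s$ is indecomposable, exactly one summand is nonzero and equals $T_s$. It remains to show this summand is $Q_{\overline s}(T_s)$, i.e.\ $Q_{\overline s}(T_s) \neq 0$. Since $Q_{\overline s}$ is exact and $T_s$ admits $L_s$ as a composition factor, it is enough to know $Q_{\overline s}(L_s) = L_s$, which is immediate from the definition of $Q_{\overline s}$ (the projection onto the block $\mathcal{A}_{\overline s}$ generated by the simples $L_t$, $t \in \overline s$). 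Hence $Q_{\overline s}(T_s) \neq 0$, so $Q_{\overline s}(T_s) = T_s$ and $Q_{\overline u}(T_s) = 0$ for $\overline u \neq \overline s$.

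Finally I would note that the only remaining point is to match the two indexing conventions: the first formula is phrased as ``$s \in (t)$'' and the second as ``$s \in \overline t$''. For the first, $\pi_{(t)}(L_s) = L_s$ exactly when $(t) = (s)$, i.e.\ $s \in (t)$, and $0$ otherwise, which is what is claimed. For the second, $Q_{\overline t}(T_s) = T_s$ exactly when $\overline t = \overline s$, i.e.\ $s \in \overline t$, and $0$ otherwise. I do not anticipate a serious obstacle here; the only mild subtlety is justifying $\pi_{(s)}(T_s) = T_s$ and $Q_{\overline s}(L_s) = L_s$ directly from the definitions of $\mathrm{Tilt}_{(s)}(\mathcal{A})$ and $\mathcal{A}_{\overline s}$ together with exactness, and then propagating these identities along composition series using that each projector is exact and annihilates the ``wrong-block'' composition factors. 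The indecomposability of $L_s$ and of $T_s$, used to collapse the respective direct-sum decompositions to a single summand, is the one structural input doing the real work.
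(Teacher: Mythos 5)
Your proof is correct and follows essentially the same route as the paper: both reduce, via the decompositions of the identity functor and the indecomposability of $L_s$ and $T_s$, to showing $\pi_{(s)}(L_s)\neq 0$ and $Q_{\overline{s}}(T_s)\neq 0$, and both transfer information between $L_s$ and $T_s$ through $\Delta_s$ (the paper applies the exact projectors to $L_s\twoheadleftarrow\Delta_s\hookrightarrow T_s$, which is the first option you mention; your composition-series length count is an equivalent variant). The only slight imprecision is the phrase ``kills precisely those composition factors $L_t$ with $t\notin(s)$'': what the length count actually shows at this stage is that $\pi_{(s)}$ preserves \emph{every} composition factor of $T_s$, which suffices since $L_s$ is one of them.
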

	\begin{proof}
		By the decompositions (\ref{eq1}) and (\ref{eq2}) and the fact that the objects $L_s$ and $T_s$ are indecomposable, we know that there exists a unique $(t)$, resp. a unique $\overline{r}$, such that $\pi_{(t)}(L_s)\neq0$, resp. $Q_{\overline{r}}(T_s)\neq0$, and we then have $\pi_{(t)}(L_s)\simeq L_s$, resp. $Q_{\overline{r}}(T_s)\simeq T_s$. By exactness of the projection functors, we have morphisms
		$$ Q_{\overline{u}}(L_s) \twoheadleftarrow Q_{\overline{u}}(\Delta_s)\hookrightarrow Q_{\overline{u}}(T_s)~\text{and}~ \pi_{(u)}(L_s)\twoheadleftarrow \pi_{(u)}(\Delta_s) \hookrightarrow\pi_{(u)}(T_s)$$
		for every $u\in\Lambda$. If $Q_{\overline{u}}(T_s)=0$, then $Q_{\overline{u}}(L_s)=0$, so $\overline{u}\neq \overline{s}$. This implies that $\overline{r}= \overline{s}$. Likewise, if $\pi_{(u)}(L_s)\neq0$, then $\pi_{(u)}(T_s)\neq0$, so $(u)=(s)$, whence $(t)= (s)$.
	\end{proof}
	
	We have now all the necessary ingredients to prove the following theorem.
	\begin{thm}\label{relations thm}
		The relations  $\sim_1$ and $\sim_2$ coincide.
	\end{thm}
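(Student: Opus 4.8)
The plan is to play the two block decompositions against each other: the decomposition $\mathrm{id}_\mathcal{A}\simeq\bigoplus_{\overline{s}\in\Lambda/\sim_1}Q_{\overline{s}}$ on $\mathcal{A}$ (see (\ref{eq1})), coming from $\sim_1$, and the decomposition $\mathrm{id}_{D^b(\mathcal{A})}\simeq\bigoplus_{(s)\in\Lambda/\sim_2}\pi_{(s)}$ on the bounded derived category (see (\ref{eq2})), coming from $\sim_2$, using Lemma \ref{lem1} as the bridge. Since $\sim_i$ is by definition the equivalence relation generated by $\mathscr{R}_i$, it is enough to prove the two implications $s\,\mathscr{R}_2\,t\Rightarrow s\sim_1 t$ and $s\,\mathscr{R}_1\,t\Rightarrow s\sim_2 t$; transitivity of $\sim_1$ and $\sim_2$ then promotes these to $s\sim_2 t\Rightarrow s\sim_1 t$ and $s\sim_1 t\Rightarrow s\sim_2 t$, so the two partitions of $\Lambda$ coincide.

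For the first implication I would start from a non-zero element of $\mathrm{Hom}_\mathcal{A}(T_s,T_t)$ and decompose it via the $Q_{\overline{u}}$. Because $\mathrm{id}_\mathcal{A}\simeq\bigoplus_{\overline{u}}Q_{\overline{u}}$ and there is no non-zero morphism between the essential images of two distinct $Q_{\overline{u}}$, the $\mathrm{Hom}$-space splits as $\bigoplus_{\overline{u}}\mathrm{Hom}_\mathcal{A}(Q_{\overline{u}}T_s,Q_{\overline{u}}T_t)$; by Lemma \ref{lem1}, $Q_{\overline{u}}T_s$ vanishes unless $\overline{u}=\overline{s}$ and $Q_{\overline{u}}T_t$ vanishes unless $\overline{u}=\overline{t}$, so a non-zero $\mathrm{Hom}_\mathcal{A}(T_s,T_t)$ is possible only when $\overline{s}=\overline{t}$, i.e. $s\sim_1 t$.

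For the second implication I would rewrite $\mathrm{Ext}^1_\mathcal{A}(L_s,L_t)=\mathrm{Hom}_{D^b(\mathcal{A})}(L_s,L_t[1])$ and run the mirror-image argument in $D^b(\mathcal{A})$: using the decomposition $\mathrm{id}_{D^b(\mathcal{A})}\simeq\bigoplus_{(u)}\pi_{(u)}$, the orthogonality of the essential images of the $\pi_{(u)}$, and the fact that each $\pi_{(u)}$ is triangulated (hence commutes with $[1]$), the group $\mathrm{Hom}_{D^b(\mathcal{A})}(L_s,L_t[1])$ decomposes as $\bigoplus_{(u)}\mathrm{Hom}_{D^b(\mathcal{A})}(\pi_{(u)}L_s,(\pi_{(u)}L_t)[1])$. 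Lemma \ref{lem1} kills every term except possibly the one with $(u)=(s)=(t)$, so $\mathrm{Ext}^1_\mathcal{A}(L_s,L_t)\neq 0$ forces $(s)=(t)$, i.e. $s\sim_2 t$.

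I do not anticipate a genuine obstacle: the substance has been packed into the construction of the projection functors and into Lemma \ref{lem1}, and both implications reduce to the same short computation. The only points needing a little care are (i) invoking \emph{both} the isomorphism $\mathrm{id}\simeq\bigoplus(\text{projectors})$ and the vanishing of cross-morphisms between distinct essential images in order to split a $\mathrm{Hom}$-space, and (ii) carrying out the $\sim_1\Rightarrow\sim_2$ step in $D^b(\mathcal{A})$ rather than in $\mathcal{A}$, so that the shift $[1]$ can be passed through the triangulated functors $\pi_{(u)}$.
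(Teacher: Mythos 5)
Your proposal is correct and follows essentially the same route as the paper: both directions reduce to decomposing the relevant $\mathrm{Hom}$-space via the projector decompositions (\ref{eq1}) and (\ref{eq2}), applying Lemma \ref{lem1} to see that $T_s$ (resp.\ $L_s$) lives entirely in one block, and invoking the vanishing of morphisms between distinct essential images. The only cosmetic difference is that you state the implications positively ($\mathscr{R}_i$ implies $\sim_j$) while the paper argues by contraposition; the computation is identical.
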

	\begin{proof}
		Let $s,t\in\Lambda$ be such that $(s)\neq(t)$. By lemma \ref{lem1}, we have 
		$$\mathrm{Ext}_\mathcal{A}^1(L_{t},L_{s})=\mathrm{Hom}_{D^b(\mathcal{A})}(L_{t},L_{s}[1])\simeq \mathrm{Hom}_{D^b(\mathcal{A})}(\pi_{(t)}(L_{t}),\pi_{(s)}(L_{s}[1])).$$
		As there is no non-zero morphism between the essential images of $\pi_{(s)}$ and $\pi_{(t)}$, the right-hand side is zero, so $t\cancel{\mathscr{R}_1} s$. We have thus shown the implication $\overline{s}=\overline{t}\Rightarrow (s)=(t)$.
		
		Conversely, if $s,t\in\Lambda$ are such that $\overline{s}\neq\overline{t}$, then it follows that
		$$\mathrm{Hom}_\mathcal{A}(T_{t},T_{s})\simeq \mathrm{Hom}_\mathcal{A}(Q_{\overline{t}}(T_{t}),Q_{\overline{s}}(T_{s}))=0$$
		so $t\cancel{\mathscr{R}_2} s$, and we have the implication $(s)=(t)\Rightarrow  \overline{s}=\overline{t}$.
	\end{proof}
	In the sequel, we will denote by $\sim$ the equivalence relation on $\Lambda$ considered in the previous theorem.
	\section{Recollections on (Iwahori-Whittaker) equivariant derived categories}\label{Recollections on (Iwahori-Whittaker) equivariant derived categories}
	In this section, we recall the construction of Iwahori-Whittaker-equivariant derived categories on some partial affine flag varieties arising from Bruhat-Tits theory. We fix a prime number $\ell\neq p$. Since our sheaves will be \'etale, we let $\mathbf{k}$ be either a finite field of prime characteristic $\ell$ or a finite extension of $\mathbb{Q}_\ell$.
	\subsection{Notations}\label{Notations} From now on, $G$ will denote a semi-simple algebraic group of adjoint type defined over an algebraically closed field $\mathbb{F}$ of characteristic $p>0$. Choose a Borel subgroup $B\subset G$ and a maximal torus $T\subset B$, and let $\mathfrak{R}\subset \mathbb{X}$ (resp. $\mathfrak{R}_+$) denote the subset of roots (resp. positive roots with respect to the opposite Borel subgroup of $B$ with respect to $T$) inside the group of characters of $T$. Each root $\alpha$ defines a subgroup $U_\alpha\subset G$ isomorphic to the additive group $\mathbb{G}_\mathrm{a}$, and the subgroup generated by the $U_\alpha$'s, for $\alpha\in\mathfrak{R}_+$ (resp. for $\alpha\in -\mathfrak{R}_+$), is a unipotent group which we will denote by $U^+$ (resp. $U$).  We will also denote by $\mathfrak{R}^\vee\subset\mathbb{X}^\vee$ the set of coroots inside the group of cocharacters of $T$ and by $\mathbb{X}^\vee_+$ (resp. $\mathbb{X}^\vee_{++}$) the set of dominant (resp. strictly dominant) cocharacters, i.e. those cocharacters which satisfy $\langle\lambda,\alpha\rangle\geq0$ (resp. $\langle\lambda,\alpha\rangle>0$) for any $\alpha\in \mathfrak{R}_+$. 
	
	Fix an integer $n\geq 1$. We will write $\mathcal{O}_n:=\mathbb{F}[[z^n]]$, $\mathcal{K}_n:=\mathbb{F}((z^n))$, $\mathcal{O}:=\mathcal{O}_1$, $\mathcal{K}:=\mathcal{K}_1$ where $z$ is an indeterminate. For any affine $\mathbb{F}$-group scheme $H$, we define the functors $L_n^+H:=R \mapsto H(R[[z^n]])$ and $L_nH:R \mapsto H(R((z^n)))$ from $\mathbb{F}$-algebras to groups, which are representable by a $\mathbb{F}$-group scheme and a $\mathbb{F}$-group ind-scheme respectively. Note that these definitions also make sense if $H$ is only defined over $\mathcal{O}_n$. We will write $L^+H$ , resp. $LH$, instead of $L_1^+H$, resp. $L_1H$. We will denote by $\mathrm{Iw}_u^+$ the inverse image of $U^+$ under the evaluation map $L^+G\to G,~z\mapsto 0$.
	
	We assume that there exists a primitive $p$-th root of unity $\zeta\in\mathbf{k}$, and consider the Artin-Schreier map $\mathrm{AS}:\mathbb{G}_\mathrm{a}\to \mathbb{G}_\mathrm{a}$ determined by the map of rings $x\mapsto x^p-x$. This morphism is a Galois cover of group $\mathbb{Z}/p\mathbb{Z}$, so determines a continuous group morphism $\pi_1(\mathbb{G}_\mathrm{a},0)\to \mathbb{Z}/p\mathbb{Z}$, where $\pi_1(\mathbb{G}_\mathrm{a},0)$ is the \'etale fundamental group of $\mathbb{G}_\mathrm{a}$ with geometric base point $0$. The composition of this map with the morphism $\mathbb{Z}/p\mathbb{Z}\to \mathbf{k}^\times$ (induced by $\zeta$) yields a continuous representation of the fundamental group, and thus a local system on $\mathbb{G}_\mathrm{a}$ of rank one. We denote this local system by $\mathcal{L}_{\mathrm{AS}}$.
	
	\subsection{The affine Weyl group and some Bruhat-Tits theory}\label{The affine Weyl group and some Bruhat-Tits theory}
	We let $N_G(T)$ be the normalizer of $T$ in $G$. The finite Weyl group associated with $(G,T)$ will be denoted by $W_0:=N_G(T)/T$, and we consider the affine Weyl group
	$$W:=W_{0}\ltimes \mathbb{Z}\mathfrak{R}^\vee,$$
	which acts naturally on $E:=\mathbb{X}^\vee\otimes_\mathbb{Z} \mathbb{R}$ via the $n$-dilated ``box''  action, defined by $$wt_\mu\car_{n}\lambda:=w(\lambda+n\mu)$$ for any $\lambda\in \mathbb{X}^\vee\otimes_\mathbb{Z} \mathbb{R}$, $w\in W_{0}$ and $\mu\in \mathbb{Z}\mathfrak{R}^\vee$, where we have denoted by $wt_\mu$ the element of the affine Weyl group associated with the couple $(w,\mu)$. The closure $\overline{\mathbf{a}_n}$ of the set 
	$$\mathbf{a}_n:=\{\lambda\in E ~:~0<\langle\lambda,\alpha\rangle<n~\forall \alpha\in\mathfrak{R}_+\}$$ 
	is a fundamental domain for this action, which stabilizes $ \mathbb{X}^\vee$. Thus, $\overline{\mathbf{a}_n}\cap \mathbb{X}^\vee$ is a fundamental domain for the action of $W$ on the set of cocharacters. Each root $\alpha\in \mathfrak{R}$ defines a reflection $s_\alpha\in W$, and we will denote by $S_0$ the set of simple reflections (i.e. associated with a simple root) of the finite Weyl group, which is known to generate $W_0$. It is also well known (cf. \cite[II.6.3]{jantzen2003representations}) that the set of simple reflections 
	$$S:=\{(s,0),s\in S_0\}\cup\{(s_\beta,\beta^\vee)\},$$ where $\beta^\vee$ runs through the set of largest short roots of the irreducible components of $\mathfrak{R}^\vee$, generates $W$. Moreover, $W$ is a Coxeter group, with Coxeter generating system $S$, for which we will denote by $l:W\to \mathbb{Z}_{\geq0}$ the associated length function.
	
	It will also be useful for us to note that one can extend the translation action of $\mathbb{Z}\mathfrak{R}^\vee$ to the whole group of cocharacters $\mathbb{X}^\vee$, by putting $t_\mu\car_n\lambda:=\lambda+n\mu$ for every $\mu\in E,\lambda\in\mathbb{X}^\vee$, and that this extends the action of $W$ on $E$ to an action of the extended affine Weyl group $\tilde{W}:=W_0\ltimes\mathbb{X}^\vee$. The subgroup $W\subset \tilde{W}$ is normal and one can extend the length function $l$ to the whole group $\tilde{W}$ (cf. \cite[§2.1]{achar2021geometric} for more details), so that we can define the subgroup
	$$\Omega:=\{w\in \tilde{W}~|~l(w)=0\}, $$
	which acts on $W$ by Coxeter group isomorphisms and induces an isomorphism
	\begin{equation}\label{minuscule}
	    \Omega\ltimes W\simeq \tilde{W}
	\end{equation}
	such that $l(\omega w)=l(w\omega)=l(w)$ for all $(\omega,w)\in \Omega\times W$.
	
	The action of $\tilde{W}$ on $E$ defines a hyperplane arrangement in $E$, and hence a collection of \textit{facets} (cf. \cite[Ch.5, §1.2]{cdi_springer_books_10_1007_978_3_540_34491_9}). To any facet $\mathbf{g}\subset\overline{\mathbf{a}_n}$, Bruhat-Tits theory associates a ``parahoric group scheme'' $P_\mathbf{g}$ defined over $\mathcal{O}_n$, whose generic fiber is isomorphic to $G\times_{\mathrm{Spec}(\mathbb{F})} \mathrm{Spec}(\mathcal{K}_n)$ and whose group of $\mathcal{O}_n$-points coincides with a subgroup of finite index of the pointwise stabilizer of $-\mathbf{g}$\footnote{We follow the conventions of \cite[§4]{RW22} for the facet which $P_\mathbf{g}(\mathcal{O}_n)$ must stabilize. Note that the authors there defined $\mathbf{a}_n$ to be the opposite of our current fundamental alcove.} for the action of $G(\mathcal{K}_n)$ on the Bruhat-Tits building associated with $G\times_{\mathrm{Spec}(\mathbb{F})} \mathrm{Spec}(\mathcal{K}_n)$. We fix such a facet $\mathbf{g}$. The partial affine flag variety associated with $\mathbf{g}$ will be denoted by $\mathrm{Fl}^{n}_{\mathbf{g}}$ and defined as the fppf-quotient $L_nG/L^+_nP_\mathbf{g}$, which is an ind-projective ind-scheme over $\mathbb{F}$ (cf. \cite{PAPPAS2008118} for a detailed exposition on these partial affine flag varieties). The connected components of $\mathrm{Fl}^{n}_{\mathbf{g}}$ are in bijection with the group $\mathbb{X}^\vee/\mathbb{Z}\mathfrak{R}^\vee$ (see  \cite[Theorem 0.1]{PAPPAS2008118}), so we will denote by $\mathrm{Fl}^{n,\circ}_{\mathbf{g}}$ the connected component associated with the neutral element. In the sequel, we will apply these results to the cases where $n=\ell$ or $n=1$; when $n=1$ (which will be the case until section \ref{smith section}), we will write $\mathrm{Fl}^{\circ}_{\mathbf{g}}$ (resp. $\mathrm{Fl}_{\mathbf{g}}$) instead of $\mathrm{Fl}^{1,\circ}_{\mathbf{g}}$ (resp. $\mathrm{Fl}^1_{\mathbf{g}}$).
	\subsection{ Parity sheaves on partial affine flag varieties}\label{section partial affine flag}
	Let $\chi_0:U^+\to\mathbb{G}_a$ be a morphism of $\mathbb{F}$-algebraic groups which restricts to a non-zero morphism on each subgroup $U_\alpha$, for $\alpha$ a simple root, and let $\chi:\mathrm{Iw}_u^+\to \mathbb{G}_a$ denote the composition of $\chi_0$ with the evaluation map. For any $Y\subset \mathrm{Fl}^{\circ}_{\mathbf{g}}$ which is a locally closed finite union of $\mathrm{Iw}_u^+$-orbits, one can show (cf. \cite[Lemma 3.2]{ciappara2021hecke}) that the $\mathrm{Iw}_u^+$-action on $Y$ factors through a quotient group of finite type $J$ such that $\chi$ factors through $\chi_J:J\to \mathbb{G}_\mathrm{a}$, where $\chi_J$ is induced by $\chi$; we can then consider the $(J,\chi_J^*\mathcal{L}_{\mathrm{AS}})$-equivariant derived category of \'etale $\mathbf{k}$-sheaves $D^b_{J,\chi_J^*\mathcal{L}_{\mathrm{AS}}}(Y,\mathbf{k})$. This category is by definition the subcategory of $D_{\mathrm{c}}^b(Y,\mathbf{k})$ consisting of constructible complexes of \'etale $\mathbf{k}$-sheaves $\mathcal{F}$ such that there exists an isomorphism $a^*\mathcal{F}\simeq \chi_J^*\mathcal{L}_{\mathrm{AS}}\boxtimes\mathcal{F}$, where $a:J\times Y\to Y$ is the action map; this definition does not depend on the choice of $J$. We then define the category $D^b_{\mathcal{IW}}(\mathrm{Fl}^{\circ}_{\mathbf{g}},\mathbf{k})$ as a direct limit of the categories $D^b_{J,\chi_J^*\mathcal{L}_{\mathrm{AS}}}(Y,\mathbf{k})$, indexed by finite and closed unions of $\mathrm{Iw}_u^+$-orbits $Y$ which are ordered by inclusion. Notice that, since the transition maps are push-forwards of closed immersions, they are fully faithful functors, and one can see this limit as an increasing union of categories.
	
	We now  recall the definition and some of the properties of parity complexes on partial affine flag varieties, since those will play a major role in the sequel. Let $Y\subset \mathrm{Fl}^{\circ}_{\mathbf{g}}$ be a finite locally closed union of $\mathrm{Iw}_{u}^+$-orbits. A complex $\mathcal{F}\in D^b_{\mathcal{IW}}(Y,\mathbf{k})$ is called $*$-even, resp. $!$-even, if for any inclusion $j:Y'\hookrightarrow Y$ of an $\mathrm{Iw}_{u}^+$-orbit $Y'$, the complex $j^*\mathcal{F}$, resp. $j^!\mathcal{F}$, is concentrated in even degrees. One defines similarly $*$-odd and $!$-odd complexes and say that a complex is even, resp. odd, if it is both $*$-even and $!$-even, resp. $*$-odd and $!$-odd. A complex is called parity if it is a direct sum of even and odd complexes. 
	
	Since the category  $D^b_{\mathcal{IW}}(\mathrm{Fl}^{\circ}_{\mathbf{g}},\mathbf{k})$ is defined as a direct limit, it makes sense to talk about even and odd objects in this category, and we will denote by $\mathrm{Par}_{\mathcal{IW}}(\mathrm{Fl}^{\circ}_{\mathbf{g}},\mathbf{k})$ the additive full subcategory consisting of parity objects. The general theory of parity complexes (from \cite{2014}) allows us to state the following result.
	\begin{prop}\label{par prop}
		Let $\mathbf{g}\subset\overline{\mathbf{a}_1}$ be a facet. For each $\mathrm{Iw}_{u}^+$-orbit $Y$ in $\mathrm{Fl}^{\circ}_{\mathbf{g}}$ which supports a non-zero Iwahori-Whittaker rank one local system $\mathcal{L}$, there exists a unique  indecomposable parity complex in $\mathrm{Par}_{\mathcal{IW}}(\mathrm{Fl}^{\circ}_{\mathbf{g}},\mathbf{k})$ supported on $\overline{Y}$ whose restriction to $Y$ is isomorphic to $\mathcal{L}[\mathrm{dim}(Y)]$, and each indecomposable parity complex is isomorphic to such an object up to a cohomological shift. Moreover, each object in $\mathrm{Par}_{\mathcal{IW}}(\mathrm{Fl}^{\circ}_{\mathbf{g}},\mathbf{k})$ is isomorphic to a sum of indecomposable parity complexes.
	\end{prop}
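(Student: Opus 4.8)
The plan is to obtain the statement as an application of the general theory of parity complexes of Juteau--Mautner--Williamson \cite{2014} to the stratified ind-scheme $\mathrm{Fl}^{\circ}_{\mathbf{g}}$, equipped with its partition into $\mathrm{Iw}_u^+$-orbits and with the twisted-equivariance datum attached to the Artin--Schreier local system $\mathcal{L}_{\mathrm{AS}}$. Once one knows that this geometric situation meets the hypotheses under which \cite{2014} produces a classification of indecomposable parity objects, the three assertions --- existence and support of the indecomposable attached to an orbit supporting an admissible local system, its uniqueness up to cohomological shift, and the fact that every parity object is a finite direct sum of such indecomposables --- follow formally. So the real content is to verify two points: that the strata are affine spaces, and that the stratification is ``even'' in the precise sense required by \cite{2014}.

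For the first point I would use the Iwahori--Bruhat decomposition attached to the parahoric $L_1^+ P_{\mathbf{g}}$: the $\mathrm{Iw}_u^+$-orbits on $\mathrm{Fl}^{\circ}_{\mathbf{g}}$ are indexed by a set of representatives of the cosets in $W/W_{\mathbf{g}}$ belonging to the neutral component, each orbit being (non-canonically) an affine space; here one also uses that $\mathrm{Iw}_u^+$ and the full Iwahori subgroup have the same orbits, since the torus acts trivially. Among these orbits, the ones supporting a non-zero rank-one Iwahori--Whittaker local system are singled out by the requirement that $\chi$ restrict non-trivially to each simple-root subgroup of $U^+$: this imposes a minimality condition on the finite part of the combinatorial label, and, combined with the bookkeeping coming from the parahoric level, recovers exactly the set ${_\mathrm{f}W}^{\mathbf{g}}$; on such an orbit the admissible local system is an Artin--Schreier sheaf on an affine space, and it is unique up to isomorphism. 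This identifies the indexing set of admissible pairs (orbit, local system) with the one in the statement.

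For the parity condition --- which I expect to be the only non-formal part --- the efficient route is to produce enough \emph{even} objects. I would build Iwahori--Whittaker Bott--Samelson type complexes: starting from the ``standard'' object supported on the minimal admissible orbit (the Artin--Schreier local system, suitably shifted), apply successive pushforward--pullbacks along the $\mathbb{P}^1$-fibrations associated with minimal parahorics; these are even because $\mathbb{P}^1$-bundles are even, and collectively they dominate every admissible orbit closure. The splitting criterion of \cite{2014}, together with the Krull--Schmidt property (valid because $\mathrm{Hom}$-spaces between parity complexes are finite-dimensional over $\mathbf{k}$), then yields a parity object on each admissible orbit closure restricting to the prescribed local system on the open orbit; its uniqueness up to shift comes from the standard parity computation of $\mathrm{Hom}$-spaces between standard and costandard objects, and inspecting the top stratum of an arbitrary indecomposable parity object shows that these objects exhaust the indecomposables and that every parity object decomposes. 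Passing from the Iwahori level to the parahoric level $\mathbf{g}$ is handled by pushing the Bott--Samelson objects forward along the proper map $\pi_{\mathbf{g}}\colon\mathrm{Fl}^{\circ}_{\mathbf{a}_1}\to\mathrm{Fl}^{\circ}_{\mathbf{g}}$, whose fibres are (even) partial flag varieties of $P_{\mathbf{g}}$, so that even-ness is preserved; finally, since $\mathrm{Par}_{\mathcal{IW}}(\mathrm{Fl}^{\circ}_{\mathbf{g}},\mathbf{k})$ is the union of its full subcategories supported on finite closed unions of orbits with fully faithful transition functors, all of this can be checked on those finite pieces, where \cite{2014} applies verbatim.
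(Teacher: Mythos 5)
Your proposal is correct and follows the same route the paper intends: the paper gives no argument beyond citing the general theory of parity complexes of Juteau--Mautner--Williamson, and your verification of its hypotheses (affine strata via the Iwahori--Bruhat decomposition at parahoric level, the trivial evenness condition on affine-space strata, existence via Bott--Samelson/convolution constructions pushed forward along the proper map $\pi_{\mathbf{g}}$, and reduction to finite closed unions of orbits) is exactly the standard way this is made precise. No gaps.
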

 \begin{rem}
     See Proposition \ref{prop IW support} below for a characterization of $\mathrm{Iw}_{u}^+$-orbits which supports a non-zero Iwahori-Whittaker rank one local system.
 \end{rem}
	\section{Parity sheaves on partial affine flag varieties and equivalence relations} 
	\subsection{Pushforward and pullback of indecomposable parity complexes}\label{Pushforward and pullback of indecomposable parity complexes} Let $\mathbf{g}\subset\overline{\mathbf{a}_1}$ be a facet and $\pi_{\mathbf{g}}:\mathrm{Fl}^{\circ}_{\mathbf{a}_1}\to \mathrm{Fl}^{\circ}_{\mathbf{g}}$ be the canonical projection. In this section, we will explain what is the effect of applying the functors $\pi_\mathbf{g}^*$ and $\pi_{\mathbf{g} *}$ to the Iwahori-Whittaker-equivariant indecomposable parity complexes that we described in Proposition \ref{par prop}.
	
	Let us first state a lemma which will be needed below. Let $(\Lambda,\leq)$ be a finite set endowed with a partial order $\leq$, admitting a unique minimal element $\lambda_0$ and equipped with a function $l:\Lambda\to \mathbb{Z}_{\geq0}$ compatible with the order $\leq$ (i.e. such that $\lambda\leq\mu\Rightarrow l(\lambda)\leq l(\mu)$). Let also $(X,(X_\lambda)_{\lambda\in\Lambda})$ be a stratified variety over $\mathbb{F}$. We make the following assumptions on the stratification:\begin{itemize}
	    \item for every $\lambda\in \Lambda$, $X_\lambda$ is an affine space of dimension $l(\lambda)$ over $\mathbb{F}$, and we have $\overline{X_\lambda}=\bigsqcup_{\mu\leq\lambda}X_\mu$;
	    \item there exist isomorphisms of $\mathbb{F}$-schemes (which we fix) $X_{\lambda}\simeq X_{\lambda_0}\times_{\mathbb{F}} \mathbb{A}_{\mathbb{F}}^{l(\lambda)-l(\lambda_0)}$ for every $\lambda\in\Lambda$;
	    \item there exists a morphism of $\mathbb{F}$-schemes $q:X\to X_{\lambda_0}$ such that $q|_{X_\lambda}$ is induced by the canonical projection on the first component $X_{\lambda_0}\times_{\mathbb{F}} \mathbb{A}_{\mathbb{F}}^{l(\lambda)-l(\lambda_0)}\to X_{\lambda_0}$ for every $\lambda\in \Lambda$.
	\end{itemize}
	\begin{lem}\label{constant sheaf}
		We have an isomorphism in $D_c^b(X_{\lambda_0},\mathbf{k})$:
		$$q_!\underline{\mathbf{k}}_X\simeq \bigoplus_{\lambda\in\Lambda}\underline{\mathbf{k}}_{X_{\lambda_0}}[-2(l(\lambda)-l(\lambda_0))].$$
	\end{lem}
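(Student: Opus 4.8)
The plan is to reduce the computation of $q_!\underline{\mathbf{k}}_X$ to a statement over the base $X_{\lambda_0}$ by dévissage along the stratification, using proper base change together with the explicit product structure on each stratum. First I would order the index set $\Lambda$ so that it is compatible with $\leq$ and the length function $l$, and build an increasing filtration of $X$ by closed subschemes: set $\Lambda$ as $\lambda_0 = \mu_1, \mu_2, \dots, \mu_N$ (a linear extension of the partial order), let $Z_i := \bigsqcup_{k \leq i} X_{\mu_k}$, which is closed in $X$ by the hypothesis $\overline{X_\lambda} = \bigsqcup_{\mu \leq \lambda} X_\mu$, and let $j_i : X_{\mu_i} \hookrightarrow Z_i$ be the open complementary inclusion to the closed inclusion $Z_{i-1} \hookrightarrow Z_i$. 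Restricting the morphism $q$ to each $Z_i$ gives maps $q_i : Z_i \to X_{\lambda_0}$, all of which are proper (or at least have the base change property we need, since everything in sight is an affine bundle over $X_{\lambda_0}$), so that $q_! = q_*$ on these pieces.

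Next I would induct on $i$ to prove that $(q_i)_! \underline{\mathbf{k}}_{Z_i} \simeq \bigoplus_{k \leq i} \underline{\mathbf{k}}_{X_{\lambda_0}}[-2(l(\mu_k) - l(\lambda_0))]$. For the inductive step, the open-closed decomposition $Z_{i-1} \xhookrightarrow{\ c\ } Z_i \xhookleftarrow{\ j_i\ } X_{\mu_i}$ yields a distinguished triangle $(j_i)_! \underline{\mathbf{k}}_{X_{\mu_i}} \to \underline{\mathbf{k}}_{Z_i} \to c_* \underline{\mathbf{k}}_{Z_{i-1}} \xrightarrow{+1}$. Applying $(q_i)_!$ and using $(q_i)_! \circ c_* = (q_{i-1})_!$ and $(q_i)_! \circ (j_i)_! = (q|_{X_{\mu_i}})_!$, I get a triangle
\begin{equation*}
(q|_{X_{\mu_i}})_!\,\underline{\mathbf{k}}_{X_{\mu_i}} \to (q_i)_!\,\underline{\mathbf{k}}_{Z_i} \to (q_{i-1})_!\,\underline{\mathbf{k}}_{Z_{i-1}} \xrightarrow{+1}.
\end{equation*}
Now the fixed isomorphism $X_{\mu_i} \simeq X_{\lambda_0} \times_{\mathbb{F}} \mathbb{A}_{\mathbb{F}}^{l(\mu_i) - l(\lambda_0)}$ under which $q|_{X_{\mu_i}}$ is the first projection lets me compute $(q|_{X_{\mu_i}})_! \underline{\mathbf{k}}_{X_{\mu_i}}$ by the Künneth formula (or the known cohomology of affine space with compact supports): it is $\underline{\mathbf{k}}_{X_{\lambda_0}} \otimes R\Gamma_c(\mathbb{A}^{d}, \mathbf{k}) \simeq \underline{\mathbf{k}}_{X_{\lambda_0}}[-2d]$ with $d = l(\mu_i) - l(\lambda_0)$, this being a single shifted constant sheaf concentrated in degree $2d$. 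By the induction hypothesis the third term is a direct sum of shifted constant sheaves living in even nonpositive-cohomology degrees $-2(l(\mu_k)-l(\lambda_0))$ for $k < i$, and compatibility of $l$ with $\leq$ forces $l(\mu_k) \leq l(\mu_i)$ is \emph{not} automatic from the linear extension — so instead I argue by parity: the first term sits in a single even degree and the third term sits in even degrees, hence every connecting map $(q_{i-1})_!\underline{\mathbf{k}}_{Z_{i-1}} \to (q|_{X_{\mu_i}})_!\underline{\mathbf{k}}_{X_{\mu_i}}[1]$ is a map between a complex in even degrees and one in odd degrees, which is necessarily zero. Therefore the triangle splits and $(q_i)_! \underline{\mathbf{k}}_{Z_i}$ is the direct sum of the two outer terms, completing the induction; taking $i = N$ gives the claim, since $Z_N = X$.

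The main obstacle I anticipate is not the dévissage itself but making sure the hypotheses genuinely give what is needed for each step: namely that $Z_{i-1}$ is closed in $Z_i$ (which follows from $\overline{X_\lambda} = \bigsqcup_{\mu \leq \lambda} X_\mu$ and the linear extension being order-compatible), that the restrictions $q_i$ behave well enough under $!$-pushforward to commute with the recollement triangles (here one should note the stalks of everything in sight over a point $x \in X_{\lambda_0}$ are the compactly-supported cohomology of the fiber $q^{-1}(x) \cap Z_i$, an iterated affine bundle, so base change is unproblematic and one could alternatively run the whole argument fiberwise), and that the vanishing of the connecting morphisms really is forced by the even/odd degree dichotomy rather than requiring an explicit knowledge of the maps. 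Once those points are checked the argument is formal. An equivalent and perhaps cleaner route, which I would mention as a remark, is to compute stalks directly: $(q_!\underline{\mathbf{k}}_X)_x \simeq R\Gamma_c(q^{-1}(x), \mathbf{k})$, and $q^{-1}(x) = \bigsqcup_\lambda \mathbb{A}^{l(\lambda)-l(\lambda_0)}$ as a stratified space with all strata affine, so its compactly supported cohomology is $\bigoplus_\lambda \mathbf{k}[-2(l(\lambda)-l(\lambda_0))]$ by the same parity argument applied to the long exact sequence of compactly supported cohomology; then one upgrades this pointwise statement to the asserted isomorphism of complexes using that the whole situation is pulled back from $X_{\lambda_0}$ via $q$ restricted to each stratum, so no monodromy can intervene.
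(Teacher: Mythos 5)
Your proposal is correct and follows essentially the same route as the paper: a dévissage along the stratification via open/closed recollement triangles, computing the contribution of each stratum from the fixed product structure $X_\lambda\simeq X_{\lambda_0}\times\mathbb{A}^{l(\lambda)-l(\lambda_0)}$, and splitting the triangles by the even/odd degree argument (which works because $X_{\lambda_0}$ is an affine space). The only cosmetic difference is that the paper inducts on $\#\Lambda$ by peeling off a single maximal (open) stratum, whereas you build up a closed filtration along a linear extension of the order; these are the same induction read in opposite directions.
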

	\begin{proof}
		The proof is done by induction on the cardinality of $\Lambda$. First note that the result is trivial when $\#\Lambda=1$, so we assume from now on that $\#\Lambda\geq2$. Let $\lambda\in\Lambda$ be a maximal element for $\leq$, so that we have an open immersion $i:X_\lambda\hookrightarrow X$. Denoting by $j$ the closed immersion $X':=X\backslash X_\lambda\hookrightarrow X$, we get a distinguished triangle
		$$i_!i^* \underline{\mathbf{k}}_X\longrightarrow \underline{\mathbf{k}}_X \longrightarrow j_!j^*\underline{\mathbf{k}}_X\xrightarrow{+1}  $$
		to which we can apply the triangulated functor $q_!$:
		\begin{equation}\label{dist triangle}
			q_!i_!i^* \underline{\mathbf{k}}_X\longrightarrow q_!\underline{\mathbf{k}}_X \longrightarrow q_!j_!j^*\underline{\mathbf{k}}_X\xrightarrow{+1}. 
		\end{equation}
		Since $q\circ i$ is the projection induced by $X_{\lambda_0}\times_{\mathbb{F}} \mathbb{A}_{\mathbb{F}}^{l(\lambda)-l(\lambda_0)}\to X_{\lambda_0}$ and $i^* \underline{\mathbf{k}}_X\simeq \underline{\mathbf{k}}_{X_\lambda}$, we have an isomorphism $q_!i_!i^* \underline{\mathbf{k}}_X\simeq \underline{\mathbf{k}}_{X_{\lambda_0}}[-2(l(\lambda)-l(\lambda_0))]$. On the other hand, we have $j^*\underline{\mathbf{k}}_X\simeq\underline{\mathbf{k}}_{X'}$, and the morphism $q':=q\circ j:X'\to X_{\lambda_0}$ satisfies $q'|_{X_\mu}=q|_{X_\mu}$ for every $\mu\neq\lambda$. Thus, we can apply our induction hypothesis to the stratified space $(X',(X_\mu)_{\mu\neq\lambda})$ to find that 
		$$q_!j_!j^*\underline{\mathbf{k}}_X\simeq q'_!\underline{\mathbf{k}}_{X'}\simeq \bigoplus_{\mu\neq\lambda}\underline{\mathbf{k}}_{X_{\lambda_0}}[-2(l(\mu)-l(\lambda_0))].$$
		In particular, we see that $q_!i_!i^* \underline{\mathbf{k}}_X$ and $q_!j_!j^*\underline{\mathbf{k}}_X$ only have cohomologies in even degrees, so that the distinguished triangle (\ref{dist triangle}) is split (recall that $X_{\lambda_0}$ is an affine space, so it only admits cohomology in even degrees). Therefore we have an isomorphism
		$$ q_!\underline{\mathbf{k}}_X\simeq  q_!i_!i^* \underline{\mathbf{k}}_X\oplus q_!j_!j^*\underline{\mathbf{k}}_X.$$
		This concludes the proof.
	\end{proof}
	Fix a facet $\mathbf{g}\subset\overline{\mathbf{a}_1}$. We denote by $W_{\mathbf{g}}\subset W$ the stabilizer of $\mathbf{g}$ for the box action $\car_1$ and put $S_\mathbf{g}:=S\cap W_\mathbf{g}$. It is well known that $(W_\mathbf{g},S_\mathbf{g})$ is a finite Coxeter system\footnote{Note that $W_\mathbf{g}$ coincides with the stabilizer of the facet $\ell\cdot\mathbf{g}\subset\overline{\mathbf{a}_\ell}$ for the dilated box action $\car_\ell$ of $W$.}. We will denote by $w_\mathbf{g}$ the element of maximal length in $W_{\mathbf{g}}$, and by $W^{\mathbf{g}}$ the set of elements $w\in W$ for which $w$ is maximal in the left coset $wW_{\mathbf{g}}$. We put $w_{0}:=w_{\{0\}}$ (notice that $W_{0}=W_{\{0\}}$). The set of elements $w\in W$ which are minimal in $W_0w$ will be denoted by ${_\mathrm{f}W}$, and we put $${_\mathrm{f}W}^\mathbf{g}=W^\mathbf{g}\cap {_\mathrm{f}W}.$$
	The following well-known result (which is a direct consequence of \cite[Lemma 2.2]{achar2021geometric}) will be useful throughout all the rest of this paper.
	\begin{prop}\label{prop min coset}
		Let $w\in {W}^\mathbf{g}$. We have the following equivalences
		$$w\in {_{\mathrm{f}}W}^\mathbf{g}\Longleftrightarrow \forall r\in W_\mathbf{g},~wr\in {_{\mathrm{f}}W}.$$
	\end{prop}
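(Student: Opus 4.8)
The plan is to reduce the statement to \cite[Lemma 2.2]{achar2021geometric}, which (in the setting there) characterizes when a minimal-length representative in a coset of a standard parabolic subgroup stays minimal after multiplication by elements of that parabolic. Here the two parabolic-type subgroups in play are $W_0 \subset W$ on the left (giving the set ${}_{\mathrm f}W$ of minimal representatives in $W_0 w$) and $W_{\mathbf g} \subset W$ on the right (giving $W^{\mathbf g}$, the maximal representatives in $w W_{\mathbf g}$). Since $W^{\mathbf g}$ is defined using \emph{maximal} rather than minimal representatives, the first move is to translate between the two conventions: an element $w$ is maximal in $w W_{\mathbf g}$ if and only if $w w_{\mathbf g}$ is minimal in $(w w_{\mathbf g}) W_{\mathbf g} = w W_{\mathbf g}$, where $w_{\mathbf g}$ is the longest element of $W_{\mathbf g}$; equivalently $\ell(w r) \le \ell(w)$ for all $r \in W_{\mathbf g}$. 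I would record this reformulation first, so that the right-hand side of the claimed equivalence becomes a statement purely about whether $wr$ lies in ${}_{\mathrm f}W$ as $r$ ranges over $W_{\mathbf g}$.

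The forward implication is essentially immediate: if $w \in {}_{\mathrm f}W^{\mathbf g} = W^{\mathbf g} \cap {}_{\mathrm f}W$, then for $r \in W_{\mathbf g}$ one has $\ell(wr) \le \ell(w)$ (maximality in the coset $wW_{\mathbf g}$), and I need to argue that this, combined with $w \in {}_{\mathrm f}W$, forces $wr \in {}_{\mathrm f}W$. The key point is a standard fact about double cosets: for $w$ minimal in $W_0 w$ and $r \in W_{\mathbf g}$ with $\ell(wr) \le \ell(w)$, any length drop when passing from $w$ to $wr$ happens ``on the right'', so the minimal representative of $W_0 (wr)$ is still $wr$ itself — one uses that $w$ minimal in $W_0 w$ means no simple reflection $s \in S_0$ satisfies $\ell(sw) < \ell(w)$, and a deletion/exchange argument shows the same holds for $wr$. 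I expect this direction to follow cleanly from \cite[Lemma 2.2]{achar2021geometric} applied with the appropriate parabolics, perhaps after passing to $ww_{\mathbf g}$ as above.

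The reverse implication is where I expect the main work to lie. Assuming $wr \in {}_{\mathrm f}W$ for \emph{all} $r \in W_{\mathbf g}$, I must deduce both that $w \in W^{\mathbf g}$ (which it is by hypothesis, so this is free) and that $w \in {}_{\mathrm f}W$ — but the latter is just the case $r = e$. So in fact the content of the reverse direction is vacuous once we notice $e \in W_{\mathbf g}$; the real assertion of the proposition is the forward direction together with the observation that membership of $w$ in $W^{\mathbf g}$ is already assumed. Thus the only genuine obstacle is verifying, via the length inequalities characterizing $W^{\mathbf g}$ and via \cite[Lemma 2.2]{achar2021geometric}, that for $w \in {}_{\mathrm f}W \cap W^{\mathbf g}$ and $r \in W_{\mathbf g}$ the product $wr$ again has no left descent in $S_0$. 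I would carry this out by writing $r = r' s$ with $s \in S_{\mathbf g}$, $\ell(r) = \ell(r') + 1$, inducting on $\ell(r)$, and at each step invoking the exchange condition to control $\ell(s_0 w r)$ for $s_0 \in S_0$: if it dropped, the exchange condition would produce a reduced expression exhibiting a left descent of $w$ itself (using that the ``new'' simple reflection sits on the right, in $S_{\mathbf g}$, and $S_0 \cap$ its support interacts controllably), contradicting $w \in {}_{\mathrm f}W$. This is the step that requires care with the combinatorics of the two parabolic subsystems $S_0$ and $S_{\mathbf g}$ of $S$, and it is precisely what \cite[Lemma 2.2]{achar2021geometric} is designed to package.
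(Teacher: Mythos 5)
Your proposal is correct, and you have correctly located where the content of the statement lies: the implication ``$\Leftarrow$'' is indeed essentially vacuous (take $r=e$ and combine with the standing hypothesis $w\in W^{\mathbf g}$), and the paper itself offers no argument beyond citing \cite[Lemma 2.2]{achar2021geometric}, so your self-contained sketch of ``$\Rightarrow$'' is the part worth examining. Two remarks on that direction. First, the hypothesis to carry through your induction is not merely $l(wr)\le l(w)$ but the full strength of maximality, namely $l(wr)=l(w)-l(r)$ for all $r\in W_{\mathbf g}$ (note also that the paper writes $l$ for the length function, reserving $\ell$ for the prime); this is what guarantees that a reduced word $r=s_1\cdots s_k$ in $S_{\mathbf g}$ peels off one right descent of $w$ at a time, which is exactly the shape your induction needs. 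Second, the inductive step is simpler than a deletion/exchange argument: if $v\in{_{\mathrm f}W}$ and $s\in S$ satisfies $l(vs)<l(v)$, then for any $s_0\in S_0$ one has $l(s_0vs)\ge l(s_0v)-1=l(v)>l(vs)$, so $vs$ has no left descent in $S_0$ and hence $vs\in{_{\mathrm f}W}$; applying this $k$ times along the reduced word of $r$ finishes the proof. (Equivalently, in the geometric language the paper uses later: a left descent $s_0\in S_0$ of $wr$ would force a wall of the dominant chamber to pass through the facet $w\car_1\mathbf g$, and reflecting $w\car_1\mathbf a_1$ in that wall would produce an element of $wW_{\mathbf g}$ strictly longer than $w$, contradicting $w\in W^{\mathbf g}$.)
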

	For any $w\in W^\mathbf{g}$, we choose a lift $\dot{w}$ of $w$ in $N_G(T)(\mathcal{K})$ (see \cite[§4.2]{RW22} for the construction of such a lift), and denote by $\mathscr{X}^\mathbf{g}_{w}$ the $\mathrm{Iw}_{u}^+$-orbit in $\mathrm{Fl}^{\circ}_{\mathbf{g}}$ of the image of $\dot{w}$ under the canonical projection $LG(\mathbb{F})\to \mathrm{Fl}^{\circ}_{\mathbf{g}}(\mathbb{F})$. We then have a stratification
	\begin{equation}\label{startification bruhat}
	    (\mathrm{Fl}^{\circ}_{\mathbf{g}})_\mathrm{red}=\bigsqcup_{w\in {W}^\mathbf{g}} \mathscr{X}^\mathbf{g}_{w},
	\end{equation}
	 of the reduced ind-scheme associated with $\mathrm{Fl}^{\circ}_{\mathbf{g}}$, where each orbit $\mathscr{X}^\mathbf{g}_{w}$ is isomorphic to an affine space over $\mathbb{F}$ whose dimension is equal to the length of the minimal element in $w_0wW_\mathbf{g}$. All the properties concerning this stratification that we use in the sequel follow from the analogous standard facts concerning the opposite Iwahori subgroup $\mathrm{Iw}_{\mathrm{u}}:=\dot{w}_0\mathrm{Iw}^+_{\mathrm{u}}\dot{w}_0$, cf. \cite[§A]{epiga:4984}. 
  \begin{prop}\label{prop IW support}
      For any $w\in W^\mathbf{g}$, the $\mathrm{Iw}_{u}^+$-orbit $\mathscr{X}^\mathbf{g}_{w}$ supports a non-zero Iwahori-Whittaker equivariant local system if and only if $w\in {_\mathrm{f}W}$, which means that $w\in {_\mathrm{f}W}^\mathbf{g}$.
  \end{prop}
	
	For any $w\in{_\mathrm{f}W}^\mathbf{g}$, we will denote by $\mathcal{L}^\mathbf{g}_{w}$, resp.  $\mathcal{E}^{\mathbf{g}}_w\in \mathrm{Par}_{\mathcal{IW}}(\mathrm{Fl}^{\circ}_{\mathbf{g}},\mathbf{k})$, resp. $\nabla^{\mathbf{g}}_w\in D^b_{\mathcal{IW}}(\mathrm{Fl}^{\circ}_{\mathbf{g}},\mathbf{k})$, the rank one Iwahori-Whittaker equivariant local system, resp. the indecomposable parity complex from Proposition \ref{par prop}, resp. the costandard perverse sheaf,  associated with $\mathscr{X}^\mathbf{g}_{w}$. 
	
	Recall that $\pi_{\mathbf{g}}:\mathrm{Fl}^{\circ}_{\mathbf{a}_1}\to \mathrm{Fl}^{\circ}_{\mathbf{g}}$ is the canonical projection, which is a proper morphism of ind-schemes, and denote by $N_{\mathbf{g}}$ the length of $w_\mathbf{g}$. For any $v\in{_\mathrm{f}W}$, we write $\mathscr{X}_v$  (resp. $\mathcal{L}_v$, resp. $\mathcal{E}_v$, resp. $\nabla_v$) instead of $\mathscr{X}_v^{\mathbf{a}_1}$ (resp. $\mathcal{L}_v^{\mathbf{a}_1}$, resp. $\mathcal{E}^{\mathbf{a}_1}_v$, resp. $\nabla^{\mathbf{a}_1}_v$). For every $w\in {_\mathrm{f}W}^\mathbf{g}$ and $v\in {_\mathrm{f}W}$, we fix isomorphisms of $\mathbb{F}$-schemes
	\begin{equation}\label{iso schemes}
	    \mathscr{X}_{v}\simeq\mathbb{A}_\mathbb{F}^{l(w_0v)},\quad \mathscr{X}_{w}^\mathbf{g}\simeq \mathbb{A}_\mathbb{F}^{l(w_0ww_\mathbf{g})}.
	\end{equation}
	Let $w\in {_\mathrm{f}W}^\mathbf{g}$. It is important to note that $\pi_{\mathbf{g}}$ is a locally trivial fibration, with $\pi_\mathbf{g}^{-1}(\mathscr{X}_{w}^\mathbf{g})=\sqcup_{x\in W_\mathbf{g}}\mathscr{X}_{wx}$, and that for any $h\in W_\mathbf{g}$ the morphism $\pi_\mathbf{g}|_{\mathscr{X}_{ww_\mathbf{g}h}}:\mathscr{X}_{ww_\mathbf{g}h}\to \mathscr{X}_{w}^\mathbf{g}$ identifies with the canonical projection on the first component
	$$\mathscr{X}_{ww_\mathbf{g}h}\simeq\mathscr{X}_{ww_\mathbf{g}}\times \mathscr{X}_{w_0h}\to \mathscr{X}_{w}^\mathbf{g}.$$
	The projection on the first component comes from the identification of $\mathscr{X}_{ww_\mathbf{g}}$ and $\mathscr{X}_{w}^\mathbf{g}$ with an affine space of dimension $l(w_0ww_\mathbf{g})$ thanks to (\ref{iso schemes}); fix $h\in W_\mathbf{g}$, and let us explain the first isomorphism above. For any $v\in W$, denote by $\overline{v}$ the image of $\dot{v}$ under the canonical projection $LG(\mathbb{F})\to \mathrm{Fl}^{\circ}_{\mathbf{a}_1}(\mathbb{F})$. By construction we have 
	$$\mathscr{X}_{ww_\mathbf{g}h}=\dot{w}_0\mathrm{Iw}_{\mathrm{u}}\dot{w}_0\cdot\overline{ww_\mathbf{g}h}= \dot{w}_0\mathrm{Iw}_{\mathrm{u}}\cdot\overline{w_0ww_\mathbf{g}h}.$$ 
	But thanks to our hypothesis that $w\in W^\mathbf{g}$, we know that $ww_\mathbf{g}$ is minimal in $ww_\mathbf{g}W_\mathbf{g}$, and our hypothesis that $w\in {_\mathrm{f}W}$ implies that $wr\in {_\mathrm{f}W}$ for all $r\in W_\mathbf{g}$ thanks to Proposition \ref{prop min coset} (in particular this holds when $r\in\{w_\mathbf{g},w_\mathbf{g}h\}$); these facts imply that we have $$l(w_0ww_\mathbf{g}h)=l(w_0)+l(ww_\mathbf{g}h)=l(w_0)+l(ww_\mathbf{g})+l(h)=l(w_0ww_\mathbf{g})+l(h).$$ From these equalities of lengths and the fact that $\mathrm{Iw}_{\mathrm{u}}\cdot\overline{x}\simeq \mathbb{A}_\mathbb{F}^{l(x)}$ for any $x\in W$, we deduce the first isomorphism below
	$$\mathrm{Iw}_{\mathrm{u}}\cdot\overline{w_0ww_\mathbf{g}h}\simeq \mathrm{Iw}_{\mathrm{u}}\cdot\overline{w_0ww_\mathbf{g}}\times \mathrm{Iw}_{\mathrm{u}}\cdot\overline{h}= \mathrm{Iw}_{\mathrm{u}}\dot{w}_0\cdot\overline{ww_\mathbf{g}}\times \mathrm{Iw}_{\mathrm{u}}\dot{w}_0\cdot\overline{w_0h},$$
	and hence finally
		$$\dot{w}_0\mathrm{Iw}_{\mathrm{u}}\cdot\overline{w_0ww_\mathbf{g}h}\simeq \dot{w}_0\mathrm{Iw}_{\mathrm{u}}\dot{w}_0\cdot\overline{ww_\mathbf{g}}\times \dot{w}_0\mathrm{Iw}_{\mathrm{u}}\dot{w}_0\cdot\overline{w_0h}.$$
	The right-hand side corresponds to $\mathscr{X}_{ww_\mathbf{g}}\times \mathscr{X}_{w_0h}$.
	\begin{lem}
		\phantomsection
		\label{lem1}
		\begin{enumerate}
			\item The functors $\pi_{\mathbf{g}}^*$ and $\pi_{{\mathbf{g}}*}$ send parity complexes to parity complexes.
			\item For any $v\in {_\mathrm{f}W}^\mathbf{g}$, we have isomorphisms
			$$\pi_{\mathbf{g}}^*[N_{\mathbf{g}}](\mathcal{E}_v^\mathbf{g})\simeq \mathcal{E}_v\qquad\text{and}\qquad\pi_{\mathbf{g}*}\mathcal{E}_v\simeq \bigoplus_{w\in W_\mathbf{g}}\mathcal{E}_v^\mathbf{g}[-2(l(w_0w)-l(w_0v))].$$
			\item Let $w\in{_\mathrm{f}W}$, and write $w=w^{\mathbf{g}}h$ for some  $w^{\mathbf{g}}\in W^{\mathbf{g}},h\in W_{\mathbf{g}}$. Then we have 
			\begin{align*}
				\pi_{\mathbf{g}*}\nabla_w&\simeq\nabla^{\mathbf{g}}_{w^{\mathbf{g}}}[l(w_\mathbf{g}h)]\quad\text{if}~w^{\mathbf{g}}\in{_\mathrm{f}W},\\
				\pi_{\mathbf{g}*}\nabla_w&=0\quad\text{otherwise.}
			\end{align*}
		\end{enumerate}
	\end{lem}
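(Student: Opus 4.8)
**Proof plan for Lemma (the three-part statement on $\pi_\mathbf{g}^*$ and $\pi_{\mathbf{g}*}$).**

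The plan is to treat the three parts in order, using the explicit fibration structure of $\pi_\mathbf{g}$ over each orbit $\mathscr{X}^\mathbf{g}_w$ that was set up just above the statement, together with Lemma \ref{constant sheaf} and the standard properties of parity complexes from \cite{2014}. For part (1), I would argue that $\pi_\mathbf{g}$ is an even, stratified, proper morphism of ind-schemes: it is a locally trivial fibration with fibre $\pi_\mathbf{g}^{-1}(\mathscr{X}^\mathbf{g}_w)=\bigsqcup_{x\in W_\mathbf{g}}\mathscr{X}_{wx}$, and over each stratum the fibre is an iterated affine space; the costandard/standard filtrations are preserved and the cohomology of the fibres is concentrated in even degrees. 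Hence by the general formalism of \cite{2014}, both $\pi_\mathbf{g}^*$ (which shifts by $[N_\mathbf{g}]$ to preserve perversity-type normalizations) and $\pi_{\mathbf{g}*}$ send $*$-even (resp. $!$-even) complexes to $*$-even (resp. $!$-even) complexes, and likewise for odd; since parity objects are finite direct sums of even and odd ones, the functors preserve $\mathrm{Par}_{\mathcal{IW}}$. One has to be slightly careful that $\pi_\mathbf{g}^*$ preserves the $!$-parity and $\pi_{\mathbf{g}*}$ preserves the $*$-parity; this is where the smoothness/properness of $\pi_\mathbf{g}$ and the even-dimensionality of the fibres enter, via base change.

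For part (2), the first isomorphism $\pi_\mathbf{g}^*[N_\mathbf{g}](\mathcal{E}^\mathbf{g}_v)\simeq\mathcal{E}_v$ I would obtain by checking that the left-hand side is an indecomposable parity complex on $\mathrm{Fl}^\circ_{\mathbf{a}_1}$ supported on $\pi_\mathbf{g}^{-1}(\overline{\mathscr{X}^\mathbf{g}_v})=\overline{\mathscr{X}_{v w_\mathbf{g}}}$ whose restriction to the open orbit $\mathscr{X}_{v w_\mathbf{g}}$ is the shifted local system $\mathcal{L}_{v w_\mathbf{g}}[\dim \mathscr{X}_{v w_\mathbf{g}}]$ — here one uses that $\pi_\mathbf{g}$ is smooth of relative dimension $N_\mathbf{g}$, that $\mathcal{L}^\mathbf{g}_v$ pulls back to $\mathcal{L}_{v w_\mathbf{g}}$ (the Iwahori–Whittaker local system is rigid and compatible with pullback), and the length computation $l(w_0 v w_\mathbf{g}) = l(w_0 v) + N_\mathbf{g}$ for $v\in{_\mathrm{f}W}^\mathbf{g}$. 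Wait — I should double-check the indecomposability claim: $\pi_\mathbf{g}^*$ of an indecomposable parity object need not be indecomposable for a general even morphism, but here $\pi_\mathbf{g}$ has connected (in fact affine-space) fibres, so $\pi_\mathbf{g}^*$ is fully faithful up to the shift, forcing indecomposability. By the uniqueness clause of Proposition \ref{par prop}, this pins down $\pi_\mathbf{g}^*[N_\mathbf{g}](\mathcal{E}^\mathbf{g}_v)\simeq\mathcal{E}_v$. The second isomorphism, for $\pi_{\mathbf{g}*}\mathcal{E}_v$, is then obtained by projection formula: by part (1) the target is parity, and one computes its restriction to each orbit $\mathscr{X}^\mathbf{g}_w$ using that $\pi_\mathbf{g}$ restricted there is the projection $\mathscr{X}_{w w_\mathbf{g}}\times \mathscr{X}_{w_0 h}\to \mathscr{X}^\mathbf{g}_w$ of the fibration; applying Lemma \ref{constant sheaf} to the stratified fibre $\bigsqcup_{x\in W_\mathbf{g}}\mathscr{X}_{wx}$ gives $\pi_{\mathbf{g}*}(\mathcal{E}_v)|_{\mathscr{X}^\mathbf{g}_w}$ as a direct sum of shifts of $\mathcal{L}^\mathbf{g}_w$ indexed by $W_\mathbf{g}$ with the shift $[-2(l(w_0w)-l(w_0v))]$ — and since both sides are parity with matching restrictions to every stratum, they agree (parity objects are determined by their stalks up to non-canonical isomorphism, e.g. via \cite{2014}).

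For part (3), write $w = w^\mathbf{g} h$ with $w^\mathbf{g}\in W^\mathbf{g}$, $h\in W_\mathbf{g}$. The costandard sheaf $\nabla_w$ is $(j_w)_*\mathcal{L}_w[l(w_0 w)]$ where $j_w\colon\mathscr{X}_w\hookrightarrow\mathrm{Fl}^\circ_{\mathbf{a}_1}$; composing with $\pi_\mathbf{g}$ and using that $\pi_\mathbf{g}\circ j_w$ factors as $\mathscr{X}_w\to\mathscr{X}^\mathbf{g}_{w^\mathbf{g}}\hookrightarrow\mathrm{Fl}^\circ_\mathbf{g}$, the first map being (the restriction of) the fibration, I would compute $\pi_{\mathbf{g}*}\nabla_w$ by first pushing forward along the fibration map $\mathscr{X}_w\to\mathscr{X}^\mathbf{g}_{w^\mathbf{g}}$, which is an affine-space bundle of relative dimension $l(w_0 w) - l(w_0 w^\mathbf{g} w_\mathbf{g})$ — this needs the length identity $l(w_0 w) = l(w_0 w^\mathbf{g} w_\mathbf{g}) + l(w_\mathbf{g} h)$, valid when $w^\mathbf{g}\in{_\mathrm{f}W}$ (i.e. $w^\mathbf{g} w_\mathbf{g}\in{_\mathrm{f}W}$, via Proposition \ref{prop min coset}). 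The key input is that $R(\mathrm{pr})_*\underline{\mathbf{k}}$ for a projection $\mathbb{A}^{a+b}\to\mathbb{A}^a$ along the \emph{last} $b$ coordinates, extended by $*$ to the base, behaves so that the local system $\mathcal{L}_w$ (restricted along the Whittaker condition) either descends to a shift of $\mathcal{L}^\mathbf{g}_{w^\mathbf{g}}$ (when $w^\mathbf{g}\in{_\mathrm{f}W}$, so $\mathscr{X}^\mathbf{g}_{w^\mathbf{g}}$ supports a non-zero IW local system) or pushes forward to zero (when $w^\mathbf{g}\notin{_\mathrm{f}W}$, so the averaging of $\mathcal{L}_{\mathrm{AS}}$ along the extra unipotent directions vanishes). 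The shift $[l(w_\mathbf{g} h)]$ is bookkeeping: the relative dimension of $\mathscr{X}_w\to\mathscr{X}^\mathbf{g}_{w^\mathbf{g}}$ combined with the normalization shifts of $\nabla_w$ versus $\nabla^\mathbf{g}_{w^\mathbf{g}}$. Finally one checks that the resulting object really is the costandard $\nabla^\mathbf{g}_{w^\mathbf{g}}$ up to the shift, e.g. by verifying it is $(j_{w^\mathbf{g}})_*$ of a shifted rank-one local system, equivalently by checking its $!$-restrictions to orbits.

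The main obstacle is part (3), specifically the vanishing statement $\pi_{\mathbf{g}*}\nabla_w = 0$ when $w^\mathbf{g}\notin{_\mathrm{f}W}$: this requires showing that integrating the Artin–Schreier-type local system $\mathcal{L}_{\mathrm{AS}}$ along the fibres of $\mathscr{X}_w\to\mathscr{X}^\mathbf{g}_{w^\mathbf{g}}$ gives zero, which is a genuine Whittaker-sheaf computation rather than formal nonsense — one needs to identify a translate of a non-trivial additive character (i.e. an $A^1$ with a non-constant $\mathcal{L}_{\mathrm{AS}}$-twist) inside the fibre whenever $w^\mathbf{g}$ fails the $_\mathrm{f}W$ condition, using that this condition says $\chi$ restricts non-trivially along the relevant $U_\alpha$-direction. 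Everything else is either a length bookkeeping exercise with the Coxeter combinatorics of $W^\mathbf{g}$ and $_\mathrm{f}W$ (controlled by Proposition \ref{prop min coset}), an application of Lemma \ref{constant sheaf}, or the uniqueness/stalk characterization of parity and costandard objects from Proposition \ref{par prop} and \cite{2014}.
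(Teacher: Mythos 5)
Your sketches of parts (1) and (3) are consistent with the arguments the paper simply delegates to \cite{epiga:4984}, and your overall plan for part (2) (compute restrictions along the fibration, apply Lemma \ref{constant sheaf}, invoke the classification of parity complexes) is the paper's plan too. But part (2) contains two concrete errors, one of which is the crux. First, the bookkeeping: since $v\in{_\mathrm{f}W}^\mathbf{g}$ is \emph{maximal} in $vW_\mathbf{g}$, the open dense stratum of $\pi_{\mathbf{g}}^{-1}(\overline{\mathscr{X}^{\mathbf{g}}_{v}})$ is $\mathscr{X}_{v}$ itself, of dimension $l(w_0v)$, while $\mathscr{X}_{vw_\mathbf{g}}$ is the \emph{smallest} stratum over $\mathscr{X}^{\mathbf{g}}_{v}$; accordingly the length identity is $l(w_0vw_\mathbf{g})=l(w_0v)-N_\mathbf{g}$, not $+N_\mathbf{g}$. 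Run literally, your normalization check on ``the open orbit $\mathscr{X}_{vw_\mathbf{g}}$'' comes out shifted by $N_\mathbf{g}$ and does not produce $\mathcal{E}_v$; the correct check is that the restriction to $\mathscr{X}_v$ is $\mathcal{L}_v[l(w_0v)]$. Second, and more seriously: the fibre of $\pi_\mathbf{g}$ is the projective variety $L^+P_{\mathbf{g}}/\mathrm{Iw}$, paved by $|W_\mathbf{g}|$ affine cells — it is not an affine space, $\pi_{\mathbf{g}*}\pi_{\mathbf{g}}^{*}$ amounts to tensoring with the cohomology of this fibre, and $\pi_{\mathbf{g}}^{*}$ is \emph{not} fully faithful. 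So your justification of the indecomposability of $\pi_{\mathbf{g}}^{*}[N_\mathbf{g}]\mathcal{E}^{\mathbf{g}}_{v}$ collapses, and this is precisely the non-formal point of the statement.

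The paper closes this gap differently: it writes $\pi_{\mathbf{g}}^{*}[N_\mathbf{g}]\mathcal{E}^{\mathbf{g}}_{v}\simeq\mathcal{E}_v\oplus\mathcal{G}$ (only using that $\mathscr{X}_v$ is open in the support with the right normalized restriction), observes that the restriction to $\pi_{\mathbf{g}}^{-1}(\mathscr{X}^{\mathbf{g}}_{v})$ is the \emph{indecomposable} object $q^{*}\mathcal{L}^{\mathbf{g}}_{v}[l(w_0v)]$ so that $\mathcal{G}$ vanishes there, deduces $(\pi_{\mathbf{g}*}\mathcal{E}_v)|_{\mathscr{X}^{\mathbf{g}}_{v}}$ by base change, the projection formula and Lemma \ref{constant sheaf}, and then eliminates both $\mathcal{G}$ and the analogous complement $\mathcal{G}'$ of the pushforward by a total stalk-dimension count for $\pi_{\mathbf{g}*}\pi_{\mathbf{g}}^{*}[N_\mathbf{g}]\mathcal{E}^{\mathbf{g}}_{v}$. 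Some such counting (or Hom-space) argument is unavoidable here. Relatedly, your plan to verify the second isomorphism by matching stalks ``on every stratum'' via Lemma \ref{constant sheaf} presupposes that $\mathcal{E}_v$ is pulled back from the base over \emph{every} $\mathscr{X}^{\mathbf{g}}_{w}$, which you only know over the open stratum $\mathscr{X}^{\mathbf{g}}_{v}$; this is exactly why the paper works only with the open stratum and then resorts to the global dimension count.
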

	\begin{proof}
		The first (resp. third) point can be proven just as in \cite[Proposition A.2]{epiga:4984} (resp. \cite[Lemma A.1]{epiga:4984}). 
		
		Let us prove the second point, for which we will take back most of the arguments of  \cite[Lemma A.5]{epiga:4984}. The second isomorphism will arise while proving the first one. 
		
		We first make the following observation: let us write $X:=\bigsqcup_{w\in W_\mathbf{g}}\mathscr{X}_{vw}$ and put $q:=\pi_\mathbf{g}|_X$, the object $q^*\mathcal{L}^\mathbf{g}_v$ is a rank one Iwahori-Whittaker local system on $X$, whose restriction to each orbit $\mathscr{X}_{vw}$ coincides with $\mathcal{L}_{vw}$ (recall $vw$ does belong to ${_\mathrm{f}W}$ thanks to Proposition \ref{prop min coset}). In particular, $q^*\mathcal{L}^\mathbf{g}_v$ is indecomposable.
		
		The object $\pi_{\mathbf{g}}^*[N_{\mathbf{g}}](\mathcal{E}_v^\mathbf{g})$ is parity by (1), with $\mathscr{X}_{v}$ open in its support and its restriction to this stratum coinciding with $\mathcal{L}_v[l(w_0v)]$ (recall that $\mathscr{X}_{v}$ is of dimension $l(w_0v)$ thanks to (\ref{iso schemes})). Therefore we can write
		\begin{equation}\label{eqpi0}
		    \pi_{\mathbf{g}}^*[N_{\mathbf{g}}](\mathcal{E}_v^\mathbf{g})\simeq \mathcal{E}_v\oplus\mathcal{G}, 
		\end{equation}
		for some object $\mathcal{G}\in \mathrm{Par}_{\mathcal{IW}}(\mathrm{Fl}^{\circ}_{\mathbf{a}_1},\mathbf{k})$. Since the restriction of $\pi_{\mathbf{g}}^*[N_{\mathbf{g}}](\mathcal{E}_v^\mathbf{g})$ to $X$ is the indecomposable object $q^*\mathcal{L}^\mathbf{g}_v[l(w_0v)]$, it must be isomorphic to  $\mathcal{E}_v|_X$, because the latter is non-zero on $\mathscr{X}_{v}$.

		Thus, the base change theorem implies  the first isomorphism below  \begin{equation}\label{res}
		\begin{split}
		    (\pi_{\mathbf{g}*}\mathcal{E}_v)|_{\mathscr{X}^\mathbf{g}_{v}}&\simeq q_*q^*\mathcal{L}^\mathbf{g}_v[l(w_0v)]\simeq q_!\underline{\mathbf{k}}_X\otimes^L\mathcal{L}^\mathbf{g}_v[l(w_0v)] \\&\simeq \bigoplus_{w\in W_\mathbf{g}}\mathcal{L}^\mathbf{g}_v[-2(l(w_0w)-l(w_0v))],
		\end{split}	
		\end{equation}the second isomorphism is implied by the projection formula (and the fact that $q_!=q_*$ because $q$ is proper), and the third isomorphism is a direct application of Lemma \ref{constant sheaf}. Since $\mathscr{X}^\mathbf{g}_{v}$ is open in the support of the parity complex $\pi_{\mathbf{g}*}\mathcal{E}_v$, we get an isomorphism
		\begin{equation}\label{eqpi1}
		    \pi_{\mathbf{g}*}\mathcal{E}_v\simeq \bigoplus_{w\in W_\mathbf{g}}\mathcal{E}_v^\mathbf{g}[-2(l(w_0w)-l(w_0v))]\oplus \mathcal{G}', 
		\end{equation}
		for some $\mathcal{G}'\in \mathrm{Par}_{\mathcal{IW}}(\mathrm{Fl}^{\circ}_{\mathbf{g}},\mathbf{k})$. Applying $\pi_{\mathbf{g}*}$ to (\ref{eqpi0}) and using (\ref{eqpi1}), we deduce the following isomorphism of parity complexes:
		$$\pi_{\mathbf{g}*}\pi_{\mathbf{g}}^*[N_{\mathbf{g}}]\mathcal{E}^\mathbf{g}_v\simeq \bigoplus_{w\in W_\mathbf{g}}\mathcal{E}_v^\mathbf{g}[-2(l(w_0w)-l(w_0v))]\oplus \pi_{\mathbf{g}*}\mathcal{G}\oplus \mathcal{G}'.$$
		Now, for any parity complex $\mathcal{E}\in \mathrm{Par}_{\mathcal{IW}}(\mathrm{Fl}^{\circ}_{\mathbf{g}},\mathbf{k})$ and $w\in {_\mathrm{f}W}^\mathbf{g}$, one may write (following \cite{williamson2012modular}): $$\mathcal{E}|_{\mathscr{X}^\mathbf{g}_{w}}\simeq V(\mathcal{E})_w\otimes_\mathbf{k}\mathcal{L}_w^\mathbf{g},$$ where $V(\mathcal{E})_w$ is a finite-dimensional graded $\mathbf{k}$-vector space (and one can do likewise for any $\mathcal{E}\in \mathrm{Par}_{\mathcal{IW}}(\mathrm{Fl}^{\circ}_{\mathbf{a}_1},\mathbf{k})$ and $w\in {_\mathrm{f}W}$). From the description of $\pi_\mathbf{g}$ as a locally trivial fibration and from the first observation that was made at the beginning of the proof, we see that 
		$$\mathrm{dim}_\mathbf{k}(V(\pi_{\mathbf{g}}^*[N_{\mathbf{g}}]\mathcal{E}^\mathbf{g}_v)_{uw})=\mathrm{dim}_\mathbf{k}(V(\mathcal{E}^\mathbf{g}_v)_{u}),~\forall (u,w)\in {_\mathrm{f}W}^\mathbf{g}\times W_\mathbf{g}, $$
		where we forget the grading of our vector spaces when taking their dimension. Moreover, the same arguments that were used to prove (\ref{res}) imply that
		$$\mathrm{dim}_\mathbf{k}(V(\pi_{\mathbf{g}*}\pi_{\mathbf{g}}^*[N_{\mathbf{g}}]\mathcal{E}^\mathbf{g}_v)_{u})=|W_\mathbf{g}|\cdot\mathrm{dim}_\mathbf{k}(V(\pi_{\mathbf{g}}^*[N_{\mathbf{g}}]\mathcal{E}^\mathbf{g}_v)_{u}),~\forall u\in {_\mathrm{f}W}^\mathbf{g}. $$
		Thus we get  $$\mathrm{dim}_\mathbf{k}(V(\pi_{\mathbf{g}*}\pi_{\mathbf{g}}^*[N_{\mathbf{g}}]\mathcal{E}^\mathbf{g}_v)_{u})=\mathrm{dim}_\mathbf{k}(V(\mathcal{F})_{u}),~ \forall u\in {_\mathrm{f}W}^\mathbf{g},$$
		where $\mathcal{F}:=\bigoplus_{w\in W_\mathbf{g}}\mathcal{E}_v^\mathbf{g}[-2(l(w_0w)-l(w_0v))]$. We deduce that $V(\pi_{\mathbf{g}*}\mathcal{G})_u=V(\mathcal{G}')_u=0$ for all $u\in {_\mathrm{f}W}^\mathbf{g}$, so that $\pi_{\mathbf{g}*}\mathcal{G}\simeq\mathcal{G}'\simeq 0$ and $\pi_{\mathbf{g}*}\pi_{\mathbf{g}}^*[N_{\mathbf{g}}]\mathcal{E}^\mathbf{g}_v\simeq \mathcal{F}$. This also implies that $\mathcal{G}\simeq 0$, and therefore we get the desired isomorphism
		$$\pi_{\mathbf{g}}^*[N_{\mathbf{g}}](\mathcal{E}_v^\mathbf{g})\simeq \mathcal{E}_v.$$	\end{proof}	
	\subsection{The antispherical module} Let $\mathcal{H}$ be the Hecke algebra associated with $(W,S)$, with standard basis $(H_w,~w\in W)$, and denote by $\mathcal{N}$ its antispherical module (we follow the notation of \cite[§3]{Soergel1997KazhdanLusztigPA}, the antispherical module is denoted by $\mathcal{M}^{\mathrm{asph}}$ in \cite{RW22}), with standard basis $(N_w,w\in {_\mathrm{f}W})$ and \textit{$\ell$-canonical basis} $({^\ell\underline{N}}_w,w\in {_\mathrm{f}W})$. We denote by $ [\mathrm{Par}_{\mathcal{IW}}(\mathrm{Fl}^{\circ}_{\mathbf{a}_1},\mathbf{k})]$ the split Grothendieck group of the additive category $\mathrm{Par}_{\mathcal{IW}}(\mathrm{Fl}^{\circ}_{\mathbf{a}_1},\mathbf{k}$. We have a canonical isomorphism of groups
	$$
		\mathrm{ch}  : [\mathrm{Par}_{\mathcal{IW}}(\mathrm{Fl}^{\circ}_{\mathbf{a}_1},\mathbf{k})]  \xrightarrow{\sim} \mathcal{N}$$ 
  defined by 
  $$\mathrm{ch}([\mathcal{F}]):=\sum_{w\in {_\mathrm{f}W}}\left(\sum_{n\in\mathbb{Z}}\mathrm{dim}_\mathbf{k}~\mathrm{Hom}_{D^b_{\mathcal{IW}}(\mathrm{Fl}^{\circ}_{\mathbf{a}_1},\mathbf{k})}(\mathcal{F},\nabla_w[n])v^n\right)N_w.$$

	The morphism $\mathrm{ch}$ sends the indecomposable parity complex $\mathcal{E}_w$ onto ${^\ell\underline{N}}_w$. The $\ell$-Kazhdan-Lusztig polynomials $({^\ell n}_{x,y},~x,y\in {_\mathrm{f}W})$ are defined by the equality 
	\begin{equation}\label{def antispheric}
		{^\ell\underline{N}}_y=\sum_{x\in {_\mathrm{f}W}}{^\ell n}_{x,y}N_x.
	\end{equation}
	
	It makes sense to extend the definition of these polynomials to the whole group $W$, simply by putting ${^\ell n}_{x,y}=0$ whenever $x$ or $y$ does not belong to ${_\mathrm{f}W}$ (this consideration will slightly simplify the statement of the first point of Proposition \ref{prop max coset}). For any objects $\mathcal{E},\mathcal{F}\in \mathrm{Par}_{\mathcal{IW}}(\mathrm{Fl}^{\circ}_{\mathbf{g}},\mathbf{k})$, we set $$\mathrm{Hom}_{D^b_{\mathcal{IW}}(\mathrm{Fl}^{\circ}_{\mathbf{g}},\mathbf{k})}^\bullet(\mathcal{E},\mathcal{F}):=\bigoplus_{n\in\mathbb{Z}}\mathrm{Hom}_{D^b_{\mathcal{IW}}(\mathrm{Fl}^{\circ}_{\mathbf{g}},\mathbf{k})}(\mathcal{E},\mathcal{F}[n]).$$ It follows that we have 
	\begin{equation}\label{l-KL}
		{^\ell n}_{x,y}(1)=\mathrm{dim}_\mathbf{k}~\mathrm{Hom}_{D^b_{\mathcal{IW}}(\mathrm{Fl}^{\circ}_{\mathbf{a}_1},\mathbf{k})}^\bullet(\mathcal{E}_y,\nabla_x)
	\end{equation}
	When $\ell=0$, the $\ell$-canonical basis coincides with the usual Kazhdan-Lusztig basis from \cite[Theorem 3.1]{Soergel1997KazhdanLusztigPA}, which is denoted by $(n_{x,y},~x,y\in {_\mathrm{f}W})$ there, so we have ${^0n}_{x,y}= {n}_{x,y}$ (this is a consequence of the fact that the perversely shifted indecomposable parity complexes coincide with the intersection cohomology complexes when $\ell=0$). We have the following easy and useful observation.
	\begin{prop}\label{car 0}
		Let $x,y\in {_\mathrm{f}W}$. For any prime number $\ell$, we have ${^\ell n}_{x,y}(1)\geq {n}_{x,y}(1)$.
	\end{prop}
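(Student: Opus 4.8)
The plan is to interpolate between characteristic $\ell$ and characteristic $0$ by a modular‑reduction argument for parity complexes. Fix a complete discrete valuation ring $\mathcal{O}$ with residue field $\mathbf{k}$ of characteristic $\ell$ (enlarging $\mathbf{k}$ if necessary, which alters neither ${}^{\ell}n_{x,y}$ nor $n_{x,y}$) and fraction field $\mathbb{K}$ of characteristic $0$; for instance $\mathcal{O}=W(\mathbf{k})$. The constructions of \S\ref{section partial affine flag}--\S\ref{Pushforward and pullback of indecomposable parity complexes} carry over with $\mathcal{O}$‑coefficients, and the base‑change functors $\mathbf{k}\otimes^{L}_{\mathcal{O}}(-)$ and $\mathbb{K}\otimes_{\mathcal{O}}(-)$ take parity complexes to parity complexes (see \cite{2014}). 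Write $\mathcal{E}_y(\mathcal{O})$ for the indecomposable $\mathcal{O}$‑parity complex supported on $\overline{\mathscr{X}_y}$ whose restriction to $\mathscr{X}_y$ is $\mathcal{L}_y(\mathcal{O})[\dim\mathscr{X}_y]$, and use analogous notation over $\mathbb{K}$.

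The two facts to establish are: (a) $\mathbf{k}\otimes^{L}_{\mathcal{O}}\mathcal{E}_y(\mathcal{O})\simeq\mathcal{E}_y$; and (b) $\mathbb{K}\otimes_{\mathcal{O}}\mathcal{E}_y(\mathcal{O})$ has $\mathcal{E}_y(\mathbb{K})$ as a direct summand, all of its other indecomposable summands being of the form $\mathcal{E}_z(\mathbb{K})[d]$ with $z\in{_\mathrm{f}W}$, $z<y$. For (a): $\mathbf{k}\otimes^{L}_{\mathcal{O}}\mathcal{E}_y(\mathcal{O})$ is parity over $\mathbf{k}$ with the correct support and open‑stratum restriction, so $\mathcal{E}_y$ is a direct summand; and it is indecomposable because $\mathrm{End}^{0}(\mathcal{E}_y(\mathcal{O}))$ is a module‑finite local $\mathcal{O}$‑algebra, $\mathrm{Hom}(\mathcal{E}_y(\mathcal{O}),\mathcal{E}_y(\mathcal{O})[1])=0$ (a $\mathrm{Hom}$ in odd degree between parity complexes), and the universal‑coefficient sequence then identifies $\mathrm{End}^{0}(\mathbf{k}\otimes^{L}_{\mathcal{O}}\mathcal{E}_y(\mathcal{O}))$ with the local ring $\mathbf{k}\otimes_{\mathcal{O}}\mathrm{End}^{0}(\mathcal{E}_y(\mathcal{O}))$ (this is also recorded in \cite{2014}). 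For (b): $\mathbb{K}\otimes_{\mathcal{O}}\mathcal{E}_y(\mathcal{O})$ is parity over the characteristic‑$0$ field $\mathbb{K}$, hence a direct sum of shifted indecomposable parity complexes, which there agree with shifted intersection cohomology complexes; it is supported on $\overline{\mathscr{X}_y}$ with restriction $\mathcal{L}_y(\mathbb{K})[\dim\mathscr{X}_y]$ to $\mathscr{X}_y$, and this last constraint forces a single unshifted copy of $\mathcal{E}_y(\mathbb{K})$ and confines every remaining summand to some $\overline{\mathscr{X}_z}\subsetneq\overline{\mathscr{X}_y}$.

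To conclude, count the multiplicities of $\mathcal{L}_x$ on the stratum $\mathscr{X}_x$. Being the $*$‑restriction of a parity complex, the complex $j_x^{*}\mathcal{E}_y(\mathcal{O})$ (with $j_x\colon\mathscr{X}_x\hookrightarrow\mathrm{Fl}^{\circ}_{\mathbf{a}_1}$) is a finite direct sum of shifts of the $\mathcal{O}$‑flat rank‑one object $\mathcal{L}_x(\mathcal{O})$, so applying $\mathbf{k}\otimes^{L}_{\mathcal{O}}(-)$ and $\mathbb{K}\otimes_{\mathcal{O}}(-)$ yields the same multiset of shifts of $\mathcal{L}_x$, resp. of $\mathcal{L}_x(\mathbb{K})$. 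Using $\nabla_x=(j_x)_{*}\mathcal{L}_x[\dim\mathscr{X}_x]$, adjunction along $j_x$, and the one‑dimensionality of $\mathrm{Hom}^{\bullet}(\mathcal{L}_x,\mathcal{L}_x)$ in the Iwahori--Whittaker equivariant derived category of the single orbit $\mathscr{X}_x$, fact (a) gives $\dim_{\mathbf{k}}\mathrm{Hom}^{\bullet}(\mathcal{E}_y,\nabla_x)=\dim_{\mathbb{K}}\mathrm{Hom}^{\bullet}(\mathbb{K}\otimes_{\mathcal{O}}\mathcal{E}_y(\mathcal{O}),\nabla_x)$, and fact (b) gives $\dim_{\mathbb{K}}\mathrm{Hom}^{\bullet}(\mathbb{K}\otimes_{\mathcal{O}}\mathcal{E}_y(\mathcal{O}),\nabla_x)\geq\dim_{\mathbb{K}}\mathrm{Hom}^{\bullet}(\mathcal{E}_y(\mathbb{K}),\nabla_x)$. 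Combining these with (\ref{l-KL}) over $\mathbf{k}$ and over $\mathbb{K}$ and with ${}^{0}n_{x,y}=n_{x,y}$ yields ${}^{\ell}n_{x,y}(1)\geq n_{x,y}(1)$. Equivalently, through the isomorphism $\mathrm{ch}$, facts (a) and (b) say that ${}^{\ell}\underline{N}_y-\underline{N}_y$ lies in $\sum_{z<y}\mathbb{Z}_{\geq0}[v,v^{-1}]\cdot\underline{N}_z$; expanding in the standard basis and setting $v=1$ gives the inequality directly.

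The step I expect to be the main obstacle is fact (b), specifically checking that the ``extra'' summands of $\mathbb{K}\otimes_{\mathcal{O}}\mathcal{E}_y(\mathcal{O})$ involve only the $\mathcal{E}_z(\mathbb{K})$ with $z<y$, rather than further shifted copies of $\mathcal{E}_y(\mathbb{K})$ itself; this is what guarantees that $\mathcal{E}_y(\mathbb{K})$ genuinely splits off and hence that the inequality runs in the right direction, and it rests on the open‑stratum normalization of $\mathcal{E}_y(\mathcal{O})$ and on the decomposition theorem being available over $\mathbb{K}$. Fact (a) is routine and well documented, and the multiplicity count on $\mathscr{X}_x$ is elementary once the $\mathcal{O}$‑linear parity formalism is set up.
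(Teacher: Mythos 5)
Your proposal is correct and is essentially the argument the paper intends: the paper's proof simply defers to \cite[Lemma 3.4]{achar2022geometric}, whose content is exactly this modular-reduction comparison over a complete discrete valuation ring (decomposition of the generic fibre of the $\mathcal{O}$-parity complex versus indecomposability of its special fibre), transposed from tilting objects to parity complexes. Your facts (a) and (b) and the stalk-multiplicity count are the standard ingredients recorded in \cite{2014}, so no further justification is needed.
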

	\begin{proof}
	    The arguments are the same as the one used in \cite[Lemma 3.4]{achar2022geometric}, replacing tilting objects by indecomposable parity sheaves.
	\end{proof}

	\subsection{Equivalence relations on ${_\mathrm{f}W}^\mathbf{g}$}
	Let $\mathbf{g}\subset\overline{\mathbf{a}_1}$ be a facet.  In the sequel, we will consider a relation $\mathscr{R}_{\mathbf{g}}$ on the set ${_\mathrm{f}W}^{\mathbf{g}}$, defined by 
	$$w\mathscr{R}_{\mathbf{g}}w'\Longleftrightarrow \mathrm{Hom}^\bullet_{\mathrm{Par}_{\mathcal{IW}}(\mathrm{Fl}^{\circ}_{\mathbf{g}},\mathbf{k})}(\mathcal{E}^{\mathbf{g}}_{w},\mathcal{E}^{\mathbf{g}}_{w'})\neq 0$$ 
	for any $w,w'\in {_\mathrm{f}W}^{\mathbf{g}}$. The equivalence relation on ${_\mathrm{f}W}^{\mathbf{g}}$ generated by $\mathscr{R}_{\mathbf{g}}$ will be denoted by $\sim_{\mathbf{g}}$. The following results will be constantly used.
	\begin{prop}\label{anti-spherical relations}
		Let $u,v\in{_\mathrm{f}W}$ be such that ${^\ell n}_{u,v}(1)\neq 0$. Then $v\mathscr{R}_{\mathbf{a}_1} u$. Moreover, we get the same conclusion if ${n}_{u,v}(1)\neq 0$.
	\end{prop}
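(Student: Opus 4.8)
The plan is to deduce the relation $v\mathscr{R}_{\mathbf{a}_1}u$ from the non‑vanishing of a graded $\mathrm{Hom}$-space between indecomposable parity complexes, using the character isomorphism $\mathrm{ch}$ together with the cohomology vanishing one knows for parity complexes. The key observation is that for any two objects $\mathcal{E}_u,\mathcal{E}_v\in\mathrm{Par}_{\mathcal{IW}}(\mathrm{Fl}^{\circ}_{\mathbf{a}_1},\mathbf{k})$ one can compute $\mathrm{Hom}^\bullet(\mathcal{E}_u,\mathcal{E}_v)$ stratum by stratum: since $\mathcal{E}_u$ is $*$-parity and $\mathcal{E}_v$ is $!$-parity, and each stratum $\mathscr{X}_w$ is an affine space (hence has cohomology only in even degrees), the standard dévissage along the stratification shows that $\mathrm{Hom}^\bullet(\mathcal{E}_u,\mathcal{E}_v)$ is a free $\mathbf{k}$-module whose graded rank is $\sum_{w\in{_\mathrm{f}W}}\underline{\dim}\,V(\mathcal{E}_u)_w\cdot\underline{\dim}\,V(\mathcal{E}_v)_w$ up to the appropriate shift. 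In particular its total dimension is controlled by the stalk data, which is exactly what the ${^\ell n}$-polynomials record.

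First I would make this precise using \eqref{l-KL}: the coefficient ${^\ell n}_{x,y}(1)$ equals $\dim_\mathbf{k}\mathrm{Hom}^\bullet(\mathcal{E}_y,\nabla_x)$, i.e.\ it measures the $!$-stalk (equivalently, via parity, the $*$-stalk) of $\mathcal{E}_y$ on the stratum $\mathscr{X}_x$. So ${^\ell n}_{u,v}(1)\neq 0$ says precisely that $\mathcal{E}_v$ has a non‑zero stalk on $\mathscr{X}_u$; equivalently $V(\mathcal{E}_v)_u\neq 0$. On the other hand $V(\mathcal{E}_u)_u\neq 0$ always, since $\mathcal{E}_u$ restricted to its open orbit $\mathscr{X}_u$ is $\mathcal{L}_u$ up to shift. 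Plugging $w=u$ into the graded-rank formula above, the corresponding summand $\underline{\dim}\,V(\mathcal{E}_u)_u\cdot\underline{\dim}\,V(\mathcal{E}_v)_u$ is non‑zero, and because every summand is a sum of monomials with non‑negative coefficients, there can be no cancellation: $\mathrm{Hom}^\bullet_{\mathrm{Par}_{\mathcal{IW}}(\mathrm{Fl}^{\circ}_{\mathbf{a}_1},\mathbf{k})}(\mathcal{E}_u,\mathcal{E}_v)\neq 0$. By definition of $\mathscr{R}_{\mathbf{a}_1}$ this is exactly $u\mathscr{R}_{\mathbf{a}_1}v$, and since $\mathscr{R}_{\mathbf{a}_1}$ is visibly symmetric (Verdier duality, or just the symmetry of the graded-rank formula in $u,v$) we also get $v\mathscr{R}_{\mathbf{a}_1}u$.

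The last sentence of the statement then follows immediately from Proposition \ref{car 0}: the hypothesis ${n}_{u,v}(1)\neq 0$ gives ${^\ell n}_{u,v}(1)\geq {n}_{u,v}(1)>0$, so the previous paragraph applies verbatim.

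I expect the main obstacle to be making the ``no cancellation'' step fully rigorous: one needs the dévissage computing $\mathrm{Hom}^\bullet(\mathcal{E}_u,\mathcal{E}_v)$ via the stratification to genuinely split (so that $\dim_\mathbf{k}\mathrm{Hom}^\bullet$ is the sum of the local contributions, with no signs), which is precisely where the parity hypothesis and the even-cohomology of affine spaces are used — the same mechanism that makes $\mathrm{Par}_{\mathcal{IW}}$ well-behaved in the first place, and which is recorded in the general theory of \cite{2014}. Everything else is bookkeeping with the isomorphism $\mathrm{ch}$ and the identity \eqref{l-KL}.
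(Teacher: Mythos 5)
Your proposal is correct and follows essentially the same route as the paper: interpret ${^\ell n}_{u,v}(1)\neq 0$ via \eqref{l-KL} as non-vanishing of the (co)stalk of $\mathcal{E}_v$ on $\mathscr{X}_u$, invoke the no-cancellation formula for $\mathrm{Hom}^\bullet$ between parity complexes from \cite[Proposition 2.6]{2014} to get $\mathrm{Hom}^\bullet(\mathcal{E}_u,\mathcal{E}_v)\neq 0$, and deduce the second assertion from Proposition \ref{car 0}. The paper's proof is just a terser statement of exactly this argument.
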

	\begin{proof}
		Recall that, thanks to Proposition \ref{car 0}, we have
		$$n_{u,v}(1)\leq {^\ell n}_{u,v}(1)=\mathrm{dim}_\mathbf{k}\mathrm{Hom}_{D^b_{\mathcal{IW}}(\mathrm{Fl}^{\circ}_{\mathbf{a}_1},\mathbf{k})}^\bullet(\mathcal{E}_v,\nabla_{u}),$$
		so that our claim is a direct consequence of \cite[Proposition 2.6]{2014}.
	\end{proof}
	The relations $\mathscr{R}_{\mathbf{a}_1}$ and $\mathscr{R}_{\mathbf{g}}$ are actually equivalent on ${_\mathrm{f}W}^{\mathbf{g}}$.
	\begin{prop}\label{proj form}
		Let $w,w'\in {_\mathrm{f}W}^{\mathbf{g}}$. We have $w\mathscr{R}_{\mathbf{g}}w'$ iff $w\mathscr{R}_{\mathbf{a}_1}w'$.
	\end{prop}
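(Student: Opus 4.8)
The statement to prove is Proposition~\ref{proj form}: for $w,w'\in{_\mathrm{f}W}^{\mathbf{g}}$, we have $w\mathscr{R}_{\mathbf{g}}w'$ if and only if $w\mathscr{R}_{\mathbf{a}_1}w'$. The natural tool is Lemma~\ref{lem1}(2), which tells us that $\pi_{\mathbf{g}}^*[N_{\mathbf{g}}]$ and $\pi_{\mathbf{g}*}$ relate the indecomposable parity complexes on $\mathrm{Fl}^{\circ}_{\mathbf{a}_1}$ and $\mathrm{Fl}^{\circ}_{\mathbf{g}}$ in a very controlled way: $\pi_{\mathbf{g}}^*[N_{\mathbf{g}}]\mathcal{E}_w^{\mathbf{g}}\simeq\mathcal{E}_w$ for $w\in{_\mathrm{f}W}^{\mathbf{g}}$, and $\pi_{\mathbf{g}*}\mathcal{E}_v$ is a direct sum of shifts of $\mathcal{E}_v^{\mathbf{g}}$ for $v\in{_\mathrm{f}W}^{\mathbf{g}}$. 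Combined with the adjunction $(\pi_{\mathbf{g}}^*,\pi_{\mathbf{g}*})$ (for a proper morphism $\pi_{\mathbf{g}*}=\pi_{\mathbf{g}!}$, so $\pi_{\mathbf{g}}^*$ is left adjoint to $\pi_{\mathbf{g}*}$), this should give a direct computation of graded Hom-spaces on $\mathrm{Fl}^{\circ}_{\mathbf{g}}$ in terms of those on $\mathrm{Fl}^{\circ}_{\mathbf{a}_1}$.

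The plan is as follows. First I would use the adjunction to write, for $w,w'\in{_\mathrm{f}W}^{\mathbf{g}}$,
\begin{equation*}
\mathrm{Hom}^\bullet_{D^b_{\mathcal{IW}}(\mathrm{Fl}^{\circ}_{\mathbf{a}_1},\mathbf{k})}(\mathcal{E}_w,\mathcal{E}_{w'})\simeq \mathrm{Hom}^\bullet_{D^b_{\mathcal{IW}}(\mathrm{Fl}^{\circ}_{\mathbf{a}_1},\mathbf{k})}(\pi_{\mathbf{g}}^*[N_{\mathbf{g}}]\mathcal{E}_w^{\mathbf{g}},\pi_{\mathbf{g}}^*[N_{\mathbf{g}}]\mathcal{E}_{w'}^{\mathbf{g}})\simeq \mathrm{Hom}^\bullet_{D^b_{\mathcal{IW}}(\mathrm{Fl}^{\circ}_{\mathbf{g}},\mathbf{k})}(\mathcal{E}_w^{\mathbf{g}},\pi_{\mathbf{g}*}\pi_{\mathbf{g}}^*[N_{\mathbf{g}}]\mathcal{E}_{w'}^{\mathbf{g}}),
\end{equation*}
using Lemma~\ref{lem1}(2) for the first isomorphism and the adjunction for the second (the shift $[N_{\mathbf{g}}]$ is harmless since we work with the full graded Hom). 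Now Lemma~\ref{lem1}(2) (more precisely the identity $\pi_{\mathbf{g}*}\pi_{\mathbf{g}}^*[N_{\mathbf{g}}]\mathcal{E}_{w'}^{\mathbf{g}}\simeq\bigoplus_{x\in W_{\mathbf{g}}}\mathcal{E}_{w'}^{\mathbf{g}}[-2(l(w_0x)-l(w_0w'))]$ established inside its proof) shows that the right-hand side is a direct sum of finitely many shifted copies of $\mathrm{Hom}^\bullet_{D^b_{\mathcal{IW}}(\mathrm{Fl}^{\circ}_{\mathbf{g}},\mathbf{k})}(\mathcal{E}_w^{\mathbf{g}},\mathcal{E}_{w'}^{\mathbf{g}})$, with at least one copy (the term $x=e$) appearing without shift or with a nonnegative shift. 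Hence the left-hand side is nonzero if and only if $\mathrm{Hom}^\bullet(\mathcal{E}_w^{\mathbf{g}},\mathcal{E}_{w'}^{\mathbf{g}})\neq 0$, which is exactly the equivalence $w\mathscr{R}_{\mathbf{a}_1}w'\iff w\mathscr{R}_{\mathbf{g}}w'$.

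The main obstacle I anticipate is bookkeeping rather than substance: one must be careful that $\pi_{\mathbf{g}*}\mathcal{E}_{w'}$ only involves the parity complex $\mathcal{E}_{w'}^{\mathbf{g}}$ itself and not other $\mathcal{E}_u^{\mathbf{g}}$ with $u\neq w'$ — this is precisely the content of Lemma~\ref{lem1}(2), so it is granted — and that passing to $\pi_{\mathbf{g}}^*$ for a general parity complex (not just an indecomposable one supported appropriately) behaves well; again this is covered by Lemma~\ref{lem1}(1). One mild subtlety is that I should confirm the graded Hom-spaces are finite-dimensional so that ``nonzero'' is stable under the operations above, but this is part of the standard formalism of parity complexes. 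I would phrase the whole argument directly at the level of graded Hom-spaces to keep the shifts transparent; the direction $w\mathscr{R}_{\mathbf{g}}w'\Rightarrow w\mathscr{R}_{\mathbf{a}_1}w'$ follows from one inclusion (the nonshifted summand) and the converse from the reverse, both contained in the single displayed isomorphism chain above.
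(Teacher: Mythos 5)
Your proposal is correct and follows essentially the same route as the paper's proof: identify $\mathcal{E}_w\simeq\pi_{\mathbf{g}}^*[N_{\mathbf{g}}]\mathcal{E}_w^{\mathbf{g}}$ via Lemma \ref{lem1}(2), apply the $(\pi_{\mathbf{g}}^*,\pi_{\mathbf{g}*})$ adjunction, and use that $\pi_{\mathbf{g}*}\pi_{\mathbf{g}}^*\mathcal{E}_{w'}^{\mathbf{g}}$ is a finite direct sum of shifts of $\mathcal{E}_{w'}^{\mathbf{g}}$. The remark about a summand with nonnegative shift is unnecessary (shifts are irrelevant since one works with the full graded $\mathrm{Hom}^\bullet$), but this does not affect the argument.
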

	\begin{proof}
		Using Lemma \ref{lem1}, we have
		\begin{align*}
			\mathrm{Hom}_{D^b_{\mathcal{IW}}(\mathrm{Fl}^{\circ}_{\mathbf{a}_1},\mathbf{k})}^\bullet(\mathcal{E}_w,\mathcal{E}_{w'})&\simeq \mathrm{Hom}_{D^b_{\mathcal{IW}}(\mathrm{Fl}^{\circ}_{\mathbf{a}_1},\mathbf{k})}^\bullet(\pi^*_{\mathbf{g}}[N_\mathbf{g}]\mathcal{E}_w^{\mathbf{g}},\pi^*_{\mathbf{g}}[N_\mathbf{g}]\mathcal{E}_{w'}^{\mathbf{g}})\\
			&\overset{\text{adjunction}}{\simeq} \mathrm{Hom}_{D^b_{\mathcal{IW}}(\mathrm{Fl}^{\circ}_{\mathbf{g}},\mathbf{k})}^\bullet(\mathcal{E}_w^{\mathbf{g}},\pi_{\mathbf{g}*}\pi_{\mathbf{g}}^*\mathcal{E}_{w'}^{\mathbf{g}}).
		\end{align*}
		From Lemma \ref{lem1}, we know that $\pi_{\mathbf{g}*}\pi_{\mathbf{g}}^*\mathcal{E}_{w'}^{\mathbf{g}}$ is isomorphic to a finite direct sum of shifts of $\mathcal{E}_{w'}^{\mathbf{g}}$,
		so the hom-spaces above are non-zero if and only if $\mathrm{Hom}_{D^b_{\mathcal{IW}}(\mathrm{Fl}^{\circ}_{\mathbf{g}},\mathbf{k})}^\bullet(\mathcal{E}_w^{\mathbf{g}},\mathcal{E}_{w'}^{\mathbf{g}})$ is non-zero.
	\end{proof}
	Putting together the two previous propositions yields:
	\begin{coro}\label{anti spherical implies R}
		Let $w,w'\in {_\mathrm{f}W}^{\mathbf{g}}$. We have $$n_{w',w}(1)\neq0\Rightarrow w\mathscr{R}_{\mathbf{g}}w'.$$
	\end{coro}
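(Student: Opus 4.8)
The plan is to obtain the corollary by simply composing the two preceding results, with no new input. Assume $n_{w',w}(1)\neq 0$. Since ${_\mathrm{f}W}^{\mathbf{g}}\subset {_\mathrm{f}W}$, the elements $w$ and $w'$ lie in ${_\mathrm{f}W}$, so I would first apply Proposition \ref{anti-spherical relations} to the pair $(u,v):=(w',w)$: the hypothesis permitted by the ``moreover'' clause there is precisely $n_{u,v}(1)=n_{w',w}(1)\neq 0$, and its conclusion $v\mathscr{R}_{\mathbf{a}_1}u$ reads $w\mathscr{R}_{\mathbf{a}_1}w'$.

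Next I would invoke Proposition \ref{proj form}, which asserts that for $w,w'\in {_\mathrm{f}W}^{\mathbf{g}}$ one has $w\mathscr{R}_{\mathbf{g}}w'$ if and only if $w\mathscr{R}_{\mathbf{a}_1}w'$. Applying the implication from right to left to the relation just obtained yields $w\mathscr{R}_{\mathbf{g}}w'$, which is the desired conclusion.

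Since the argument is this short, there is no real obstacle; the only points requiring care are bookkeeping. One must swap the two arguments correctly when invoking Proposition \ref{anti-spherical relations} (its conclusion is $v\mathscr{R}_{\mathbf{a}_1}u$, not $u\mathscr{R}_{\mathbf{a}_1}v$), and one should keep in mind that the fundamental alcove $\mathbf{a}_1$ is an open facet, so that $W_{\mathbf{a}_1}$ is trivial, $W^{\mathbf{a}_1}=W$, and hence ${_\mathrm{f}W}^{\mathbf{a}_1}={_\mathrm{f}W}$, making $\mathscr{R}_{\mathbf{a}_1}$ a relation defined on all of ${_\mathrm{f}W}$. All the substantive content has already been carried out in the cited statements: the comparison of graded $\mathrm{Hom}$-spaces on $\mathrm{Fl}^{\circ}_{\mathbf{a}_1}$ and $\mathrm{Fl}^{\circ}_{\mathbf{g}}$ via the adjunction $(\pi_{\mathbf{g}}^{*},\pi_{\mathbf{g}*})$ together with Lemma \ref{lem1} for Proposition \ref{proj form}, and the passage from the characteristic-zero Kazhdan--Lusztig datum to non-vanishing of a graded $\mathrm{Hom}$-space via Proposition \ref{car 0} and \cite[Proposition 2.6]{2014} for Proposition \ref{anti-spherical relations}.
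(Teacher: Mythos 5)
Your argument is exactly the paper's: the corollary is stated there with the remark that it follows by "putting together the two previous propositions," i.e.\ Proposition \ref{anti-spherical relations} applied to $(u,v)=(w',w)$ followed by the right-to-left implication of Proposition \ref{proj form}, and your index bookkeeping is correct. Nothing is missing.
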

	\begin{rem}
		Define a relation $\mathscr{R}'_\mathbf{g}$ on ${_\mathrm{f}W}^{\mathbf{g}}$ by 
		$$w \mathscr{R}'_\mathbf{g}w'\Longleftrightarrow {^\ell n}_{w',w}(1)\neq0.$$
		Then, using Proposition \ref{proj form} together with \cite[Proposition 2.6]{2014}, it is not difficult to show that the equivalence relation generated by $\mathscr{R}'_\mathbf{g}$ is equal to $\sim_\mathbf{g}$.
	\end{rem}\section{Determination of the equivalence classes}\label{Determination of the blocks}
	The main goal of this section is to show that, when the root system $\mathfrak{R}^\vee$ (or, equivalently, $\mathfrak{R}$) is indecomposable and $\mathbf{g}\subset\overline{\mathbf{a}_1}$ is a non-special facet, the set ${_\mathrm{f}W}^\mathbf{g}$ consists of a single equivalence class for $\sim_\mathbf{g}$ (we will eventually see that the case where the root system is not indecomposable follows from this first case, see Proposition \ref{reduction to irred}). As we will see in the end (cf. subsection \ref{section special}), the case where $\mathbf{g}$ is a special facet can be deduced from the non-special case with the help of Smith-Treumann theory, see Proposition \ref{red to non special}.
	\subsection{The regular case}
	In this short paragraph, we show that  the set ${_{\mathrm{f}}W}$ is a single equivalence class for $\sim_{\mathbf{a}_1}$, i.e. that all the elements of ${_{\mathrm{f}}W}$ are in relation for $\sim_{\mathbf{a}_1}$. Although this case will be included in the more general statement of Theorem \ref{thm not points} (where $\mathbf{a}_1$ is replaced with a facet $\mathbf{g}\subset\overline{\mathbf{a}_1}$ which is not a point), we give an independent proof here, which is much simpler since many difficulties do not appear yet.
	\begin{rem}
	  The result we get in Proposition \ref{prop block sing} below implies (via the discussion at the end of subsection \ref{A geometric proof of the linkage principle}) that the block associated with a ``regular" dominant weight $\lambda\in \mathbb{X}^\vee_+$ (regular means inside of an alcove) is $W\bullet_\ell\lambda\cap\mathbb{X}^\vee_+$ (here we see $\mathbb{X}^\vee_+$ as the weight poset of $\mathrm{Rep}_\mathbf{k}(G^\vee)$). This last result was already much simpler to get than the general description of the block associated with an arbitrary dominant weight, see \cite[§2.4]{humphreys1978blocks}.
	\end{rem}
	\begin{lem}\label{lem reg bl}
		Let $w\in {_{\mathrm{f}}W}$. If $w\neq e$, then there exists $w'<w$ such that $w'\in{_\mathrm{f}W}$ and $w'\mathscr{R}_{\mathbf{a}_1} w$.
	\end{lem}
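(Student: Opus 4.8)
The plan is to produce $w'$ by peeling off a right descent of $w$ and then invoking Corollary~\ref{anti spherical implies R}. Since $w\neq e$ it has a right descent, i.e.\ there is a simple reflection $s\in S$ with $l(ws)<l(w)$; set $w':=ws$, so that $w'<w$ holds automatically, and it remains to check that $w'\in{_\mathrm{f}W}$ and that $w'\mathscr{R}_{\mathbf{a}_1}w$.

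For the first point, recall that $w\in{_\mathrm{f}W}$ means $w$ is minimal in $W_0w$, i.e.\ $l(tw)>l(w)$ for every $t\in S_0$; as $t$ is a Coxeter generator of $W$, this forces $l(tw)=l(w)+1$. Hence, for every $t\in S_0$,
$$l(tw')=l(tws)\geq l(tw)-1=l(w)>l(w)-1=l(w'),$$
so $w'$ is minimal in $W_0w'$, that is, $w'\in{_\mathrm{f}W}$.

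For the second point I would show that $n_{w',w}(1)\neq 0$; then Corollary~\ref{anti spherical implies R}, applied with $\mathbf{g}=\mathbf{a}_1$ (so ${_\mathrm{f}W}^{\mathbf{a}_1}={_\mathrm{f}W}$), gives $w\mathscr{R}_{\mathbf{a}_1}w'$, and since $\mathscr{R}_{\mathbf{a}_1}$ is symmetric — indecomposable parity complexes being Verdier self-dual up to shift, so that $\mathrm{Hom}^\bullet(\mathcal{E}_w,\mathcal{E}_{w'})\neq0$ iff $\mathrm{Hom}^\bullet(\mathcal{E}_{w'},\mathcal{E}_w)\neq0$ — this yields $w'\mathscr{R}_{\mathbf{a}_1}w$, as wanted. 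The non-vanishing $n_{w',w}(1)\neq0$ is the classical fact that this ``leading lower coefficient'' of the antispherical canonical basis element equals $v$: since $w=w's$ with $w's>w'$ and $w,w'\in{_\mathrm{f}W}$, one compares the coefficient of $N_{w'}$ on the two sides of the Kazhdan--Lusztig recursion ${^0\underline{N}}_{w'}\cdot\underline{H}_s={^0\underline{N}}_w+\sum_{z<w'}c_z\,{^0\underline{N}}_z$ (with $\underline{H}_s=H_s+vH_e$ and $c_z\in\mathbb{Z}_{\geq0}$), using $N_{w'}\underline{H}_s=N_w+vN_{w'}$ on the left and $n_{w',z}=0$ for $z<w'$ on the right, to obtain $n_{w',w}=v$. (Equivalently, $n_{w',w}(1)\neq0$ because for $\ell=0$ the perversely shifted $\mathcal{E}_w$ is $\mathcal{IC}_w$, and $\mathcal{H}^{-\dim\mathscr{X}_w}(\mathcal{IC}_w)$ is the constant extension of $\mathcal{L}_w$ to the normal variety $\overline{\mathscr{X}_w}$, hence non-zero on the codimension-one stratum $\mathscr{X}_{w'}$.)

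The only genuinely substantive input is the non-vanishing $n_{w',w}(1)\neq 0$; everything else is routine Coxeter combinatorics of ${_\mathrm{f}W}$ and the formal properties of $\mathscr{R}_{\mathbf{a}_1}$ recorded in Sections~\ref{relation section} and~\ref{Recollections on (Iwahori-Whittaker) equivariant derived categories}. One can also argue purely geometrically, bypassing Kazhdan--Lusztig combinatorics: with $\mathbf{g}\subset\overline{\mathbf{a}_1}$ the wall of $\mathbf{a}_1$ fixed by $s$, one has $w\in{_\mathrm{f}W}^{\mathbf{g}}$ and $\pi_{\mathbf{g}}^*[N_{\mathbf{g}}]\mathcal{E}^{\mathbf{g}}_w\simeq\mathcal{E}_w$ by Lemma~\ref{lem1}, while $\pi_{\mathbf{g}*}\mathcal{E}_{w'}$ is a parity complex whose restriction to the open stratum $\mathscr{X}^{\mathbf{g}}_w$ of its support is $\mathcal{L}^{\mathbf{g}}_w[\dim\mathscr{X}^{\mathbf{g}}_w]$ — because $\pi_{\mathbf{g}}$ restricts to an isomorphism $\mathscr{X}_{w'}\xrightarrow{\sim}\mathscr{X}^{\mathbf{g}}_w$ — so $\mathcal{E}^{\mathbf{g}}_w$ is a direct summand of $\pi_{\mathbf{g}*}\mathcal{E}_{w'}$; adjunction then gives $\mathrm{Hom}^\bullet(\mathcal{E}_{w'},\mathcal{E}_w)\simeq\mathrm{Hom}^\bullet(\pi_{\mathbf{g}*}\mathcal{E}_{w'},\mathcal{E}^{\mathbf{g}}_w)\neq0$.
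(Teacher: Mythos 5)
Your proposal is correct, and your primary argument reaches the key non-vanishing by a genuinely different route from the paper. The paper also takes $w'=ws$ for a right descent $s$, but it gets $w'\in{_\mathrm{f}W}$ from Proposition \ref{prop min coset} (via $w\in{_\mathrm{f}W}^{\mathbf{g}}$ for $\mathbf{g}$ the wall fixed by $s$) and then establishes ${^\ell n}_{w',w}(1)\neq 0$ \emph{geometrically}: it computes $\mathrm{Hom}^\bullet(\mathcal{E}_w,\nabla_{ws})\simeq\mathrm{Hom}^\bullet(\mathcal{E}^{\mathbf{g}}_w,\nabla^{\mathbf{g}}_w)\neq0$ by the $\pi_{\mathbf{g}}^*\dashv\pi_{\mathbf{g}*}$ adjunction and Lemma \ref{lem1}, which yields the $\ell$-modular polynomial directly, and then applies Proposition \ref{anti-spherical relations}. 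Your main route instead proves the characteristic-zero identity $n_{w',w}=v$ by the Kazhdan--Lusztig recursion in the antispherical module and feeds it into Corollary \ref{anti spherical implies R}; this is valid (it implicitly uses Proposition \ref{car 0} to pass from $n$ to ${^\ell n}$, which the paper has already established) and is arguably more elementary, at the cost of invoking the char-$0$ comparison. Your closing ``purely geometric'' alternative is essentially the paper's own proof, just phrased with $\mathcal{E}^{\mathbf{g}}_w$ as a summand of $\pi_{\mathbf{g}*}\mathcal{E}_{w'}$ rather than with $\nabla_{ws}$. One small caveat: your justification of the symmetry of $\mathscr{R}_{\mathbf{a}_1}$ via Verdier self-duality is delicate in the Iwahori--Whittaker setting, since $\mathbb{D}$ replaces $\mathcal{L}_{\mathrm{AS}}$ by its inverse and so lands in a different equivariant category; the symmetry does hold (it follows from the hom formula \cite[Proposition 2.6]{2014}, since total stalk and costalk dimensions of an indecomposable parity complex agree on each stratum), and in any case only the generated equivalence relation $\sim_{\mathbf{a}_1}$ is used downstream — the paper itself silently relies on the same point when it concludes $w'\mathscr{R}_{\mathbf{a}_1}w$ from Proposition \ref{anti-spherical relations}.
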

	\begin{proof}
		Writing a reduced expression for $w$, it is easy to see that there exists an $s\in S$ such that $ws<w$. Let $\mathbf{g}$ be the wall fixed by $s$. Since $w\in {_{\mathrm{f}}W}^\mathbf{g}$, Proposition \ref{prop min coset} implies that $ws\in {_{\mathrm{f}}W}$. By Lemma \ref{lem1}, we get
		\begin{align*}
			\mathrm{Hom}_{D^b_{\mathcal{IW}}(\mathrm{Fl}^{\circ}_{\mathbf{a}_1},\mathbf{k})}^\bullet(\mathcal{E}_w,\nabla_{ws})&\simeq \mathrm{Hom}_{D^b_{\mathcal{IW}}(\mathrm{Fl}^{\circ}_{\mathbf{a}_1},\mathbf{k})}^\bullet(\pi_{\mathbf{g}}^*[1]\mathcal{E}_w^{\mathbf{g}},\nabla_{ws})\\
			&\simeq \mathrm{Hom}_{D^b_{\mathcal{IW}}(\mathrm{Fl}^{\circ}_{\mathbf{g}},\mathbf{k})}^\bullet(\mathcal{E}_w^{\mathbf{g}},\pi_{\mathbf{g}*}[-1]\nabla_{ws})\\
			&\simeq \mathrm{Hom}_{D^b_{\mathcal{IW}}(\mathrm{Fl}^{\circ}_{\mathbf{g}},\mathbf{k})}^\bullet(\mathcal{E}_w^{\mathbf{g}},\nabla_{w}^{\mathbf{g}})\neq 0
		\end{align*}
		where the second line is obtained by adjunction. So if we put $w':=ws$, we get that ${^\ell n}_{w',w}(1)\neq 0$, and Proposition \ref{anti-spherical relations} allows us to conclude.
	\end{proof}
	It is now straightforward to conclude.
	\begin{prop}\label{prop block sing}
		The set ${_\mathrm{f}W}$ consists of a single class for the equivalence relation $\sim_{\mathbf{a}_1}$.
	\end{prop}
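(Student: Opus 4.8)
The plan is to obtain this as an immediate formal consequence of Lemma~\ref{lem reg bl}, by well-founded induction on the length function $l\colon {_\mathrm{f}W}\to\mathbb{Z}_{\geq 0}$. Note first that $e\in{_\mathrm{f}W}$ (the identity is the minimal element of $W_0e=W_0$) and that $e$ is the unique element of ${_\mathrm{f}W}$ of length $0$.

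First I would prove, by induction on $l(w)$, that every $w\in{_\mathrm{f}W}$ satisfies $w\sim_{\mathbf{a}_1}e$. If $l(w)=0$ then $w=e$ and there is nothing to prove. If $w\neq e$, Lemma~\ref{lem reg bl} provides some $w'\in{_\mathrm{f}W}$ with $w'<w$ in the Bruhat order — hence $l(w')<l(w)$ — and $w'\mathscr{R}_{\mathbf{a}_1}w$, so in particular $w\sim_{\mathbf{a}_1}w'$. The induction hypothesis gives $w'\sim_{\mathbf{a}_1}e$, and transitivity of $\sim_{\mathbf{a}_1}$ yields $w\sim_{\mathbf{a}_1}e$.

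Since all elements of ${_\mathrm{f}W}$ are then equivalent to $e$, the set ${_\mathrm{f}W}$ forms a single class for $\sim_{\mathbf{a}_1}$. There is no genuine obstacle here: the entire content already sits in Lemma~\ref{lem reg bl} (itself a short computation with the functors $\pi_{\mathbf{g}}^{*}$ and $\pi_{\mathbf{g}*}$ applied to costandard objects, combined with Proposition~\ref{anti-spherical relations}), and the present statement merely packages that lemma by induction. The only point worth a word is that the induction is carried out on the length rather than on the Bruhat order directly, which is legitimate since $l$ takes values in the well-ordered set $\mathbb{Z}_{\geq 0}$ and is strictly increasing along covering relations of the Bruhat order.
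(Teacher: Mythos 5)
Your argument is correct and is exactly the paper's proof: an induction on the length of $w$, using Lemma~\ref{lem reg bl} to produce a shorter element $w'\in{_\mathrm{f}W}$ with $w'\mathscr{R}_{\mathbf{a}_1}w$, concluding that every element is equivalent to $e$. Your extra remark on why induction on length is legitimate is a harmless elaboration of what the paper leaves implicit.
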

	\begin{proof}
		Let $w\in {_{\mathrm{f}}W}$. By the previous lemma and an induction on the length of $w$, we can see that $w\sim_{\mathbf{a}_1}e$. This concludes the proof.
	\end{proof}
	The proof of the fact that all of the elements of ${_{\mathrm{f}}W}^\mathbf{g}$ are in relation when $\mathbf{g}\subset\overline{\mathbf{a}_1}$ is an arbitrary non-special facet will be much more involved. This is due to the fact that, for an arbitrary $w\in {_{\mathrm{f}}W}^\mathbf{g}$, there is no obvious choice of an element $w'<w$ in ${_{\mathrm{f}}W}^\mathbf{g}$ such that $w'\sim_{\mathbf{{g}}}w$. The goal of the next subsection is to find such relations.
	\subsection{Some invariance properties of the $\ell$-anti-spherical Kazhdan-Lusztig polynomials}
	The following result is a generalization of Lemma \ref{lem reg bl}. 
	\begin{prop}\label{prop max coset}
		Let $\mathbf{q}$ be a facet inside $\overline{\mathbf{a}_1}$ and $w\in {_{\mathrm{f}}W}^\mathbf{q}$. \begin{enumerate}
			\item For all $w'\in {_\mathrm{f}W}$, we have ${^\ell n}_{w'r,w}(1)={^\ell n}_{w',w}(1)$ for all $r \in W_\mathbf{q}$.
			\item Assume moreover that $\mathbf{g}\subset \overline{\mathbf{a}_1}$ is a facet such that $\mathbf{q}\subset\overline{\mathbf{g}}$. For all $w_1,w_2\in wW_\mathbf{q}\cap W^{\mathbf{g}}$, we have $w_1\sim_{\mathbf{{g}}}w_2$.
		\end{enumerate} 
	\end{prop}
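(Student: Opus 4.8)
The plan is to prove both parts geometrically, by transporting everything from $\mathrm{Fl}^{\circ}_{\mathbf{a}_1}$ down to $\mathrm{Fl}^{\circ}_{\mathbf{q}}$ along the projection $\pi_\mathbf{q}$ and invoking Lemma \ref{lem1}. Throughout I write $N_\mathbf{q}:=l(w_\mathbf{q})$, and recall from (\ref{l-KL}) that ${}^\ell n_{x,y}(1)=\dim_\mathbf{k}\mathrm{Hom}^{\bullet}_{D^b_{\mathcal{IW}}(\mathrm{Fl}^{\circ}_{\mathbf{a}_1},\mathbf{k})}(\mathcal{E}_y,\nabla_x)$, with the convention that ${}^\ell n_{x,y}=0$ when $x\notin {_\mathrm{f}W}$.

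For part (1), fix $w'\in {_\mathrm{f}W}$ and let $z$ be the maximal element of the coset $w'W_\mathbf{q}$; the key point is that $z\in W^\mathbf{q}$ does not depend on the chosen representative, so $z=(w'r)^\mathbf{q}$ for \emph{every} $r\in W_\mathbf{q}$. Since $w\in {_\mathrm{f}W}^\mathbf{q}$, Lemma \ref{lem1}(2) gives $\mathcal{E}_w\simeq\pi_\mathbf{q}^*[N_\mathbf{q}]\mathcal{E}^{\mathbf{q}}_w$, so for any $r\in W_\mathbf{q}$ with $w'r\in {_\mathrm{f}W}$, adjunction (the shift being absorbed in the graded $\mathrm{Hom}$) yields
$$\mathrm{Hom}^{\bullet}_{D^b_{\mathcal{IW}}(\mathrm{Fl}^{\circ}_{\mathbf{a}_1},\mathbf{k})}(\mathcal{E}_w,\nabla_{w'r})\simeq \mathrm{Hom}^{\bullet}_{D^b_{\mathcal{IW}}(\mathrm{Fl}^{\circ}_{\mathbf{q}},\mathbf{k})}(\mathcal{E}^{\mathbf{q}}_w,\pi_{\mathbf{q}*}\nabla_{w'r}).$$
Writing $w'r=z h_r$ with $h_r\in W_\mathbf{q}$, Lemma \ref{lem1}(3) gives $\pi_{\mathbf{q}*}\nabla_{w'r}\simeq\nabla^{\mathbf{q}}_z[l(w_\mathbf{q}h_r)]$ if $z\in {_\mathrm{f}W}$, and $\pi_{\mathbf{q}*}\nabla_{w'r}=0$ otherwise. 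If $z\in {_\mathrm{f}W}$, then $z\in {_\mathrm{f}W}^\mathbf{q}$, so Proposition \ref{prop min coset} shows that every element of $zW_\mathbf{q}=w'W_\mathbf{q}$ lies in ${_\mathrm{f}W}$ (in particular all the $w'r$ do), and the right-hand side above has total dimension $\dim_\mathbf{k}\mathrm{Hom}^{\bullet}_{D^b_{\mathcal{IW}}(\mathrm{Fl}^{\circ}_{\mathbf{q}},\mathbf{k})}(\mathcal{E}^{\mathbf{q}}_w,\nabla^{\mathbf{q}}_z)$, which is manifestly independent of $r$; hence ${}^\ell n_{w'r,w}(1)$ takes the same value for all $r\in W_\mathbf{q}$. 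If $z\notin {_\mathrm{f}W}$, the same computation (taking $r=e$) gives ${}^\ell n_{w',w}(1)=0$ and, more generally, ${}^\ell n_{w'r,w}(1)=0$ whenever $w'r\in {_\mathrm{f}W}$, while ${}^\ell n_{w'r,w}(1)=0$ by convention otherwise; so again the value is constant in $r$ and equals ${}^\ell n_{w',w}(1)$. Carrying out the same argument with $\mathbf{k}$ of characteristic zero proves the corresponding invariance for the ordinary anti-spherical Kazhdan-Lusztig polynomials.

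For part (2), I would first note that $\mathbf{q}\subset\overline{\mathbf{g}}$ forces $W_\mathbf{g}\subseteq W_\mathbf{q}$ (the reflections in the walls containing $\mathbf{g}$ are among those containing $\mathbf{q}$, and these generate $W_\mathbf{g}$ and $W_\mathbf{q}$ respectively), whence $W^\mathbf{q}\subseteq W^\mathbf{g}$; in particular $w\in {_\mathrm{f}W}^\mathbf{q}$ belongs to $wW_\mathbf{q}\cap W^\mathbf{g}$ and to ${_\mathrm{f}W}^\mathbf{g}$, so it is enough to prove that $w_i\sim_\mathbf{g}w$ for every $w_i\in wW_\mathbf{q}\cap W^\mathbf{g}$. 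Given such a $w_i$, write $w_i=wr$ with $r\in W_\mathbf{q}$; Proposition \ref{prop min coset} gives $w_i\in {_\mathrm{f}W}$, hence $w_i\in {_\mathrm{f}W}^\mathbf{g}$. Applying part (1) with $w'=w$ gives ${}^\ell n_{w_i,w}(1)={}^\ell n_{w,w}(1)=1\neq 0$, so Proposition \ref{anti-spherical relations} yields $w\mathscr{R}_{\mathbf{a}_1}w_i$, and Proposition \ref{proj form} (applicable since $w,w_i\in {_\mathrm{f}W}^\mathbf{g}$) upgrades this to $w\mathscr{R}_\mathbf{g}w_i$. Transitivity of $\sim_\mathbf{g}$ then gives $w_1\sim_\mathbf{g}w\sim_\mathbf{g}w_2$.

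The one genuinely geometric ingredient — and the crux of the argument — is the identity in part (1): once one sees that $\pi_{\mathbf{q}*}\nabla_{w'r}$ depends on $r$ only through the cohomological shift $[l(w_\mathbf{q}h_r)]$, because the maximal coset representative $(w'r)^\mathbf{q}$ cannot detect $r$, invariance of ${}^\ell n_{w'r,w}(1)$ is immediate. The only subtlety is the bookkeeping for representatives $w'r$ lying outside ${_\mathrm{f}W}$, handled by the vanishing clause of Lemma \ref{lem1}(3); and part (2) is then essentially formal, combining part (1) with Propositions \ref{anti-spherical relations} and \ref{proj form}.
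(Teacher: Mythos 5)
Your proof is correct and follows essentially the same route as the paper's: part (1) is obtained by the same adjunction computation $\mathrm{Hom}^\bullet(\pi_\mathbf{q}^*[N_\mathbf{q}]\mathcal{E}_w^\mathbf{q},\nabla_{w'r})\simeq\mathrm{Hom}^\bullet(\mathcal{E}_w^\mathbf{q},\pi_{\mathbf{q}*}\nabla_{w'r})$ combined with Lemma \ref{lem1}(3) and the independence of the maximal coset representative, and part (2) by feeding ${}^\ell n_{wr,w}(1)={}^\ell n_{w,w}(1)=1$ into Propositions \ref{anti-spherical relations} and \ref{proj form} (the paper phrases this via $n$ and Corollary \ref{anti spherical implies R}, which is the same combination). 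Your extra bookkeeping for representatives $w'r\notin{_\mathrm{f}W}$ and the observation $W_\mathbf{g}\subseteq W_\mathbf{q}$ are both correct and consistent with the paper's conventions.
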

	
	\begin{proof}
		\begin{enumerate}
			\item Let $w'\in {_\mathrm{f}W}$ and write $w'=uh$, where $u$ is the maximal element in $w'W_\mathbf{q}$, and $h\in W_\mathbf{q}$. We have the following sequence of isomorphisms\footnote{Notice that we forget the grading of the vector spaces in those isomorphisms.} of $\mathbf{k}$-vector spaces for all $r\in W_\mathbf{q}$:
			\begin{align*}
				\mathrm{Hom}^\bullet_{D^b_{\mathcal{IW}}(\mathrm{Fl}^{\circ}_{\mathbf{a}_1},\mathbf{k})}(\mathcal{E}_{w},\nabla_{w'r})&\simeq \mathrm{Hom}^\bullet_{D^b_{\mathcal{IW}}(\mathrm{Fl}^{\circ}_{\mathbf{a}_1},\mathbf{k})}(\pi_{\mathbf{q}}^*[N_\mathbf{q}]\mathcal{E}_{w}^{\mathbf{q}},\nabla_{w'r})\\
				&\overset{\text{adjunction}}{\simeq} \mathrm{Hom}^\bullet_{D^b_{\mathcal{IW}}(\mathrm{Fl}^{\circ}_{\mathbf{q}},\mathbf{k})}(\mathcal{E}_{w}^{\mathbf{q}},(\pi_\mathbf{q})_*\nabla_{uhr})\\
				&\simeq \begin{cases}\mathrm{Hom}^\bullet_{D^b_{\mathcal{IW}}(\mathrm{Fl}^{\circ}_{\mathbf{q}},\mathbf{k})}(\mathcal{E}_{w}^{\mathbf{q}},\nabla_{u}^{\mathbf{q}})~&\text{if}~u\in {_\mathrm{f}W},\\
					0~&\text{otherwise},
				\end{cases}
			\end{align*}
			where the first and last isomorphisms are consequences of the first and third point of Lemma \ref{lem1} respectively. Thanks to the definition of the $\ell$-anti-spherical polynomials (see (\ref{def antispheric})), this means that ${^\ell n}_{w'r,w}(1)=0$ for all $r\in W_\mathbf{q}$ when $u\notin {_\mathrm{f}W}$ (which trivially implies that ${^\ell n}_{w'r,w}(1)={^\ell n}_{w',w}(1)$), and ${^\ell n}_{w'r,w}(1)={^\ell n}_{u,w}(1)={^\ell n}_{w',w}(1)$ otherwise.
			\item First notice that, since $w\in {_{\mathrm{f}}W}^\mathbf{q}$, we have that $wW_\mathbf{q}\subset {_{\mathrm{f}}W}$ thanks to Proposition \ref{prop min coset}, so that \begin{equation}\label{inclusion min}
			    wW_\mathbf{q}\cap W^{\mathbf{g}}\subset {_{\mathrm{f}}W}\cap W^{\mathbf{g}}={_{\mathrm{f}}W}^\mathbf{g}.
			\end{equation} Applying the previous point to $w'=w$, we get that ${ n}_{wr,w}(1)={ n}_{w,w}(1)=1$ for every $r\in W_\mathbf{q}$, and therefore that $w\mathscr{R}_\mathbf{g}wr$ if  $wr\in W^{\mathbf{g}}$ (which implies that $wr\in {_{\mathrm{f}}W}^\mathbf{g}$ by (\ref{inclusion min})) thanks to Corollary \ref{anti spherical implies R}. Thus, we have $w\mathscr{R}_\mathbf{g}w_i$ for $i\in\{1,2\}$, and we conclude by transitivity that $w_1\sim_{\mathbf{{g}}}w_2$.
		\end{enumerate}
	\end{proof} 
	The goal of the next subsection is to get a precise picture of the geometry of the affine space $\mathbb{X}^\vee\otimes_\mathbb{Z}\mathbb{R}$. By doing so, we will be able to use the previous relations together with the bijection between ${_{\mathrm{f}}W}^\mathbf{g}$ and $\mathcal{A}^+_\mathbf{g}$ (the set of dominant facets which are in $W\car_1\mathbf{g}$) to show that ${_{\mathrm{f}}W}^\mathbf{g}$ consists of a single class for $\sim_\mathbf{g}$ (when $\mathbf{g}$ is not a point).
	\subsection{Geometry of $\mathbb{X}^\vee\otimes_\mathbb{Z}\mathbb{R}$}\label{geometry}
	In this subsection, we follow the terminology and some of the notations\footnote{The general reference, which is used in \cite[§1]{LUSZTIG1980121}, is \cite{cdi_springer_books_10_1007_978_3_540_34491_9}.} of both \cite[§1]{LUSZTIG1980121} and \cite[§4]{Soergel1997KazhdanLusztigPA}. The box action $\car_1$ of $W$ on the affine space $E:=\mathbb{X}^\vee\otimes_\mathbb{Z}\mathbb{R}$ defines a set of hyperplanes $\mathscr{H}$ (one could also work with the dilated box action $\car_n$ for some $n\geq 1$ or with $\bullet_n$, cf. the setting of \cite[§1]{LUSZTIG1980121}). For any facet $\mathbf{p}$ (not necessarily included in $\overline{\mathbf{a}_1}$), we will denote by $W_\mathbf{p}$ its stabilizer in $W$ for the box action. We define the set of strictly dominant elements by 
	$$\mathscr{C}^+_0:=\{\lambda\in E~|~\langle\lambda,\alpha\rangle>0~\forall \alpha\in\mathfrak{R}_+\}. $$Recall that the connected components of $E\backslash\bigcup_{H\in\mathscr{H}}H$ are products of open simplices called alcoves\footnote{When the root system $\mathfrak{R}^\vee$ is irreducible, the alcoves are open simplices. In general, $\mathfrak{R}^\vee$ decomposes into a product of irreducible root systems, from which we deduce the decomposition of any alcove into a product of open simplices (cf.  subsection \ref{Facets which are not points}). }. An example is 
	$$\mathbf{a}_1=\{\mu\in E~|~0<\langle\mu,\alpha\rangle<1~\forall\alpha\in\mathfrak{R}_+\},$$
	which we call the fundamental alcove. The box action on the set of alcoves $\mathcal{A}$ can be extended to an action of $\tilde{W}$, in which case the stabilizer of  $\mathbf{a}_1$ is equal to $\Omega=\{w\in \tilde{W}~|~l(w)=0 \}$. Thus, the assignment $w\mapsto w\car_1\mathbf{a}_1$ yields a bijection $W\simeq \tilde{W}/\Omega\xrightarrow{\sim}\mathcal{A}$, so that  any alcove $A$ can be written as $A=w\car_1\mathbf{a}_1$ for a unique $w\in W$. This bijection $W\xrightarrow{\sim} \mathcal{A}$ allows us to define a right action of $W$ on $\mathcal{A}$, induced by right multiplication on itself. We may also define an order $\leq$ on $\mathcal{A}$, induced by the Bruhat order on $W$. Thus, for any $s\in S$ and $A\in \mathcal{A}$, we will have that $A<As$ if and only if $l(w)<l(ws)$, where $w\in W$ is such that $w\car_1\mathbf{a}_1=A$. Since $As$ is obtained by reflecting $A$ along its wall which is the $W$-conjugate of the wall of $\mathbf{a}_1$ fixed by $s$, this means that $A<As$ if and only if the number of hyperplanes separating $A$ from $\mathbf{a}_1$ (i.e. the number of hyperplanes $H\in\mathscr{H}$ such that $A$ and $\mathbf{a}_1$ are included in different connected components of $E\backslash H$) is smaller than the number of hyperplanes separating $As$ from $\mathbf{a}_1$ (see \cite[§1.4]{LUSZTIG1980121}). In the sequel, we will denote by $d(A)$ the number of hyperplanes separating $A$ from $\mathbf{a}_1$, so that we have $d(A)=l(w)$.
	
	The set of dominant alcoves (i.e. those included in $\mathscr{C}^+_0$) will be denoted by $\mathcal{A}^+$; we have a bijection ${_\mathrm{f}W}\xrightarrow{\sim}\mathcal{A}^+,~w\mapsto w\car_1\mathbf{a}_1$.
	
	For any facet $\mathbf{g}\subset \overline{\mathbf{a}_1}$, we define $\mathcal{A}_\mathbf{g}:=\{w\car_1\mathbf{g},~w\in W\}$ and  $\mathcal{A}_\mathbf{g}^{+}:=\{w\car_1\mathbf{g},~w\in {_\mathrm{f}W}^\mathbf{g}\}$. One can show that $\mathcal{A}_\mathbf{g}^{+}$ is exactly the subset of facets of $\mathcal{A}_\mathbf{g}$ which are included in $\mathscr{C}^+_0$. Note that the map $w \mapsto w\car_1\mathbf{g}$ induces a bijection between $W^\mathbf{g}$, resp. ${_\mathrm{f}W}^\mathbf{g}$, and $\mathcal{A}_\mathbf{g}$, resp. $\mathcal{A}_\mathbf{g}^{+}$. We will consider the left action of $W$ on $\mathcal{A}_\mathbf{g}$ given by $w\mathbf{h}:=w\car_1\mathbf{h}$ for any $\mathbf{h}\in\mathcal{A}_\mathbf{g}$. Thus, if $\mathbf{h}=u\mathbf{g}$ for some $u\in W^\mathbf{g}$, then $w\mathbf{h}=wu\mathbf{g}$, and the element of $W^\mathbf{g}$ corresponding to the facet $w\mathbf{h}$ is the maximal element of $wuW_\mathbf{g}$. Also note that, by continuity of the action of $W$ on $E$, if $A$ is an alcove containing $\mathbf{h}$ in its closure, then $w\mathbf{h}$ is the unique element of $\mathcal{A}_\mathbf{g}$ which is contained in the closure of $wA$. Let us state and prove the following easy result, which will be used a lot in the sequel.
	\begin{prop}\label{dominant facet}
		Let $\mathbf{h}$ be a facet containing a facet $\mathbf{p}$ in its closure such that $\mathbf{p}\subset \mathscr{C}^+_0$. Then we have $\mathbf{h}\subset \mathscr{C}^+_0$. In particular if $\mathbf{g}\subset\overline{\mathbf{a}_1}$ is the facet such that $\mathbf{h}\in\mathcal{A}_\mathbf{g}$, then there exists a unique element $w\in{_\mathrm{f}W}^\mathbf{g}$ such that $w\mathbf{g}=\mathbf{h}$.
	\end{prop}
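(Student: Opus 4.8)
The plan is to read off the first assertion directly from the geometry of the hyperplane arrangement $\mathscr{H}$. The key observation is that the open cone $\mathscr{C}^+_0$ is cut out by the hyperplanes $H_\alpha:=\{\lambda\in E~:~\langle\lambda,\alpha\rangle=0\}$ as $\alpha$ runs over the \emph{simple} roots (since $\langle\lambda,\alpha\rangle>0$ for all simple $\alpha$ forces $\langle\lambda,\alpha\rangle>0$ for all $\alpha\in\mathfrak{R}_+$), and that each such $H_\alpha$ belongs to $\mathscr{H}$, being the fixed-point hyperplane of the reflection $s_\alpha\in W_0\subset W$ for the box action $\car_1$. I would then use the defining property of a facet of $\mathscr{H}$: for every hyperplane $H\in\mathscr{H}$, a facet lies either entirely inside $H$ or entirely in one of the two open half-spaces determined by $H$.

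Concretely, fix a simple root $\alpha$ and write $E=H_\alpha\sqcup H_\alpha^{>0}\sqcup H_\alpha^{<0}$ with $H_\alpha^{>0}=\{\langle\cdot,\alpha\rangle>0\}$. Since $\mathbf{p}\subset\mathscr{C}^+_0\subset H_\alpha^{>0}$ and $\mathbf{p}\subset\overline{\mathbf{h}}$, the facet $\mathbf{h}$ cannot lie in $H_\alpha$ (otherwise $\overline{\mathbf{h}}\subset H_\alpha$, contradicting $\mathbf{p}\subset H_\alpha^{>0}$) and cannot lie in $H_\alpha^{<0}$ (otherwise $\overline{\mathbf{h}}$ is contained in the closed half-space $H_\alpha\cup H_\alpha^{<0}$, again contradicting $\mathbf{p}\subset H_\alpha^{>0}$); hence $\mathbf{h}\subset H_\alpha^{>0}$. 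Intersecting over all simple roots $\alpha$ gives $\mathbf{h}\subset\mathscr{C}^+_0$. For the second assertion I would invoke the two facts recalled just above in the text: that $\mathcal{A}^+_\mathbf{g}$ is exactly the set of facets of $\mathcal{A}_\mathbf{g}$ contained in $\mathscr{C}^+_0$, and that $w\mapsto w\car_1\mathbf{g}$ restricts to a bijection ${_\mathrm{f}W}^\mathbf{g}\xrightarrow{\sim}\mathcal{A}^+_\mathbf{g}$. Since $\mathbf{h}\in\mathcal{A}_\mathbf{g}$ lies in $\mathscr{C}^+_0$ by the first part, we get $\mathbf{h}\in\mathcal{A}^+_\mathbf{g}$, and the bijection produces the unique $w\in{_\mathrm{f}W}^\mathbf{g}$ with $w\car_1\mathbf{g}=\mathbf{h}$.

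There is essentially no obstacle here: the result is purely a statement about the combinatorics of $\mathscr{H}$. The only points needing a modicum of care are that the bounding hyperplanes of $\mathscr{C}^+_0$ are genuinely members of the arrangement $\mathscr{H}$ (so that a facet meeting the open cone must be contained in it), and the passage from all positive roots to simple roots in the description of $\mathscr{C}^+_0$; both are immediate from the definitions given in subsection~\ref{geometry}.
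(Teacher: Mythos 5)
Your proof is correct and is essentially the paper's argument: the paper simply notes that a facet is either contained in $\mathscr{C}^+_0$ or in its (closed) complement and then applies the same closure argument to $\mathbf{p}\subset\overline{\mathbf{h}}$, while you unpack that dichotomy hyperplane by hyperplane over the simple roots. The second assertion is handled identically in both, via the bijection ${_\mathrm{f}W}^\mathbf{g}\xrightarrow{\sim}\mathcal{A}^+_\mathbf{g}$.
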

	\begin{proof}
	By definition of $\mathscr{C}^+_0$, any facet is either included in $\mathscr{C}^+_0$ or in $E\backslash\mathscr{C}^+_0$, the latter being a closed subset of $E$. If $\mathbf{h}\subset E\backslash\mathscr{C}^+_0$, then $\overline{\mathbf{h}}\subset E\backslash\mathscr{C}^+_0$, contradicting our assumption on $\mathbf{p}$. Therefore $\mathbf{h}\subset \mathscr{C}^+_0$. The rest of the proposition follows.
	\end{proof}
	A \textit{special facet} will be a zero dimensional facet by which a maximal number of hyperplanes passes through. An example is the point $\{0\}$, and all the other special facets are translates of this point by elements of $\mathbb{X}^\vee$, i.e. are of the form $\{\lambda\}$ for $\lambda\in\mathbb{X}^\vee$ (cf.\cite[chap. VI, §2, Proposition 3]{cdi_springer_books_10_1007_978_3_540_34491_9}). For any special facet $\mathbf{v}$, we will denote by $W_\mathbf{v}$ the stabilizer of $\mathbf{v}$, which is the subgroup of $W$ generated by the reflections with respect to the hyperplanes passing through $\mathbf{v}$. If $\mathbf{v}=\{\lambda\}$, then $W_\mathbf{v}=t_\lambda W_0 t_{-\lambda}$. We will write $w_\mathbf{v}:=t_\lambda w_0t_{-\lambda}$, which is an element of $W_\mathbf{v}$.
	\begin{lem}\label{translation}
		Let $\mathbf{u},\mathbf{v}$ be special facets. Then $w_\mathbf{u}w_{\mathbf{v}}$ is a translation.
	\end{lem}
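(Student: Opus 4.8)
The plan is a direct computation in the extended affine Weyl group $\tilde{W}=W_0\ltimes\mathbb{X}^\vee$, using the explicit formulas for $w_\mathbf{u}$ and $w_\mathbf{v}$ recalled just above the statement. Since every special facet is an $\mathbb{X}^\vee$-translate of $\{0\}$, I would write $\mathbf{u}=\{\mu\}$ and $\mathbf{v}=\{\lambda\}$ with $\mu,\lambda\in\mathbb{X}^\vee$, so that $w_\mathbf{u}=t_\mu w_0 t_{-\mu}$ and $w_\mathbf{v}=t_\lambda w_0 t_{-\lambda}$.

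The only structural facts needed are $t_\nu t_{\nu'}=t_{\nu+\nu'}$, the identity $w_0^2=e$, and the conjugation rule $w_0 t_\nu w_0^{-1}=t_{w_0\nu}$ for $\nu\in\mathbb{X}^\vee$, which I would spell out explicitly (the only real subtlety is to make the conventions for the box action agree with the semidirect-product structure on $\tilde{W}$). With these, the product telescopes:
\begin{align*}
w_\mathbf{u}w_\mathbf{v}&=t_\mu w_0 t_{-\mu}\,t_\lambda w_0 t_{-\lambda}=t_\mu w_0 t_{\lambda-\mu} w_0 t_{-\lambda}\\
&=t_\mu\, t_{w_0(\lambda-\mu)}\, t_{-\lambda}=t_{(w_0-\mathrm{id})(\lambda-\mu)},
\end{align*}
which exhibits $w_\mathbf{u}w_\mathbf{v}$ as a translation.

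It remains to check that this translation belongs to $W=W_0\ltimes\mathbb{Z}\mathfrak{R}^\vee$, i.e. that $(w_0-\mathrm{id})(\lambda-\mu)\in\mathbb{Z}\mathfrak{R}^\vee$. This is the standard fact that $w(\nu)-\nu\in\mathbb{Z}\mathfrak{R}^\vee$ for every $w\in W_0$ and $\nu\in\mathbb{X}^\vee$: for a simple reflection $s_\alpha$ one has $s_\alpha(\nu)-\nu=-\langle\nu,\alpha\rangle\alpha^\vee\in\mathbb{Z}\mathfrak{R}^\vee$, and one concludes by induction on the length of a reduced expression for $w_0$. There is no genuine obstacle here; the computation is essentially forced once the conjugation identity is in place, and the statement follows at once.
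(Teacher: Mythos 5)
Your computation is correct and is essentially the paper's own proof: both write $\mathbf{u}=\{\mu\}$, $\mathbf{v}=\{\lambda\}$, expand $w_\mathbf{u}w_\mathbf{v}=t_\mu w_0t_{-\mu}t_\lambda w_0t_{-\lambda}$, and telescope using $w_0^2=e$ and $w_0t_\nu w_0^{-1}=t_{w_0\nu}$ to get $t_{\mu-\lambda+w_0(\lambda-\mu)}$. Your final check that $(w_0-\mathrm{id})(\lambda-\mu)\in\mathbb{Z}\mathfrak{R}^\vee$ is a correct but superfluous addition, since $w_\mathbf{u},w_\mathbf{v}$ already lie in $W$ (as $W$ is normal in $\tilde{W}$), so their product is automatically a translation by an element of $\mathbb{Z}\mathfrak{R}^\vee$.
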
 
	\begin{proof}
		Write $\mathbf{v}=\{\lambda\},~\mathbf{u}=\{\mu\}$ for some $\lambda,\mu\in\mathbb{X}^\vee$, then we have
		$$w_\mathbf{u}w_\mathbf{v}=t_{\mu}w_0t_{-\mu}t_{\lambda}w_0t_{-\lambda}=w_0t_{w_0\mu}t_{-\mu}t_{\lambda}w_0t_{-\lambda}=w_0^2t_{\mu}t_{-w_0\mu}t_{w_0\lambda}t_{-\lambda}=t_{\mu-\lambda+w_0(\lambda-\mu)}.$$
	\end{proof}
	
	Let $\mathbf{v}$ be a special facet, a quarter with vertex $\mathbf{v}$ is a connected component of 
	$$E\backslash\bigcup_{\substack{H\in\mathscr{H}\\\mathbf{v}\subset H}}H.$$
	When $\mathbf{v}={0}$, one such quarter is the dominant cone $\mathscr{C}^+_{0}$. For a general $\mathbf{v}$, we denote by $\mathscr{C}^+_\mathbf{v}$ the quarter with vertex $\mathbf{v}$ which is a translate of $\mathscr{C}^+_{0}$, and we put $\mathscr{C}^-_\mathbf{v}:=w_\mathbf{v}\mathscr{C}^+_\mathbf{v}$. We will also denote by $A^+_\mathbf{v}$, resp. $A^-_\mathbf{v}$, the unique alcove contained in $\mathscr{C}^+_\mathbf{v}$, resp. in $\mathscr{C}^-_\mathbf{v}$, containing $\mathbf{v}$ in its closure. We clearly have $A^-_\mathbf{v}=w_\mathbf{v}A^+_\mathbf{v}$.  Furthermore, let $\mathscr{H}^*$ be the set of hyperplanes $H$ of $\mathscr{H}$ such that $H$ is a wall of some $\mathscr{C}^+_\mathbf{v}$ for a special facet $\mathbf{v}$, we define \textit{boxes} to be the connected components of $E\backslash\bigcup_{H\in\mathscr{H}^*}H$. Each alcove is contained in a unique box, and we denote by $\Pi_\mathbf{v}$ the box containing $A^+_\mathbf{v}$ . In our context, if $\mathbf{v}=\{\lambda\}$ for some $\lambda\in\mathbb{X}^\vee$, we put $\Pi_{\lambda}:=\Pi_\mathbf{v}$, and we get 
	\begin{equation}\label{box}
		\Pi_{\lambda}=\{\mu\in E~|~0<\langle\mu-\lambda,\alpha\rangle<1~\forall \alpha\in S_0\}.
	\end{equation} 
	\begin{rem}\label{action compat}
		In \cite{LUSZTIG1980121}, Lusztig defines $\Omega$ to be the group generated by the orthogonal reflections through the hyperplanes of $\mathscr{H}$, which is seen as acting on the right on $E$. In our context, this right action is the left action of $W=\Omega$ on $E$, and the left action of $W$ on the set of alcoves from \cite{LUSZTIG1980121} is our right action on $\mathcal{A}$.
	\end{rem}
	\begin{prop}\label{conncomp}
		Let $H\in\mathscr{H}$. Then, there exists a unique connected component $E_1$ of $E\backslash H$ that has a non-empty intersection with $\mathscr{C}^+_\mathbf{v}$ for any special facet $\mathbf{v}$. Moreover, there exists at least one special facet $\mathbf{u}$ such that $\mathscr{C}^-_\mathbf{u}\cap E_1=\emptyset$.
	\end{prop}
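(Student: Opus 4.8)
The plan is to argue directly with linear coordinates on $E$, with no appeal to the combinatorics of alcoves. Every hyperplane $H\in\mathscr{H}$ is parallel to the fixed hyperplane $\alpha^{\perp}=\{x\in E:\langle x,\alpha\rangle=0\}$ of some reflection $s_\alpha\in W$, so after fixing $\alpha\in\mathfrak{R}_+$ we may write $H=\{x\in E:\langle x,\alpha\rangle=c\}$ for a suitable real number $c$; the two connected components of $E\setminus H$ are then $E^{+}:=\{x:\langle x,\alpha\rangle>c\}$ and $E^{-}:=\{x:\langle x,\alpha\rangle<c\}$. I will show that $E_1:=E^{+}$ is the component in the statement. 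Two elementary facts will be used repeatedly: (i) $\langle v,\alpha\rangle>0$ for every $v\in\mathscr{C}^{+}_0$ and every $\alpha\in\mathfrak{R}_+$, which is just the definition of $\mathscr{C}^{+}_0$; and (ii) the group homomorphism $\langle\,\cdot\,,\alpha\rangle\colon\mathbb{X}^\vee\to\mathbb{Z}$ is nonzero (because $\mathbb{X}^\vee$ spans $E$ and $\alpha\neq0$), hence its image is a nonzero subgroup of $\mathbb{Z}$ and is unbounded both above and below.

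For existence, I would fix a special facet $\mathbf{v}=\{\lambda\}$ with $\lambda\in\mathbb{X}^\vee$, so that $\mathscr{C}^{+}_\mathbf{v}=\lambda+\mathscr{C}^{+}_0$. Choosing any $v_0\in\mathscr{C}^{+}_0$ and using (i), we get $\langle\lambda+tv_0,\alpha\rangle=\langle\lambda,\alpha\rangle+t\langle v_0,\alpha\rangle\to+\infty$ as $t\to+\infty$; since $\mathscr{C}^{+}_0$ is a cone, $\lambda+tv_0\in\mathscr{C}^{+}_\mathbf{v}$ for all $t>0$, and it lies in $E^{+}$ once $t$ is large enough. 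Hence $E^{+}\cap\mathscr{C}^{+}_\mathbf{v}\neq\emptyset$ for every special facet $\mathbf{v}$.

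For uniqueness I would use (ii) to pick $\lambda\in\mathbb{X}^\vee$ with $\langle\lambda,\alpha\rangle\geq c$. Then for every $w\in\mathscr{C}^{+}_0$ one has $\langle\lambda+w,\alpha\rangle\geq c+\langle w,\alpha\rangle>c$ by (i), so $\mathscr{C}^{+}_{\{\lambda\}}=\lambda+\mathscr{C}^{+}_0\subset E^{+}$ and in particular $E^{-}\cap\mathscr{C}^{+}_{\{\lambda\}}=\emptyset$; as $E\setminus H$ has exactly two components, $E_1=E^{+}$ is the unique one meeting all the quarters $\mathscr{C}^{+}_\mathbf{v}$, which settles the first assertion. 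For the second assertion I first record that for a special facet $\mathbf{u}=\{\mu\}$ one has $\mathscr{C}^{-}_\mathbf{u}=w_\mathbf{u}\car_1(\mu+\mathscr{C}^{+}_0)=\mu+(w_0\car_1\mathscr{C}^{+}_0)$, and that $w_0\car_1\mathscr{C}^{+}_0=\{x\in E:\langle x,\alpha\rangle<0\ \forall\alpha\in\mathfrak{R}_+\}$ since $w_0\mathfrak{R}_+=-\mathfrak{R}_+$. Using (ii) once more, choose $\mu\in\mathbb{X}^\vee$ with $\langle\mu,\alpha\rangle\leq c$ (for instance $\mu=0$ when $c\geq0$). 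Then any $x\in\mathscr{C}^{-}_\mathbf{u}$ satisfies $\langle x,\alpha\rangle<\langle\mu,\alpha\rangle\leq c$, so $x\notin E_1$; thus $\mathscr{C}^{-}_\mathbf{u}\cap E_1=\emptyset$ with $\mathbf{u}=\{\mu\}$, as desired.

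I do not expect a genuine obstacle here: the only mildly delicate points are identifying the hyperplanes of $\mathscr{H}$ as level sets of root functionals and the observation that such a functional takes arbitrarily large positive and negative integer values on $\mathbb{X}^\vee$; everything else is a one-line computation. The same argument applies verbatim to the dilated box action $\car_n$.
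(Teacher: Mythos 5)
Your proof is correct and follows essentially the same route as the paper's: both write $H$ as a level set of a linear functional vanishing on a hyperplane through a lattice point, travel far into the cone $\mathscr{C}^+_0$ to produce a point of $E_1\cap\mathscr{C}^+_{\mathbf{v}}$, exhibit one quarter entirely contained in $E_1$ to rule out the other component, and use that $\mathscr{C}^-_{\mathbf{u}}$ is a translate of $-\mathscr{C}^+_0$ for the last assertion. The only difference is cosmetic: you make the functional explicit as $\langle\cdot,\alpha\rangle$ for a positive root $\alpha$, while the paper keeps it abstract.
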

	\begin{proof}
		Let $f:E\to \mathbb{R}$ be a linear form (where $E$ is seen as a real vector space with origin $0$) and $x\in \mathbb{X}^\vee$ such that $H=x+f^{-1}(\{0\})$. Then the connected components of $E\backslash H$ are $E_1:=x+f^{-1}(\mathbb{R}^*_+)$ and $E_2:=x+f^{-1}(\mathbb{R}^*_-)$. Since $f^{-1}(\{0\})\in\mathscr{H}$, $\mathscr{C}^+_0$ must be inside $f^{-1}(\mathbb{R}^*_+)$ or $f^{-1}(\mathbb{R}^*_-)$. We may assume that $\mathscr{C}^+_0\subset f^{-1}(\mathbb{R}^*_+)$. Let $y\in \mathbb{X}$ and put $\mathbf{v}:=\{y\}$, so that $\mathscr{C}^+_\mathbf{v}=y+\mathscr{C}^+_0$. For any $u\in\mathscr{C}^+_0$, we can find $r>0$ big enough so that $f(y-x+ru)>0$ (because $f(u)>0$), which implies that 
		$$y+ru\in (y+\mathscr{C}^+_0)\cap(x+f^{-1}(\mathbb{R}^*_+))=\mathscr{C}^+_\mathbf{v}\cap E_1.$$
		
		So $E_1$ has a non-empty intersection with any $\mathscr{C}^+_\mathbf{v}$. On the other hand, we have seen that $f^{-1}(\mathbb{R}^*_-)\cap \mathscr{C}^+_0=\emptyset$, which means that $E_2\cap \mathscr{C}^+_{x}=\emptyset$. This shows that $E_1$ is the only connected component of $E\backslash H$ having the desired property.
		
		Finally, we want to show that there exists a special facet $\mathbf{u}$ such that $\mathscr{C}^-_\mathbf{u}\cap E_1=\emptyset$. Recall that we have 
		$$\mathscr{C}^-_0=w_0\mathscr{C}^+_0=-\mathscr{C}^+_0.$$
		Since $\mathscr{C}^+_0\subset f^{-1}(\mathbb{R}^*_+)$, we must have $\mathscr{C}^-_0\cap f^{-1}(\mathbb{R}^*_+)=\emptyset$, and therefore $\mathscr{C}^-_x\cap E_1=\emptyset$.
	\end{proof}
	Let $H\in\mathscr{H}$, then $E\backslash H$ consists of two connected components $E^-_H$ and $E^+_H$. Thanks to Proposition \ref{conncomp}, we can set $E^+_H$ to be the one connected component that has a non-empty intersection with $\mathscr{C}^+_\mathbf{v}$ for any special facet $\mathbf{v}$. Following \cite[§4]{Soergel1997KazhdanLusztigPA} (see also \cite[§1.5]{LUSZTIG1980121}), we define a partial order $\preceq$ on $\mathcal{A}$ generated by the relations
	$$A\preceq s_HA\qquad\text{if}~ A\in\mathcal{A},~H\in \mathscr{H},~s_HA\subset E^+_H,$$
	where $s_H\in W$ denotes the affine reflection associated with the hyperplane $H$. One can show (cf. the proof of \cite[Claim 4.14]{Soergel1997KazhdanLusztigPA}) that this partial order $\preceq$ coincides with the usual Bruhat order $\leq$ on $\mathcal{A}^+$. The following result will be crucial in the sequel. We let $A$ be an alcove, $s\in S$ and $H$ be the hyperplane containing the wall separating $A$ and $As$.
	\begin{prop}\label{reverse order}
		Assume that $A,As\in\mathcal{A}^+$ and that there exists a special point $\mathbf{v}$ such that $w_\mathbf{v}A$ and $w_\mathbf{v}As$ belong to $\mathcal{A}^+$. Then $A<As$ if and only if $w_\mathbf{v}As<w_\mathbf{v}A$.
	\end{prop}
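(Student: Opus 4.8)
The strategy is to express the Bruhat order on $\mathcal{A}^+$ in terms of the half-spaces $E^\pm_H$, and then to exploit that the involution $w_\mathbf{v}$ interchanges $E^+_H$ with $E^-_{w_\mathbf{v}H}$ for every $H\in\mathscr{H}$. First I would record a local description of the Bruhat order on $\mathcal{A}^+$: since $A$ and $As$ are adjacent across $H$ we have $As=s_HA$, and exactly one of the two generating relations of $\preceq$ concerning this pair holds, namely $A\preceq s_HA$ if $As\subset E^+_H$, and $As\preceq s_H(As)=A$ if $A\subset E^+_H$. As $\preceq$ agrees with the Bruhat order on $\mathcal{A}^+$ (by the proof of \cite[Claim 4.14]{Soergel1997KazhdanLusztigPA}), this gives
\[
A<As\iff As\subset E^+_H\iff A\subset E^-_H .
\]
Moreover $w_\mathbf{v}A$ and $w_\mathbf{v}As$ are adjacent across the hyperplane $w_\mathbf{v}H$, because $w_\mathbf{v}As=(w_\mathbf{v}s_Hw_\mathbf{v})(w_\mathbf{v}A)=s_{w_\mathbf{v}H}(w_\mathbf{v}A)$, and they lie in $\mathcal{A}^+$ by hypothesis, so the same reasoning yields
\[
w_\mathbf{v}As<w_\mathbf{v}A\iff w_\mathbf{v}A\subset E^+_{w_\mathbf{v}H} .
\]

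The crux is the geometric claim that $w_\mathbf{v}(E^+_K)=E^-_{w_\mathbf{v}K}$ for every $K\in\mathscr{H}$. Writing $\mathbf{v}=\{\lambda\}$, the element $w_\mathbf{v}=t_\lambda w_0t_{-\lambda}\in W$ is an involution of $E$ with linear part $w_0$; since $w_0$ sends $\mathscr{C}^+_0$ to $\mathscr{C}^-_0$, it sends each quarter $\mathscr{C}^+_\mathbf{u}$ (the translate of $\mathscr{C}^+_0$ with vertex the special facet $\mathbf{u}$) to $\mathscr{C}^-_{w_\mathbf{v}\mathbf{u}}$, and $\mathbf{u}\mapsto w_\mathbf{v}\mathbf{u}$ is a bijection of the set of special facets. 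By definition $E^+_K$ meets $\mathscr{C}^+_\mathbf{u}$ for every special facet $\mathbf{u}$, hence $w_\mathbf{v}(E^+_K)$ — one of the two connected components of $E\setminus w_\mathbf{v}K$ — meets $w_\mathbf{v}(\mathscr{C}^+_\mathbf{u})=\mathscr{C}^-_{w_\mathbf{v}\mathbf{u}}$ for every $\mathbf{u}$, i.e. it meets $\mathscr{C}^-_\mathbf{w}$ for every special facet $\mathbf{w}$. On the other hand, Proposition \ref{conncomp} applied to the hyperplane $w_\mathbf{v}K$ provides a special facet $\mathbf{u}_0$ with $\mathscr{C}^-_{\mathbf{u}_0}\cap E^+_{w_\mathbf{v}K}=\emptyset$; therefore $w_\mathbf{v}(E^+_K)\neq E^+_{w_\mathbf{v}K}$, and so $w_\mathbf{v}(E^+_K)=E^-_{w_\mathbf{v}K}$.

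To finish, I would apply this claim with $K=w_\mathbf{v}H$ and use $w_\mathbf{v}^2=\mathrm{id}$ to obtain $w_\mathbf{v}(E^+_{w_\mathbf{v}H})=E^-_H$, whence $w_\mathbf{v}A\subset E^+_{w_\mathbf{v}H}\iff A\subset E^-_H$. Chaining the three displayed equivalences gives $A<As\iff A\subset E^-_H\iff w_\mathbf{v}As<w_\mathbf{v}A$, which is the assertion. I expect the main obstacle to be the bookkeeping around the half-spaces — knowing that $E^+_H$ is well defined (Proposition \ref{conncomp}) and, above all, that $w_\mathbf{v}$ swaps the two half-spaces of each hyperplane of $\mathscr{H}$ — together with the routine but necessary check that $w_\mathbf{v}$ carries the adjacency of $A$ and $As$ to the adjacency of $w_\mathbf{v}A$ and $w_\mathbf{v}As$ across $w_\mathbf{v}H$; everything else is formal.
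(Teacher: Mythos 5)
Your proposal is correct and follows essentially the same route as the paper: reduce both inequalities to containment in the half-spaces $E^{\pm}_H$ via the coincidence of $\leq$ and $\preceq$ on $\mathcal{A}^+$, then show $w_\mathbf{v}$ swaps the two half-spaces of each hyperplane using Proposition \ref{conncomp}. The only cosmetic difference is that you identify $w_\mathbf{v}(\mathscr{C}^+_\mathbf{u})$ with a negative quarter directly from the linear part $w_0$, where the paper invokes Lemma \ref{translation}; the substance is identical.
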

	\begin{proof} 
		Since the two orders $\leq$ and $\preceq$ coincide on $\mathcal{A}^+$ and $As=s_HA$, the fact that $A<As$ is equivalent to $As\subset E^+_H$.
		
		Because the actions of $W$ on the right and on the left on $\mathcal{A}$ commute, the hyperplane $H':=w_\mathbf{v}\car_1 H$ is the one separating $w_\mathbf{v}As$ from $w_\mathbf{v}A$. Using the fact that $w_\mathbf{v}A, w_\mathbf{v}As\in\mathcal{A}^+$, we see that $w_\mathbf{v}As<w_\mathbf{v}A$ if and only if $w_\mathbf{v}A$ is included in $E^+_{H'}$. Thus, to conclude it is enough to show that $E^+_{H'}=w_\mathbf{v}E^-_H$, or equivalently that $E^-_{H'}=w_\mathbf{v}E^+_H$. Notice that since $w_\mathbf{v}E^+_H$ is connected, it equals either $E^+_{H'}$ or $E^-_{H'}$. By Proposition \ref{conncomp}, we can find a special facet $\mathbf{u}$ such that $\mathscr{C}^-_\mathbf{u}\cap E^+_H=\emptyset$.  Then we have $w_\mathbf{v}\mathscr{C}^-_\mathbf{u}\cap w_\mathbf{v}E^+_H=\emptyset$. But 
		$$w_\mathbf{v}\mathscr{C}^-_\mathbf{u}=w_\mathbf{v}w_\mathbf{u}\mathscr{C}^+_\mathbf{u},$$
		and since $w_\mathbf{v}w_\mathbf{u}$ is a translation (thanks to Lemma \ref{translation}), $w_\mathbf{v}\mathscr{C}^-_\mathbf{u}$ is of the form $\mathscr{C}^+_\mathbf{w}$ for some special facet $\mathbf{w}$. So $\mathscr{C}^+_\mathbf{w}\cap w_\mathbf{v}E^+_H=\emptyset$, which implies that $E^-_{H'}=w_\mathbf{v}E^+_H$.
	\end{proof}
	
	Following \cite[§4, §5]{Soergel1997KazhdanLusztigPA}, we consider the bijection $\mathcal{A}\to \mathcal{A},~A\mapsto\check{A}$. This bijection sends an alcove $A$ inside of $\Pi_\mathbf{v}$ to the alcove $w_\mathbf{v}A$. We will denote by $A\mapsto\hat{A}$ the inverse of this bijection. For any alcove $B\subset\Pi:=\Pi_0$, we see that $\check{B}=w_0B$. For a general alcove $A$, write it uniquely as $A=\lambda+B$ for some $\lambda\in\mathbb{X}^\vee$, $B\subset\Pi$, and it follows that $\check{A}=\lambda+w_0B$.
	
	\begin{lem}\label{lemhat}
		Let $A\in\mathcal{A}$ and write $A=\lambda+B$, with $\lambda\in\mathbb{X}^\vee$, $B\subset\Pi$. We have $\hat{A}=\lambda+t_{2\rho^\vee}w_0B$.
	\end{lem}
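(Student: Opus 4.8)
The plan is to exhibit an explicit alcove $C$ and to verify directly that $\check{C}=A$; since the map $A\mapsto\hat{A}$ is by definition the inverse of $A\mapsto\check{A}$, this forces $\hat{A}=C$. Recall that if an alcove $D$ is written (uniquely) as $D=\nu+D_0$ with $\nu\in\mathbb{X}^\vee$ and $D_0\subset\Pi$, then $\check{D}=\nu+w_0D_0$. So the only thing one needs to understand is how to push a translate of $w_0(\text{an alcove inside }\Pi)$ back into the standard box $\Pi$.

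The key computation will be the identity $w_0\Pi=t_{-\rho^\vee}\Pi$. To prove it I would use the explicit description $\Pi=\{\mu\in E\mid 0<\langle\mu,\alpha\rangle<1\ \forall\alpha\in S_0\}$ from \eqref{box} together with the relation $\langle w_0\mu,\alpha\rangle=\langle\mu,w_0\alpha\rangle$ for the action of $W$ on $E$: the substitution $\nu=w_0\mu$ gives
\[
w_0\Pi=\{\nu\in E\mid 0<\langle\nu,w_0\alpha\rangle<1\ \forall\alpha\in S_0\},
\]
and since $w_0$ sends the set of simple roots to its negative, this equals $\{\nu\mid -1<\langle\nu,\alpha\rangle<0\ \forall\alpha\in S_0\}$, which in turn equals $-\rho^\vee+\Pi$ because $\langle\rho^\vee,\alpha\rangle=1$ for every simple root $\alpha$ (here one uses that $G$ is of adjoint type, so that $\rho^\vee\in\mathbb{X}^\vee$). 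In particular, writing $A=\lambda+B$ with $B\subset\Pi$, we obtain $w_0B\subset t_{-\rho^\vee}\Pi$, hence $\rho^\vee+w_0B\subset\Pi$.

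It then remains to set $C:=\lambda+t_{2\rho^\vee}w_0B$ and to observe that $C=(\lambda+\rho^\vee)+(\rho^\vee+w_0B)$ is exactly the decomposition of the alcove $C$ with $\mathbb{X}^\vee$-part $\lambda+\rho^\vee$ and box-part $\rho^\vee+w_0B\subset\Pi$. Applying the formula for $\check{\,\cdot\,}$ then gives
\[
\check{C}=(\lambda+\rho^\vee)+w_0(\rho^\vee+w_0B)=\lambda+\rho^\vee+w_0\rho^\vee+B=\lambda+B=A,
\]
the last step using $w_0\rho^\vee=-\rho^\vee$. Hence $\hat{A}=C=\lambda+t_{2\rho^\vee}w_0B$, as claimed. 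The argument is essentially bookkeeping: the only points requiring mild care are the identity $w_0\Pi=t_{-\rho^\vee}\Pi$ (i.e. keeping the conventions for the $W$-action and the pairing straight, and the facts that $w_0$ negates the set of simple roots and that $\langle\rho^\vee,\cdot\rangle$ equals $1$ on each simple root) and checking that $\rho^\vee+w_0B$ really lands inside $\Pi$, so that the formula $\check{D}=\nu+w_0D_0$ may be applied with the correct decomposition. There is no genuine obstacle.
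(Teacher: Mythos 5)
Your proof is correct and follows essentially the same route as the paper: both exhibit the candidate alcove with box-part $\rho^\vee+w_0B=w_0(-\rho^\vee+B)\subset\Pi$ and verify that applying $\check{(\cdot)}$ recovers $A$, using $w_0\rho^\vee=-\rho^\vee$. The only difference is that you actually prove the key containment (via the identity $w_0\Pi=-\rho^\vee+\Pi$, using $\langle\rho^\vee,\alpha\rangle=1$ for simple $\alpha$), whereas the paper asserts the equivalent fact $w_0(\mu-\rho^\vee)\in\Pi$ for $\mu\in\Pi$ without proof.
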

	\begin{proof}
		For all $\mu\in\Pi$, we have that $w_0(\mu-\rho^\vee)\in\Pi$. Thus, $w_0(-\rho^\vee+B)\subset\Pi$ for any alcove $B\subset\Pi$, so if we set $C:=\rho^\vee+w_0(-\rho^\vee+B)$, we get $\check{C}=B$ and so 
		$$\hat{B}=C=2\rho^\vee+w_0B=t_{2\rho^\vee}w_0B.$$
		The general result follows easily, since any alcove $A$ can be written uniquely as  $A=\lambda+B$, with $\lambda\in\mathbb{X}^\vee$, $B\subset\Pi$, and $\hat{A}=\lambda+\hat{B}$.
	\end{proof}
	\begin{prop}\label{decalage rho}
		Let $A\in\mathcal{A}^+$. Then $\hat{A}\subset \rho^\vee+\mathscr{C}_0^+$.
	\end{prop}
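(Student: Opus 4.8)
The plan is to compute $\hat A$ explicitly via Lemma~\ref{lemhat} and then check by hand the inequalities defining $\rho^\vee+\mathscr{C}_0^+$. Write $A=\lambda+B$ as in that lemma, with $\lambda\in\mathbb{X}^\vee$ and $B\subset\Pi=\Pi_0$; then $\hat A=\lambda+t_{2\rho^\vee}w_0B$, i.e.\ $\hat A=\{\lambda+2\rho^\vee+w_0\mu:\mu\in B\}$. So it suffices to prove that
$$\langle\lambda+\rho^\vee+w_0\mu,\alpha\rangle>0\qquad\text{for every }\mu\in B\text{ and every }\alpha\in\mathfrak{R}_+,$$
since this says exactly that every point of $\hat A$ lies in $\rho^\vee+\mathscr{C}_0^+$.

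I would extract two elementary observations. First, \emph{$\lambda$ is dominant}: for $\nu\in A$, the inclusion $A\subset\Pi_\lambda$ gives $\langle\nu-\lambda,\beta\rangle<1$ for every simple root $\beta$ by \eqref{box}, while $A\subset\mathscr{C}_0^+$ gives $\langle\nu,\beta\rangle>0$; combining these, $\langle\lambda,\beta\rangle>-1$, hence $\langle\lambda,\beta\rangle\ge0$ as $\langle\lambda,\beta\rangle\in\mathbb{Z}$, so $\langle\lambda,\alpha\rangle\ge0$ for all $\alpha\in\mathfrak{R}_+$. This is the only place where the hypothesis $A\in\mathcal{A}^+$ enters. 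Second, \emph{$\rho^\vee-\mu\in\mathscr{C}_0^+$ for $\mu\in B$}: from $B\subset\Pi_0$ and \eqref{box} one has $0<\langle\mu,\beta\rangle<1$ for every simple root $\beta$, hence $0<\langle\rho^\vee-\mu,\beta\rangle<1$ since $\langle\rho^\vee,\beta\rangle=1$; thus $\rho^\vee-\mu\in\Pi_0\subset\mathscr{C}_0^+$, and in particular $\langle\rho^\vee-\mu,\gamma\rangle>0$ for all $\gamma\in\mathfrak{R}_+$.

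Now fix $\mu\in B$ and $\alpha\in\mathfrak{R}_+$, and put $\beta:=-w_0\alpha$, which lies in $\mathfrak{R}_+$ because $w_0\mathfrak{R}_+=-\mathfrak{R}_+$. Using the $W_0$-invariance of the pairing together with $w_0\alpha=-\beta$ and $w_0\rho^\vee=-\rho^\vee$, one gets $\langle w_0\mu,\alpha\rangle=\langle\mu,w_0\alpha\rangle=-\langle\mu,\beta\rangle$ and $\langle\rho^\vee,\alpha\rangle=\langle w_0\rho^\vee,w_0\alpha\rangle=\langle-\rho^\vee,-\beta\rangle=\langle\rho^\vee,\beta\rangle$. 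Hence
$$\langle\lambda+\rho^\vee+w_0\mu,\alpha\rangle=\langle\lambda,\alpha\rangle+\langle\rho^\vee-\mu,\beta\rangle\ge\langle\rho^\vee-\mu,\beta\rangle>0$$
by the two observations. Since this holds for all $\mu\in B$ and all $\alpha\in\mathfrak{R}_+$, we obtain $\hat A-\rho^\vee\subset\mathscr{C}_0^+$, i.e.\ $\hat A\subset\rho^\vee+\mathscr{C}_0^+$.

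All in all this is a short verification, and I do not expect a genuine obstacle: the only points needing a little care are the identification of the translation part $\lambda$ of the box $\Pi_\lambda$ containing a dominant alcove as a dominant coweight, and the bookkeeping with the action of the longest element $w_0$ on roots and on $\rho^\vee$.
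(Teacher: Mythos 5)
Your argument is correct and follows essentially the same route as the paper: both decompose $A=\lambda+B$ with $B\subset\Pi$, observe that $A\in\mathcal{A}^+$ forces $\lambda$ to be dominant, and then use the explicit description of $\hat{\phantom{A}}$ on the box $\Pi$ (your observation that $\rho^\vee-\mu\in\Pi_0$ for $\mu\in\Pi_0$ is exactly the fact $w_0(\mu-\rho^\vee)\in\Pi$ invoked in the paper's proof of Lemma \ref{lemhat}). The only difference is that you verify the defining inequalities of $\rho^\vee+\mathscr{C}_0^+$ pointwise rather than passing through the containment $\hat{A}=\lambda+\rho^\vee+w_0(-\rho^\vee+B)\subset\lambda+\rho^\vee+\Pi$, which is purely cosmetic.
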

	\begin{proof}
		Let $A\in\mathcal{A}^+$, and write $A=\lambda+B$, for $\lambda\in\mathbb{X}^\vee$, $B\subset\Pi$. Since $A$ is in $\mathcal{A}^+$, we must have $\lambda\in\overline{\mathscr{C}^+_{0}}\cap\mathbb{X}^\vee$. Indeed, if $\mu\in\Pi$ and $\lambda\in\mathbb{X}^\vee\backslash\overline{\mathscr{C}^+_{0}}$, then one sees using (\ref{box}) that $\lambda+\mu\notin\mathscr{C}^+_0$.
		
		Thus, it is enough to prove the proposition for $A\subset \Pi$. But in this case, we have by lemma \ref{lemhat}
		$$\hat{A}=t_{2\rho^\vee}w_0A=\rho^\vee+w_0(-\rho^\vee+A).$$
		Since $w_0(-\rho^\vee+A)\subset\Pi$, this concludes the proof.
	\end{proof}
	
	We also need to define a bijection $\mathcal{A}_\mathbf{g}\to \mathcal{A}_\mathbf{g},~\mathbf{h}\mapsto\hat{\mathbf{h}}$ for an arbitrary facet $\mathbf{g}\subset \overline{\mathbf{a}_1}$. One can do it as follows: let $\mathbf{h}\in \mathcal{A}_\mathbf{g}$ and $A\in\mathcal{A}$ be such that $\mathbf{h}\subset\overline{A}$ and $A\preceq Aw$ for all $w\in W_\mathbf{g}$ (such an alcove exists and is unique, thanks to the existence and unicity of a minimal element in  a coset of a parabolic subgroup in a Coxeter group), then we define $\hat{\mathbf{h}}$ as the element of $\mathcal{A}_\mathbf{g}$ which sits inside the closure of $\hat{A}$.
	\begin{coro}\label{shifting facets}
		For any $\mathbf{h}\in\mathcal{A}_\mathbf{g}^+$, we have $\hat{\mathbf{h}}\subset \rho^\vee+\overline{\mathscr{C}^+_0}$.
	\end{coro}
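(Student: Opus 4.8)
The plan is to reduce the statement to the alcove case, which is precisely Proposition \ref{decalage rho}, by unwinding the definition of the bijection $\mathbf{h}\mapsto\hat{\mathbf{h}}$ on $\mathcal{A}_\mathbf{g}$ and invoking Proposition \ref{dominant facet}.

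First I would fix $\mathbf{h}\in\mathcal{A}_\mathbf{g}^+$; by definition of $\mathcal{A}_\mathbf{g}^+$ this means $\mathbf{h}\subset\mathscr{C}_0^+$. Following the construction of $\hat{\mathbf{h}}$, let $A\in\mathcal{A}$ be the unique alcove with $\mathbf{h}\subset\overline{A}$ and $A\preceq Aw$ for all $w\in W_\mathbf{g}$. Since $A$ contains in its closure the facet $\mathbf{h}$, which lies in $\mathscr{C}_0^+$, Proposition \ref{dominant facet} applies (with $\mathbf{p}=\mathbf{h}$) and forces $A\subset\mathscr{C}_0^+$, that is, $A\in\mathcal{A}^+$.

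Then I would apply Proposition \ref{decalage rho} to obtain $\hat{A}\subset\rho^\vee+\mathscr{C}_0^+$. Passing to closures and using that translation by $\rho^\vee$ is a homeomorphism of $E$, this gives $\overline{\hat{A}}\subset\rho^\vee+\overline{\mathscr{C}_0^+}$. By construction $\hat{\mathbf{h}}$ is the facet of $\mathcal{A}_\mathbf{g}$ sitting inside $\overline{\hat{A}}$, so $\hat{\mathbf{h}}\subset\overline{\hat{A}}\subset\rho^\vee+\overline{\mathscr{C}_0^+}$, which is the desired conclusion. There is no genuine obstacle here: the only point to verify is that the minimal alcove $A$ attached to a dominant facet $\mathbf{h}$ is again dominant, and this is exactly Proposition \ref{dominant facet}; everything else is unwinding definitions and monotonicity of closure.
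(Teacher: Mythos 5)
Your proof is correct and follows essentially the same route as the paper: take the (minimal) alcove $A$ with $\mathbf{h}\subset\overline{A}$ from the definition of $\hat{\mathbf{h}}$, use Proposition \ref{dominant facet} to see $A\in\mathcal{A}^+$, apply Proposition \ref{decalage rho}, and pass to closures. Nothing is missing.
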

	\begin{proof}
		By definition, $\mathbf{h}$ sits inside the closure of some alcove $A$. Notice that, since $A$ contains $\mathbf{h}$ in its closure, we must have $A\in\mathcal{A}^+$ thanks to Proposition \ref{dominant facet}. Thus $\hat{A}\subset \rho^\vee+\mathscr{C}^+_0$ by Proposition \ref{decalage rho}, so $\hat{\mathbf{h}}\subset \rho^\vee+\overline{\mathscr{C}^+_0}$.
	\end{proof}
	Corollary \ref{shifting facets} implies in particular that the operation $\mathbf{h}\mapsto\hat{\mathbf{h}}$ preserves $\mathcal{A}^+_\mathbf{g}$.
	\begin{prop}\label{hat transf}
		Let $A\in\mathcal{A}^+$ be such that $Ar\geq A$  and $Ar\in\mathcal{A}^+$ for all $r\in W_\mathbf{g}$. Then, for all $r\in W_\mathbf{g}$, we have $\hat{A}r\leq\hat{A}$ and $\hat{A}r\in\mathcal{A}^+$.
	\end{prop}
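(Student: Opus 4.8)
The plan is to reduce to the case of a single simple reflection and then apply Proposition \ref{reverse order}, after first checking that \emph{all} of the alcoves $\hat{A}r$ for $r\in W_\mathbf{g}$ are dominant.

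\textbf{Step 1 (the alcoves $\hat{A}r$ are dominant).} Write $A=w\car_1\mathbf{a}_1$ with $w\in W$. The hypotheses say precisely that $w$ is minimal in $wW_\mathbf{g}$ and that $wr\in{_\mathrm{f}W}$ for every $r\in W_\mathbf{g}$ (using the bijection ${_\mathrm{f}W}\xrightarrow{\sim}\mathcal{A}^+$). Hence $ww_\mathbf{g}$ is the maximal element of $wW_\mathbf{g}$, it lies in $W^\mathbf{g}$, and $ww_\mathbf{g}r\in{_\mathrm{f}W}$ for all $r\in W_\mathbf{g}$, so $ww_\mathbf{g}\in{_\mathrm{f}W}^\mathbf{g}$ by Proposition \ref{prop min coset}; therefore the facet $\mathbf{h}:=w\car_1\mathbf{g}=ww_\mathbf{g}\car_1\mathbf{g}\subset\overline{A}$ belongs to $\mathcal{A}^+_\mathbf{g}$. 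By Corollary \ref{shifting facets} we get $\hat{\mathbf{h}}\subset\rho^\vee+\overline{\mathscr{C}^+_0}$, and since $\rho^\vee$ is strictly dominant (so $\langle\rho^\vee,\alpha\rangle>0$ for all $\alpha\in\mathfrak{R}_+$) this is contained in $\mathscr{C}^+_0$; by the argument of Proposition \ref{dominant facet}, every alcove having $\hat{\mathbf{h}}$ in its closure is dominant. Finally, since $A,Ar\in\mathcal{A}^+$ and $\preceq$ agrees with $\leq$ on $\mathcal{A}^+$, the alcove $A$ is precisely the one used in the definition of $\hat{\mathbf{h}}$ attached to $\mathbf{h}$; hence $\hat{\mathbf{h}}$ is the facet of $\mathcal{A}_\mathbf{g}$ lying in $\overline{\hat{A}}$, and the alcoves having $\hat{\mathbf{h}}$ in their closure are exactly the $\hat{A}r$, $r\in W_\mathbf{g}$. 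Thus $\hat{A}r\in\mathcal{A}^+$ for all $r\in W_\mathbf{g}$.

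\textbf{Step 2 (order reversal).} Let $\mathbf{u}$ be the (zero-dimensional, special) point with $\hat{A}\subset\Pi_\mathbf{u}$. The definitions of the bijections $A\mapsto\check{A}$ and its inverse $A\mapsto\hat{A}$ give $\hat{A}=w_\mathbf{u}A$, and since the left and right $W$-actions on $\mathcal{A}$ commute, $\hat{A}r=w_\mathbf{u}(Ar)$ for all $r\in W_\mathbf{g}$. I will prove $\hat{A}r\leq\hat{A}$ by induction on $l(r)$, the case $r=e$ being trivial. For $l(r)\geq 1$, write $r=r_1s$ with $s\in S_\mathbf{g}$ and $l(r_1)=l(r)-1$. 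Minimality of $w$ in $wW_\mathbf{g}$ gives $d(Ar_1)<d(Ar_1s)=d(Ar)$, and since $Ar=(Ar_1)s$ these alcoves are adjacent, so $Ar_1<Ar$. Apply Proposition \ref{reverse order} to the alcove $Ar_1$, the simple reflection $s$, and the special point $\mathbf{u}$: its hypotheses hold because $Ar_1,Ar\in\mathcal{A}^+$ (assumption) and $w_\mathbf{u}(Ar_1)=\hat{A}r_1$, $w_\mathbf{u}(Ar)=\hat{A}r$ lie in $\mathcal{A}^+$ (Step 1), and it yields $\hat{A}r=w_\mathbf{u}(Ar)<w_\mathbf{u}(Ar_1)=\hat{A}r_1\leq\hat{A}$, the last inequality being the induction hypothesis. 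This proves both assertions.

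\textbf{Main obstacle.} The delicate point is Step 1: one must know that the \emph{facet} $\hat{\mathbf{h}}$, not merely the alcove $\hat{A}$, sits strictly inside the dominant cone, so that all of its adjacent alcoves are dominant. This is exactly where Corollary \ref{shifting facets} is needed rather than just Proposition \ref{decalage rho}. Once this is in place, the order reversal is a formal consequence of Proposition \ref{reverse order} via the reduced-expression induction above.
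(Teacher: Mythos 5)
Your proof is correct and follows essentially the same route as the paper: establish that the facet of $\mathcal{A}_\mathbf{g}$ in $\overline{\hat{A}}$ lies in $\rho^\vee+\overline{\mathscr{C}^+_0}$ (hence all alcoves $\hat{A}r$ are dominant by Proposition \ref{dominant facet}), then invoke Proposition \ref{reverse order} with $A=w_\mathbf{v}\hat{A}$. The only cosmetic differences are that the paper gets dominance by applying Proposition \ref{decalage rho} directly to $\hat{A}$ rather than Corollary \ref{shifting facets} to the facet, and it applies Proposition \ref{reverse order} only for $s\in S_\mathbf{g}$ before concluding by the standard theory of maximal coset representatives, where you run a reduced-expression induction instead.
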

	\begin{proof}
		Let $\mathbf{v}$ be the special point such that $\hat{A}\in\Pi_\mathbf{v}$. First notice that $\hat{A}$ is included in $\rho^\vee+\mathscr{C}^+_0$ thanks to Proposition \ref{decalage rho}. Thus, if we denote by $\mathbf{h}$ the facet of $\mathcal{A}_\mathbf{g}$ included in the closure of $\hat{A}$, we see that $\mathbf{h}\subset \rho^\vee+\overline{\mathscr{C}^+_0}$ and that $\mathbf{h}$ is included in the closure of all the alcoves of $\hat{A}W_\mathbf{g}$, so that they all belong to $\mathcal{A}^+$ thanks to Proposition \ref{dominant facet}. Since $A=w_\mathbf{v}\hat{A}$, we can apply Proposition \ref{reverse order} to get that $\hat{A}s<\hat{A}$ for all $s\in S_\mathbf{g}$. From the general theory of maximal elements in cosets of parabolic subgroups in Coxeter groups, this implies that $\hat{A}w\leq\hat{A}$ for all $w\in W_\mathbf{g}$.
	\end{proof}
	
	The anti-spherical $\ell$-Kazhdan-Lusztig polynomial ${^\ell n}_{x,y}$ will also be denoted by ${^\ell n}_{B,A}$ for the alcoves $B,A$ corresponding to $x,y\in {_\mathrm{f}W}$. The following result will be of great importance for us.
	\begin{lem}\label{lem askl1}
		For any dominant alcove $A\in\mathcal{A}^+$, we have $n_{A,\hat{A}}(1)=1$.
	\end{lem}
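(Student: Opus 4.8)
The plan is to translate the statement into one about the costalks of characteristic-zero parity complexes, and then to argue by induction on $d(A)$.

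Since $n=\,{}^0n$ and, over a coefficient field of characteristic $0$, the perversely shifted indecomposable parity complexes are the intersection cohomology complexes, formula (\ref{l-KL}) applied with $\ell=0$ gives
$$n_{A,\hat A}(1)=\dim_{\mathbf{k}}\mathrm{Hom}^\bullet_{D^b_{\mathcal{IW}}(\mathrm{Fl}^{\circ}_{\mathbf{a}_1},\mathbf{k})}(\mathcal{E}_{\hat A},\nabla_A)$$
for $\mathbf{k}$ of characteristic $0$; up to reindexing this is the total dimension of the costalk of $\mathcal{E}_{\hat A}$ along the stratum $\mathscr{X}_A$. In particular $n_{A,\hat A}(1)\geq 1$ exactly when $\mathscr{X}_A\subset\overline{\mathscr{X}_{\hat A}}$, i.e.\ when $A\leq\hat A$. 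To see that this holds, write $A=\lambda+B$ with $\lambda\in\overline{\mathscr{C}^+_0}\cap\mathbb{X}^\vee$ and $B\subset\Pi$; Lemma~\ref{lemhat} shows that $\hat A=w_{\mathbf{v}}\car_1 A$, where $\mathbf{v}=\{\lambda+\rho^\vee\}$ is a special vertex lying in $\mathscr{C}^+_0$ and $w_{\mathbf{v}}$ is the longest element of the finite group $W_{\mathbf{v}}$, so that $\hat A$ corresponds to $w_{\mathbf{v}}x\in{_\mathrm{f}W}$ if $A$ corresponds to $x$. Combining this with $\hat A\subset\rho^\vee+\mathscr{C}^+_0$ (Proposition~\ref{decalage rho}) yields $A\leq\hat A$, hence $n_{A,\hat A}(1)\geq 1$.

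The remaining content is the upper bound $n_{A,\hat A}(1)\leq 1$, which I would prove by induction on $d(A)$. For the base case $A=\mathbf{a}_1$, one has $\hat A\leftrightarrow w_{\{\rho^\vee\}}=t_{2\rho^\vee}w_0$, and $n_{\mathbf{a}_1,\hat{\mathbf{a}_1}}(1)=1$ follows from a direct computation of the costalk of $\mathcal{E}_{w_{\{\rho^\vee\}}}$ along the base stratum; equivalently, this case is contained in the alcove combinatorics of \cite[\S4--5]{Soergel1997KazhdanLusztigPA}. For the inductive step with $A\neq\mathbf{a}_1$, choose $s\in S$ with $As<A$ and $As\in\mathcal{A}^+$ (possible since $\mathcal{A}^+\cong{_\mathrm{f}W}$ and every nontrivial element of ${_\mathrm{f}W}$ admits such an $s$, exactly as in the proof of Lemma~\ref{lem reg bl}), and let $H$ be the hyperplane carrying the wall between $A$ and $As$. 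If $H\notin\mathscr{H}^*$, then $A$ and $As$ lie in the same box, so $\widehat{As}=w_{\mathbf{v}}\car_1(As)$ for the same $\mathbf{v}$; Proposition~\ref{reverse order} then gives $\hat A<\widehat{As}$, and $\hat A,\widehat{As}$ differ by a simple reflection for the right action. One compares the costalks of $\mathcal{E}_{\hat A}$ and $\mathcal{E}_{\widehat{As}}$ through the recursion defining the antispherical Kazhdan--Lusztig basis (both $As\rightarrow A$ and $\widehat{As}\rightarrow\hat A$ being elementary wall-crossings), using that the induction hypothesis $n_{As,\widehat{As}}(1)=1$ together with positivity of the relevant polynomials forces all $\mu$-coefficient corrections to contribute $0$ at $v=1$; this yields $n_{A,\hat A}(1)=n_{As,\widehat{As}}(1)=1$. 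If $H\in\mathscr{H}^*$, the two boxes are distinct and the two instances of $\hat{\ }$ use different special vertices; here one uses instead (in the spirit of Proposition~\ref{hat transf}) that $\hat A$ is a maximal coset representative for $W_{\mathbf{g}}$, with $\mathbf{g}$ the vertex of $\overline{\mathbf{a}_1}$ cut out by the box wall $H$, and reduces the computation of the costalk to $\mathrm{Fl}^{\circ}_{\mathbf{g}}$ via the projection formula of Lemma~\ref{lem1}.

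The main obstacle is the inductive step: showing that the correction terms in the antispherical recursion genuinely drop out upon evaluation at $v=1$, and handling the box-wall case cleanly; making the base-case costalk computation precise in the Iwahori--Whittaker setting is a secondary difficulty. For these reasons it may in practice be simplest to extract the whole statement from \cite{Soergel1997KazhdanLusztigPA}.
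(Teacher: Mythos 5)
The paper's proof of this lemma is a one-line citation of \cite[Theorem 5.1]{Soergel1997KazhdanLusztigPA}, which asserts (in particular) that $n_{A,\hat{A}}$ is a single monomial; your closing remark that it is simplest to extract the statement from Soergel is exactly what the paper does. Your attempt at an independent proof, however, has a genuine gap at its very first step. You claim that $n_{A,\hat A}(1)\geq 1$ \emph{exactly when} $A\leq\hat A$, i.e.\ that support containment of strata is equivalent to nonvanishing of the costalk. It is only necessary, not sufficient: for antispherical Kazhdan--Lusztig polynomials the vanishing of $n_{x,y}$ for comparable pairs $x<y$ is pervasive --- this is precisely the ``cancellation effect'' near the walls of the dominant chamber invoked in the introduction to motivate Step~1 of the paper's strategy. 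Already in affine type $A_1$ one has $\underline{N}_y=N_y+vN_{ys}$, so $n_{x,y}=0$ for every $x<ys$ even though $x\leq y$. Since the lower bound $n_{A,\hat A}(1)\geq 1$ is the substantive content of the lemma (it is what guarantees that $A$ and $\hat A$ can be linked at all), the proof does not get off the ground.

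The upper bound is also not established. In your inductive step the two wall-crossings go in opposite directions ($As<A$ while $\hat A<\widehat{As}$ by Proposition~\ref{reverse order}), so applying the recursion $\underline{N}_{\widehat{As}}\,\underline{H}_s=\underline{N}_{\hat A}+\sum_z\mu(z,\widehat{As})\underline{N}_z$ and extracting the $N_A$- and $N_{As}$-coefficients requires controlling the $\mu$-correction terms; you acknowledge this but do not resolve it, and positivity alone does not force these contributions to vanish at $v=1$. Handling this uniformly (including the box-wall case, where the special vertex changes) is essentially what Soergel's passage through the periodic module $\mathcal{P}^\circ$ accomplishes, so the honest route here really is the citation.
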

	\begin{proof}
		This is an immediate consequence of \cite[Theorem 5.1]{Soergel1997KazhdanLusztigPA} (see also \cite[§7]{Soergel1997KazhdanLusztigPA}).
	\end{proof}
	
	\begin{lem}\label{lem hat}
		Let $A\in\mathcal{A}^+$ and $\mathbf{g}\subset\overline{\mathbf{a}_1}$ be a facet such that $Ar\geq A$ and $Ar\in\mathcal{A}^+$ for all $r\in W_\mathbf{g}$. Then we have $n_{Ar,\hat{A}}(1)=1$ for all $r\in W_\mathbf{g}$.
	\end{lem}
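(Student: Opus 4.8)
The plan is to reduce the lemma to the single identity $n_{A,\hat{A}}(1)=1$ of Lemma \ref{lem askl1}, using the coset-invariance of the anti-spherical Kazhdan--Lusztig polynomials recorded in the first point of Proposition \ref{prop max coset}. The one thing that needs checking is that $(A,\mathbf{g})$ is in a situation where that invariance applies with $\hat{A}$ sitting in the second (``$\mathcal{E}$'') slot, i.e.\ that $\hat{A}$ is the maximal element of its coset under $W_\mathbf{g}$.

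First I would note that the hypotheses placed on $A$ and $\mathbf{g}$ are exactly those of Proposition \ref{hat transf}. That proposition therefore yields $\hat{A}r\leq\hat{A}$ and $\hat{A}r\in\mathcal{A}^+$ for every $r\in W_\mathbf{g}$. The inequalities say precisely that $\hat{A}$ is the maximal element of the coset $\hat{A}W_\mathbf{g}$, so the element of $W$ corresponding to $\hat{A}$ lies in $W^\mathbf{g}$; and since $\hat{A}$ is a dominant alcove (for instance by Corollary \ref{shifting facets}, or already by Proposition \ref{decalage rho}), this element lies in ${_\mathrm{f}W}$. Hence it lies in ${_\mathrm{f}W}^\mathbf{g}$, which is the input needed for the next step.

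Then I would apply the first point of Proposition \ref{prop max coset} with $\mathbf{q}=\mathbf{g}$, with $w$ the element of ${_\mathrm{f}W}^\mathbf{g}$ corresponding to $\hat{A}$, and with $w'$ the element of ${_\mathrm{f}W}$ corresponding to $A$ (recall $A\in\mathcal{A}^+$); specializing the sheaf coefficients to characteristic $0$, where ${^0n}_{x,y}=n_{x,y}$, this gives $n_{Ar,\hat{A}}(1)=n_{A,\hat{A}}(1)$ for all $r\in W_\mathbf{g}$. Combining this with $n_{A,\hat{A}}(1)=1$ from Lemma \ref{lem askl1} finishes the proof. I do not expect any real obstacle beyond this bookkeeping: the only substantive input is Proposition \ref{hat transf}, which guarantees that $\hat{A}$ is a maximal-length $W_\mathbf{g}$-coset representative and thus legitimizes the use of Proposition \ref{prop max coset}(1) in this setting.
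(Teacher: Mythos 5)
Your proof is correct and follows essentially the same route as the paper: Proposition \ref{hat transf} is used to verify that $\hat{A}$ is the maximal (dominant) element of its $W_\mathbf{g}$-coset so that Proposition \ref{prop max coset}(1) applies with $\hat{A}$ in the second slot, and then Lemma \ref{lem askl1} supplies $n_{A,\hat{A}}(1)=1$. Your explicit remark that the coset-invariance of ${}^\ell n$ specializes to the ordinary polynomials $n$ in characteristic $0$ is exactly the bookkeeping the paper leaves implicit.
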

	
	\begin{proof}
		
		By Lemma \ref{lem askl1}, we have that $n_{A,\hat{A}}(1)=1$ and, by Proposition \ref{prop max coset} (applied to the elements $w\in {_\mathrm{f}W}^\mathbf{g}$, $w'\in {_\mathrm{f}W}$ such that $w'\mathbf{a}_\ell=A$ and $w\mathbf{a}_\ell=\hat{A}$) combined with Proposition \ref{hat transf}, we deduce that $n_{Ar,\hat{A}}(1)=1$ for all $r\in W_\mathbf{g}$.
	\end{proof}
	The following Proposition will allow us to only consider facets which are far inside of the dominant cone.
	\begin{prop}\label{away}
		Let $\mathbf{g}\subset \overline{\mathbf{a}_1}$ be a facet  and $\mathbf{h}\in\mathcal{A}^+_\mathbf{g}$. Denote by $u$, resp. $\hat{u}$, the elements of ${_\mathrm{f}W}^\mathbf{g}$ corresponding to $\mathbf{h}$, resp. $\hat{\mathbf{h}}$ (i.e. such that $u\mathbf{g}=\mathbf{h}$, resp. $\hat{u}\mathbf{g}=\hat{\mathbf{h}}$). Then we have $n_{u,\hat{u}}(1)\neq 0$, so that
		$$\mathrm{Hom}_{D^b_{\mathcal{IW}}(\mathrm{Fl}^{\circ}_{\mathbf{g}},\mathbf{k})}^\bullet(\mathcal{E}_{u}^{\mathbf{g}},\mathcal{E}_{\hat{u}}^{\mathbf{g}})\neq 0.$$
	\end{prop}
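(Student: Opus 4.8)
The plan is to reduce the assertion to the combinatorial input of Lemma~\ref{lem hat} (which evaluates an ordinary anti-spherical Kazhdan--Lusztig polynomial attached to an alcove far inside the dominant cone) and then to feed the resulting non-vanishing into Corollary~\ref{anti spherical implies R}. The representation-theoretic content is thus entirely outsourced; what remains is to translate correctly between facets and alcoves.

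First I would fix the alcove $A\in\mathcal{A}$ that is used in the definition of $\hat{\mathbf{h}}$: the unique alcove containing $\mathbf{h}$ in its closure which is minimal in the coset $AW_\mathbf{g}$. Since $\mathbf{h}\in\mathcal{A}^+_\mathbf{g}$ forces $\mathbf{h}\subset\mathscr{C}^+_0$, Proposition~\ref{dominant facet} gives $A\in\mathcal{A}^+$, and the same argument gives $Ar\in\mathcal{A}^+$ for every $r\in W_\mathbf{g}$ (each such alcove still contains $\mathbf{h}$ in its closure); minimality of $A$ in its coset gives $A\leq Ar$ for all $r\in W_\mathbf{g}$. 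Hence $A$ satisfies the hypotheses of both Lemma~\ref{lem hat} and Proposition~\ref{hat transf}.

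Next I would match $u$ and $\hat{u}$ with the relevant alcoves. Writing $\mathbf{h}=u\car_1\mathbf{g}$ with $u\in W^\mathbf{g}$ maximal in $uW_\mathbf{g}$, the alcoves containing $\mathbf{h}$ in their closure are exactly those of $uW_\mathbf{g}\car_1\mathbf{a}_1$, and the minimal one corresponds to $uw_\mathbf{g}$; hence $A=uw_\mathbf{g}\car_1\mathbf{a}_1$, so the alcove attached to $u$ is $Aw_\mathbf{g}=u\car_1\mathbf{a}_1$. On the other hand, Proposition~\ref{hat transf} gives $\hat{A}r\leq\hat{A}$ and $\hat{A}r\in\mathcal{A}^+$ for all $r\in W_\mathbf{g}$, so the element $x\in W$ with $x\car_1\mathbf{a}_1=\hat{A}$ is maximal in $xW_\mathbf{g}$ and dominant, whence $x\in{_\mathrm{f}W}^\mathbf{g}$ by Proposition~\ref{prop min coset}; moreover $x\car_1\mathbf{g}$ is the facet of $\mathcal{A}_\mathbf{g}$ inside $\overline{\hat{A}}$, i.e.\ $\hat{\mathbf{h}}$, so $x=\hat{u}$ and the alcove attached to $\hat{u}$ is $\hat{A}$. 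Now Lemma~\ref{lem hat} gives $n_{Ar,\hat{A}}(1)=1$ for every $r\in W_\mathbf{g}$; taking $r=w_\mathbf{g}$ yields $n_{u,\hat{u}}(1)=1\neq0$.

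Finally, since $u,\hat{u}\in{_\mathrm{f}W}^\mathbf{g}$, Corollary~\ref{anti spherical implies R} gives $\hat{u}\mathscr{R}_\mathbf{g}u$, that is $\mathrm{Hom}_{D^b_{\mathcal{IW}}(\mathrm{Fl}^{\circ}_{\mathbf{g}},\mathbf{k})}^\bullet(\mathcal{E}^{\mathbf{g}}_{\hat{u}},\mathcal{E}^{\mathbf{g}}_{u})\neq0$; since the category of Iwahori--Whittaker parity complexes is stable under Verdier duality and each indecomposable parity complex is Verdier self-dual up to shift, a graded $\mathrm{Hom}$-space between two of them vanishes if and only if the one in the opposite direction does, so $\mathrm{Hom}_{D^b_{\mathcal{IW}}(\mathrm{Fl}^{\circ}_{\mathbf{g}},\mathbf{k})}^\bullet(\mathcal{E}^{\mathbf{g}}_{u},\mathcal{E}^{\mathbf{g}}_{\hat{u}})\neq0$ as desired. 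I expect the only genuine difficulty to be the bookkeeping: keeping straight the two bijections $W\xrightarrow{\sim}\mathcal{A}$, $w\mapsto w\car_1\mathbf{a}_1$ and $W^\mathbf{g}\xrightarrow{\sim}\mathcal{A}_\mathbf{g}$, $w\mapsto w\car_1\mathbf{g}$, and checking that the operation $\mathbf{h}\mapsto\hat{\mathbf{h}}$ on facets corresponds, after the $w_\mathbf{g}$-shift inside the coset, to the alcove operation $A\mapsto\hat{A}$ of Lemma~\ref{lem hat} (this is exactly where Propositions~\ref{dominant facet} and~\ref{hat transf} enter).
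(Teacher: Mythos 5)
Your argument is correct and follows the paper's own proof essentially verbatim: both identify the minimal alcove $A=uw_\mathbf{g}\car_1\mathbf{a}_1$ containing $\mathbf{h}$ in its closure, check the hypotheses of Lemma~\ref{lem hat} and Proposition~\ref{hat transf} via Proposition~\ref{dominant facet}, deduce $n_{u,\hat{u}}(1)=1$, and conclude through Corollary~\ref{anti spherical implies R} (which is exactly Proposition~\ref{anti-spherical relations} combined with Proposition~\ref{proj form}, the route the paper takes). Your extra care about the direction of the $\mathrm{Hom}$-space is welcome — the paper glosses over it — though the cleanest justification of the symmetry is the formula of \cite[Proposition 2.6]{2014}, $\dim\mathrm{Hom}^\bullet(\mathcal{E}_w^\mathbf{g},\mathcal{E}_{w'}^\mathbf{g})=\sum_x{^\ell n}_{x,w}(1)\,{^\ell n}_{x,w'}(1)$, rather than Verdier duality, which in the Iwahori--Whittaker setting exchanges $\mathcal{L}_{\mathrm{AS}}$ with its inverse and so does not literally preserve the category.
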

	\begin{proof}
		
		Recall that $w_\mathbf{g}$ is the element of maximal length in $W_\mathbf{g}$, and set $A:=uw_\mathbf{g}\mathbf{a}_\ell$. Notice that $A\in\mathcal{A}^+$ thanks to Proposition \ref{prop min coset} and, by construction, $A$ is the alcove such that $\mathbf{h}\subset \overline{A}$, $Ar\in\mathcal{A}^+$ and $A\leq Ar$ for every $r\in W_\mathbf{g}$ (in particular $Ar\preceq A$ for all $r\in W_\mathbf{g}$), so that $\hat{\mathbf{h}}\subset\overline{\hat{A}}$. By Lemma \ref{lem hat}, we have that $n_{Aw_\mathbf{g},\hat{A}}(1)=1\neq 0$, and $\hat{A}w\leq \hat{A}$ for every $w\in W_\mathbf{g}$ thanks to Proposition \ref{hat transf}. In particular we deduce that $\hat{u}\mathbf{a}_\ell=\hat{A}$. Thanks to Proposition \ref{anti-spherical relations}, this implies that $$\mathrm{Hom}_{D^b_{\mathcal{IW}}(\mathrm{Fl}^{\circ}_{\mathbf{a}_1},\mathbf{k})}^\bullet(\mathcal{E}_{u},\mathcal{E}_{\hat{u}})\neq 0.$$  
		By Proposition \ref{proj form}, we get the desired result.
	\end{proof}
	
	\subsection{Facets which are not points}\label{Facets which are not points} The root system $\mathfrak{R}^\vee$ decomposes uniquely into a disjoint union $\mathfrak{R}^\vee=\mathfrak{R}^\vee_1\sqcup \cdots\sqcup\mathfrak{R}^\vee_t$ of irreducible root systems. This decomposition comes with a decomposition of the affine Weyl group $W= W_1\times\cdots \times W_t$ (where each $W_i$ is the affine Weyl group associated with $\mathfrak{R}_i^\vee$) and of the affine space $E= E_1\times \cdots\times E_t$ (where each $E_i$ is associated with $\mathfrak{R}_i^\vee$). Note that one also has dually a decomposition $\mathfrak{R}=\mathfrak{R}_1\sqcup \cdots\sqcup\mathfrak{R}_t$ into irreducible root systems, and an isomorphism $G_1\times\cdots\times G_r\xrightarrow{\sim} G$ induced by multiplication from the product of the minimal closed connected normal subgroups of $G$ of positive dimension over $\mathbb{F}$, each $G_i$ admitting $\mathfrak{R}_i$ as a root system (cf. \cite[Theorem and Corollary 27.5]{humphreys1975linear}). The point for us is that, for every $i$, the alcoves determined by the box action of $W_i$ on $E_i$ are open simplices, and that each alcove $A\in\mathcal{A}$ is a product of such alcoves $A_1\times\cdots\times A_t$. More generally, each facet $\mathbf{h}\subset \overline{A}$ decomposes into a product of facets $\mathbf{h}_1\times\cdots\times\mathbf{h}_t$, with $\mathbf{h}_i\subset\overline{A_i}$ for every $i$.
	
	In this subsection we prove that, for any facet $\mathbf{g}\subset\overline{\mathbf{a}_1}$ such that each $\mathbf{g}_i$ is not a point (where $\mathbf{g}=\mathbf{g}_1\times\cdots\times\mathbf{g}_t$), the set ${_\mathrm{f}W}^\mathbf{g}$ is single equivalence class. We will prove in the next subsection that ${_\mathrm{f}W}^\mathbf{g}$ is a single equivalence class exactly when each $\mathbf{g}_i$ is a non-special facet. Let us first explain how we can reduce ourselves to the case where the root system is irreducible; we start with a technical lemma, where $\mathbf{k}$ could be taken to be any field.  
	\begin{lem}\label{alg non com}
		Let $A$ be a local $\mathbf{k}$-algebra and $B$ be a connected $\mathbf{k}$-algebra (we do not assume $A$ and $B$ to be commutative). Assume that there exists a $\mathbf {k}$-algebra morphism $A\to \mathbf{k}$ and that $B$ is a finite dimensional $\mathbf{k}$-vector space. Then the ring $A\otimes_\mathbf{k}B$ is connected. 
	\end{lem}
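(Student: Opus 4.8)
The plan is to realise $A\otimes_\mathbf{k}B$ as a subring of a matrix ring $M_n(A)$, to locate the relevant ideal inside the Jacobson radical of that matrix ring, and then to invoke the elementary fact that a Jacobson radical contains no nonzero idempotent.

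First I would extract from the hypotheses the two facts I need. On the side of $A$: the given $\mathbf{k}$-algebra map $\phi\colon A\to\mathbf{k}$ is surjective (it restricts to the identity on $\mathbf{k}$), so $\mathfrak{m}:=\ker\phi$ is a proper two-sided ideal of $A$; since $A$ is local, every proper ideal lies in the Jacobson radical $J(A)$, hence $\mathfrak{m}\subseteq J(A)$. (This is where locality of $A$ is genuinely used: for $A=\mathbf{k}\times\mathbf{k}$ the statement fails.) On the side of $B$: it is finite dimensional, say $n:=\dim_\mathbf{k}B$, which lets me view $A\otimes_\mathbf{k}B$ as a right $A$-module via $a\mapsto a\otimes 1$, free of rank $n$ with basis $1\otimes b_1,\dots,1\otimes b_n$ for any $\mathbf{k}$-basis $(b_i)$ of $B$. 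Left multiplication then gives a ring homomorphism $\iota\colon A\otimes_\mathbf{k}B\to\operatorname{End}_A\big((A\otimes_\mathbf{k}B)_A\big)\cong M_n(A)$, and $\iota$ is injective because a ring acts faithfully on itself by left multiplication. Writing $bb_i=\sum_j\mu^b_{ij}b_j$ with $\mu^b_{ij}\in\mathbf{k}$, one computes that $\iota(a\otimes b)$ is the matrix with $(j,i)$-entry $\mu^b_{ij}a$; consequently $\iota(\mathfrak{m}\otimes_\mathbf{k}B)\subseteq M_n(\mathfrak{m})\subseteq M_n(J(A))=J(M_n(A))$. Since $J(M_n(A))$ contains no nonzero idempotent and $\iota$ is injective, the two-sided ideal $\mathfrak{m}\otimes_\mathbf{k}B$ of $A\otimes_\mathbf{k}B$ contains no nonzero idempotent.

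The conclusion is then immediate: given an idempotent $e=e^2\in A\otimes_\mathbf{k}B$, I would apply $\phi\otimes\mathrm{id}_B\colon A\otimes_\mathbf{k}B\to\mathbf{k}\otimes_\mathbf{k}B=B$, whose kernel is precisely $\mathfrak{m}\otimes_\mathbf{k}B$ (exactness of $A\otimes_\mathbf{k}-$ over the field $\mathbf{k}$). The image $(\phi\otimes\mathrm{id}_B)(e)$ is an idempotent of $B$, hence $0$ or $1$ because $B$ is connected; in the first case $e\in\mathfrak{m}\otimes_\mathbf{k}B$ so $e=0$, and in the second case $1-e$ is an idempotent mapping to $0$, so $1-e=0$ and $e=1$. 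Thus the only idempotents of $A\otimes_\mathbf{k}B$ are $0$ and $1$. The only step requiring any care is checking that $\iota(\mathfrak{m}\otimes_\mathbf{k}B)$ really lands in $M_n(\mathfrak{m})$ — that is, the structure-constant computation together with the observation $\mathfrak{m}\subseteq J(A)$ — everything else being formal; I do not expect any serious obstacle.
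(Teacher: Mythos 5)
Your proof is correct. The overall skeleton coincides with the paper's: both arguments reduce modulo $\mathfrak{m}\otimes_{\mathbf{k}}B$ (using $A/\mathfrak{m}\simeq\mathbf{k}$, so the quotient is the connected ring $B$), conclude that the image of an idempotent $e$ is $0$ or $1$, and then lift. Where you differ is in the mechanism justifying the lift, i.e.\ in showing that an idempotent lying in (or congruent to $1$ modulo) $\mathfrak{m}\otimes_{\mathbf{k}}B$ is trivial. The paper invokes Nakayama's lemma for the finitely generated $A$-module $A\otimes_{\mathbf{k}}B$ to deduce that an element is invertible if and only if its reduction is, and then argues directly with $e(e-1)=0$. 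You instead embed $A\otimes_{\mathbf{k}}B$ into $M_n(A)$ via the left regular representation, check that $\mathfrak{m}\otimes_{\mathbf{k}}B$ lands in $M_n(\mathfrak{m})\subseteq M_n(J(A))=J(M_n(A))$, and use that a Jacobson radical contains no nonzero idempotent. In substance both routes amount to proving $\mathfrak{m}\otimes_{\mathbf{k}}B\subseteq J(A\otimes_{\mathbf{k}}B)$; your version has the mild advantage of sidestepping the one-sided-inverse subtleties that the paper's blanket claim ``$e$ invertible iff $\overline{e}$ invertible'' glosses over in the noncommutative setting, at the cost of importing the standard facts $J(M_n(A))=M_n(J(A))$ and $\mathfrak{m}=\ker\phi\subseteq J(A)$ for a local ring. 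Both are complete proofs.
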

	\begin{proof}
		Denote by $\mathfrak{m}$ the maximal ideal of $A$. Since we have a morphism $A\to \mathbf{k}$, we get an injection $A/\mathfrak{m}\hookrightarrow \mathbf{k}$, which is in fact an isomorphism $A/\mathfrak{m}\simeq \mathbf{k}$ as $A/\mathfrak{m}$ is a $\mathbf{k}$-vector space. Therefore we get an isomorphism 
		$$A/\mathfrak{m}\otimes_\mathbf{k}B\simeq B, $$
		from which we deduce that $A/\mathfrak{m}\otimes_\mathbf{k}B$ is a connected ring.

Thanks to Nakayama's lemma (cf. \cite[§2, Proposition 4.2.3]{knus2012quadratic}), a morphism of finitely generated $A$-modules $f:M\to N$ is surjective if and only if $$f\otimes \mathrm{id}_{A/\mathfrak{m}}:M/\mathfrak{m}M\to N/\mathfrak{m}N$$ is surjective. Since $A\otimes_\mathbf{k}B$ is finitely generated over $A$ (because $\mathrm{dim}_\mathbf{k}B<\infty$),  we can apply this last result to any endomorphism of the finitely generated $A$-module $A\otimes_\mathbf{k}B$, and deduce that an element $e\in A\otimes_\mathbf{k}B$ is invertible if and only if its image  $\overline{e}$ under the canonical projection $A\otimes_\mathbf{k}B\to A/\mathfrak{m}\otimes_\mathbf{k}B$ is invertible.

		Let $e\in A\otimes_\mathbf{k}B$ be an idempotent, i.e. an element satisfying $e(e-1)=0$. Then $\overline{e}\in A/\mathfrak{m}\otimes_\mathbf{k}B$ is an idempotent, from which we deduce that $\overline{e}\in\{1,0\}$ by connectedness of $A/\mathfrak{m}\otimes_\mathbf{k}B$. If $\overline{e}=1$, then $e\in (A\otimes_\mathbf{k}B)^\times$ thanks to the previous paragraph, so $e(e-1)=0\Rightarrow e=1$. If $\overline{e}=0$, then $$\overline{e-1}=\overline{e}-1=-1\in(A/\mathfrak{m}\otimes_\mathbf{k}B)^\times,$$
		from which we deduce once again that $e-1\in (A\otimes_\mathbf{k}B)^\times$ and thus $e=0$. Therefore any idempotent of $A\otimes_\mathbf{k}B$ is trivial, which means that this ring is connected. \end{proof}
	For the next result, notice that the decomposition $W= W_1\times\cdots \times W_t$ carries on at the level of parabolic subgroups, so that we have equalities ${W}^\mathbf{g}={W_1}^{\mathbf{g}_1}\times\cdots\times{W_t}^{\mathbf{g}_t}$ and ${_\mathrm{f}W}^\mathbf{g}={_\mathrm{f}W_1}^{\mathbf{g}_1}\times\cdots\times{_\mathrm{f}W_t}^{\mathbf{g}_t}$.
	\begin{prop}\label{reduction to irred}
		Let $\mathbf{g}=\mathbf{g}_1\times\cdots\times\mathbf{g}_t$ be a facet included in $\overline{\mathbf{a}_1}$, and $w=(w_1,\cdots,w_t)$, $w'=(w'_1,\cdots,w'_t)$ be elements of ${_\mathrm{f}}W^\mathbf{g}$. Then we have 
		\begin{equation}\label{351}
			w\mathscr{R}_\mathbf{g} w'\Longleftrightarrow w_i\mathscr{R}_{\mathbf{g}_i}w'_i~\forall i. 
		\end{equation}
		Moreover there is an equality
		\begin{equation}\label{352}
			{^\ell n}_{w,w'}(1)={^\ell n}_{w_1,w'_1}(1)\times\cdots\times {^\ell n}_{w_r,w'_r}(1).
		\end{equation}
	\end{prop}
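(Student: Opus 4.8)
The plan is to exploit the product decomposition of the whole geometric situation. Since $G\simeq G_1\times\cdots\times G_t$ (the product of its minimal closed connected normal subgroups of positive dimension), and since the parahoric group schemes, the loop groups, and the facet $\mathbf{g}=\mathbf{g}_1\times\cdots\times\mathbf{g}_t$ all decompose accordingly, one obtains an isomorphism of ind-schemes $\mathrm{Fl}^{\circ}_{\mathbf{g}}\simeq \mathrm{Fl}^{\circ}_{\mathbf{g}_1}\times\cdots\times\mathrm{Fl}^{\circ}_{\mathbf{g}_t}$ which is compatible with the $\mathrm{Iw}_u^+$-actions and with the Whittaker data: $U^+$ splits as $U^+_1\times\cdots\times U^+_t$, the character $\chi_0$ becomes the sum of the corresponding characters, and $\mathcal{L}_{\mathrm{AS}}$ is multiplicative for the addition map of $\mathbb{G}_\mathrm{a}$. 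Under this identification, for $w=(w_1,\dots,w_t)\in {_\mathrm{f}W}^{\mathbf{g}}={_\mathrm{f}W_1}^{\mathbf{g}_1}\times\cdots\times{_\mathrm{f}W_t}^{\mathbf{g}_t}$ the orbit $\mathscr{X}^{\mathbf{g}}_w$ is the product $\prod_i\mathscr{X}^{\mathbf{g}_i}_{w_i}$, the local system $\mathcal{L}^{\mathbf{g}}_w$ is the external product $\mathcal{L}^{\mathbf{g}_1}_{w_1}\boxtimes\cdots\boxtimes\mathcal{L}^{\mathbf{g}_t}_{w_t}$, and likewise $\nabla^{\mathbf{g}}_w\simeq\nabla^{\mathbf{g}_1}_{w_1}\boxtimes\cdots\boxtimes\nabla^{\mathbf{g}_t}_{w_t}$ (an external product of costandard perverse sheaves is costandard, up to the usual shift).

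The next step is to identify $\mathcal{E}^{\mathbf{g}}_w$ with $\mathcal{E}^{\mathbf{g}_1}_{w_1}\boxtimes\cdots\boxtimes\mathcal{E}^{\mathbf{g}_t}_{w_t}$. This complex is parity (an external product of parity complexes is parity, since the Künneth formula preserves concentration in even, resp. odd, degrees stratum by stratum), it is supported on $\overline{\mathscr{X}^{\mathbf{g}}_w}$, and its restriction to the open stratum $\mathscr{X}^{\mathbf{g}}_w$ is $\mathcal{L}^{\mathbf{g}}_w[\dim\mathscr{X}^{\mathbf{g}}_w]$ (dimensions and cohomological shifts being additive under external products). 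By the uniqueness part of Proposition \ref{par prop} it is therefore isomorphic to $\mathcal{E}^{\mathbf{g}}_w$, provided it is indecomposable. For this I would use the Künneth isomorphism $\mathrm{End}(\mathcal{E}^{\mathbf{g}_1}_{w_1}\boxtimes\cdots\boxtimes\mathcal{E}^{\mathbf{g}_t}_{w_t})\simeq \mathrm{End}(\mathcal{E}^{\mathbf{g}_1}_{w_1})\otimes_{\mathbf{k}}\cdots\otimes_{\mathbf{k}}\mathrm{End}(\mathcal{E}^{\mathbf{g}_t}_{w_t})$: each factor is a finite-dimensional local $\mathbf{k}$-algebra (endomorphism ring of an indecomposable object in the Krull--Schmidt category $\mathrm{Par}_{\mathcal{IW}}(\mathrm{Fl}^{\circ}_{\mathbf{g}_i},\mathbf{k})$) admitting the restriction-to-the-open-stratum map as a $\mathbf{k}$-algebra morphism to $\mathbf{k}$, so an induction on $t$ using Lemma \ref{alg non com} shows that this tensor product is connected, i.e. has no nontrivial idempotent; hence the external product is indecomposable.

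Granting $\mathcal{E}^{\mathbf{g}}_w\simeq\mathcal{E}^{\mathbf{g}_1}_{w_1}\boxtimes\cdots\boxtimes\mathcal{E}^{\mathbf{g}_t}_{w_t}$, both assertions follow from one further application of Künneth. For (\ref{351}), over the field $\mathbf{k}$ one gets $\mathrm{Hom}^\bullet_{\mathrm{Par}_{\mathcal{IW}}(\mathrm{Fl}^{\circ}_{\mathbf{g}},\mathbf{k})}(\mathcal{E}^{\mathbf{g}}_w,\mathcal{E}^{\mathbf{g}}_{w'})\simeq \bigotimes_{i=1}^t\mathrm{Hom}^\bullet_{\mathrm{Par}_{\mathcal{IW}}(\mathrm{Fl}^{\circ}_{\mathbf{g}_i},\mathbf{k})}(\mathcal{E}^{\mathbf{g}_i}_{w_i},\mathcal{E}^{\mathbf{g}_i}_{w'_i})$ as graded $\mathbf{k}$-vector spaces, and a tensor product of vector spaces is non-zero exactly when every factor is non-zero, giving $w\mathscr{R}_\mathbf{g} w'\Leftrightarrow w_i\mathscr{R}_{\mathbf{g}_i}w'_i$ for all $i$. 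For (\ref{352}), formula (\ref{l-KL}) identifies ${^\ell n}_{w,w'}(1)$ with $\dim_\mathbf{k}\mathrm{Hom}^\bullet(\mathcal{E}_{w'},\nabla_w)$ in the case $\mathbf{g}=\mathbf{a}_1$; writing $\mathcal{E}_{w'}\simeq\mathcal{E}_{w'_1}\boxtimes\cdots\boxtimes\mathcal{E}_{w'_t}$ and $\nabla_w\simeq\nabla_{w_1}\boxtimes\cdots\boxtimes\nabla_{w_t}$ and applying Künneth once more gives $\mathrm{Hom}^\bullet(\mathcal{E}_{w'},\nabla_w)\simeq\bigotimes_i\mathrm{Hom}^\bullet(\mathcal{E}_{w'_i},\nabla_{w_i})$, and taking dimensions yields the product formula. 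The point requiring the most care is not the algebra but the first step: one must check that the decomposition of $G$ genuinely propagates through the parahoric group schemes, the loop groups, the choice of the neutral connected component, and the Iwahori--Whittaker structure, so that the Künneth formula for étale sheaves can be applied on each finite-dimensional closed piece of $\mathrm{Fl}^{\circ}_{\mathbf{g}}$ compatibly with the equivariance condition.
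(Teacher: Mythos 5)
Your proposal is correct and follows essentially the same route as the paper: the product decomposition of $G$ and of the partial affine flag varieties, the identification of $\mathcal{E}^{\mathbf{g}}_w$ with the external product $\mathcal{E}^{\mathbf{g}_1}_{w_1}\boxtimes\cdots\boxtimes\mathcal{E}^{\mathbf{g}_t}_{w_t}$ via parity, support, restriction to the open stratum and an indecomposability argument resting on Lemma \ref{alg non com}, and then Künneth on $\mathrm{Hom}^\bullet$-spaces (against $\mathcal{E}^{\mathbf{g}}_{w'}$ for (\ref{351}) and against $\nabla_{w'}$ for (\ref{352})). The only point where the paper is slightly more careful is the graded-versus-ungraded issue in the locality/connectedness step, where it invokes a result of Gordon--Green to pass between the degree-zero part and the full graded endomorphism algebra.
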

	\begin{proof}
		 The isomorphism $G_1\times\cdots\times G_r\xrightarrow{\sim} G$ induces an  isomorphism from the product of the partial affine flag varieties $\mathrm{Fl}^{\circ}_{\mathbf{g}_i}$, each being relative to $G_i$, to the partial affine flag variety $\mathrm{Fl}^{\circ}_{\mathbf{g}}$ relative to $G$. This morphism is equivariant for the action of the product of the Iwahori subgroups $\mathrm{Iw}^+_1\times\cdots\times \mathrm{Iw}^+_t$ on the left (where $\mathrm{Iw}^+_i=L^+G_i\cap \mathrm{Iw}^+$ for each $i$) and of $\mathrm{Iw}^+$ on the right. Thus, external tensor product yields a functor
		\begin{equation*}
			F:\prod _{i=1}^tD^b_{\mathcal{IW}}(\mathrm{Fl}^{\circ}_{\mathbf{g}_i},\mathbf{k})  \to D^b_{\mathcal{IW}}(\mathrm{Fl}^{\circ}_{\mathbf{g}},\mathbf{k}),
		\end{equation*}
		and one can easily check with the Künneth formula that $F$ sends tuples of parity complexes to parity complexes. 
		
		Next, we claim that we have an isomorphism 
		\begin{equation*}
			F((\mathcal{E}_{w_1}^{\mathbf{g}_1},\cdots,\mathcal{E}_{w_t}^{\mathbf{g}_t}))\simeq \mathcal{E}_w^\mathbf{g}.
		\end{equation*}
		The two objects above are parity complexes and coincide when restricted to the stratum $\mathscr{X}^\mathbf{g}_w$, so we only need to check that the object $F((\mathcal{E}_{w_1}^{\mathbf{g}_1},\cdots,\mathcal{E}_{w_t}^{\mathbf{g}_t}))$ is indecomposable, which we will do by showing that its endomorphism ring is connected. 
		
		We claim that we have the following isomorphism of graded $\mathbf{k}$-vector spaces
		\begin{equation}
			\mathrm{Hom}_{D^b_{\mathcal{IW}}(\mathrm{Fl}^{\circ}_{\mathbf{g}},\mathbf{k}) }^\bullet(\mathcal{E}_{w_1}^{\mathbf{g}_1}\boxtimes\cdots \boxtimes\mathcal{E}_{w_t}^{\mathbf{g}_t},\mathcal{E}_{w'_1}^{\mathbf{g}_1}\boxtimes\cdots \boxtimes\mathcal{E}_{w'_t}^{\mathbf{g}_t})\simeq \bigotimes_{i=1}^t\mathrm{Hom}_{D^b_{\mathcal{IW}}(\mathrm{Fl}^{\circ}_{\mathbf{g}_i},\mathbf{k})}^\bullet(\mathcal{E}_{w_i}^{\mathbf{g}_i},\mathcal{E}_{w'_i}^{\mathbf{g}_i}).\label{eq locale 2}
		\end{equation}
		Let us briefly explain how to obtain (\ref{eq locale 2}). Thanks to \cite[Proposition 1.4.6]{achar2021perverse} (the result is stated with sheaves for the analytic topology there, but is easily translated for the \'etale topology), we know that for every $\mathcal{F},\mathcal{G}\in D^b_{\mathcal{IW}}(\mathrm{Fl}^{\circ}_{\mathbf{g}},\mathbf{k})$ we have a natural isomorphism of graded vector spaces
		\begin{equation}\label{Rhom}
			\mathrm{H}^\bullet(R\Gamma(R\underline{\mathrm{Hom}}(\mathcal{F},\mathcal{G})))\simeq \mathrm{H}^\bullet(R\mathrm{Hom}(\mathcal{F},\mathcal{G}))\simeq \mathrm{Hom}_{D^b_{\mathcal{IW}}(\mathrm{Fl}^{\circ}_{\mathbf{g}},\mathbf{k})}^\bullet(\mathcal{F},\mathcal{G}), 
		\end{equation}
		where $\underline{\mathrm{Hom}}$ is the internal hom functor, and $\mathrm{H}^\bullet(-)$ means the direct sum of cohomology groups. Thus, we only need to show that there exists an isomorphism $$R\underline{\mathrm{Hom}}(\mathcal{F}_1\boxtimes\cdots \boxtimes\mathcal{F}_t,\mathcal{G}_1\boxtimes\cdots \boxtimes\mathcal{G}_t)\simeq \mathlarger{\mathlarger{\boxtimes}}_{i=1}^tR\underline{\mathrm{Hom}}(\mathcal{F}_i,\mathcal{G}_i)$$
		for every $\mathcal{F}_i,\mathcal{G}_i\in D^b_{\mathcal{IW}}(\mathrm{Fl}^{\circ}_{\mathbf{g}},\mathbf{k})$. This fact is proved in \cite[§4.2.7(b)]{beilinson2018faisceaux} for constructible complexes. 
		
		Next, notice that the $\mathbf{k}$-algebra $\mathrm{Hom}_{D^b_{\mathcal{IW}}(\mathrm{Fl}^{\circ}_{\mathbf{g}_i},\mathbf{k})}(\mathcal{E}_{w_i}^{\mathbf{g}_i},\mathcal{E}_{w'_i}^{\mathbf{g}_i})$ is local for each $i$ (because $\mathcal{E}_{w_i}^{\mathbf{g}_i}$ is indecomposable), so that $A_i:=\mathrm{Hom}^\bullet_{D^b_{\mathcal{IW}}(\mathrm{Fl}^{\circ}_{\mathbf{g}_i},\mathbf{k})}(\mathcal{E}_{w_i}^{\mathbf{g}_i},\mathcal{E}_{w'_i}^{\mathbf{g}_i})$ is also a local $\mathbf{k}$-algebra as its degree zero part is local (here we apply \cite[Theorem 3.1]{gordon1982graded}); moreover, restriction to the stratum $\mathscr{X}^{\mathbf{g}_i}_{w_i}$ yields a $\mathbf{k}$-algebra morphism from $A_i$ to $\mathbf{k}$. Therefore we can apply Lemma \ref{alg non com} (recall that a local ring is connected) to deduce that the ring on the right-hand side of (\ref{eq locale 2}) is connected. Hence we deduce that the degree zero part of the left-hand side of (\ref{eq locale 2}) is connected (applying once again \cite[Theorem 3.1]{gordon1982graded}), proving the claim.
		
		Thus, (\ref{eq locale 2}) can be rewritten as  
		$$\mathrm{Hom}_{D^b_{\mathcal{IW}}(\mathrm{Fl}^{\circ}_{\mathbf{g}},\mathbf{k}) }^\bullet(\mathcal{E}_{w}^{\mathbf{g}},\mathcal{E}_{w'}^{\mathbf{g}})\simeq \bigotimes_{i=1}^t\mathrm{Hom}_{D^b_{\mathcal{IW}}(\mathrm{Fl}^{\circ}_{\mathbf{g}_i},\mathbf{k})}^\bullet(\mathcal{E}_{w_i}^{\mathbf{g}_i},\mathcal{E}_{w'_i}^{\mathbf{g}_i}). $$
		Since a finite tensor product of vector spaces is non-zero if and only if each vector space appearing in the product is non-zero, this isomorphism finishes implies the equivalence (\ref{351}).
		
		Now we prove (\ref{352}). First notice that, by compatibility of the $*$-pushforward with the external tensor product (cf. \cite[§4.2.7(a)]{beilinson2018faisceaux}), we get an isomorphism $\nabla_{w'}^{\mathbf{g}}\simeq \nabla_{w'_1}^{\mathbf{g}_1}\boxtimes\cdots\boxtimes\nabla_{w'_t}^{\mathbf{g}_1}$. Next, another use of (\ref{Rhom}) yields  the second isomorphism below:
		\begin{align*}
			\mathrm{Hom}_{D^b_{\mathcal{IW}}(\mathrm{Fl}^{\circ}_{\mathbf{g}},\mathbf{k}) }^\bullet(\mathcal{E}_{w}^{\mathbf{g}},\nabla_{w'}^\mathbf{g})&\simeq\mathrm{Hom}_{D^b_{\mathcal{IW}}(\mathrm{Fl}^{\circ}_{\mathbf{g}},\mathbf{k}) }^\bullet(\mathcal{E}_{w_1}^{\mathbf{g}_1}\boxtimes\cdots \boxtimes\mathcal{E}_{w_t}^{\mathbf{g}_t},\nabla_{w'_1}^{\mathbf{g}_1}\boxtimes\cdots\boxtimes\nabla_{w'_t}^{\mathbf{g}_1})\\&\simeq \bigotimes_{i=1}^t\mathrm{Hom}_{D^b_{\mathcal{IW}}(\mathrm{Fl}^{\circ}_{\mathbf{g}_i},\mathbf{k})}^\bullet(\mathcal{E}_{w_i}^{\mathbf{g}_i},\nabla_{w'_i}^{\mathbf{g}_i}). 
		\end{align*}
		Taking the dimensions in this isomorphism yields the desired equality.
	\end{proof}
	Now that we understand how to reduce the study to the case of an irreducible root system thanks to (\ref{351}), we treat this case in detail. We start with the following easy lemma (which does not require $\mathfrak{R}^\vee$ to be irreducible).
	\begin{lem}\label{descent}	
		Let $A$ be an alcove contained in  $\rho^\vee+\mathscr{C}^+_0$. Then there exist alcoves $$A_r,A_{r-1},\cdots,A_1,A_0$$ contained in $\rho^\vee+\mathscr{C}^+_0$ such that 
		$$\rho^\vee+\mathbf{a}_1=A_r\leq A_{r-1}\leq\cdots\leq A_1\leq A_0=A$$
		and $A_i$ is obtained by reflecting $A_{i-1}$ along one of its walls for each $i\in\llbracket1,r-1\rrbracket$.
	\end{lem}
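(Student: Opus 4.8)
The statement is purely combinatorial — it concerns the Bruhat order on alcoves inside the shifted dominant cone $\rho^\vee+\mathscr{C}_0^+$ — so I would prove it by a descending induction on $d(A)$, the number of hyperplanes separating $A$ from $\mathbf{a}_1$. The base case is $A=\rho^\vee+\mathbf{a}_1$, where $r=0$ and there is nothing to do. For the inductive step, the key point is: if $A$ is a dominant alcove strictly inside $\rho^\vee+\mathscr{C}_0^+$ (i.e. $A\neq\rho^\vee+\mathbf{a}_1$), I must produce a wall $H$ of $A$ such that the reflected alcove $A_1:=s_H A$ satisfies $A_1<A$, $A_1\subset\rho^\vee+\mathscr{C}_0^+$, and $d(A_1)<d(A)$; then apply the induction hypothesis to $A_1$ and prepend $A$.

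\textbf{Finding the right wall.} Write $A=\rho^\vee+B$ with $B\subset\Pi$, as in the discussion before Lemma~\ref{lemhat}; since $A$ is dominant, the argument in the proof of Proposition~\ref{decalage rho}/Corollary~\ref{shifting facets} shows the relevant ``position'' of $A$ relative to $\rho^\vee$ is dominant, so $A$ lies in the box $\Pi_\lambda$ for some $\lambda\in\overline{\mathscr{C}_0^+}\cap\mathbb{X}^\vee$ with $\rho^\vee+\mathscr{C}_0^+\supset\Pi_\lambda$ meeting it. Since $A\neq\rho^\vee+\mathbf{a}_1$, there is at least one hyperplane $H\in\mathscr{H}$ separating $A$ from $\mathbf{a}_1$; choosing $H$ to be a \emph{wall} of $A$ that separates $A$ from $\mathbf{a}_1$ (such a wall exists — one can take a reduced expression $w=s_1\cdots s_m$ of the element $w\in W$ with $w\car_1\mathbf{a}_1=A$ and use $ws_m<w$, the wall being the one between $A$ and $As_m$), we get $d(As_m)=d(A)-1<d(A)$ and hence $As_m<A$ in the Bruhat order on $\mathcal{A}$. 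The only thing left to check is that $A_1:=As_m$ is still contained in $\rho^\vee+\mathscr{C}_0^+$. Here I would use the geometry developed in subsection~\ref{geometry}: reflecting a dominant alcove along a wall that separates it from $\mathbf{a}_1$ moves it ``closer'' to $\mathbf{a}_1$, and since $\rho^\vee+\mathscr{C}_0^+$ is a union of alcoves bounded by the walls of $\rho^\vee+\mathscr{C}_0^+$ (which are themselves in $\mathscr{H}$), crossing a non-boundary wall of this cone keeps us inside; one rules out crossing a boundary wall precisely because such a crossing would \emph{increase} $d$ (the boundary walls of $\rho^\vee+\mathscr{C}_0^+$ separate $\mathbf{a}_1$ from the exterior), contradicting $d(A_1)<d(A)$. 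More carefully: if $A_1$ were outside $\rho^\vee+\mathscr{C}_0^+$, then $H$ would be one of the walls of the cone $\rho^\vee+\mathscr{C}_0^+$, and since $\mathbf{a}_1\subset\rho^\vee+\mathscr{C}_0^+$ (as $\rho^\vee\in\overline{\mathbf{a}_1}$), $H$ would \emph{not} separate $A$ from $\mathbf{a}_1$ — contradiction.

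\textbf{Conclusion of the induction.} Granting the above, set $A_0:=A$, $A_1:=As_m\subset\rho^\vee+\mathscr{C}_0^+$ with $A_1<A_0$ and $d(A_1)=d(A_0)-1$. Applying the induction hypothesis to $A_1$ yields alcoves $A_r=\rho^\vee+\mathbf{a}_1\leq A_{r-1}\leq\cdots\leq A_1$ in $\rho^\vee+\mathscr{C}_0^+$, each obtained from the previous by a wall reflection; prepending $A_0$ gives the desired chain, and $A_1\leq A_0$ with $A_1=s_H A_0$ is exactly the required reflection relation at the top. The numbering $\llbracket 1,r-1\rrbracket$ in the statement (rather than $\llbracket 1,r\rrbracket$) is consistent: the passage $A_r\to A_{r-1}$ from $\rho^\vee+\mathbf{a}_1$ is also a wall reflection, but it is immaterial to the intended application.

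\textbf{Main obstacle.} The only genuinely delicate point is the geometric claim that reflecting a dominant alcove $A\subsetneq\rho^\vee+\mathscr{C}_0^+$ along a wall separating it from $\mathbf{a}_1$ lands again inside $\rho^\vee+\mathscr{C}_0^+$ — i.e. that the wall crossed is never a wall of the cone $\rho^\vee+\mathscr{C}_0^+$ itself. I expect this to follow cleanly from the observation that every wall of $\rho^\vee+\mathscr{C}_0^+$ has $\mathbf{a}_1$ on its ``$+$''-side (since $\rho^\vee\in\overline{\mathbf{a}_1}$ places $\mathbf{a}_1$ inside the cone), so such a wall cannot be among those separating $A$ from $\mathbf{a}_1$; combined with the characterization $A<As \iff d(As)<d(A)$ from subsection~\ref{geometry}, this closes the argument. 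Everything else is a routine induction on $d(A)$.
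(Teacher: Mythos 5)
Your overall strategy (induction on $d(A)$, peeling off one wall-crossing at a time) is reasonable, but the justification of the key step --- that the reflected alcove $A_1=As_m$ stays inside $\rho^\vee+\mathscr{C}_0^+$ --- rests on two false assertions, and the step itself fails as stated. You claim $\rho^\vee\in\overline{\mathbf{a}_1}$ and hence $\mathbf{a}_1\subset\rho^\vee+\mathscr{C}_0^+$. Neither holds: $\langle\rho^\vee,\alpha\rangle$ equals the height of $\alpha$, which is $\geq 2$ for any non-simple positive root, so $\rho^\vee\notin\overline{\mathbf{a}_1}$ in general; and for any $\mu\in\mathbf{a}_1$ and any simple root $\alpha$ one has $\langle\mu,\alpha\rangle<1=\langle\rho^\vee,\alpha\rangle$, so in fact $\mathbf{a}_1\cap(\rho^\vee+\mathscr{C}_0^+)=\emptyset$ always. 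Consequently your dichotomy is exactly backwards: the walls of the cone $\rho^\vee+\mathscr{C}_0^+$ lie on the hyperplanes $\{\langle\lambda,\alpha\rangle=1\}$ for $\alpha$ simple, and each of them \emph{does} separate an alcove $A\subset\rho^\vee+\mathscr{C}_0^+$ from $\mathbf{a}_1$; crossing such a wall is therefore a Bruhat descent ($d$ decreases by one) and yet lands outside the shifted cone. Concretely, already in type $A_2$ or $C_2$, take $A\neq\rho^\vee+\mathbf{a}_1$ inside $\rho^\vee+\mathscr{C}_0^+$ with a wall lying on $\{\langle\lambda,\alpha_1\rangle=1\}$: a reduced expression of the corresponding $w$ may end with the reflection through that wall, and then your $A_1$ leaves the cone. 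So ``any descending wall of $A$'' is not good enough, and nothing in your argument selects a good one.

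The missing idea is a reduction by translation: since $\leq$ coincides on $\mathcal{A}^+$ with the periodic order $\preceq$ of subsection \ref{geometry}, which is translation-invariant, the statement for alcoves in $\rho^\vee+\mathscr{C}_0^+$ (with target $\rho^\vee+\mathbf{a}_1$) is equivalent to the same statement for alcoves in $\mathscr{C}_0^+$ (with target $\mathbf{a}_1$). In the \emph{unshifted} cone the descent does work, and the clean argument is Coxeter-theoretic rather than metric: if $w\in{_\mathrm{f}W}$, $w\neq e$, and $s\in S$ satisfies $ws<w$, then $w$ is maximal in $wW_{\mathbf{g}}$ for $\mathbf{g}$ the wall fixed by $s$, hence $w\in{_\mathrm{f}W}^{\mathbf{g}}$ and $ws\in{_\mathrm{f}W}$ by Proposition \ref{prop min coset} (this is Lemma \ref{lem reg bl}); i.e.\ $As$ is again dominant, and the induction on $l(w)$ closes. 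Without the translation step your proposal has a genuine gap, since in the shifted cone a Bruhat descent need not preserve the region.
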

	\begin{proof}
	 Since the order $\leq$ is invariant under translation by $\rho^\vee$ in $\mathcal{A}^+$ (because it coincides with the periodic order $\preceq$), it is equivalent to prove that for any alcove $A$ contained in $\mathscr{C}^+_0$, there exist alcoves $(A_i)_{0\leq i\leq r}$ contained in $\mathscr{C}^+_0$ such that $(A_r,A_0)=(\mathbf{a}_1,A)$, $A_{i}\leq A_{i-1}$ and $A_i$ is obtained by reflecting $A_{i-1}$ along one of its walls. This last condition is equivalent to requiring that $A_i=A_{i-1}s$ for some simple reflection $s$. Now if we let $w$ be the element of ${_\mathrm{f}W}$ such that $w\mathbf{a}_1=A$, we know by the proof of Lemma \ref{lem reg bl} that there exists some simple reflection $s$ such that $w_1:=ws<w$, with $w_1\in {_\mathrm{f}W}$, so that we can put $A_1:=w_1\mathbf{a}_1$. We then conclude by induction.
	\end{proof}
	
	See Figure \ref{Affine hyperplanes C2} for an illustration of Lemma \ref{descent} (but notice that one needs to dilate the affine plane on the picture by $\ell^{-1}$ to find back the same setting).
	\begin{lem}\label{desending facets}
		Let $\mathbf{h}\in\mathcal{A}_\mathbf{g}$. Pick a facet $\mathbf{p}\subset\overline{\mathbf{h}}$ and consider $\mathbf{h}':=v\mathbf{h}$ with $v\in W_\mathbf{p}$. If $\mathbf{p}\subset \mathscr{C}^+_0$, then $\mathbf{h}$ and $\mathbf{h}'$ belong to $\mathcal{A}^+_\mathbf{g}$, and we have $w\sim_{\mathbf{g}}w'$, where $w,w'\in {_{\mathrm{f}}W}^\mathbf{g}$ are such that $\mathbf{h}=w\mathbf{g}$ and $\mathbf{h}'=w'\mathbf{g}$. 
	\end{lem}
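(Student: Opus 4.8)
The plan is to transport the auxiliary facet $\mathbf{p}$ inside $\overline{\mathbf{g}}$ by means of the element attached to $\mathbf{h}$, and then to invoke Proposition \ref{prop max coset}(2).

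First I would settle the membership claims. The facet $\mathbf{h}$ contains $\mathbf{p}\subset\mathscr{C}^+_0$ in its closure, so Proposition \ref{dominant facet} gives $\mathbf{h}\subset\mathscr{C}^+_0$, hence $\mathbf{h}\in\mathcal{A}^+_\mathbf{g}$, and there is a unique $w\in{_\mathrm{f}W}^\mathbf{g}$ with $w\car_1\mathbf{g}=\mathbf{h}$. Writing $\mathbf{h}=x\car_1\mathbf{g}$ for some $x\in W$ we see $\mathbf{h}'=v\car_1\mathbf{h}\in\mathcal{A}_\mathbf{g}$; moreover $v\in W_\mathbf{p}$ stabilizes $\mathbf{p}$, so $\mathbf{p}=v\car_1\mathbf{p}\subset v\car_1\overline{\mathbf{h}}=\overline{\mathbf{h}'}$, and Proposition \ref{dominant facet} again gives $\mathbf{h}'\subset\mathscr{C}^+_0$, i.e.\ $\mathbf{h}'\in\mathcal{A}^+_\mathbf{g}$; let $w'\in{_\mathrm{f}W}^\mathbf{g}$ be the corresponding element.

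Next I would set $\mathbf{q}:=w^{-1}\car_1\mathbf{p}$. Since $\mathbf{p}\subset\overline{\mathbf{h}}=w\car_1\overline{\mathbf{g}}$, this is a facet with $\mathbf{q}\subset\overline{\mathbf{g}}\subset\overline{\mathbf{a}_1}$, in particular $W_\mathbf{g}\subseteq W_\mathbf{q}$. From $W_\mathbf{q}=w^{-1}W_\mathbf{p}w$, writing $v=wrw^{-1}$ with $r\in W_\mathbf{q}$ gives $\mathbf{h}'=v\car_1(w\car_1\mathbf{g})=wr\car_1\mathbf{g}$, so $w'\in wrW_\mathbf{g}\subseteq wW_\mathbf{q}$, and of course $w\in wW_\mathbf{q}$. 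Let $\widetilde{w}$ be the maximal element of the coset $wW_\mathbf{q}$; then $\widetilde{w}\in W^\mathbf{q}$ and $\widetilde{w}\car_1\mathbf{q}=w\car_1\mathbf{q}=\mathbf{p}\subset\mathscr{C}^+_0$, so the bijection between ${_\mathrm{f}W}^\mathbf{q}$ and $\mathcal{A}^+_\mathbf{q}$ forces $\widetilde{w}\in{_\mathrm{f}W}^\mathbf{q}$. Since $w,w'\in\widetilde{w}W_\mathbf{q}\cap W^\mathbf{g}$, Proposition \ref{prop max coset}(2) applied to the facet $\mathbf{q}$, the element $\widetilde{w}$ and the facet $\mathbf{g}$ yields $w\sim_\mathbf{g}w'$, as desired.

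The proof is essentially a sequence of bookkeeping identities once the right auxiliary facet $\mathbf{q}$ is chosen; the one step deserving attention is the observation that the hypothesis $\mathbf{p}\subset\mathscr{C}^+_0$ is precisely what makes $\mathbf{p}=\widetilde{w}\car_1\mathbf{q}$ a dominant facet and hence places $\widetilde{w}$ in ${_\mathrm{f}W}^\mathbf{q}$, so that no genuine obstacle appears.
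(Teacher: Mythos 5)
Your proof is correct and follows essentially the same route as the paper's: transport $\mathbf{p}$ to $\mathbf{q}=w^{-1}\car_1\mathbf{p}\subset\overline{\mathbf{g}}$, observe $w'\in wW_\mathbf{q}$, and apply Proposition \ref{prop max coset}(2) via the maximal element of $wW_\mathbf{q}$ (your $\widetilde{w}$ is the paper's $w''$). Your explicit check that $\widetilde{w}\in{_\mathrm{f}W}^\mathbf{q}$, using $\widetilde{w}\car_1\mathbf{q}=\mathbf{p}\subset\mathscr{C}^+_0$, is a welcome bit of extra care but not a different argument.
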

	\begin{proof}
		Since $\mathbf{p}$ is inside $\mathscr{C}^+_0$, the same is true for all the facets $v\mathbf{h}$, with $v\in W_\mathbf{p}$, since those contain $\mathbf{p}$ in their closure. In particular $\mathbf{h}$ and $\mathbf{h}'$ belong to $\mathcal{A}^+_\mathbf{g}$ thanks to Proposition \ref{dominant facet}.
		
		Let $w\in {_{\mathrm{f}}W}^\mathbf{g}$ be such that $w\mathbf{g}=\mathbf{h}$. By construction, there exists a facet $\mathbf{q}\subset\overline{\mathbf{g}}$ such that $w\mathbf{q}=\mathbf{p}$. In particular, we get that $W_\mathbf{p}=wW_\mathbf{q}w^{-1}$ and, for any $v\in  W_\mathbf{p}$, we have
		$$v\mathbf{h}=vw\mathbf{g}=wr\mathbf{g},$$
		where $r\in W_\mathbf{q}$ is such that $wrw^{-1}=v$. Let $u\in W_\mathbf{g}$ be such that $wru$ is maximal in $wrW_\mathbf{g}$. We still have $wru\mathbf{g}=\mathbf{h}'$, and since $\mathbf{h}'\in\mathcal{A}^+_\mathbf{g}$, we must have $wru\in  {_{\mathrm{f}}W}^\mathbf{g}$, so $w'=wru$, with $ru\in W_\mathbf{q}$. Thanks to the second point of Proposition \ref{prop max coset}, we conclude that $w\sim_{\mathbf{g}}w'$ (more precisely, if we let $w''\in {_{\mathrm{f}}W}^\mathbf{g}$ be the maximal element in $wW_\mathbf{q}$ (see Figure \ref{Lemma C2} below), then we have $n_{w,w''}(1)\neq 0$ and $n_{w',w''}(1)\neq0$).
	\end{proof}
	  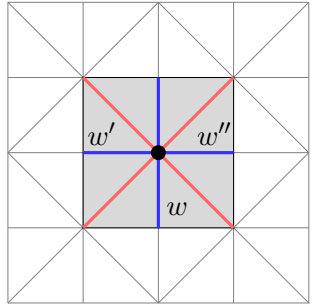
\begin{figure}[!h]	\begin{tikzpicture}[>=stealth]
		\foreach \x in {1,...,5}{
			\draw[ultra thin, color=gray] (\x,1)--(\x,5);
		}
		\foreach \x in {1,...,5}{
			\draw[ultra thin, color=gray] (1,\x)--(5,\x);
		}
		
		\foreach \x in {1,...,3}{
			\draw[ultra thin, color=gray] (1,2*\x-1)--(2*\x-1,1);
			\draw[ultra thin, color=gray] (1,2*\x-1)--(7-2*\x,5);
		}
		\draw[ultra thin, color=gray] (3,1)--(5,3);
		\draw[ultra thin, color=gray] (5,1)--(5,1);
		\draw[ultra thin, color=gray] (3,5)--(5,3);
		\draw[very thin, fill=gray!30] (3,3)--(4,2)--(3,2)--(3,3);
		\draw[very thin, fill=gray!30] (3,3)--(3,2)--(2,2)--(3,3);
		\draw[very thin, fill=gray!30] (3,3)--(2,2)--(2,3)--(3,3);
		\draw[very thin, fill=gray!30] (3,3)--(2,3)--(2,4)--(3,3);
		\draw[very thin, fill=gray!30] (3,3)--(2,4)--(3,4)--(3,3);
		\draw[very thin, fill=gray!30] (3,3)--(3,4)--(4,4)--(3,3);
		\draw[very thin, fill=gray!30] (3,3)--(4,4)--(4,3)--(3,3);
		\draw[very thin, fill=gray!30] (3,3)--(4,3)--(4,2)--(3,3);
	   \draw[very thick, color=red!60, fill opacity=0] (3,3)--(4,2);
	   \draw[very thick, color=red!60, fill opacity=0] (3,3)--(4,4);
	   \draw[very thick, color=red!60, fill opacity=0] (3,3)--(2,2);
	    \draw[very thick, color=red!60, fill opacity=0] (3,3)--(2,4);
	    \draw[very thick, color=blue!80, fill opacity=0] (3,3)--(3,2);
	    \draw[very thick, color=blue!80, fill opacity=0] (3,3)--(2,3);
	    \draw[very thick, color=blue!80, fill opacity=0] (3,3)--(3,4);
	    \draw[very thick, color=blue!80, fill opacity=0] (3,3)--(4,3);
       \draw (3,3) node[circle,inner sep=2, fill=black] {};
       \draw (3.25,2.25) node[scale=1,rotate=0]{$w$};
       \draw (2.25,3.25) node[scale=1,rotate=0]{$w'$};
       \draw (3.75,3.25) node[scale=1,rotate=0]{$w''$};

	\end{tikzpicture}
	\caption{\textit{Lemma \ref{desending facets} in type $C_2$}. The black dot is $\mathbf{p}$, while the $W_\mathbf{p}$-conjugates of $\mathbf{h}$ are represented by blue lines, and the red lines are the other walls fixed by $W_\mathbf{p}$.}
	\label{Lemma C2}
\end{figure}

	We have now enough ingredients to conclude.
	\begin{thm}\label{thm not points}
		Assume that $\mathfrak{R}^\vee$ is irreducible and let $\mathbf{g}\subset\overline{\mathbf{a}_1}$ be a facet which is not a point. Then ${_\mathrm{f}W}^\mathbf{g}$ consists of a single class for the equivalence relation $\sim_{\mathbf{g}}$.
	\end{thm}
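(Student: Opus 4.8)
The plan is to connect everything to one base element. Write $v \in {_\mathrm{f}W}^\mathbf{g}$ for the element corresponding to the (unique) facet of $\mathcal{A}_\mathbf{g}$ lying in the closure of the dominant alcove $\rho^\vee + \mathbf{a}_1$, and let me show that every $w \in {_\mathrm{f}W}^\mathbf{g}$ satisfies $w \sim_\mathbf{g} v$. The first step (moving away from the walls) is to apply Proposition \ref{away} to $\mathbf{h} := w \car_1 \mathbf{g} \in \mathcal{A}^+_\mathbf{g}$: this gives $w \sim_\mathbf{g} \hat w$, where $\hat w \in {_\mathrm{f}W}^\mathbf{g}$ is the element with $\hat w \car_1 \mathbf{g} = \hat{\mathbf{h}}$; moreover $\hat{\mathbf{h}}$ lies in the closure of the alcove $\hat A$ attached to $\mathbf{h}$, and $\hat A \subset \rho^\vee + \mathscr{C}^+_0$ by Proposition \ref{decalage rho}. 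Since $\rho^\vee + \overline{\mathscr{C}^+_0} \subset \mathscr{C}^+_0$ we even have $\overline{\hat A} \subset \mathscr{C}^+_0$. It then remains to carry out the second step, namely to prove $\hat w \sim_\mathbf{g} v$.

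For that I would apply Lemma \ref{descent} to the alcove $\hat A$, obtaining alcoves $\rho^\vee + \mathbf{a}_1 = A_r \leq A_{r-1} \leq \cdots \leq A_0 = \hat A$, all contained in $\rho^\vee + \mathscr{C}^+_0$, in which two consecutive ones differ by the reflection along a common wall. For each $i$, let $\mathbf{h}_i$ be the unique facet of $\mathcal{A}_\mathbf{g}$ contained in $\overline{A_i}$ — it is a dominant facet since $\overline{A_i} \subset \rho^\vee + \overline{\mathscr{C}^+_0} \subset \mathscr{C}^+_0$ — and let $w_i \in {_\mathrm{f}W}^\mathbf{g}$ be the element with $w_i \car_1 \mathbf{g} = \mathbf{h}_i$, so that $w_0 = \hat w$ and $w_r = v$. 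It then suffices to show $w_{i-1} \sim_\mathbf{g} w_i$ for every $i$ and to conclude by transitivity of $\sim_\mathbf{g}$.

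The crux, which I expect to be the main obstacle, is this last assertion. Fix $i$, let $H \in \mathscr{H}$ be the hyperplane separating $A_{i-1}$ from $A_i$ and $s_H \in W$ the associated reflection; since $s_H$ maps $A_{i-1}$ onto $A_i$, we get $\mathbf{h}_i = s_H \car_1 \mathbf{h}_{i-1}$. If $\mathbf{h}_{i-1} \subset H$, then $\mathbf{h}_{i-1} \subset \overline{A_{i-1}} \cap \overline{A_i}$, hence $\mathbf{h}_i = \mathbf{h}_{i-1}$ and $w_i = w_{i-1}$. Otherwise, because $\mathfrak{R}^\vee$ is irreducible, $A_{i-1}$ is an open simplex, $\mathbf{h}_{i-1}$ is one of its faces of dimension $\dim \mathbf{g} \geq 1$ — this is where the hypothesis that $\mathbf{g}$ is not a point enters — while $H \cap \overline{A_{i-1}}$ is a wall of $A_{i-1}$; therefore $\overline{\mathbf{h}_{i-1}} \cap H \cap \overline{A_{i-1}}$ is the closure of a non-empty facet $\mathbf{p}$, with $\mathbf{p} \subset \overline{\mathbf{h}_{i-1}}$, $\mathbf{p} \subset H$ and $\mathbf{p} \subset \overline{A_{i-1}} \subset \mathscr{C}^+_0$. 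As $s_H \in W_\mathbf{p}$ and $\mathbf{h}_i = s_H \car_1 \mathbf{h}_{i-1}$, Lemma \ref{desending facets}, applied with $\mathbf{h}_{i-1}$, the facet $\mathbf{p}$, and $v = s_H$, yields $w_{i-1} \sim_\mathbf{g} w_i$, as wanted. The delicate point is precisely that when $\mathbf{g}$ is a point, $\mathbf{h}_{i-1}$ may be a vertex of $A_{i-1}$ lying opposite to the reflecting wall $H \cap \overline{A_{i-1}}$, so that $\overline{\mathbf{h}_{i-1}} \cap H \cap \overline{A_{i-1}} = \emptyset$ and no admissible $\mathbf{p}$ exists; this is exactly why the case where $\mathbf{g}$ is a point must be handled separately afterwards. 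Finally, the reducible situation is then recovered from the irreducible case via Proposition \ref{reduction to irred}.
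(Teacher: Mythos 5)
Your proposal is correct and follows essentially the same route as the paper: reduce to $\rho^\vee+\overline{\mathscr{C}^+_0}$ via Proposition \ref{away}, descend through a gallery of dominant alcoves from Lemma \ref{descent}, and link consecutive facets by Lemma \ref{desending facets} using the fact that in a simplex a wall meets every face of dimension $\geq 1$. The only (immaterial) difference is that the paper produces the auxiliary facet $\mathbf{q}_i\subset\overline{\mathbf{g}}$ inside the fundamental alcove and transports it by $w_i$, whereas you intersect $\overline{\mathbf{h}_{i-1}}$ with the reflecting wall directly inside $\overline{A_{i-1}}$.
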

	\begin{proof}We introduce a new notation for this proof: if $\mathbf{h},\mathbf{h}'\in \mathcal{A}^+_\mathbf{g}$ are such that $$\mathbf{h}=w\mathbf{g},~\mathbf{h}=w'\mathbf{g}$$ for some $w,w'\in {_{\mathrm{f}}W}^\mathbf{g}$, then we write $\mathbf{h}\sim_{\mathbf{g}}\mathbf{h}'$ if $w\sim_{\mathbf{g}}w'$. We want to show that $\mathcal{A}^+_\mathbf{g}$ consists of a single equivalence class for $\sim_{\mathbf{g}}$.
		
		By Proposition \ref{away}, we have $\mathbf{h}\sim_{\mathbf{g}}\hat{\mathbf{h}}$ for any $\mathbf{h}\in\mathcal{A}^+_\mathbf{g}$ and, thanks to Corollary \ref{shifting facets}, we know that $\hat{\mathbf{h}}\subset \rho^\vee+ \overline{\mathscr{C}^+_0}$ . Thus, it is enough to show that all the elements of $\mathcal{A}^+_\mathbf{g}$ lying inside of $\rho^\vee+ \overline{\mathscr{C}^+_0}$ are in relation. For this, we will show that any facet satisfying this condition is in relation with the unique representative of its $W$-orbit in $\rho^\vee+\overline{\mathbf{a}_\ell}$.  	
		
		Pick $\mathbf{h}$ inside of $\rho^\vee+\overline{\mathscr{C}^+_0}$ and let $A\subset \rho^\vee+\mathscr{C}^+_0$ be an alcove containing $\mathbf{h}$ in its closure. Let also $A_r,A_{r-1},\cdots,A_1,A_0:=A$ be alcoves as in Lemma \ref{descent}, and denote by $s_i\in S$ the reflection such that $A_is_i=A_{i-1}$. 
		
		Since $\mathbf{g}$ is not a point and $\overline{\mathbf{a}_1}$ is a simplex (because $\mathfrak{R}^\vee$ is indecomposable), the face of the simplex fixed by any $s\in S$ (which is the closure of the facet fixed by $s$) must have a nonzero intersection\footnote{Recall that, for a positive integer $n$, an $n$-simplex $\Delta$ inside of an affine space $E$ of dimension $n$ is defined as the convex hull of $n+1$ vertices (those vertices being $n+1$ points not lying on a same hyperplane). For $0\leq k\leq n$, a $k$-face of $\Delta$ is a subset consisting of the convex hull of $k+1$ vertices of $\Delta$. From these definitions, it is straightforward to show that an $(n-1)$-face of $\Delta$ has a non-empty intersection with any $k$-face, for $k\geq 1$.} with $\overline{\mathbf{g}}$ inside $\overline{\mathbf{a}_1}$. This means that for any $s\in S$, there exists a facet $\mathbf{q}\subset\overline{\mathbf{g}}$ such that $s\in W_\mathbf{q}$. 	
		
		For $i\in\llbracket 1,r\rrbracket$, denote by $\mathbf{q}_i$ a facet such that $\mathbf{q}_i\subset\overline{\mathbf{g}}$ and $s_i\in W_{\mathbf{q}_i}$; for $i\in\{0,\cdots,r\}$, denote by $w_i$ the element of $_{\mathrm{f}}W$ such that $w_i\mathbf{a}_1=A_i$ and put $\mathbf{h}_i:=w_i\mathbf{g}$, $\mathbf{p}_i:=w_i\mathbf{q}_i$. Notice that $A_is_i=w_is_iw_i^{-1}A_i$ and that $\mathbf{h}_i\subset \overline{A_i}$.  By construction, we have the following data
		\begin{align*}
			&\mathbf{p}_i\subset\overline{\mathbf{h}_i},\quad \text{with}~ \mathbf{h}_0=\mathbf{h}~\text{and}~\mathbf{h}_r\subset\rho^\vee+\overline{\mathbf{a}_1},\\
			&\mathbf{h}_{i-1}=w_is_iw_i^{-1} \mathbf{h}_{i}~\forall i\in\llbracket 1,r\rrbracket\quad\text{with}~w_is_iw_i^{-1}\in W_{\mathbf{p}_i}.
		\end{align*} 
		Since the alcoves $A_i$ are inside of $\rho^\vee+ \mathscr{C}^+_0$, the facets $\mathbf{p}_i$ are inside of $\mathscr{C}^+_0$ for all $i$. Thus we can apply Lemma \ref{desending facets} to get that $\mathbf{h}_{i}\sim_{\mathbf{g}}\mathbf{h}_{i-1}$ for every $i$, and by transitivity $\mathbf{h}_r\sim_{\mathbf{g}}\mathbf{h}$. This concludes the proof.
	\end{proof}
	
	\subsection{Non-special facets which are points}\label{Facets which are points}
	In order to get the statement of Theorem \ref{thm not points} for non-special facets which are points, we need to be able to say more about the anti-spherical Kazhdan-Lusztig polynomials $n_{B,A}$. For that, we will make use of the combinatorial data linking the ``periodic" polynomial $p_{B,A}$ of \cite[Remark 4.4]{Soergel1997KazhdanLusztigPA} (which is very closely related to Lusztig's polynomial $Q_{B,A}$ from \cite{LUSZTIG1980121}) and the anti-spherical polynomials. 
	
	Recall the partial order $\preceq$ introduced in subsection \ref{geometry}. The following lemma will be helpful throughout all of this subsection.
	\begin{lem}\label{lem max coset}
		Let $\mathbf{v}$ be a special facet and $C$ be an alcove contained in $\mathscr{C}^+_{\mathbf{v}}$. Then $C$ is maximal in $W_\mathbf{v}C$ for $\preceq$.
	\end{lem}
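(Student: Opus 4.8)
The plan is to unwind the definitions and reduce the statement to a known fact about the periodic order $\preceq$, namely that it restricts to the Bruhat order on $\mathcal{A}^+$ and that alcoves in $\mathscr{C}^+_{\mathbf{v}}$ are "large" for $\preceq$ among their $W_{\mathbf{v}}$-translates. First I would fix the special facet $\mathbf{v}$, say $\mathbf{v}=\{\lambda\}$ with $\lambda\in\mathbb{X}^\vee$, so that $W_\mathbf{v}=t_\lambda W_0 t_{-\lambda}$ is the finite reflection subgroup generated by the reflections $s_H$ through the hyperplanes $H\in\mathscr{H}$ passing through $\mathbf{v}$. Recall the generators of $\preceq$: for an alcove $B$ and a hyperplane $H\in\mathscr{H}$, one has $B\preceq s_H B$ whenever $s_H B\subset E^+_H$, where $E^+_H$ is the half-space meeting every dominant quarter $\mathscr{C}^+_{\mathbf{w}}$. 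The key point is that for $H$ passing through $\mathbf{v}$, the half-space $E^+_H$ contains $\mathscr{C}^+_{\mathbf{v}}$ entirely (this is exactly the defining property of $E^+_H$ from Proposition \ref{conncomp}, applied to the special facet $\mathbf{v}$).

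The main step is then the following observation: if $C\subset\mathscr{C}^+_{\mathbf{v}}$ and $r\in W_\mathbf{v}$ with $rC\neq C$, I want to show $rC\preceq C$. I would argue by descent on the number of reflecting hyperplanes through $\mathbf{v}$ separating $rC$ from $C$. If $rC\neq C$, then since all walls of $\mathscr{C}^+_\mathbf{v}$ pass through $\mathbf{v}$ and $C$ lies inside the open quarter $\mathscr{C}^+_\mathbf{v}$, while $rC$ is obtained from $C$ by an element of the finite reflection group $W_\mathbf{v}$, there is a hyperplane $H$ through $\mathbf{v}$ separating $rC$ from $C$ and bounding a region adjacent to $rC$; more precisely one can find $H\ni\mathbf{v}$ such that $s_H (rC)$ lies strictly closer to $C$ (fewer separating hyperplanes through $\mathbf{v}$). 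Since $C\subset\mathscr{C}^+_\mathbf{v}\subset E^+_H$ and $rC$ is separated from $C$ by $H$, we get $rC\subset E^-_H$, hence $s_H(rC)\subset E^+_H$, so by the defining generator of $\preceq$ we have $rC\preceq s_H(rC)$. By induction on the separation count, $s_H(rC)\preceq C$, hence $rC\preceq C$ by transitivity. This shows $C$ is maximal in $W_\mathbf{v}C$ for $\preceq$.

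I expect the main obstacle to be the bookkeeping in the inductive step: one must verify that the hyperplane $H$ chosen genuinely lies in $\mathscr{H}$ and passes through $\mathbf{v}$ (so that $\mathscr{C}^+_\mathbf{v}\subset E^+_H$ applies and $s_H\in W_\mathbf{v}$), and that $s_H(rC)$ is again in the $W_\mathbf{v}$-orbit of $C$ with strictly fewer separating hyperplanes through $\mathbf{v}$ — this is the standard combinatorics of finite Coxeter groups acting on their Tits cone, transported to the affine picture by translating to the case $\mathbf{v}=\{0\}$ where $W_\mathbf{v}=W_0$, $\mathscr{C}^+_\mathbf{v}=\mathscr{C}^+_0$, and the claim is simply that the unique dominant alcove in a $W_0$-orbit (the alcoves in $\mathscr{C}_0^+$) is Bruhat-maximal, a fact one can cite from \cite[§1.5]{LUSZTIG1980121} or \cite[§4]{Soergel1997KazhdanLusztigPA}. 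The translation invariance of $\preceq$ (it coincides with the periodic order, invariant under $\mathbb{X}^\vee$) then reduces the general $\mathbf{v}$ to the case $\mathbf{v}=\{0\}$, avoiding most of the geometric work; alternatively, since $w_\mathbf{v}=t_\lambda w_0 t_{-\lambda}$ conjugates the setup for $\{0\}$ to that for $\mathbf{v}$, one can also argue directly using Proposition \ref{reverse order}.
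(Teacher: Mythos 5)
Your argument is correct and follows essentially the same route as the paper's proof: both rest on the observation (via Proposition \ref{conncomp}) that $\mathscr{C}^+_\mathbf{v}\subset E^+_H$ for any $H\in\mathscr{H}$ through $\mathbf{v}$, so that any alcove of the orbit separated from $C$ by such an $H$ lies in $E^-_H$ and can be pushed up by $s_H$ via the defining generators of $\preceq$, yielding an increasing chain terminating at $C$. The only cosmetic difference is the termination argument — you count separating hyperplanes through $\mathbf{v}$, whereas the paper invokes finiteness of the orbit $W_\mathbf{v}C$ — and both are valid.
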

	\begin{proof}
		If $D\in W_\mathbf{v}C$ is different from $C$, then there exists a hyperplane $H$ passing through $\mathbf{v}$ which separates $D$ from $\mathscr{C}^+_\mathbf{v}$ (one may take a well chosen wall of $\mathscr{C}^+_\mathbf{v}$), so that $D\subset E^-_H$ and $D\prec s_HD$, with $s_HD\in W_\mathbf{v}D$. Assume that $s_HD\neq C$, then $s_HD$ is not included in $\mathscr{C}^+_{\mathbf{v}}$ (because $C$ is the only alcove of $W_\mathbf{v}D$ contained in $\mathscr{C}^+_{\mathbf{v}}$), so that we can once again find some alcove $D_2\in W_\mathbf{v}D$ satisfying $s_HD\prec D_2$. Since $W_\mathbf{v}D$ is a finite set, we can repeat this process a finite number of times until finding an alcove $D_n\in W_\mathbf{v}D$ which is contained in $\mathscr{C}^+_{\mathbf{v}}$ and such that $D\prec D_n$, so that $D_n=C$. Therefore $C$ is the maximal element in $W_\mathbf{v}C$ for $\preceq$.
	\end{proof}
	We now recall the definition of the ``periodic'' module $\mathcal{P}$, which is the free left $\mathbb{Z}[v^{\pm1}]$-module with basis $\mathcal{A}$, equipped with a structure of right $\mathcal{H}$-module satisfying
	\begin{equation}\label{action hecke alcoves}
		\forall s\in S,~AC_s=
		\begin{cases}
			As+vA, & \text{if}\ A\prec As \\
			As+v^{-1}A, & \text{if}\ As\prec A,
		\end{cases}
	\end{equation}
	where $C_s:=H_s+v$ (these data do define a right action of $\mathcal{H}$ thanks to \cite[Lemma 4.1]{Soergel1997KazhdanLusztigPA}). One then defines the submodule $\mathcal{P}^\circ\subset \mathcal{P}$ as the right $\mathcal{H}$-submodule generated by the elements of the form
	$$E_\lambda:=\sum_{z\in W_0}v^{l(z)}(\lambda+z\mathbf{a}_1),~\lambda\in \mathbb{X}^\vee.$$
	Notice that this definition parallels Lusztig's definition of
	$$e_{\{\lambda\}}:=\sum_{A\in\mathcal{A},~\{\lambda\}\subset \overline{A}}A$$ 
	from \cite[§1.7]{LUSZTIG1980121}, because the set of alcoves $\{\lambda+z\mathbf{a}_1,~z\in W_0\}$ is equal to the set $\{A\in\mathcal{A},~\{\lambda\}\subset \overline{A}\}$.
	
	Recall that a morphism $f:M\to N$ of right $\mathcal{H}$-modules is called \textit{skew linear} if it satisfies $f(xH_w)=f(x)(H_{w^{-1}})^{-1}$, $f(xv)=f(x)v^{-1}$  for every $x\in M,~w\in W$. It can be shown (cf. \cite[Theorem 4.3]{Soergel1997KazhdanLusztigPA}) that $\mathcal{P}^\circ$ admits a unique $\mathcal{H}$-skew linear involution $\overline{(\cdot)}:\mathcal{P}^\circ\to \mathcal{P}^\circ$ such that $\overline{E_\lambda}=E_\lambda$ for all $\lambda\in\mathbb{X}^\vee$, and that for all $A\in \mathcal{A}$ there exists a unique $\underline{P}_A\in\mathcal{P}^\circ$ which is self dual with respect to this involution, with $\underline{P}_A\in A+\sum_Bv\mathbb{Z}[v]B$. The $\underline{P}_A$ form a $\mathbb{Z}[v^{\pm1}]$-basis of $\mathcal{P}^\circ$, and the periodic polynomials are defined via the formula
	$$\underline{P}_A=\sum_B p_{B,A}B.$$
	The following result follows quite directly from the constructions, but will be of great importance for us.
	\begin{lem}\label{lem inv}
		Let $A\in\mathcal{A}$ and $\mathbf{v}$ be the special facet such that $A\subset\Pi_\mathbf{v}$. Then we have 
		$$p_{wC,A}(1)=p_{C,A}(1)\qquad \text{for all}~C\in\mathcal{A}~\text{and}~w\in W_\mathbf{v}. $$
	\end{lem}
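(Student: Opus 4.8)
The plan is to pass to the specialisation $v=1$, where the periodic $\mathcal{H}$-module structure on $\mathcal{P}$ degenerates to an honest action of $W$ on the set of alcoves $\mathcal{A}$, and there to exhibit a \emph{left} symmetry which is invisible at the level of the polynomials $p_{B,A}$ but becomes visible after this degeneration. First I would reduce to $\mathbf{v}=\{0\}$. Since every special facet is of the form $\{\lambda\}$ with $\lambda\in\mathbb{X}^\vee$ and the order $\preceq$ is invariant under translation by $\mathbb{X}^\vee$, translation by $\lambda$ defines a $\mathbb{Z}[v^{\pm1}]$-linear automorphism of $\mathcal{P}$ commuting with the right $\mathcal{H}$-action of (\ref{action hecke alcoves}); it sends $E_\nu$ to $E_{\lambda+\nu}$, hence preserves $\mathcal{P}^\circ$, and, by uniqueness of the skew-linear involution (cf. \cite[Theorem 4.3]{Soergel1997KazhdanLusztigPA}), it commutes with $\overline{(\cdot)}$. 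It therefore carries the self-dual element $\underline{P}_A$ to $\underline{P}_{\lambda+A}$, so $p_{\lambda+B,\lambda+A}=p_{B,A}$. As $W_{\{\lambda\}}=t_\lambda W_0 t_{-\lambda}$, this reduces the statement to the case $\mathbf{v}=\{0\}$, $W_\mathbf{v}=W_0$, $A\subset\Pi:=\Pi_0$.

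Next I would analyse the structure at $v=1$. Write $\mathcal{P}_1$ for the free $\mathbb{Z}$-module on $\mathcal{A}$ obtained from $\mathcal{P}$ by setting $v=1$. By (\ref{action hecke alcoves}) the two cases collapse to $AC_s=As+A$, and since $C_s=H_s+v$ this forces $AH_s=As$ at $v=1$; hence the right $\mathcal{H}$-action on $\mathcal{P}$ degenerates to the right action of $W$ on $\mathcal{A}$ by $A\cdot w$, and $E_\nu$ degenerates to $e_\nu:=\sum_{\{\nu\}\subset\overline{A}}A$. On $\mathcal{P}_1$ the left action of $W$ on alcoves, $(w,A)\mapsto w\car_1 A$, is $\mathbb{Z}$-linear and commutes with the right $W$-action; moreover each $w\in W_0$ fixes $0$ and so permutes the set of alcoves having $0$ in their closure, whence $w\cdot e_0=e_0$. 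Consequently the right $\mathbb{Z}[W]$-submodule $e_0\cdot\mathbb{Z}[W]\subset\mathcal{P}_1$ is fixed pointwise by the left $W_0$-action.

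The key point, and the step I expect to be the main obstacle, is to show that $\underline{P}_A|_{v=1}\in e_0\cdot\mathbb{Z}[W]$ for every $A\subset\Pi$; equivalently, that $\underline{P}_A$ lies in the right $\mathcal{H}$-submodule $E_0\mathcal{H}\subset\mathcal{P}^\circ$. For the lowest alcove of the box this is immediate: $\mathbf{a}_1\subset\mathscr{C}^+_0$, so by Lemma \ref{lem max coset} the alcove $\mathbf{a}_1$ is $\preceq$-maximal in $W_0\mathbf{a}_1=\{z\mathbf{a}_1:z\in W_0\}$, which is exactly the support of $E_0=\sum_{z\in W_0}v^{l(z)}z\mathbf{a}_1$; since $E_0\in\mathbf{a}_1+\sum_B v\mathbb{Z}[v]\,B$ and $\overline{E_0}=E_0$, uniqueness of the self-dual basis gives $\underline{P}_{\mathbf{a}_1}=E_0$. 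For a general $A\subset\Pi$ one connects $\mathbf{a}_1$ to $A$ inside the box by a chain of wall-crossings and propagates membership in $E_0\mathcal{H}$ along the standard recursion $\underline{P}_{A's}=\underline{P}_{A'}C_s-\sum_{B}c_B\,\underline{P}_B$ for the self-dual basis (as in \cite[§4]{Soergel1997KazhdanLusztigPA}): one needs that this recursion stays inside the box, so that the inductive hypothesis applies to every $\underline{P}_B$ occurring on the right-hand side. This box-by-box control, extracted from Soergel's description of $\mathcal{P}^\circ$ together with Lemma \ref{lem max coset}, is the genuine combinatorial content of the lemma.

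Granting this, the conclusion is formal. Write $\underline{P}_A|_{v=1}=\sum_B p_{B,A}(1)\,B$; by the previous paragraph this element lies in $e_0\cdot\mathbb{Z}[W]$ and is therefore fixed by the left $W_0$-action, i.e. $\sum_B p_{B,A}(1)\,(w\car_1 B)=\sum_B p_{B,A}(1)\,B$ for all $w\in W_0$. Equating the coefficients of the alcove $w\car_1 C$ on the two sides, and using that the left $W$-action on $\mathcal{A}$ is free, gives $p_{C,A}(1)=p_{wC,A}(1)$ for all $C\in\mathcal{A}$ and $w\in W_0$. Undoing the translation by $\lambda$ from Step 1 yields the statement for the original special facet $\mathbf{v}$ and its stabiliser $W_\mathbf{v}$.
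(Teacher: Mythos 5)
Your overall strategy is the same as the paper's: specialize the periodic module at $v=1$, observe that the right $\mathcal{H}$-action degenerates to the right $W$-action on alcoves and commutes with the left $W$-action, note that $e_\mathbf{v}$ is fixed by the left action of $W_\mathbf{v}$, and conclude by showing that $\underline{P}_A|_{v=1}$ lies in $e_\mathbf{v}\cdot\mathbb{Z}[W]$. Your reduction to $\mathbf{v}=\{0\}$, your identification $\underline{P}_{\mathbf{a}_1}=E_0$, and your final coefficient comparison are all correct.

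The gap is exactly where you flag it: the membership $\underline{P}_A\in E_0\mathcal{H}$ for all $A\subset\Pi$ (or even just its image at $v=1$) is asserted but not proved, and the induction you sketch does not obviously close. In the wall-crossing recursion $\underline{P}_{A's}=\underline{P}_{A'}C_s-\sum_B c_B\underline{P}_B$, the correction terms are indexed by alcoves $B\prec A's$ which in general do \emph{not} lie in the box $\Pi$; for such $B$ the natural inductive statement is $\underline{P}_B\in E_{\mathbf{u}}\mathcal{H}$ for the special point $\mathbf{u}$ with $B\subset\Pi_\mathbf{u}$, which only yields left $W_\mathbf{u}$-invariance, not left $W_0$-invariance. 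So the induction as stated would at best give $\underline{P}_A\in\sum_{\mathbf{u}}E_\mathbf{u}\mathcal{H}$, which is not enough. The paper avoids this entirely: it uses Soergel's identification $p_{B,A}(1)=Q_{B,A}(1)$ with Lusztig's polynomials (\cite[Remark 4.4]{Soergel1997KazhdanLusztigPA}) and then quotes \cite[Theorem 2.15]{LUSZTIG1980121}, which asserts precisely that $\underline{D}_A=\sum_B Q_{B,A}B$ lies in the right submodule generated by $e_\mathbf{v}$. If you want a self-contained argument you would essentially have to reprove that theorem of Lusztig, which is the genuine content you have deferred; as written, your proof is incomplete at its central step.
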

	\begin{proof}
		Denote by $\mathcal{P}_1$ the free left $\mathbb{Z}$-module $\mathbb{Z}\otimes_{\mathbb{Z}[v^{\pm1}]}\mathcal{P}$, where $\mathbb{Z}$ is seen as a  $\mathbb{Z}[v^{\pm1}]$-module through the map $\mathbb{Z}[v^{\pm1}]\to \mathbb{Z},~v\mapsto 1$, and let $\varphi:\mathcal{P}\to \mathcal{P}_1$ be the morphism of $\mathbb{Z}$-modules induced by sending $v$ to $1$. In particular we have
		$$\varphi(\underline{P}_A)=\sum_{B}p_{B,A}(1)B. $$
		The left and right actions of $W$ on $\mathcal{A}$ endow $\mathcal{P}_1$ with a structure of left and right $\mathbb{Z}[W]$-module, and one can check with (\ref{action hecke alcoves}) that $\varphi(PH_w)=\varphi(P)w$ for every $w\in W$ and $P\in\mathcal{P}$. In particular, $\varphi(PH_w)$ belongs to the right $\mathbb{Z}[W]$-submodule generated by $\varphi(P)$, which we denote by $\varphi(P)\mathbb{Z}[W]$. From \cite[Remark 4.4]{Soergel1997KazhdanLusztigPA}, we know that $p_{B,A}(1)=Q_{B,A}(1)$ for every alcoves $A,B$, where $Q_{B,A}$ is Lusztig's polynomial from \cite{LUSZTIG1980121}. But $\underline{D}_A:=\sum_{B}Q_{B,A}B$ belongs to the right $\mathcal{H}$-submodule generated by $e_\mathbf{v}$ thanks to \cite[Theorem 2.15]{LUSZTIG1980121}, from which we deduce that $\varphi(\underline{P}_A)$ belongs to $e_\mathbf{v}\mathbb{Z}[W]$. Now $e_\mathbf{v}$ is invariant under the left action of $W_\mathbf{v}$ by construction, but since the left and right actions of $W$ on $\mathcal{P}_1$ commute, we get that $wP=P$ for every $P\in e_\mathbf{v}\mathbb{Z}[W]$, concluding the proof.
	\end{proof}
	For any $x\in W$ and $A\in\mathcal{A}$, write $A=\lambda+B$ for a unique $\lambda\in\mathbb{X}^\vee$ and $B\subset\Pi$, and put $x\ast A:= x\lambda+B$. We will denote by $N_A$ the element that we denoted by $N_x$ in subsection \ref{section partial affine flag}, where $x\mathbf{a}_1=A$, and we define (following \cite[Proposition 5.2]{Soergel1997KazhdanLusztigPA}) the $\mathbb{Z}[v^{\pm1}]$-linear application $$\mathrm{res}:\mathcal{P}\to \mathcal{N}, $$
	which sends an alcove $A$ to $N_A$ if $A\in\mathcal{A}^+$, and to $0$ otherwise. Finally, for any $A\in\mathcal{A}$, set 
	$$\mathrm{alt}~\underline{P}_A:=\sum_{x\in W_0}(-1)^{l(x)}\underline{P}_{x\ast A}. $$
	
	The link between periodic and antispherical polynomials is made explicit by the following result, which is \cite[Theorem 5.3(1)]{Soergel1997KazhdanLusztigPA}.
	\begin{prop}\label{period and antisph}
		For any alcove $A\subset \rho^\vee+\mathscr{C}^+_0$, we have 
		$$\underline{N}_A=\mathrm{res}~\mathrm{alt}~\underline{P}_A.$$
	\end{prop}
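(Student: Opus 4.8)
The plan is to recognize the right-hand side as the unique element of $\mathcal{N}$ satisfying the two defining properties of the Kazhdan--Lusztig (canonical) basis and to verify both. Precisely, $\underline{N}_A$ (for $A\in\mathcal{A}^+$, identified with the corresponding element of ${_\mathrm{f}W}$) is characterized inside $\mathcal{N}$ by: it is fixed by the usual bar involution of $\mathcal{N}$, and it lies in $N_A+\sum_{B\in\mathcal{A}^+,\,B<A}v\mathbb{Z}[v]\,N_B$, where $<$ is the Bruhat order, which on $\mathcal{A}^+$ coincides with $\preceq$ by \cite[Claim 4.14]{Soergel1997KazhdanLusztigPA}. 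Uniqueness of such an element being standard, it suffices to prove that $\mathrm{res}\,\mathrm{alt}\,\underline{P}_A$ is bar-invariant and has this triangular shape; this is how I would organize the argument, which ultimately is \cite[Theorem 5.3(1)]{Soergel1997KazhdanLusztigPA}.

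For the triangularity I would proceed concretely. Write $A=\lambda+B_0$ with $\lambda\in\mathbb{X}^\vee$ and $B_0\subset\Pi$. The hypothesis $A\subset\rho^\vee+\mathscr{C}^+_0$ forces $\langle\lambda,\alpha\rangle\geq 1$ for every simple root $\alpha$, so $\lambda$ is strictly dominant and regular. Hence, for each $x\in W_0\setminus\{e\}$, the alcove $x\ast A=x\lambda+B_0$ has $\langle\cdot,\alpha\rangle<0$ for some simple $\alpha$, i.e.\ it is non-dominant, whereas $e\ast A=A$ is dominant. Since $\underline{P}_{x\ast A}\in (x\ast A)+\sum_{C}v\mathbb{Z}[v]\,C$ by self-duality of the periodic basis, applying $\mathrm{res}$ term by term and then forming the signed sum over $W_0$ leaves $N_A$ (from $x=e$) as the unique $v^0$-contribution, every remaining contribution landing in $v\mathbb{Z}[v]\,N_B$ for dominant $B$. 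One then has to check that the surviving $B$ all satisfy $B\prec A$; here the translation- and $W$-invariance of the periodic order and the coefficient identities of the type $p_{wC,A}(1)=p_{C,A}(1)$ from Lemma \ref{lem inv} are the relevant tools.

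For the bar-invariance: each $\underline{P}_{x\ast A}$ lies in $\mathcal{P}^\circ$ and is fixed by the bar involution of $\mathcal{P}^\circ$ by \cite[Theorem 4.3]{Soergel1997KazhdanLusztigPA}, so $\mathrm{alt}\,\underline{P}_A=\sum_{x\in W_0}(-1)^{l(x)}\underline{P}_{x\ast A}$ is itself a bar-invariant element of $\mathcal{P}^\circ$. The real content is that the composite $\mathrm{res}\,\mathrm{alt}$ carries bar-invariant elements of $\mathcal{P}^\circ$ to bar-invariant elements of $\mathcal{N}$. This is \emph{not} automatic: $\mathrm{res}$ alone is not a morphism of right $\mathcal{H}$-modules (already $\mathrm{res}(E_0\,C_s)\neq\mathrm{res}(E_0)\,C_s$ for $s\in S\cap W_0$), so one cannot reduce the claim to a check on $\mathcal{H}$-module generators in the naive way. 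Instead one must track how $\mathrm{alt}$ converts the ``periodic'' wall-crossing of (\ref{action hecke alcoves}) --- which keeps both $A$ and $As$ and has $v^{\pm1}$ coefficients even across the walls of $\mathscr{C}^+_0$ --- into the ``antispherical'' wall-crossing, which instead kills the term across a finite wall; the alternating cancellation is exactly what reconciles the two, and the skew-linear bar involutions on $\mathcal{P}^\circ$ and $\mathcal{N}$ are pinned down by their values on $E_\lambda$ and on $N_e$ respectively, which correspond under $\mathrm{res}\,\mathrm{alt}$.

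I expect the main obstacle to be precisely this last point --- showing $\mathrm{res}\,\mathrm{alt}$ intertwines the two bar involutions --- because the absence of $\mathcal{H}$-linearity of $\mathrm{res}$ means the verification has to be done ``by hand'' on the alcove basis, and one needs the regularity coming from the $\rho^\vee$-shift for the signed sums to line up (this is the same phenomenon that produces the cancellations near the walls of the dominant cone mentioned in the introduction). By contrast, once the leading-term computation above is set up, the triangularity is comparatively routine.
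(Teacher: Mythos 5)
The paper does not actually prove this proposition: it is quoted verbatim from Soergel, \cite[Theorem 5.3(1)]{Soergel1997KazhdanLusztigPA}, so there is no in-paper argument to compare yours against. Your outline is a reasonable reconstruction of the shape of that proof: characterize $\underline{N}_A$ by self-duality together with the degree condition $\underline{N}_A\in N_A+\sum_B v\mathbb{Z}[v]N_B$, verify the degree condition by a leading-term computation, and reduce the rest to the compatibility of $\mathrm{res}\circ\mathrm{alt}$ with the two involutions. Your leading-term computation is correct: $A\subset\rho^\vee+\mathscr{C}^+_0$ does force the translation part $\lambda$ of $A=\lambda+B_0$ to be dominant and regular, so $x\ast A$ is non-dominant for $x\in W_0\setminus\{e\}$ and is killed by $\mathrm{res}$, leaving $N_A$ as the unique degree-zero term. (Note that the additional support condition $B\prec A$ you set out to verify is not needed: self-duality plus membership in $N_A+\sum_B v\mathbb{Z}[v]N_B$ already characterizes $\underline{N}_A$, and the support condition is then automatic.)

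As a standalone proof, however, the proposal has a genuine gap exactly where you flag it: the assertion that $\mathrm{res}\circ\mathrm{alt}$ carries the self-dual element $\mathrm{alt}\,\underline{P}_A$ of $\mathcal{P}^\circ$ to a self-dual element of $\mathcal{N}$ is motivated (correctly — $\mathrm{res}$ alone is not $\mathcal{H}$-linear, and the alternating sum is what converts the periodic wall-crossing of (\ref{action hecke alcoves}) into the antispherical one) but never established. That intertwining statement is the entire content of Soergel's theorem; describing the cancellation mechanism and pinning the involutions down on $E_\lambda$ and $N_e$ is not a substitute for carrying it out, and the verification genuinely requires the $\rho^\vee$-deep regularity hypothesis in a way your sketch only gestures at. Since you explicitly defer to \cite[Theorem 5.3(1)]{Soergel1997KazhdanLusztigPA} at the outset, the fair summary is that your proposal is a correct road map for the external reference that the paper itself invokes without proof, but it is not an independent proof of the statement.
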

	For any special facet $\mathbf{v}$ and alcove $A\in\mathcal{A}$ such that $A\subset \Pi_\mathbf{v}$, we define the set
	$$S_A=\{C~|~wC\preceq A~\forall w\in W_\mathbf{v}\}. $$ 
	The next few results will help us utilize Proposition \ref{period and antisph}. More precisely, our main goal in Proposition \ref{generic poly} will be to determine for which alcoves $C$ one has $n_{C,A}=p_{C,A}$, where the alcove $A\subset\rho^\vee+\mathscr{C}_0^+$ is fixed. As recalled in the lemma below, our interest for the set $S_A$ comes from the fact that it contains the support of $\underline{P}_A$.
	\begin{lem}\label{support poly perio}
		Let $A\in\mathcal{A}$ and $\mathbf{v}$ be the special facet such that $A\subset\Pi_\mathbf{v}$. 
		\begin{enumerate}
			\item We have the implication $p_{B,A}\neq0\Rightarrow B\in S_A$.
			\item We have 
			$$S_A=\{wC~|~w\in W_\mathbf{v},~C\preceq A~\text{and}~ C\subset \mathscr{C}^+_\mathbf{v}\}. $$
			\item Assume that there exists some facet $\mathbf{g}\subset\overline{\mathbf{a}_1}$ such that $A\in A^+_\mathbf{v}W_\mathbf{g}$, and that $\mathbf{v}\subset x\mathscr{C}^+_0$ for some $x\in W_0\backslash\{\mathrm{id}\}$ (this simply means that $\mathbf{v}$ is inside another quarter with vertex $\{0\}$ than $\mathscr{C}^+_0$). Then we have 
			$$D\in S_A\Rightarrow D\not\subset\rho^\vee+\mathscr{C}^+_0.$$
		\end{enumerate}
	\end{lem}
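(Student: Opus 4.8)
I would prove the three statements in the order (2), (1), (3): (2) is purely order-theoretic, (1) follows from it together with the known support of the periodic polynomials, and (3) is the substantive geometric point.

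For (2): every $W_\mathbf{v}$-orbit of alcoves contains exactly one alcove in each quarter with vertex $\mathbf{v}$, since an alcove lies in a single connected component of the complement of the hyperplanes through $\mathbf{v}$, while $W_\mathbf{v}$ (a finite reflection group) permutes those quarters simply transitively. By Lemma \ref{lem max coset} the representative lying in $\mathscr{C}^+_\mathbf{v}$ is the $\preceq$-maximum of its orbit. Hence, writing $C_0$ for the $\mathscr{C}^+_\mathbf{v}$-representative of $W_\mathbf{v}C$, we have $C\in S_A$ (i.e. $wC\preceq A$ for all $w\in W_\mathbf{v}$) iff $C_0\preceq A$; conversely, if $C_0\subset\mathscr{C}^+_\mathbf{v}$ with $C_0\preceq A$ and $w\in W_\mathbf{v}$, then $wC_0\preceq C_0\preceq A$ and $W_\mathbf{v}(wC_0)=W_\mathbf{v}C_0$, so $wC_0\in S_A$. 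This gives exactly the description $S_A=\{wC\mid w\in W_\mathbf{v},\ C\subset\mathscr{C}^+_\mathbf{v},\ C\preceq A\}$ of (2).

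For (1): by the characterisation of $\underline{P}_A$ (cf. \cite[Theorem 4.3]{Soergel1997KazhdanLusztigPA}) one has $p_{B,A}\neq0\Rightarrow B\preceq A$; moreover, as recalled in the proof of Lemma \ref{lem inv}, $\sum_BQ_{B,A}B$ lies in the right $\mathcal{H}$-submodule generated by $e_\mathbf{v}$, whose elements are supported on unions of left $W_\mathbf{v}$-orbits, so (using non-negativity of the coefficients of the periodic polynomials, which lets one detect non-vanishing after $v\mapsto1$, where Lemma \ref{lem inv} applies) the support of $\underline{P}_A$ is stable under left translation by $W_\mathbf{v}$. Combining the two: if $p_{B,A}\neq0$ then $p_{wB,A}\neq0$, hence $wB\preceq A$, for every $w\in W_\mathbf{v}$, i.e. $B\in S_A$. (Equivalently, (1) is a rephrasing via \cite[Remark 4.4]{Soergel1997KazhdanLusztigPA} and part (2) of Lusztig's description of the support of the generic polynomials in \cite[§2]{LUSZTIG1980121}.)

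Part (3) is the main obstacle, and I would argue by contradiction: suppose $D\in S_A$ with $D\subset\rho^\vee+\mathscr{C}^+_0$. Write $\mathbf{v}=\{\lambda\}$ with $\lambda\in\mathbb{X}^\vee$; by (2) we obtain $C\subset\mathscr{C}^+_\mathbf{v}$ and $w\in W_\mathbf{v}$ with $D=wC$ and $C\preceq A$, and since $C$ is the $\preceq$-maximum of $W_\mathbf{v}C=W_\mathbf{v}D$ it suffices to contradict $C\preceq A$. Since $\preceq$ is invariant under translation by $\mathbb{X}^\vee$, $C\preceq A$ becomes $C-\lambda\preceq A-\lambda$; the hypothesis $A\in A^+_\mathbf{v}W_\mathbf{g}$ says $A-\lambda$ lies in the right $W_\mathbf{g}$-coset of $\mathbf{a}_1$, and as $W_\mathbf{g}$ is finite this coset is a bounded, $\lambda$-independent set of alcoves (especially small when $\mathbf{g}$ is non-special, since then few hyperplanes pass through $\mathbf{g}$). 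In the case where the whole coset $\mathbf{a}_1W_\mathbf{g}$ is dominant, to which one reduces, $\preceq$ there is the Bruhat order, $C-\lambda$ is dominant hence $\le A-\lambda$, so $C-\lambda$ — and therefore $C$ and the whole orbit $W_\mathbf{v}C$ — is confined to a bounded region around $\mathbf{v}$ whose size does not depend on $\lambda$. It remains to derive a contradiction with $D=wC\subset\rho^\vee+\mathscr{C}^+_0$, using that $\mathbf{v}\subset x\mathscr{C}^+_0$ for some $x\neq\mathrm{id}$ forces $\mathbf{v}$ into a non-dominant Weyl chamber. The delicate point — and where I expect the real difficulty to lie — is that a bare metric estimate does not quite close the argument, so one has to bring in the half-space description $E^\pm_H$ of the periodic order (Proposition \ref{conncomp}) together with its interaction with $w_\mathbf{v}$ (Proposition \ref{reverse order}) in order to show that the $W_\mathbf{v}$-orbit of such a bounded alcove near the non-dominant point $\mathbf{v}$ cannot reach $\rho^\vee+\mathscr{C}^+_0$.
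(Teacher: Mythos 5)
Your treatments of (1) and (2) are fine and essentially match the paper: (2) is exactly the argument via Lemma \ref{lem max coset} and the fact that each $W_\mathbf{v}$-orbit has a unique representative in $\mathscr{C}^+_\mathbf{v}$, and for (1) the paper simply cites \cite[Proposition 4.22]{Soergel1997KazhdanLusztigPA}, so your alternative derivation via Lemma \ref{lem inv} is acceptable (modulo the positivity of the coefficients of the periodic polynomials, which you invoke without justification).

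Part (3), however, has a genuine gap, which you yourself flag. The missing idea is that the first half of your argument can and must be pushed to an \emph{exact} conclusion, not a metric one: after translating by $-\lambda$ and observing that $\preceq$ becomes the Bruhat order on dominant alcoves, the relation $C-\lambda\preceq A-\lambda$ with $A-\lambda$ in the coset $\mathbf{a}_1 W_{\mathbf{g}'}$ (where $W_{\mathbf{g}'}$ is a \emph{standard} parabolic subgroup, $\mathbf{g}'\subset\overline{\mathbf{a}_1}$) forces $C-\lambda=w''\mathbf{a}_1$ with $w''\leq w'\in W_{\mathbf{g}'}$, hence $w''\in W_{\mathbf{g}'}$; translating back, $C\in A^+_\mathbf{v}W_\mathbf{g}$. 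This is strictly stronger than ``$C$ lies in a bounded region'': it says that $\overline{C}$ contains the $W$-conjugate $\mathbf{h}$ of $\mathbf{g}$ sitting in $\overline{A^+_\mathbf{v}}$, so $\overline{C}\cap\overline{A^+_\mathbf{v}}\neq\emptyset$. The argument then closes without any appeal to Proposition \ref{conncomp} or Proposition \ref{reverse order}: for $D=wC\in S_A$ one gets $\overline{D}\cap w\overline{A^+_\mathbf{v}}\neq\emptyset$, and $wA^+_\mathbf{v}\subset x\mathscr{C}^+_0$ for every $w\in W_\mathbf{v}$ (because $x^{-1}wA^+_\mathbf{v}$ contains $x^{-1}\mathbf{v}\subset\mathscr{C}^+_0$ in its closure, so Proposition \ref{dominant facet} applies). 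Hence if $D\subset\mathscr{C}^+_0$ its closure must meet a wall separating $\mathscr{C}^+_0$ from $x\mathscr{C}^+_0$, which is impossible for an alcove contained in $\rho^\vee+\mathscr{C}^+_0$. Without the precise membership $C\in A^+_\mathbf{v}W_\mathbf{g}$ this contact argument is unavailable, and your proposed fallback (half-space combinatorics via $E^\pm_H$ and $w_\mathbf{v}$) does not obviously substitute for it.
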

	\begin{proof}
		\begin{enumerate}
			\item This is \cite[Proposition 4.22]{Soergel1997KazhdanLusztigPA}.
			\item Let us write
			$$S'_A=\{wC~|~w\in W_\mathbf{v},~C\preceq A~\text{and}~ C\subset \mathscr{C}^+_\mathbf{v}\}. $$
			The inclusion $S_A\subset S'_A$ just follows from the fact that any alcove $C\in\mathcal{A}$ has a $W_{\mathbf{v}}$ conjugate $wC$ inside of $\mathscr{C}^+_{\mathbf{v}}$. Conversely, if $C$ is an alcove such that $C\subset\mathscr{C}^+_{\mathbf{v}}$ and $C\preceq A$, then we have $wC\preceq C$ for every $w\in W_{\mathbf{v}}$ thanks to Lemma \ref{lem max coset}, so $wC\preceq A$ for every $w\in W_{\mathbf{v}}$. This proves the inclusion $S'_A\subset S_A$, and concludes the proof of the first point.
			\item Let $C\subset\mathscr{C}^+_{\mathbf{v}}$ be an alcove such that $C\preceq A$.\begin{itemize}
				\item We claim that $C$ belongs to $A^+_{\mathbf{v}}W_\mathbf{g}$. Since $\preceq$ is invariant under translations, we can translate everything by $-\mu$, where $\mathbf{v}=\{\mu\}$, so that we are reduced to the case where $\mathbf{v}=\{0\}$. More precisely, if we denote by $w_\mu\in W$ the element such that $w_\mu\mathbf{a}_1=A^+_\mathbf{v}$, by $w\in W_\mathbf{g}$ the element such that $A=A^+_\mathbf{v}w$ and set $\omega_\mu:=t_{-\mu}w_\mu$, then we get that $\omega_\mu\mathbf{a}_1=\mathbf{a}_1$ and
				\begin{multline*}C\preceq A\Longleftrightarrow t_{-\mu}C\preceq t_{-\mu}A=t_{-\mu}(w_\mu\mathbf{a}_1w)=t_{-\mu}(w_\mu w\mathbf{a}_1)= \omega_\mu w\mathbf{a}_1\\=(\omega_\mu w\omega_\mu^{-1})\omega_\mu\mathbf{a}_1=(\omega_\mu w\omega_\mu^{-1})\mathbf{a}_1.  \end{multline*}
			 So now, if we set $\mathbf{g}':=\omega_\mu\mathbf{g}$, we get the inequality $t_{-\mu}C\preceq w'\mathbf{a}_1$, with $w'=\omega_\mu w\omega_\mu^{-1}\in W_{\mathbf{g}'}$. Since the alcoves $t_{-\mu}C$ and $w'\mathbf{a}_1$ belong to $\mathcal{A}^+$ (because they are translates by $-\mu$ of alcoves in $\mathscr{C}^+_\mathbf{v}$), we get that $t_{-\mu}C\leq w'\mathbf{a}_1$, so that $t_{-\mu}C=w''\mathbf{a}_1$ for some $w''\leq w'$, from which we easily deduce that $w''\in W_{\mathbf{g}'}$; this means that $w''=\omega_\mu u\omega_\mu^{-1}$ for some $u\in W_\mathbf{g}$, so 
				\begin{multline*}
					C=t_\mu(\omega_\mu u\omega_\mu^{-1})\mathbf{a}_1=t_\mu(\omega_\mu u\omega_\mu^{-1})t_{-\mu}t_\mu\mathbf{a}_1=(w_\mu u) (w_\mu^{-1}t_\mu)\mathbf{a}_1=(w_\mu u) \omega_\mu^{-1}\mathbf{a}_1\\=w_\mu u\mathbf{a}_1=A^+_\mathbf{v}u.
				\end{multline*}
				\item Denote by $\mathbf{h}$ the $W$-conjugate of $\mathbf{g}$ contained in $\overline{A^+_{\mathbf{v}}}$. In particular, we get that  $\mathbf{h}$ belongs to both $\overline{A^+_{\mathbf{v}}}$ and $\overline{C}$ (because $C\in A^+_\mathbf{v}W_\mathbf{g}$ thanks to the previous paragraph), so that $\overline{C}\cap\overline{A^+_{\mathbf{v}}}\neq\emptyset$. From this and the description of $S_A$ given in (2), we deduce that for any alcove $D\in S_A$, there exists some $w\in W_{\mathbf{v}}$ such that 
				$$\overline{D}\cap w\overline{A^+_{\mathbf{v}}}\neq\emptyset$$ 
				(just write $D=wC$ for some alcove $C$ as above). But $wA^+_{\mathbf{v}}\subset x\mathscr{C}^+_0$ for any $w\in W_{\mathbf{v}}$ (because $x^{-1}wA^+_{\mathbf{v}}$ is an alcove containing $x^{-1}\mathbf{v}\subset \mathscr{C}^+_0$ in its closure, so that $x^{-1}wA^+_{\mathbf{v}}\subset \mathscr{C}^+_0$ thanks to Proposition \ref{dominant facet}), so we get ultimately that if an alcove $D\in S_A$ is contained in $\mathscr{C}^+_0$, then $\overline{D}$ must have a non-empty intersection with a hyperplane separating $\mathscr{C}^+_0$ from $x\mathscr{C}^+_0$, so that $D$ is not contained in $\rho^\vee+\mathscr{C}^+_0$.
			\end{itemize} 
		\end{enumerate}
	\end{proof}

	\begin{lem}\label{lemme contained box}
		Assume that the root system $\mathfrak{R}^\vee$ is irreducible. Let $\lambda$ be a nonzero weight contained in $\overline{\mathscr{C}^+_0}\cap\mathbb{X}^\vee$, $\mathbf{v}$ be a special facet, $C$ be an alcove contained in $\Pi_\mathbf{v}$ and put $A:=\lambda+C$. If $\overline{A}\cap\overline{A^+_\mathbf{v}}\neq\emptyset$, we must have $C=A^+_\mathbf{v}$.
	\end{lem}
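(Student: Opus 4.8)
The plan is to reduce to the case $\mathbf v=\{0\}$, to show that the only point $\overline A$ and $\overline{A^+_{\mathbf v}}$ can share is the vertex $\lambda$, and then to invoke the rigidity of the ``corner alcove'' of a box. Since the hyperplane arrangement $\mathscr H$, the alcoves, the quarters $\mathscr C^+_{\mathbf w}$, the boxes $\Pi_{\mathbf w}$ and the alcoves $A^+_{\mathbf w}$ are all invariant under translation by $\mathbb X^\vee$, and $\mathbf v=\{\mu\}$ is a special facet with $\mu\in\mathbb X^\vee$, I would translate by $-\mu$ and assume $\mathbf v=\{0\}$, so that $A^+_{\mathbf v}=\mathbf a_1$, $\Pi_{\mathbf v}=\Pi_0$ and $\mathscr C^+_{\mathbf v}=\mathscr C^+_0$. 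Put $\mathbf v':=\{\lambda\}$, again a special facet. From $C\subset\Pi_0$ and the description $(\ref{box})$ of boxes one gets that $A=\lambda+C$ (an alcove, being a translate of $C$ by $\lambda\in\mathbb X^\vee$) satisfies $A\subset\lambda+\Pi_0=\Pi_\lambda=\Pi_{\mathbf v'}$; moreover $\lambda+\mathbf a_1$ is the unique alcove inside $\lambda+\mathscr C^+_0=\mathscr C^+_{\mathbf v'}$ having $\mathbf v'$ in its closure, so $A^+_{\mathbf v'}=\lambda+\mathbf a_1$. Thus it suffices to prove $A=A^+_{\mathbf v'}$, which gives back $C=\mathbf a_1=A^+_{\mathbf v}$.

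The heart of the argument — essentially the only non-mechanical step — is to show that every $p\in\overline A\cap\overline{\mathbf a_1}$ equals $\lambda$. For this I would play off the two opposite constraints coming from the two closures. On the one hand $p\in\overline{\mathbf a_1}$ forces $\langle p,\theta\rangle\le 1$, where $\theta\in\mathfrak R_+$ is the highest root, available since $\mathfrak R$ (equivalently $\mathfrak R^\vee$) is irreducible. On the other hand $p\in\overline A\subset\overline{\Pi_\lambda}$ forces, by $(\ref{box})$, $\langle p,\alpha\rangle\ge\langle\lambda,\alpha\rangle$ for every simple root $\alpha$. Writing $\theta=\sum_\alpha c_\alpha\,\alpha$ with all $c_\alpha\ge 1$ (here irreducibility of $\mathfrak R$ enters crucially), one obtains $1\ge\langle p,\theta\rangle=\sum_\alpha c_\alpha\langle p,\alpha\rangle\ge\sum_\alpha c_\alpha\langle\lambda,\alpha\rangle=\langle\lambda,\theta\rangle$. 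Since $\lambda$ is a non-zero dominant cocharacter it pairs nontrivially with some simple root — otherwise it would pair trivially with $\mathbb Z\mathfrak R=\mathbb X$ and vanish — so $\langle\lambda,\theta\rangle\ge 1$, and the chain above forces $\langle p,\theta\rangle=\langle\lambda,\theta\rangle$. Then $\sum_\alpha c_\alpha(\langle p,\alpha\rangle-\langle\lambda,\alpha\rangle)=0$ with non-negative summands, whence $\langle p,\alpha\rangle=\langle\lambda,\alpha\rangle$ for all simple $\alpha$ and therefore $p=\lambda$; thus $\mathbf v'=\{\lambda\}\subset\overline A$. I expect this inequality bookkeeping — arranging the two bounds on $p$ so that the slack-free configuration pins $p$ to the vertex $\lambda$ — to be the main (indeed essentially the only) obstacle.

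To conclude I would note from $(\ref{box})$ that $\Pi_\lambda\subset\mathscr C^+_{\mathbf v'}$, because the conditions $\langle\nu-\lambda,\alpha\rangle>0$ for simple $\alpha$ already give $\langle\nu-\lambda,\beta\rangle>0$ for all $\beta\in\mathfrak R_+$. Hence $A$ is an alcove contained in the quarter $\mathscr C^+_{\mathbf v'}$ with $\mathbf v'$ in its closure, so by the definition of $A^+_{\mathbf v'}$ as the unique such alcove, $A=A^+_{\mathbf v'}=\lambda+\mathbf a_1$, and $C=\mathbf a_1=A^+_{\mathbf v}$, as wanted. The remaining ingredients — the translation-invariance reduction, the identity $A^+_{\mathbf v'}=\lambda+A^+_{\{0\}}$, and the inclusion $\overline{\mathbf a_1}\subset\{\,\nu:\langle\nu,\theta\rangle\le 1\,\}$ (immediate since $\theta\in\mathfrak R_+$) — are routine, so no real difficulty is expected beyond the step that identifies $p$ with $\lambda$.
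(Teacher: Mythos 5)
Your proof is correct and follows essentially the same route as the paper's: reduce to $\mathbf{v}=\{0\}$ and exploit irreducibility through the highest root $\theta=\sum_\alpha c_\alpha\alpha$ with all $c_\alpha\geq 1$, playing the box inequalities (on simple roots) against the alcove-closure inequalities (on all positive roots). The only organizational difference is that the paper tracks the integers $n_\alpha$ defining $C$ and shows they all equal $1$, whereas you pin the intersection point down to $\lambda$ and then invoke the uniqueness of the alcove $A^+_{\{\lambda\}}$ in its quarter; both variants are sound.
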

	\begin{proof}
		In order to simplify the notations, we may and will reduce ourselves to the case where $\mathbf{v}=\{0\}$. Let us write
		$$C=\{\mu\in E~|~n_\alpha-1<\langle\mu,\alpha\rangle<n_\alpha ~\forall\alpha\in\mathfrak{R}_+\}$$
		for some integers $n_\alpha$. First notice that if $\alpha',\alpha\in\mathfrak{R}_+$ are such that $\alpha\leq \alpha'$ (i.e.  $\alpha'-\alpha$ is a sum of positive roots), then we must have $n_\alpha\leq n_{\alpha'}$. Indeed, since $C\subset\mathscr{C}^+_0$, we know that $\langle\mu,\alpha\rangle\leq \langle\mu,\alpha'\rangle$ for every $\mu\in C$, so in particular we get that  
		$$n_\alpha-1< \langle\mu,\alpha\rangle\leq\langle\mu,\alpha'\rangle< n_{\alpha'},$$ proving the claim. 
		
		Since $C\subset\Pi$, we know that $n_\alpha=1$ for every $\alpha\in S_0$, and thus $n_\alpha\geq 1$ for every $\alpha\in\mathfrak{R}_+$. The assumption that $\overline{A}\cap\overline{\mathbf{a}_1}\neq\emptyset$ implies that we have 
		$$n_{\alpha}+\langle\lambda,{\alpha}\rangle\in\{1,2\}\quad\text{for every}~\alpha\in\mathfrak{R}_+. $$ 
		Therefore we must have $\langle\lambda,{\alpha}\rangle\in\{0,1\}$ for every $\alpha\in \mathfrak{R}_+$. Since $\lambda\neq 0$, there exists some $\alpha_0\in S_0$ such that $\langle\lambda,\alpha_0\rangle>0$. So let $\beta=\sum_{S_0}c_\alpha \alpha\in \mathfrak{R}_+$ be the longest root (cf. \cite[Chap. 6, Proposition 25]{cdi_springer_books_10_1007_978_3_540_34491_9}, this is where we need the assumption that $\mathfrak{R}$ is irreducible). In particular we have $c_\alpha\geq 1$ for every $\alpha\in S_0$, so that $\langle\lambda,\beta\rangle>0$, and thus $n_\beta=1$. But for every $\alpha\in \mathfrak{R}_+$, we have $\alpha\leq\beta$, so we get that $n_{\alpha}\leq n_\beta$, and finally $n_\alpha=1$. This means that $C=\mathbf{a}_1$.
	\end{proof}
	The following corollary can be visualized on Figure \ref{Type C2} below.
	\begin{coro}\label{lem boite}
		Let $\mathbf{v}$ be a special facet, $\mathbf{g}\subset\overline{\mathbf{a}_1}$ be a non-special facet and $A\in A^+_\mathbf{v}W_\mathbf{g}$ be the alcove which is maximal in $A^+_\mathbf{v}W_\mathbf{g}$ for $\preceq$. Then $A$ belongs to $\Pi_\mathbf{v}$.
	\end{coro}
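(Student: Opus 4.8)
The plan is to reduce immediately to the case $\mathbf{v}=\{0\}$ and then work entirely inside the star of the facet $\mathbf{g}$. Write $\mathbf{v}=\{\nu\}$ with $\nu\in\mathbb{X}^\vee$; translating the whole arrangement by $-\nu$ sends $A^+_\mathbf{v}$ to $\mathbf{a}_1$, the quarter $\mathscr{C}^+_\mathbf{v}$ to $\mathscr{C}^+_0$, the box $\Pi_\mathbf{v}$ to $\Pi:=\Pi_0$, and, since left translations commute with the right $W$-action on $\mathcal{A}$, it sends $A^+_\mathbf{v}W_\mathbf{g}$ to $\mathbf{a}_1W_\mathbf{g}$, all while preserving the periodic order $\preceq$ (which is translation invariant, as it coincides with $\leq$ on $\mathcal{A}^+$). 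So I may assume $\mathbf{v}=\{0\}$. Now $\mathbf{a}_1W_\mathbf{g}$ is precisely the set of alcoves whose closure contains $\mathbf{g}$ (the ``star'' of $\mathbf{g}$), and I would record that any two adjacent alcoves of this star share a wall whose supporting hyperplane contains $\mathbf{g}$; hence every alcove of $\mathbf{a}_1W_\mathbf{g}$ is obtained from $\mathbf{a}_1$ by a sequence of reflections $s_H$ with $\mathbf{g}\subset H$.

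Next I would pin down the $\preceq$-maximal element $A$. The key point is that $A$ is the unique alcove of the star contained in $E^+_H$ for \emph{every} hyperplane $H$ with $\mathbf{g}\subset H$: the intersection $\bigcap_{H\supseteq\mathbf{g}}E^+_H$ is a nonempty chamber of the (central, along $\mathrm{aff}(\mathbf{g})$) subarrangement $\{H:\mathbf{g}\subset H\}$ having $\mathbf{g}$ in its closure — nonemptiness follows from Proposition \ref{conncomp}, since a deeply dominant quarter $\mathscr{C}^+_\mathbf{w}$ lies in $E^+_H$ for each of the finitely many $H\supseteq\mathbf{g}$ — and near $\mathbf{g}$ this chamber is an alcove of the star, which is then $\succeq$ every other alcove of the star by reflecting across the separating hyperplanes one at a time. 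I would then note that $A$ is dominant: for each simple root $\alpha$, either $H_\alpha:=\{\langle\cdot,\alpha\rangle=0\}$ passes through $\mathbf{g}$, so $A\subset E^+_{H_\alpha}=\{\langle\cdot,\alpha\rangle>0\}$ (the identification of $E^+_{H_\alpha}$ being an instance of Proposition \ref{conncomp}), or $H_\alpha$ does not pass through $\mathbf{g}$, in which case the whole star lies in the component of $E\setminus H_\alpha$ containing $\mathbf{a}_1\subset\mathscr{C}^+_0$, i.e.\ again $\{\langle\cdot,\alpha\rangle>0\}$.

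Finally I would analyse the walls of $\Pi=\Pi_0=\{x:0<\langle x,\alpha\rangle<1\ \forall\alpha\in S_0\}$, which are the $H_\alpha$ together with $H'_\alpha:=\{\langle\cdot,\alpha\rangle=1\}$ for $\alpha\in S_0$. Using that $\langle x,\alpha\rangle\le\langle x,\theta\rangle\le 1$ on $\overline{\mathbf{a}_1}$ (with $\theta$ the highest root, $\theta=\sum_{\beta\in S_0}m_\beta\beta$), one checks that $H'_\alpha\cap\overline{\mathbf{a}_1}$ is empty when $m_\alpha\ge 2$ and is the single special vertex $\{\varpi^\vee_\alpha\}$ when $m_\alpha=1$; since $\mathbf{g}$ is non-special, no $H'_\alpha$ contains $\mathbf{g}$. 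Hence the only walls of $\Pi$ passing through $\mathbf{g}$ are among the $H_\alpha$, and by the previous paragraph $\mathbf{a}_1$ and $A$ lie on the same side $\{\langle\cdot,\alpha\rangle>0\}$ of each such $H_\alpha$. As $A$ is obtained from $\mathbf{a}_1$ only by reflections in hyperplanes through $\mathbf{g}$, no wall of $\Pi$ separates $A$ from $\mathbf{a}_1$, so $A$ lies in the same box as $\mathbf{a}_1$, that is $A\subset\Pi=\Pi_\mathbf{v}$.

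The main obstacle is the second step: it is tempting (and wrong) to guess that the $\preceq$-maximum of $\mathbf{a}_1W_\mathbf{g}$ is the Bruhat-longest element $w_\mathbf{g}\car_1\mathbf{a}_1$, but $\preceq$ and the Bruhat order disagree off $\mathcal{A}^+$, and the genuine $\preceq$-maximum is the ``$E^+$-most'' alcove of the star; getting the relation between $\preceq$ and $E^+$ right (and hence deducing dominance of $A$) is the delicate part, which is exactly why the hypothesis that $\mathbf{g}$ be non-special enters — it is what keeps all the relevant walls of $\Pi_\mathbf{v}$ of ``$H_\alpha$-type''. One can alternatively organise the argument so as to invoke Lemma \ref{lemme contained box}: knowing $A$ dominant one writes $A=\mu+C$ with $\mu\in\overline{\mathscr{C}^+_0}\cap\mathbb{X}^\vee$ and $C\subset\Pi$ an alcove, and if $\mu\ne 0$ then Lemma \ref{lemme contained box} forces $C=A^+_\mathbf{v}=\mathbf{a}_1$, so $A=\mu+\mathbf{a}_1$, whence $\overline{\mathbf{g}}\subseteq\overline{(\mu+\mathbf{a}_1)}\cap\overline{\mathbf{a}_1}$, contradicting that $\mathbf{g}$ is a non-special facet of the simplex $\overline{\mathbf{a}_1}$; this variant trades the order-theoretic subtlety for the (equally delicate) fact that two distinct faces of $\overline{\mathbf{a}_1}$ differing by a nonzero element of $\mathbb{X}^\vee$ must be special.
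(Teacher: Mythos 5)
Your proof is correct in substance, and its second half takes a genuinely different route from the paper's. The paper first shows (much as you do, but only for the walls of $\mathscr{C}^+_\mathbf{v}$) that the $\preceq$-maximum $A$ lies in $\mathscr{C}^+_\mathbf{v}$; it then writes $A=\lambda+C$ with $C\subset\Pi_\mathbf{v}$, invokes Lemma \ref{lemme contained box} to force $C=A^+_\mathbf{v}$ when $\lambda\neq0$, and reaches a contradiction with non-specialness via the highest root — exactly the ``alternative organisation'' you sketch at the end. Your main line instead sharpens the first step to the characterization of $A$ as the unique alcove of the star lying in $\bigcap_{H\supseteq\mathbf{g}}E^+_H$, and then closes with a separating-hyperplane argument: $A$ and $\mathbf{a}_1$ are separated only by hyperplanes through $\mathbf{g}$, and the only walls of $\Pi$ through $\mathbf{g}$ are among the $H_\alpha$, because $H'_\alpha\cap\overline{\mathbf{a}_1}$ is at most the special vertex $\{\varpi^\vee_\alpha\}$ — this is where non-specialness enters for you, playing the same role as the paper's highest-root computation. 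This buys a cleaner picture of where the maximum sits and dispenses with Lemma \ref{lemme contained box}. Two small repairs are needed. First, translating by $-\nu$ does not send $A^+_\mathbf{v}W_\mathbf{g}$ to $\mathbf{a}_1W_\mathbf{g}$: the left $\tilde{W}$-action and the right $W$-action commute only up to $\Omega$-conjugation of the right factor, so the image is $\mathbf{a}_1W_{\mathbf{g}''}$ with $\mathbf{g}''=\omega\mathbf{g}$ for some $\omega\in\Omega$ (equivalently, the star of the translate of the facet $\mathbf{h}'\in\mathcal{A}_\mathbf{g}$ contained in $\overline{A^+_\mathbf{v}}$); this is harmless since $\mathbf{g}''$ is again a non-special facet of $\overline{\mathbf{a}_1}$, but it must be said, as your wall analysis is then carried out for $\mathbf{g}''$. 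Second, translation-invariance of $\preceq$ follows from translations permuting $\mathscr{H}$ and the sides $E^+_H$ (as used in the proof of Lemma \ref{support poly perio}), not from its coincidence with $\leq$ on $\mathcal{A}^+$. Finally, your use of ``the highest root $\theta$'' implicitly assumes $\mathfrak{R}$ irreducible, but the paper's own proof makes the same reduction, so this matches the intended scope of the statement.
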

	\begin{proof}
		Let us first explain why $A\subset \mathscr{C}^+_\mathbf{v}$. Denote by $\mathbf{h}'$ the element of $\mathcal{A}_\mathbf{g}$ which belongs to the closure of $A^+_\mathbf{v}$. We have the inclusion $A^+_\mathbf{v}\subset \mathscr{C}^+_\mathbf{v}$, so we know that $\mathbf{h}'$ is included in $\overline{\mathscr{C}^+_\mathbf{v}}$. If $C\in A^+_\mathbf{v}W_\mathbf{g}$ is an alcove which is not included in $\mathscr{C}^+_\mathbf{v}$, then there exists a hyperplane $H$ of the boundary of $\mathscr{C}^+_\mathbf{v}$ which contains $\mathbf{h}'$ and separates $C$ from $\mathscr{C}^+_{\mathbf{v}}$, so we clearly have the inclusion $C\subset E^-_H$, and therefore $C\prec s_HC$. Since $s_H\in W_{\mathbf{h}'}$, the alcove $s_HC$ must belong\footnote{Indeed, write $C=w\mathbf{a}_1$ for some $w\in W$, then  $\mathbf{h}'=w\mathbf{g}$ and $w^{-1}s_Hw\in W_\mathbf{g}$, so that $s_HC=C(w^{-1}s_Hw)\in CW_\mathbf{g}=A^+_\mathbf{v}W_\mathbf{g}$.} to $A^+_\mathbf{v}W_\mathbf{g}$. This shows that the maximal element in  $A^+_\mathbf{v}W_\mathbf{g}$ for $\preceq$ must be contained in $\mathscr{C}^+_\mathbf{v}$, whence $A\subset \mathscr{C}^+_\mathbf{v}$.
		
		Now let us show that $A\subset\Pi_\mathbf{v}$. For this we may and will assume until the end of this proof that $\mathfrak{R}$ is irreducible. Assume that this inclusion does not hold, then because $A\subset \mathscr{C}^+_\mathbf{v}$, we can write $A=\lambda+C$ for some nonzero weight $\lambda\in\overline{\mathscr{C}^+_0}\cap\mathbb{X}^\vee$ and some alcove $C\subset\Pi_\mathbf{v}$. But we also have $\overline{A}\cap \overline{A^+_\mathbf{v}}\neq\emptyset$ (both of those sets contain $\mathbf{h}'$), therefore we can apply Lemma \ref{lemme contained box} (here we use the assumption that $\mathfrak{R}$ is irreducible) to see that $C=A^+_\mathbf{v}$. So we are in the following situation:
		\begin{equation}\label{before trans}
			\lambda+A^+_\mathbf{v}\in A^+_\mathbf{v}W_\mathbf{g}. 
		\end{equation}
		Let $\mu\in\mathbb{X}^\vee$ be such that $\{\mu\}=\mathbf{v}$, and denote by $w_\mu\in W$ the element such that $w_\mu\mathbf{a}_1=A^+_\mathbf{v}$. Putting $\omega_\mu:=t_{-\mu}w_\mu$ and applying $t_{-\mu}$ to (\ref{before trans}), we get that
		$$\lambda+\omega_\mu\mathbf{a}_1\in \omega_\mu\mathbf{a}_1 W_\mathbf{g}=(\omega_\mu W_\mathbf{g}\omega_\mu^{-1})\omega_\mu\mathbf{a}_1, $$
		and, using the fact that $\omega_\mu$ fixes the fundamental alcove $\mathbf{a}_1$, we obtain
		\begin{equation}\label{after trans}
			\lambda+\mathbf{a}_1\in W_{\mathbf{g}'}\mathbf{a}_1,~ \text{with}~\mathbf{g}':=\omega_\mu\mathbf{g}.
		\end{equation}
		Since $\omega_\mu\in\Omega$ (because $\omega_\mu$ fixes $\mathbf{a}_1$) and $\mathbf{g}$ is a non-special facet, $\mathbf{g}'$ is also a non-special facet\footnote{Indeed, the action of $W$ (and hence of $\Omega$) on $E$ obviously permutes the set of hyperplanes $\mathcal{H}$, so it also permutes the set of special facets by construction.} included in the closure of the fundamental alcove. From (\ref{after trans}) we deduce that $\mathbf{g}'\subset \lambda+\overline{\mathbf{a}_1}$. But $\lambda\in\overline{\mathscr{C}^+_0}\cap\mathbb{X}^\vee$ is nonzero, so $\langle\lambda,\beta\rangle\geq 1$, where $\beta\in\mathfrak{R}_+$ is the longest root. Since $\mathbf{g}'\subset \overline{\mathbf{a}_1}$, we have that $\langle\mu,\beta\rangle\leq 0$ for every  $\mu\in-\lambda+\mathbf{g}'$ (because $\langle\mu',\beta\rangle\leq 1$ for all $\mu'\in\overline{\mathbf{a}_1}$); therefore the inclusion  $-\lambda+\mathbf{g}'\subset \overline{\mathbf{a}_1}$ is possible only when $\langle\mu,\beta\rangle=0$ for all $\mu\in-\lambda+\mathbf{g}'$. Since $\alpha\leq \beta$ for all $\alpha\in\mathfrak{R}_+$ we get that 
		$$0\leq \langle\mu,\alpha\rangle\leq \langle\mu,\beta\rangle=0~\forall\alpha\in\mathfrak{R}_+,\mu\in-\lambda+\mathbf{g}'$$ which means that $\mu=0$ and so $\mathbf{g}'=\{\lambda\}$, contradicting our assumption that $\mathbf{g}'$ is not a special facet. This concludes the proof of the lemma.
	\end{proof}
	We are now ready to prove our central result.
	\begin{prop}\label{generic poly} 
		Let $\mathbf{g}\subset\overline{\mathbf{a}_1}$ be a non-special facet, $\mathbf{h}\in\mathcal{A}_\mathbf{g}^+$ be such that $\mathbf{h}\subset\rho^\vee+\overline{\mathscr{C}^+_0}$ and $B\subset\rho^\vee+\mathscr{C}^+_0$ be an alcove containing $\mathbf{h}$ in its closure. Let also $\mathbf{v}$ be a special facet such that $\mathbf{v}\subset\overline{B}$, and $A$ be the alcove which is maximal in $A^+_\mathbf{v}W_\mathbf{g}$ for $\leq$. Then we have $A\subset \Pi_\mathbf{v}$ and  
		$$n_{Br,A}(1)=1\qquad \forall r\in W_\mathbf{g}.$$
	\end{prop}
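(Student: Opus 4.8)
The plan is to use Corollary~\ref{lem boite} for the first assertion and the translation between antispherical and periodic Kazhdan--Lusztig polynomials for the second. The inclusion $A\subset\Pi_{\mathbf v}$ is exactly Corollary~\ref{lem boite}, so I only need the identity $n_{Br,A}(1)=1$, and the crucial point is that the second index $A$ has been forced into the box $\Pi_{\mathbf v}$. First I would record positions: writing $\mathbf v=\{\mu\}$, the hypotheses $\mathbf v\subset\overline B$ and $B\subset\rho^\vee+\mathscr C^+_0$ give $\mu\in\rho^\vee+\overline{\mathscr C^+_0}$, so $\mu$ is strictly dominant and hence $A\subset\mathscr C^+_{\mathbf v}=\mu+\mathscr C^+_0\subset\rho^\vee+\mathscr C^+_0$ by (the proof of) Corollary~\ref{lem boite}. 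Each $Br$ with $r\in W_{\mathbf g}$ contains the strictly dominant facet $\mathbf h$ in its closure, hence is dominant by Proposition~\ref{dominant facet}, so all the $n_{Br,A}$ are genuine polynomials. Since $A\subset\rho^\vee+\mathscr C^+_0$, Proposition~\ref{period and antisph} applies to $A$, and reading off from $\underline N_A=\mathrm{res}\,\mathrm{alt}\,\underline P_A$ the coefficient of $N_{Br}$ yields
\[
n_{Br,A}(1)=\sum_{x\in W_0}(-1)^{l(x)}\,p_{Br,\,x\ast A}(1).
\]

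Next I would kill every term with $x\neq\mathrm{id}$. As $A$ is $W_{\mathbf g}$-maximal in its coset we may write $A=\mu+C_0$ with $C_0\subset\Pi$, and $\mathbf a_1,C_0$ lie in a common right $W_{\mathbf g}$-coset; translating the vertex-part, $x\ast A=x\mu+C_0$ lies in $A^+_{\{x\mu\}}W_{\mathbf g}$, while for $x\neq\mathrm{id}$ the special facet $\{x\mu\}$ lies in the non-dominant chamber $x\mathscr C^+_0$ (because $\mu$ is strictly dominant). If $\mathbf h$ is not interior to $\rho^\vee+\mathscr C^+_0$ I would first replace $Br$ by an alcove of $\rho^\vee+\mathscr C^+_0$ containing $\mathbf h$ in its closure — which changes nothing by Proposition~\ref{prop max coset}(1), where $\mathbf q\subset\overline{\mathbf a_1}$ is the representative of the $W$-orbit of $\mathbf g$ and $A$ corresponds to an element of ${_\mathrm{f}W}^{\mathbf q}$, being the $W_{\mathbf g}$-maximal dominant element of its coset. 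Then Lemma~\ref{support poly perio}(3) gives $Br\notin S_{x\ast A}$, whence $p_{Br,x\ast A}(1)=0$ by Lemma~\ref{support poly perio}(1), so only the term $x=\mathrm{id}$ survives and $n_{Br,A}(1)=p_{Br,A}(1)$.

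It remains to show $p_{Br,A}(1)=1$. Here I would invoke Lemma~\ref{lem inv}: since $A\subset\Pi_{\mathbf v}$, the integer $p_{\,\cdot\,,A}(1)$ is $W_{\mathbf v}$-invariant in its first argument, so it suffices to evaluate it on the representative in $\mathscr C^+_{\mathbf v}$ of the $W_{\mathbf v}$-orbit of $Br$; using that $B$ lies deep in $\rho^\vee+\mathscr C^+_0$ together with the box analysis behind Lemma~\ref{lemme contained box} and Corollary~\ref{lem boite} (which already traps the $W_{\mathbf g}$-relevant alcoves inside $\Pi_{\mathbf v}$), one pins this value to $p_{A,A}(1)=1$, the last equality coming from the normalisation $\underline P_A\in A+\sum_C v\,\mathbb Z[v]\,C$. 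Combining the three steps gives $n_{Br,A}(1)=1$ for all $r\in W_{\mathbf g}$, which with $A\subset\Pi_{\mathbf v}$ is the assertion.

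I expect the main obstacle to be this last step: identifying the single surviving periodic polynomial with $1$, which requires a delicate positional analysis inside the box $\Pi_{\mathbf v}$ — which reflections of $W_{\mathbf g}$ preserve $\Pi_{\mathbf v}$, how those that do not can be absorbed into $W_{\mathbf v}$ via Lemma~\ref{lem inv}, and where exactly the alcoves $Br$ (all sharing the non-special vertex $\mathbf h$) sit relative to $A$. A secondary technical point is the behaviour of $\mathbf h$ near the walls of $\rho^\vee+\mathscr C^+_0$, which is dealt with by the invariance Proposition~\ref{prop max coset}(1) before the vanishing argument of Lemma~\ref{support poly perio}(3) can be applied.
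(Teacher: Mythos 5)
Your outline follows the paper's proof almost step for step: $A\subset\Pi_{\mathbf v}$ via Corollary~\ref{lem boite} (with the small remark, which the paper makes explicitly, that all alcoves of $A^+_{\mathbf v}W_{\mathbf g}$ contain the dominant facet $\mathbf h'\subset\overline{A^+_{\mathbf v}}$ in their closure and hence are dominant by Proposition~\ref{dominant facet}, so that maximality for $\leq$ and for $\preceq$ agree and the corollary applies); then the alternating-sum formula of Proposition~\ref{period and antisph}, the vanishing of the terms $x\neq\mathrm{id}$ via Lemma~\ref{support poly perio}(1) and (3), giving $n_{C,A}=p_{C,A}$ for every $C\subset\rho^\vee+\mathscr C^+_0$; the left $W_{\mathbf v}$-invariance of $p_{\cdot,A}(1)$ from Lemma~\ref{lem inv}; and the reduction from $Br$ to $B$ by Proposition~\ref{prop max coset}(1). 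All of that is correct and is exactly the paper's route.

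The one place you stop short is the step you yourself flag as the obstacle, and your proposed tool for it is not the right one. After Lemma~\ref{lem inv} you are left with $p_{A^+_{\mathbf v},A}(1)$, where $A^+_{\mathbf v}=Ar_0$ for some $r_0\in W_{\mathbf g}$; this is \emph{not} $p_{A,A}(1)$, and no positional analysis in $\Pi_{\mathbf v}$ (nor Lemma~\ref{lemme contained box}) identifies the two, since Lemma~\ref{lem inv} gives invariance under the \emph{left} action of $W_{\mathbf v}$ and what you need here is invariance under the \emph{right} action of $W_{\mathbf g}$, which is not a property of the periodic polynomials established anywhere. The paper closes this by converting back to the antispherical side: since $A^+_{\mathbf v}\subset\rho^\vee+\mathscr C^+_0$, the identity $n=p$ already proved applies again and gives $p_{A^+_{\mathbf v},A}(1)=n_{Ar_0,A}(1)$, and then Proposition~\ref{prop max coset}(1) (applicable because $A$ is dominant and maximal in $AW_{\mathbf g}$, hence corresponds to an element of ${_\mathrm{f}W}^{\mathbf g}$) yields $n_{Ar_0,A}(1)=n_{A,A}(1)=1$. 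In other words, the right $W_{\mathbf g}$-invariance lives on the antispherical side, the left $W_{\mathbf v}$-invariance on the periodic side, and the equality $n_{C,A}=p_{C,A}$ on $\rho^\vee+\mathscr C^+_0$ is the bridge used in \emph{both} directions. With that substitution your argument is complete.
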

	\begin{proof}
		Let $\mathbf{h}'$ be the element of $\mathcal{A}_\mathbf{g}$ which is included in $\overline{A^+_\mathbf{v}}$. By construction (i.e. because $\mathbf{v}\subset\rho^\vee+\overline{\mathscr{C}_0^+}$), we have that $A^+_\mathbf{v}\subset\rho^\vee+\mathscr{C}^+_0$, so that in particular $\mathbf{h}'\subset \mathscr{C}^+_0$. Thus, applying Proposition \ref{dominant facet}, we know that all the alcoves of $A^+_\mathbf{v}W_\mathbf{g}$ are contained in $\mathcal{A}^+$ (because they contain $\mathbf{h}'$ in their closure), so that the orders $\preceq$ and $\leq$ coincide in $A^+_\mathbf{v}W_\mathbf{g}$. Therefore $A$ is maximal in $A^+_\mathbf{v}W_\mathbf{g}$ for $\preceq$, so is included in $\Pi_\mathbf{v}$ thanks to Corollary \ref{lem boite}.
		
		By Lemma \ref{period and antisph} and because $A\subset \rho^\vee+\mathscr{C}^+_0$, we have that
		\begin{equation}\label{periodic}
			\sum_{C}n_{C,A}N_C=\sum_{x\in W_0}(-1)^{l(x)}\sum_{C}p_{C,x\ast A}~\mathrm{res}~C. 
		\end{equation}
		Our next goal is to use this formula to show that
		$$ n_{C,A}=p_{C,A}\qquad\text{for all}~C\subset  \rho^\vee+\mathscr{C}^+_0. $$
		For that we will prove that when the alcove $C\subset  \rho^\vee+\mathscr{C}^+_0$ is fixed and $x\in W_0\backslash\{\mathrm{id}\}$, we have $p_{C,x\ast A}=0$; this will follow from a careful study of the periodic polynomials. 
		
		Fix $x\in W_0\backslash\{\mathrm{id}\}$, and notice that $x\ast A\subset \Pi_{x\mathbf{v}}$ since $A\subset \Pi_\mathbf{v}$. We can then apply the first point of Lemma \ref{support poly perio}:
		$$p_{D,x\ast A}\neq 0 \Longrightarrow D\in S_{x\ast A}.$$
		But we have that $x\ast A\in A^+_{x\mathbf{v}}W_\mathbf{g}$ (because $x\ast A$ and $A^+_{x\mathbf{v}}$ are just translates of $A$ and $A^+_{\mathbf{v}}$ respectively, by the same translation of $\mathbb{Z}\mathfrak{R}^\vee$) and $x\mathbf{v}\subset x\mathscr{C}^+_0$, so the third point of Lemma \ref{support poly perio} tells us that 
		$$D\in S_{x\ast A}\Rightarrow D\not\subset  \rho^\vee+\mathscr{C}^+_0.$$
		This fact, combined with (\ref{periodic}), implies that we have 
		\begin{equation}\label{periodic vs antisph}
			n_{C,A}=p_{C,A}\qquad\text{for all}~C\subset  \rho^\vee+\mathscr{C}^+_0.
		\end{equation}
		
		On the other hand, recall the result of Lemma \ref{lem inv}:
		\begin{equation}\label{inva peri}
			p_{wC,A}(1)=p_{C,A}(1) ~\forall w\in W_{\mathbf{v}}.
		\end{equation}
		Thus, for any alcove $C$  included in $\rho^\vee+\mathscr{C}^+_0$, (\ref{periodic vs antisph}) and (\ref{inva peri}) yield
		\begin{equation}\label{inva antisph}
			n_{wC,A}(1)=n_{C,A}(1) \qquad\forall w\in W_{\mathbf{v}}~\text{such that}~wC\subset \rho^\vee+\mathscr{C}^+_0.
		\end{equation}
		
		By construction, there exist elements $w\in W_\mathbf{v},~r_0\in W_\mathbf{g}$ such that $B=wA^+_\mathbf{v}$ and $A^+_\mathbf{v}=Ar_0$, so that $B=wAr_0$. Since $B\subset\rho^\vee+\mathscr{C}^+_0$, we can apply (\ref{inva antisph}) to get that
		$$n_{B,A}(1)=n_{Ar_0,A}(1). $$
		Finally, since $A$ is maximal in $AW_\mathbf{g}$ for $\leq$, we can apply successively the first point of Proposition \ref{prop max coset} (where the facet $\mathbf{q}$ from this proposition is our current $\mathbf{g}$, and where the elements $w\in{_\mathrm{f}W}^\mathbf{g}$, $w'\in{_\mathrm{f}W}$ are such that $w\mathbf{a}_1=A$ and $w'\mathbf{a}_1=B$) to obtain
		$$n_{Br,A}(1)=n_{B,A}(1)=n_{Ar_0,A}(1)=n_{A,A}(1)=1~\forall r\in W_\mathbf{g}. $$
	\end{proof}
	See Figure \ref{Type C2} for an illustration of the setting of the previous Proposition.
	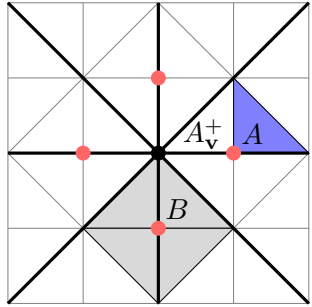
\begin{figure}[!h]	\begin{tikzpicture}[>=stealth]
		\foreach \x in {1,...,5}{
			\draw[ultra thin, color=gray] (\x,1)--(\x,5);
		}
		\foreach \x in {1,...,5}{
			\draw[ultra thin, color=gray] (1,\x)--(5,\x);
		}
		
		\foreach \x in {1,...,3}{
			\draw[ultra thin, color=gray] (1,2*\x-1)--(2*\x-1,1);
			\draw[ultra thin, color=gray] (1,2*\x-1)--(7-2*\x,5);
		}
		\draw[ultra thin, color=gray] (3,1)--(5,3);
		\draw[ultra thin, color=gray] (5,1)--(5,1);
		\draw[ultra thin, color=gray] (3,5)--(5,3);
		\draw[very thin, fill=gray!30] (3,3)--(4,2)--(3,2)--(3,3);
		\draw[very thin, fill=gray!30] (3,3)--(3,2)--(2,2)--(3,3);
		\draw[very thin, fill=gray!30] (4,2)--(3,1)--(3,2)--(4,2);
		\draw[very thin, fill=gray!30] (2,2)--(3,1)--(3,2)--(2,2);
		\draw[very thin, fill=blue!50] (4,3)--(5,3)--(4,4)--(4,3);
		\draw[very thick] (3,3)--(3,5);
		\draw[very thick] (3,3)--(5,5);
		\draw[very thick] (3,3)--(5,3);
		\draw[very thick] (3,3)--(5,1);
		\draw[very thick] (3,3)--(3,1);
		\draw[very thick] (3,3)--(1,1);
		\draw[very thick] (3,3)--(1,3);
		\draw[very thick] (3,3)--(1,5);
		
		\draw (3,3) node[circle,inner sep=2, fill=black] {};
		\draw (4,3) node[circle,inner sep=2, fill=red!60] {};
		\draw (2,3) node[circle,inner sep=2, fill=red!60] {};
		\draw (3,2) node[circle,inner sep=2, fill=red!60] {};
		\draw (3,4) node[circle,inner sep=2, fill=red!60] {};
		\draw (3.25,2.25) node[scale=1,rotate=0]{$B$};
		\draw (3.6,3.25) node[scale=1,rotate=0]{$A^+_\mathbf{v}$};
		\draw (4.25,3.25) node[scale=1,rotate=0]{$A$};

	\end{tikzpicture}
		\caption{\textit{Proposition \ref{generic poly}  in type $C_2$}. The black dot is $\mathbf{v}$ and the thick black lines represent the affine hyperplanes fixed by the reflections in $W_\mathbf{v}$. The red dots are the $W_\mathbf{v}$-conjugates of $\mathbf{h}$, and the set of gray alcoves represents $\{Br,~r\in W_\mathbf{g}\}$.}  \label{Type C2}
	\end{figure}
	\begin{rem}
	
		Keep the notations of the previous proposition. By positivity of the coefficients of the anti-spherical Kazdhan-Lusztig polynomials, the fact that $n_{Br,A}(1)=1$ for all $r\in W_\mathbf{g}$ means that the polynomial $n_{Br,A}$ is always a non-zero monomial.
	\end{rem}
	\begin{coro}\label{coro periodic}	
		Let $\mathbf{g}\subset\overline{\mathbf{a}_1}$ be a non-special facet, $\mathbf{h}\in\mathcal{A}^+_\mathbf{g}$ be such that $\mathbf{h}\subset\rho^\vee+\overline{\mathscr{C}^+_0}$, $B\subset\rho^\vee+\mathscr{C}^+_0$ be an alcove such that $\mathbf{h}\subset\overline{B}$, and let $\mathbf{v}\subset\rho^\vee+\overline{\mathscr{C}^+_0}$ be a special facet such that $\mathbf{v}\subset\overline{B}$. If we let $u\in W_\mathbf{v}$ be such that $B':=uB$ is included in $\rho^\vee+\overline{\mathscr{C}^+_0}$ and denote by $\mathbf{h}'$ the element of $\mathcal{A}^+_\mathbf{g}$ included in $\overline{B'}$, then we have $w'\sim_\mathbf{g}w$, where $w',w\in {_\mathrm{f}W}^\mathbf{g}$ are such that $\mathbf{h}=w\mathbf{g}$ and $\mathbf{h}'=w'\mathbf{g}$.
	\end{coro}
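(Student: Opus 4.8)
The plan is to apply Proposition~\ref{generic poly} twice, to two data sharing a common target alcove, and then to convert the resulting equalities of anti-spherical Kazhdan--Lusztig polynomials into relations for $\sim_{\mathbf{g}}$ through Corollary~\ref{anti spherical implies R}. First I would fix the alcove $A$ which is maximal in $A^+_{\mathbf{v}}W_{\mathbf{g}}$ for $\leq$. As recalled in the proof of Proposition~\ref{generic poly}, every alcove of $A^+_{\mathbf{v}}W_{\mathbf{g}}$ lies in $\mathcal{A}^+$, and since $A$ is maximal in $AW_{\mathbf{g}}=A^+_{\mathbf{v}}W_{\mathbf{g}}$ the element $a\in{_\mathrm{f}W}$ with $a\car_1\mathbf{a}_1=A$ lies in $W^{\mathbf{g}}$, hence in ${_\mathrm{f}W}^{\mathbf{g}}$. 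The crucial point is that $A$ and $a$ depend only on $\mathbf{v}$ and $\mathbf{g}$, so the same $A$ serves as the ``target'' for both inputs below.

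Next I would verify that Proposition~\ref{generic poly} applies to the primed datum $(\mathbf{g},\mathbf{h}',B',\mathbf{v})$ exactly as to $(\mathbf{g},\mathbf{h},B,\mathbf{v})$. Since $u\in W_{\mathbf{v}}$ fixes $\mathbf{v}$ we have $\mathbf{v}=u\mathbf{v}\subset u\overline{B}=\overline{B'}$. Moreover an alcove meets no hyperplane of $\mathscr{H}$, while the boundary of $\rho^\vee+\overline{\mathscr{C}^+_0}$ is a union of hyperplanes of $\mathscr{H}$; hence the hypothesis $B'\subset\rho^\vee+\overline{\mathscr{C}^+_0}$ upgrades to $B'\subset\rho^\vee+\mathscr{C}^+_0$, and $\overline{B'}\subset\rho^\vee+\overline{\mathscr{C}^+_0}\subset\mathscr{C}^+_0$, which is also why $\mathbf{h}'$ (the unique facet of $\mathcal{A}_{\mathbf{g}}$ contained in $\overline{B'}$) belongs to $\mathcal{A}^+_{\mathbf{g}}$ and satisfies $\mathbf{h}'\subset\rho^\vee+\overline{\mathscr{C}^+_0}$. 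Applying Proposition~\ref{generic poly} to both data (with the common $A$) then gives
$$n_{Br,A}(1)=1\quad\text{and}\quad n_{B'r,A}(1)=1\qquad\text{for all }r\in W_{\mathbf{g}}.$$

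It remains to extract the two relations. Writing $B=b\car_1\mathbf{a}_1$ and $B'=b'\car_1\mathbf{a}_1$ with $b,b'\in{_\mathrm{f}W}$, we have $\mathbf{h}=b\car_1\mathbf{g}$ and $\mathbf{h}'=b'\car_1\mathbf{g}$, so $w$ (resp.\ $w'$) is the maximal element of $bW_{\mathbf{g}}$ (resp.\ of $b'W_{\mathbf{g}}$); in particular the alcove $w\car_1\mathbf{a}_1$ contains $\mathbf{h}$ in its closure, hence lies in $BW_{\mathbf{g}}$, so $w\car_1\mathbf{a}_1=Br_0$ for some $r_0\in W_{\mathbf{g}}$ and consequently $n_{w,a}(1)=n_{Br_0,A}(1)=1\neq0$, and similarly $n_{w',a}(1)=1\neq0$. (One may instead avoid producing $r_0$ altogether by invoking the first point of Proposition~\ref{prop max coset} with $\mathbf{q}=\mathbf{g}$.) Since $w,w',a\in{_\mathrm{f}W}^{\mathbf{g}}$, Corollary~\ref{anti spherical implies R} then yields $a\mathscr{R}_{\mathbf{g}}w$ and $a\mathscr{R}_{\mathbf{g}}w'$, whence $w\sim_{\mathbf{g}}a\sim_{\mathbf{g}}w'$ and finally $w\sim_{\mathbf{g}}w'$.

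The whole argument is essentially bookkeeping once Proposition~\ref{generic poly} is available; the one point requiring a little care is the verification above that reflecting $B$ by $u\in W_{\mathbf{v}}$ into $\rho^\vee+\overline{\mathscr{C}^+_0}$ keeps us strictly inside $\rho^\vee+\mathscr{C}^+_0$ with $\mathbf{v}$ still on the boundary of the new alcove, so that Proposition~\ref{generic poly} genuinely applies to the primed datum. This is routine geometry of the affine hyperplane arrangement and introduces no new difficulty.
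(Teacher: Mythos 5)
Your proposal is correct and follows essentially the same route as the paper: apply Proposition~\ref{generic poly} twice with the common maximal alcove $A$ of $A^+_{\mathbf{v}}W_{\mathbf{g}}$ (your $a$ is the paper's $w''$), then convert $n_{Br_0,A}(1)=1$ and $n_{B'r_0',A}(1)=1$ into $\mathscr{R}_{\mathbf{g}}$-relations via Corollary~\ref{anti spherical implies R} and conclude by transitivity. The extra verification that the primed datum $(\mathbf{g},\mathbf{h}',B',\mathbf{v})$ satisfies the hypotheses is exactly what the paper compresses into ``the same reasoning,'' and your argument for it is sound.
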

	\begin{proof}
		Denote by $A$ the alcove which is maximal in $A^+_\mathbf{v}W_\mathbf{g}$ for $\leq$, and by $\mathbf{h}''$ the element of $\mathcal{A}_\mathbf{g}$ included in $\overline{A}$. Recall that $A\subset \Pi_\mathbf{v}$ thanks to Proposition \ref{generic poly}, so that in particular $\mathbf{h}''\in\mathcal{A}^+_\mathbf{g}$. So if we let $w''$ be the element of ${_\mathrm{f}W}^\mathbf{g}$ which satisfies $w''\mathbf{g}=\mathbf{h}''$ and $r\in W_\mathbf{g}$ be the element such that $Br$ is maximal in $BW_\mathbf{g}$ for $\leq$ (so that $w\mathbf{a}_1=Br$ and $w''\mathbf{a}_1=A$), then Proposition \ref{generic poly} tells us that 
		$$n_{Br,A}(1)=1,$$
		from which we deduce that $w\sim_\mathbf{g} w''$ thanks to Corollary \ref{anti spherical implies R}. The same reasoning (considering this time the alcove $B'$  containing both $\mathbf{h}'$ and $\mathbf{v}$ in its closure) shows that $w'\sim_\mathbf{g} w''$, so finally $w\sim_\mathbf{g}w'$ by transitivity.
	\end{proof}
	We can finally complete the proof of Theorem \ref{thm not points}.
	\begin{thm}\label{cas general}
		Assume that $\mathfrak{R}^\vee$ is irreducible and let $\mathbf{g}\subset\overline{\mathbf{a}_1}$ be a non-special facet. Then ${_\mathrm{f}W}^\mathbf{g}$ consists of a single class for the equivalence relation $\sim_{\mathbf{g}}$.
	\end{thm}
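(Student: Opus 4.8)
The plan is to split the statement according to the dimension of $\mathbf{g}$. When $\mathbf{g}$ is not a point the assertion is exactly Theorem~\ref{thm not points}, so the only new content is the case where $\mathbf{g}=\{p\}$ is a non-special point of $\overline{\mathbf{a}_1}$. Since $\mathfrak{R}^\vee$ is irreducible, $\overline{\mathbf{a}_1}$ is a simplex whose vertices are $0$ and the points $\varpi^\vee_i/m_i$, where $\theta=\sum_i m_i\alpha_i$ is the highest root; being non-special forces $p=\varpi^\vee_j/m_j$ with $m_j\geq 2$. A direct computation (equivalently, removing the corresponding node from the affine Dynkin diagram) then gives $S_\mathbf{g}=S\setminus\{s\}$, where $s=s_{\alpha_j}$ is a \emph{finite} simple reflection whose fixed hyperplane $\{\langle\cdot,\alpha_j\rangle=0\}$ is a wall of $\overline{\mathbf{a}_1}$ passing through the special point $0$. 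In particular the affine simple reflection and every finite simple reflection other than $s$ lie in $W_\mathbf{g}$. This elementary structural remark is what makes the earlier machinery apply cleanly.

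First I would carry out the ``Step~1'' reduction: by Proposition~\ref{away} together with Corollary~\ref{shifting facets} and the fact that $\mathbf{h}\mapsto\hat{\mathbf{h}}$ preserves $\mathcal{A}^+_\mathbf{g}$, every facet $\mathbf{h}\in\mathcal{A}^+_\mathbf{g}$ satisfies $\mathbf{h}\sim_\mathbf{g}\hat{\mathbf{h}}$ with $\hat{\mathbf{h}}\in\mathcal{A}^+_\mathbf{g}$ and $\hat{\mathbf{h}}\subset\rho^\vee+\overline{\mathscr{C}^+_0}$. Hence it suffices to prove that every facet of $\mathcal{A}^+_\mathbf{g}$ contained in $\rho^\vee+\overline{\mathscr{C}^+_0}$ is $\sim_\mathbf{g}$-equivalent to the facet of $\mathcal{A}^+_\mathbf{g}$ contained in $\overline{\rho^\vee+\mathbf{a}_1}$.

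Next, for such a facet $\mathbf{h}$ I would choose an alcove $B\subset\rho^\vee+\mathscr{C}^+_0$ with $\mathbf{h}\subset\overline{B}$ and apply Lemma~\ref{descent} to get a descent chain $\rho^\vee+\mathbf{a}_1=A_r\leq\cdots\leq A_0=B$ of alcoves inside $\rho^\vee+\mathscr{C}^+_0$, with $A_{i-1}=A_is_i$ for simple reflections $s_i$ such that $A_i>A_{i-1}$. Writing $w_i\in{_\mathrm{f}W}$ for the element with $w_i\mathbf{a}_1=A_i$ and $\mathbf{h}_i\in\mathcal{A}^+_\mathbf{g}$ for the facet contained in $\overline{A_i}$ (so $\mathbf{h}_0=\mathbf{h}$ and $\mathbf{h}_r$ is the target facet), the crux is that $\mathbf{h}_i\sim_\mathbf{g}\mathbf{h}_{i-1}$ for every $i$, after which chaining the relations concludes. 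If $s_i\neq s$, then $s_i\in W_\mathbf{g}$, so $\mathbf{h}_{i-1}=w_is_i\mathbf{g}=w_i\mathbf{g}=\mathbf{h}_i$ and there is nothing to prove. If $s_i=s$, then the hyperplane separating $A_i$ from $A_{i-1}$ is the image under $w_i$ of the wall of $\overline{\mathbf{a}_1}$ fixed by $s$, hence it contains the special point $\mathbf{v}:=\{w_i\cdot 0\}$, which is a common vertex of $A_i$ and $A_{i-1}$ and lies in $\rho^\vee+\overline{\mathscr{C}^+_0}$; moreover the reflection $u:=w_isw_i^{-1}$ lies in $W_\mathbf{v}$ and satisfies $uB=uA_i=A_{i-1}$. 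Applying Corollary~\ref{coro periodic} to $B$, $\mathbf{v}$ and $u$ then yields $\mathbf{h}_i\sim_\mathbf{g}\mathbf{h}_{i-1}$.

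The only genuinely delicate point I expect is verifying, at each $s$-step, that the hypotheses of Corollary~\ref{coro periodic} are met; but this reduces to the elementary geometric facts above (that $\mathbf{v}$ is special, dominant, and a common vertex of $A_i$ and $A_{i-1}$, and that $u$ is a reflection fixing $\mathbf{v}$ carrying $A_i$ to $A_{i-1}$), the real combinatorial work having already been absorbed into Proposition~\ref{generic poly} and Corollary~\ref{coro periodic}. The case where $\mathfrak{R}^\vee$ is not irreducible is not addressed here, being reduced to the present one via Proposition~\ref{reduction to irred}.
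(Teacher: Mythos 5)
Your proposal is correct and follows essentially the same route as the paper's proof: reduce to point facets via Theorem \ref{thm not points}, move into $\rho^\vee+\overline{\mathscr{C}^+_0}$ using Proposition \ref{away} and Corollary \ref{shifting facets}, descend along a chain of alcoves from Lemma \ref{descent}, and link consecutive facets with Corollary \ref{coro periodic} applied at the special point $w_i\{0\}$. The only differences are cosmetic: you split the cases as $s_i=s_{\alpha_j}$ versus $s_i\in S_{\mathbf{g}}$ (so that more steps become trivially $\mathbf{h}_i=\mathbf{h}_{i-1}$) where the paper splits on $s_i=\sigma$ versus $s_i\in W_0$, and the inequality in your descent chain should read $A_i\leq A_{i-1}$ rather than $A_i>A_{i-1}$.
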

	\begin{proof}
		Thanks to Theorem \ref{thm not points}, we may and will assume that $\mathbf{g}$ is a point. With the notation of the proof of Theorem \ref{thm not points}, recall that we want to show that $\mathcal{A}^+_\mathbf{g}$ consists of a single equivalence class for $\sim_{\mathbf{g}}$. Also recall from the proof of Theorem \ref{thm not points} that it is enough to show that all the elements of $\mathcal{A}^+_\mathbf{g}$ lying inside of $\rho^\vee+ \overline{\mathscr{C}^+_0}$ are in relation.
		
		Pick $\mathbf{h}$ inside of $\rho^\vee+\overline{\mathscr{C}^+_0}$ and let $A\subset \rho^\vee+\mathscr{C}^+_0$ be an alcove containing $\mathbf{h}$ in its closure. Let also $A_r,A_{r-1},\cdots,A_1,A_0:=A$ be alcoves as in Lemma \ref{descent}, and denote by $s_i\in S$ the reflection such that $A_is_i=A_{i-1}$. 
		
		Since $\mathfrak{R}$ is irreducible, there exists a unique simple reflection $\sigma\in S$ which does not belong to the finite Weyl group $W_0$. Moreover, because $\overline{\mathbf{a}_1}$ is a simplex, there is only one facet inside $\overline{\mathbf{a}_1}$ which is a point and which is not included in the hyperplane fixed by $\sigma$: this is the special facet $\{0\}$. Thus, since $\mathbf{g}$ is not special, it must be included in the wall fixed by $\sigma$, or in other words we have $\sigma\in W_\mathbf{g}$. 
		
		For $i\in\llbracket 0,\cdots,r\rrbracket$, denote by $w_i$ the element of $_{\mathrm{f}}W$ such that $w_i\mathbf{a}_1=A_i$ and put $\mathbf{h}_i:=w_i\mathbf{g}$. Notice that $A_is_i=w_is_iw_i^{-1}A_i$ and that $\mathbf{h}_i\subset \overline{A_i}$. By construction, we also have
		$$\mathbf{h}_{i-1}=w_is_iw_i^{-1} \mathbf{h}_{i}~\forall i\in\llbracket 1,\cdots,r\rrbracket~\text{and}~\mathbf{h}_0=\mathbf{h}.$$
		
		Fix $i\in\llbracket 1,\cdots,r\rrbracket$. Now, if $s_i=\sigma$, then $w_is_iw_i^{-1}$ fixes $\mathbf{h}_{i}$, so that $\mathbf{h}_{i}=\mathbf{h}_{i-1}$. So assume that $s_i\neq \sigma$, which implies that $s_i\in W_0$, and put $\mathbf{v}_i:=w_i\{0\}$. Then we see that $\mathbf{v}_i$ is a special facet included in $\overline{A_i}$, which thus satisfies the inclusion $\mathbf{v}_i\subset\rho^\vee+\overline{\mathscr{C}^+_0}$, with $w_is_iw_i^{-1}\in W_{\mathbf{v}_i}$. Therefore we may apply Corollary \ref{coro periodic} to see that $\mathbf{h}_{i}\sim_\mathbf{g}\mathbf{h}_{i-1}$. By transitivity of the relation we get $\mathbf{h}\sim_{\mathbf{g}}\mathbf{h}_r$, concluding the proof.
	\end{proof}
	\begin{rem}\label{precise link}
	    Take back the notations of the previous proof. Corollary \ref{coro periodic} (when $\mathbf{g}$ is a point) and Lemma \ref{desending facets} (when $\mathbf{g}$ is not a point) actually allow us to be a bit more precise: for all $i\in\llbracket1,\cdots,r\rrbracket$, there exists an element $u_i\in{_\mathrm{f}W}^\mathbf{g}$ (which is the maximal element in $W_{\mathbf{v}_i}w_i$ in Corollary \ref{coro periodic}, and the maximal element in $w_iW_{\mathbf{q}_i}$ in  Lemma \ref{desending facets}) such that $n_{w_i,u_i}(1)\neq0$ and $n_{w_{i-1},u_i}(1)\neq0$.
	\end{rem}
	One of the advantages of our proof is that it gives an explicit way of linking an element $w\in{_\mathrm{f}W}^\mathbf{g}$ to the unique element $v$ of ${_\mathrm{f}W}^\mathbf{g}$ satisfying $v\mathbf{g}\subset \rho^\vee+\overline{\mathbf{a}_1}$. This allows us to give the following result.
	\begin{coro}\label{bound}
		Assume that $\mathfrak{R}^\vee$ is irreducible. Let $\mathbf{g}\subset\overline{\mathbf{a}_1}$ be a non-special facet and $w,w'$ be elements of $ {_\mathrm{f}W}^\mathbf{g}$ which are in the same equivalence class for $\sim_\mathbf{g}$. Denote by $A_w$ (resp. $A_{w'}$) the alcove which contains $w\mathbf{g}$ (resp. $w'\mathbf{g}$) in its closure and which is minimal in $A_wW_\mathbf{g}$ (resp. $A_{w'}W_\mathbf{g}$) for $\preceq$. Then there exists a positive integer $s$ and a chain of elements of ${_\mathrm{f}W}^\mathbf{g}$
		$$w_s=w,w_{s-1},\cdots,w_0=w' $$
		such that, for all $i\in\llbracket 0,s-1\rrbracket$, there exists an element $u_i\in {_\mathrm{f}W}^\mathbf{g}$ satisfying
		$${ n}_{w_i,u_i}(1)\neq 0~\text{and}~{ n}_{w_{i+1},u_i}(1)\neq 0, $$
		and such that
		$$s\leq 2+d(\hat{A}_w-\rho^\vee)+d(\hat{A}_{w'}-\rho^\vee). $$
	\end{coro}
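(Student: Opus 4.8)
The plan is to build two chains, one linking $w$ to the unique element $v\in{_\mathrm{f}W}^\mathbf{g}$ with $v\mathbf{g}\subset\rho^\vee+\overline{\mathbf{a}_1}$ and one linking $w'$ to the same $v$, and then to concatenate them (reading the second one backwards). That such chains exist is the content of Theorem \ref{cas general}; the point here is merely to keep track of their length, using the quantitative information supplied by Proposition \ref{away}, Lemma \ref{descent} and Remark \ref{precise link}.

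Write $\mathbf{h}:=w\mathbf{g}\in\mathcal{A}^+_\mathbf{g}$ and let $\hat{u}\in{_\mathrm{f}W}^\mathbf{g}$ be the element with $\hat{u}\mathbf{g}=\hat{\mathbf{h}}$, as in Proposition \ref{away}. I would first carry out the \emph{moving away} step: Proposition \ref{away} gives $n_{w,\hat{u}}(1)\neq 0$, and together with $n_{\hat{u},\hat{u}}(1)=1$ this realizes one chain step between $w$ and $\hat{u}$, with ``dominator'' $\hat{u}$. By Corollary \ref{shifting facets} the facet $\hat{\mathbf{h}}$ lies in $\rho^\vee+\overline{\mathscr{C}^+_0}$, and by the very definition of $\mathbf{h}\mapsto\hat{\mathbf{h}}$ for facets (applied to $\mathbf{h}=w\mathbf{g}$, whose minimal-in-coset enveloping alcove is precisely $A_w$) it sits in the closure of $\hat{A}_w$; moreover $\hat{A}_w\subset\rho^\vee+\mathscr{C}^+_0$ by Proposition \ref{decalage rho} applied to $A_w$, which belongs to $\mathcal{A}^+$ by Proposition \ref{dominant facet}. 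The same applies to $w'$, producing $\hat{u}'\in{_\mathrm{f}W}^\mathbf{g}$ with $n_{w',\hat{u}'}(1)\neq 0$.

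Next I would run the \emph{descent}. Apply Lemma \ref{descent} to the alcove $\hat{A}_w$: it produces alcoves $\hat{A}_w=A_0\geq A_1\geq\cdots\geq A_r=\rho^\vee+\mathbf{a}_1$ inside $\rho^\vee+\mathscr{C}^+_0$, with each $A_i$ obtained from $A_{i-1}$ by a wall reflection, and inspecting the proof of Lemma \ref{descent} shows $r=d(\hat{A}_w-\rho^\vee)$ (the translate $\hat{A}_w-\rho^\vee$ being dominant, this is well defined). Running the argument of Theorem \ref{cas general} with this descending chain and invoking Remark \ref{precise link}, one obtains elements $w_i\in{_\mathrm{f}W}^\mathbf{g}$ for $0\leq i\leq r$, with $w_0=\hat{u}$ and $w_r=v$, together with, for each $i$, an element $u_i\in{_\mathrm{f}W}^\mathbf{g}$ such that $n_{w_i,u_i}(1)\neq 0$ and $n_{w_{i-1},u_i}(1)\neq 0$ (taking $u_i=w_i$ on those steps where $w_i=w_{i-1}$). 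This links $\hat{u}$ to $v$ in $d(\hat{A}_w-\rho^\vee)$ steps, and symmetrically $\hat{u}'$ to $v$ in $d(\hat{A}_{w'}-\rho^\vee)$ steps.

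Finally I would concatenate the chain $w\leftrightarrow\hat{u}\leftrightarrow\cdots\leftrightarrow v\leftrightarrow\cdots\leftrightarrow\hat{u}'\leftrightarrow w'$, reading the second half backwards; since the required condition ($n_{w_i,u_i}(1)\neq 0$ and $n_{w_{i+1},u_i}(1)\neq 0$) is symmetric in $w_i$ and $w_{i+1}$, this reversal is harmless. Relabelling the resulting sequence as $w_s=w,w_{s-1},\dots,w_0=w'$, one gets $s=1+d(\hat{A}_w-\rho^\vee)+d(\hat{A}_{w'}-\rho^\vee)+1$, hence $s\leq 2+d(\hat{A}_w-\rho^\vee)+d(\hat{A}_{w'}-\rho^\vee)$ (deleting any repeated consecutive terms only decreases $s$). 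The only slightly delicate points, both amounting to bookkeeping, are the identification $r=d(\hat{A}_w-\rho^\vee)$ inside Lemma \ref{descent}, and the observation that the enveloping alcove used in the descent of Theorem \ref{cas general} may be taken to be exactly $\hat{A}_w$, i.e.\ that $\hat{\mathbf{h}}\subset\overline{\hat{A}_w}$ for $\mathbf{h}=w\mathbf{g}$.
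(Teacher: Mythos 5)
Your proposal is correct and follows essentially the same route as the paper: move away from the walls via Proposition \ref{away} (one step to $\hat{u}$, which is the element $\hat{w}$ with $\hat{w}\mathbf{g}\subset\overline{\hat{A}_w}$), descend from $\hat{A}_w$ to $\rho^\vee+\mathbf{a}_1$ in $d(\hat{A}_w-\rho^\vee)$ steps using Lemma \ref{descent} and Remark \ref{precise link}, and concatenate the two half-chains at $v$. The bookkeeping points you flag (that $\hat{\mathbf{h}}\subset\overline{\hat{A}_w}$ by the definition of the hat operation on facets, and that the number of descent steps equals $d(\hat{A}_w-\rho^\vee)$) are exactly the ones the paper relies on, and your counts match.
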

	\begin{proof}
		Denote by $v$ the unique element of ${_\mathrm{f}W}^\mathbf{g}$ such that $v\mathbf{g}\subset\rho^\vee+\overline{\mathbf{a}_1}$. Let us also denote by $\hat{w}$ the element of ${_\mathrm{f}W}^\mathbf{g}$ such that $\hat{w}\mathbf{g}\subset \overline{\hat{A}_w}$ and $A_0:=\hat{A}_w,A_{r-1},\cdots,A_r:=\rho^\vee+\mathbf{a}_1$ be the alcoves of Lemma \ref{descent}. By Remark \ref{precise link}, we know that there exist elements $w_i,u_i\in {_\mathrm{f}W}^\mathbf{g}$ such that $(w_r,w_0)=(v,\hat{w})$ and
		$$w_i\mathbf{g}\subset \overline{A_i},~n_{w_i,u_i}(1)\neq0~\text{and}~n_{w_{i+1},u_i}(1)\neq0,~\forall i\in \llbracket 0,r-1\rrbracket .$$
		Therefore, the chain $w,\hat{w},w_1,\cdots,w_r=v$ linking $w$ to $v$ is of length equal to $1$ plus the number of hyperplanes separating $\hat{A}_w$ from $\mathbf{a}_1+\rho^\vee$, and this last number is equal to the number of hyperplanes separating $\hat{A}_w-\rho^\vee$ from $\mathbf{a}_1$, i.e. to $d(\hat{A}_w-\rho^\vee)$. Repeating the same process for $w'$, we find a chain linking $w'$ to $v$ of length equal to $1+d(\hat{A}_{w'}-\rho^\vee)$, so that the concatenation of these two chains yields the desired result.
	\end{proof}
	
	\section{Smith-Treumann theory and consequences}\label{smith section}
	\subsection{Geometric Satake equivalence}\label{geosat}
	The affine Grassmannian $\mathrm{Gr}$ associated with $G$ can be defined as the fppf-quotient $LG/L^+G$, which can be shown to be an ind-projective ind-scheme over $\mathbb{F}$. In particular, $\mathrm{Gr}$ coincides with the partial affine flag variety $\mathrm{Fl}_{\{0\}}$ (cf. subsection \ref{The affine Weyl group and some Bruhat-Tits theory}). For any $\lambda\in\mathbb{X}^\vee$, we denote by $z^\lambda\in T(\mathcal{O})$ the image of $z$ by $\lambda:\mathbb{G}_\mathrm{m}(\mathcal{K})\to T(\mathcal{K})$, and define $[\lambda]:=z^\lambda L^+G$. The orbit $L^+G\cdot [\lambda]$ will be denoted $X_\lambda$. Using the Cartan decomposition, one can show that the action of $L^+G$ on $\mathrm{Gr}$ induces the following equality
	$$(\mathrm{Gr})_{\mathrm{red}}=\bigsqcup_{\lambda\in \mathbb{X}^\vee_+}X_\lambda,$$
	where $(\mathrm{Gr})_{\mathrm{red}}$ is the reduced ind-scheme associated with $\mathrm{Gr}$. Moreover, each orbit $X_\lambda$ is a smooth $\mathbb{F}$-scheme of finite type and
	$$\overline{X_\lambda}=\bigsqcup_{\substack{\mu\leq\lambda\\\mu\in \mathbb{X}^\vee_+}}X_\mu$$
	is a projective $\mathbb{F}$-scheme, where $\mu\leq \lambda$ means as usual that $\lambda-\mu$ is a sum of positive coroots.
	
	For any $X$ which is a locally closed finite union of $L^+G$-orbits, one can show that the $L^+G$-action on $X$ factors through a quotient group of finite type $J$; this quotient may and will be chosen so that the kernel of the map $L^+G\to J$ is contained in $\mathrm{ker}(L^+G\to G)$. More concretely, one can take $J$ to be the group representing the functor $R\mapsto G(R[z]/z^n)$, for a large enough integer $n$. We can then consider the $J$-equivariant derived category of constructible \'etale $\mathbf{k}$-sheaves $D^b_{J}(X,\mathbf{k})$, which does not depend on our choice of $J$ (see \cite[§3.3]{ciappara2021hecke}). We are now allowed to define the category $D^b_{L^+G}(\mathrm{Gr},\mathbf{k})$ as the direct limit of the categories $D^b_{J}(X,\mathbf{k})$, indexed by finite and closed unions of orbits $X$ which are ordered by inclusion and where the transition maps are given by direct images. This category admits a canonical perverse $t$-structure, and we denote by $\mathrm{Perv}_{L^+G}(\mathrm{Gr},\mathbf{k})$ its heart.  
	
	For $\lambda\in\mathbb{X}^\vee_+$, let $i^\lambda:X_\lambda\hookrightarrow\mathrm{Gr}$ denote the inclusion and define
	$$\Delta^{\mathrm{sph}}_\lambda:={^p\mathrm{H}}^0(i^\lambda_{ !}\underline{\mathbf{k}}_{X_\lambda}[\mathrm{dim}(X_\lambda)]),\qquad\nabla^{\mathrm{sph}}_\lambda:={^p\mathrm{H}}^0(i^\lambda_{ *}\underline{\mathbf{k}}_{X_\lambda}[\mathrm{dim}(X_\lambda)]).$$
	The complex $\Delta^{\mathrm{sph}}_\lambda$, resp. $\nabla^{\mathrm{sph}}_\lambda$, is called the standard perverse sheaf, resp the costandard perverse sheaf attached to $\lambda$. By construction, there is a canonical map $\Delta^{\mathrm{sph}}_\lambda\to\nabla^{\mathrm{sph}}_\lambda$, and we define the intersection cohomology complex $\mathrm{IC}_\lambda^{\mathrm{sph}}$ associated with $\lambda$ as the image of this map. Intersection cohomology complexes cover (up to isomorphism) all of the simple objects of the category $\mathrm{Perv}_{L^+G}(\mathrm{Gr},\mathbf{k})$ when $\lambda$ runs through $\mathbb{X}^\vee_+$. 
	\begin{prop}[Proposition 12.4, \cite{baumann:hal-01491529}]
		The category $\mathrm{Perv}_{L^+G}(\mathrm{Gr},\mathbf{k})$ is a highest weight category, with weight poset $\mathbb{X}^\vee_+$, standard objects $\{\Delta^{\mathrm{sph}}_\lambda,~\lambda\in \mathbb{X}^\vee_+\}$ and costandard objects $\{\nabla^{\mathrm{sph}}_\lambda,~\lambda\in \mathbb{X}^\vee_+\}$.
	\end{prop}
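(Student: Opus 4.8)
This is a classical fact --- it goes back essentially to Mirković--Vilonen, and in the present form it is \cite[Proposition 12.4]{baumann:hal-01491529} --- and the plan is to prove it by the standard ``stratification'' argument for categories of equivariant perverse sheaves. First I would observe that, since $D^b_{L^+G}(\mathrm{Gr},\mathbf{k})$ is by construction the filtered union of the subcategories $D^b_{L^+G}(Z,\mathbf{k})$ over finite closed unions of $L^+G$-orbits $Z$, with fully faithful transition functors given by pushforward along closed immersions, the heart $\mathrm{Perv}_{L^+G}(\mathrm{Gr},\mathbf{k})$ is correspondingly the increasing union of the abelian subcategories $\mathrm{Perv}_{L^+G}(Z,\mathbf{k})$, and these transition functors carry $\Delta^{\mathrm{sph}}_\lambda$, $\nabla^{\mathrm{sph}}_\lambda$ and $\mathrm{IC}_\lambda^{\mathrm{sph}}$ to the corresponding objects on the larger variety (all being supported on $\overline{X_\lambda}$). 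It therefore suffices to show that for each such $Z$ the category $\mathrm{Perv}_{L^+G}(Z,\mathbf{k})$ is a highest weight category with weight poset $\{\lambda\in\mathbb{X}^\vee_+ : X_\lambda\subseteq Z\}$ (ordered by $\le$), standard objects the $\Delta^{\mathrm{sph}}_\lambda$ and costandard objects the $\nabla^{\mathrm{sph}}_\lambda$ it contains; passing to the colimit then yields the claim for $\mathrm{Gr}$ with poset $\mathbb{X}^\vee_+$.

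Fix such a $Z$, stratified by the finitely many orbits $X_\mu\subseteq Z$. For each $\mu$ the group $L^+G$ acts transitively on $X_\mu$ with \emph{connected} stabiliser --- its image in the finite-type quotient $J$ through which the action factors is an iterated extension of a connected parabolic subgroup of $G$ by copies of $\mathbb{G}_\mathrm{a}$ --- so $X_\mu$ admits no non-constant $L^+G$-equivariant local system, and $\mathrm{Perv}_{L^+G}(X_\mu,\mathbf{k})$ is equivalent to the category of finite-dimensional $\mathbf{k}$-vector spaces, with unique simple object $\underline{\mathbf{k}}_{X_\mu}[\dim X_\mu]$. Hence $\mathrm{Perv}_{L^+G}(Z,\mathbf{k})$ is a finite-length abelian category with finite-dimensional $\mathrm{Hom}$-spaces whose simple objects are exactly the $\mathrm{IC}_\mu^{\mathrm{sph}}$ with $X_\mu\subseteq Z$; moreover $\mathrm{End}(\mathrm{IC}_\mu^{\mathrm{sph}})=\mathbf{k}$ over any field, since restriction to the open orbit $X_\mu$ identifies it with $\mathrm{End}(\underline{\mathbf{k}}_{X_\mu}[\dim X_\mu])=\mathbf{k}$ (connectedness of the stabiliser rules out any twisted form).

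Next I would induct on the number of strata, using the recollement of \emph{abelian} categories attached to the open immersion $j\colon X_\lambda\hookrightarrow Z$ and the complementary closed immersion $i\colon Z'\hookrightarrow Z$, where $X_\lambda$ is an orbit open in $Z$ (i.e.\ maximal for $\le$ among the orbits in $Z$) and $Z'=Z\setminus X_\lambda$. By induction $\mathrm{Perv}_{L^+G}(Z',\mathbf{k})$ is highest weight, with standards and costandards the $\Delta^{\mathrm{sph}}_\mu$, $\nabla^{\mathrm{sph}}_\mu$ it contains (these do not depend on the ambient variety, being supported on $Z'$), and $\mathrm{Perv}_{L^+G}(X_\lambda,\mathbf{k})$ is semisimple with a single simple object. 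The objects $\Delta^{\mathrm{sph}}_\lambda={}^p\mathrm{H}^0(j_!\underline{\mathbf{k}}_{X_\lambda}[\dim X_\lambda])$ and $\nabla^{\mathrm{sph}}_\lambda={}^p\mathrm{H}^0(j_*\underline{\mathbf{k}}_{X_\lambda}[\dim X_\lambda])$ are precisely the images of that simple object under the abelian functors $j_!$, $j_*$, and one then checks the usual list: $\mathrm{End}(\Delta^{\mathrm{sph}}_\lambda)=\mathbf{k}=\mathrm{End}(\nabla^{\mathrm{sph}}_\lambda)$ and $\mathrm{Hom}(\Delta^{\mathrm{sph}}_\lambda,\nabla^{\mathrm{sph}}_\lambda)=\mathbf{k}$ (from $j^*j_!\simeq\mathrm{id}\simeq j^*j_*$ and the full faithfulness of $j_!$, $j_*$); $j^*\Delta^{\mathrm{sph}}_\lambda$ is the simple perverse sheaf on $X_\lambda$ while $\Delta^{\mathrm{sph}}_\lambda$ is supported on $\overline{X_\lambda}$, so its head is $\mathrm{IC}_\lambda^{\mathrm{sph}}$ and all its other composition factors are $\mathrm{IC}_\mu^{\mathrm{sph}}$ with $\mu<\lambda$ (dually for $\nabla^{\mathrm{sph}}_\lambda$), which also forces $\mathrm{Hom}(\Delta^{\mathrm{sph}}_\lambda,\nabla^{\mathrm{sph}}_\mu)=0=\mathrm{Hom}(\Delta^{\mathrm{sph}}_\mu,\nabla^{\mathrm{sph}}_\lambda)$ for $\mu<\lambda$; the key vanishing $\mathrm{Ext}^1_{\mathrm{Perv}}(\Delta^{\mathrm{sph}}_\lambda,\nabla^{\mathrm{sph}}_\mu)=0=\mathrm{Ext}^1_{\mathrm{Perv}}(\Delta^{\mathrm{sph}}_\mu,\nabla^{\mathrm{sph}}_\lambda)$ follows for $\mu<\lambda$ from $j^*i_*=0$ in the recollement, and for $\mu=\lambda$ reduces by adjunction to $\mathrm{Ext}^1(\underline{\mathbf{k}}_{X_\lambda}[\dim X_\lambda],\underline{\mathbf{k}}_{X_\lambda}[\dim X_\lambda])=0$ in the semisimple category $\mathrm{Perv}_{L^+G}(X_\lambda,\mathbf{k})$; and finally there are projective covers admitting $\Delta^{\mathrm{sph}}$-filtrations, obtained by lifting projectives from $\mathrm{Perv}_{L^+G}(Z',\mathbf{k})$ and adjusting with copies of $\Delta^{\mathrm{sph}}_\lambda$ supplied by the recollement. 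These are exactly the hypotheses of the general theorem producing highest weight categories from a stratification by recollements with semisimple strata (Cline--Parshall--Scott; see also Beilinson--Ginzburg--Soergel, and Achar's book on perverse sheaves), so $\mathrm{Perv}_{L^+G}(Z,\mathbf{k})$ is highest weight, completing the induction.

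The main obstacle will be the $\mathrm{Ext}^1$-vanishing between standards and costandards, and the reason some care is needed there is that the orbit inclusions $X_\mu\hookrightarrow \overline{X_\mu}$ are \emph{not} affine in general (the boundary of an orbit can have codimension $\ge 2$, already for $G=\mathrm{SL}_2$): thus $j_!\underline{\mathbf{k}}_{X_\lambda}[\dim X_\lambda]$ and $j_*\underline{\mathbf{k}}_{X_\lambda}[\dim X_\lambda]$ are genuinely non-perverse and the argument must be run inside the abelian recollement rather than in the constructible derived category --- equivalently, the realisation functor $D^b(\mathrm{Perv}_{L^+G}(\mathrm{Gr},\mathbf{k}))\to D^b_{L^+G}(\mathrm{Gr},\mathbf{k})$ need not be an equivalence here. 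A subsidiary point is the identity $\mathrm{End}(\mathrm{IC}_\mu^{\mathrm{sph}})=\mathbf{k}$ over an arbitrary field, which again rests on the connectedness of the orbit stabilisers.
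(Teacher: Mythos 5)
The paper offers no argument for this statement at all: it is imported verbatim, with citation, from \cite[Proposition 12.4]{baumann:hal-01491529}, so the only meaningful comparison is with the proof given there. Your outline follows the same standard route as that reference (reduction to finite closed unions of orbits, connectedness of the $L^+G$-stabilisers forcing a single simple object per stratum and $\mathrm{End}(\mathrm{IC}^{\mathrm{sph}}_\lambda)=\mathbf{k}$, recollement induction, and the adjunction computations of $\mathrm{Hom}(\Delta^{\mathrm{sph}}_\lambda,\nabla^{\mathrm{sph}}_\mu)$ and $\mathrm{Ext}^1(\Delta^{\mathrm{sph}}_\lambda,\nabla^{\mathrm{sph}}_\mu)$, the latter landing in $H^1$ of the classifying space of a connected group, hence vanishing). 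Up to and including the $\mathrm{Ext}^1$-vanishing, the sketch is correct and is essentially the argument of the reference.

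The one step that does not work as written is the last one: ``projective covers admitting $\Delta^{\mathrm{sph}}$-filtrations, obtained by lifting projectives from $\mathrm{Perv}_{L^+G}(Z',\mathbf{k})$ and adjusting with copies of $\Delta^{\mathrm{sph}}_\lambda$ supplied by the recollement.'' That gluing construction (Beilinson--Ginzburg--Soergel, Cline--Parshall--Scott) is licensed precisely by the hypothesis $\mathrm{Hom}_{D^b}(j_{s!}\mathcal{L}_s[d_s],j_{t*}\mathcal{L}_t[d_t][n])=0$ for $n>0$, and this hypothesis \emph{fails} for $\mathrm{Gr}$: already for $s=t=\lambda$ it equals $H^2_{L^+G}(X_\lambda,\mathbf{k})\neq 0$ in degree $2$, since $X_\lambda$ fibres over a partial flag variety. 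This is the very same failure you correctly invoke to explain why the realisation functor is not an equivalence, so it cannot then be ignored when building projectives; note also that $\mathrm{Ext}^2_{\mathrm{Perv}}(\Delta^{\mathrm{sph}}_\lambda,\nabla^{\mathrm{sph}}_\lambda)$ (which the axioms of \cite[3.7]{riche} require to vanish, directly or as a consequence) cannot be computed inside $D^b_{L^+G}(\mathrm{Gr},\mathbf{k})$, where the corresponding $\mathrm{Hom}$-group is genuinely nonzero. Two repairs are available. Either verify the axiom in the form ``$\Delta^{\mathrm{sph}}_\lambda$ is a projective cover of $\mathrm{IC}^{\mathrm{sph}}_\lambda$ in $\mathrm{Perv}_{L^+G}(\overline{X_\lambda},\mathbf{k})$'': your adjunction argument does extend, because for \emph{any} perverse sheaf $M$ on $\overline{X_\lambda}$ one has $\mathrm{Ext}^1(\Delta^{\mathrm{sph}}_\lambda,M)\hookrightarrow\mathrm{Hom}(\underline{\mathbf{k}}_{X_\lambda}[\dim X_\lambda],(i^\lambda)^*M[1])$, and $(i^\lambda)^*M$ is a direct sum of shifts of the constant sheaf, so the obstruction again lies in odd equivariant cohomology of $X_\lambda$ and vanishes; the existence of projectives with standard filtrations and the $\mathrm{Ext}^2$-vanishing are then formal consequences. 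Or follow the reference, which constructs the projective cover of $\mathrm{IC}^{\mathrm{sph}}_\mu$ in $\mathrm{Perv}_{L^+G}(\overline{X_\lambda},\mathbf{k})$ by representing the exact functor $\mathcal{F}\mapsto\mathrm{H}^{-\dim X_\mu}((i^\mu)^*\mathcal{F})$ and establishes its standard filtration by a separate stalk computation. Either way, this axiom needs an actual argument rather than an appeal to the recollement machinery whose hypotheses fail here.
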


	The category $D^b_{L^+G}(\mathrm{Gr},\mathbf{k})$ admits a monoidal product $\star$, which restricts to a monoidal product on $\mathrm{Perv}_{L^+G}(\mathrm{Gr},\mathbf{k})$. In the following crucial result, we denote by $G^\vee$ the Langlands dual group of $G$ over $\mathbf{k}$, and by $\mathrm{Rep}_\mathbf{k}(G^\vee)$ the category of algebraic finite dimensional $\mathbf{k}$-representations of $G^\vee$.
	\begin{thm}[Theorem 14.1, \cite{Mirkovic2004GeometricLD}]\label{satake equivalence}
		There is an equivalence of monoidal categories 
		$$(\mathrm{Perv}_{L^+G}(\mathrm{Gr},\mathbf{k}),\star)\xrightarrow{\sim} (\mathrm{Rep}_\mathbf{k}(G^\vee),\otimes_\mathbf{k}).$$
	\end{thm}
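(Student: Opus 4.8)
The plan is to follow the Tannakian strategy of Mirković and Vilonen: realize $(\mathrm{Perv}_{L^+G}(\mathrm{Gr},\mathbf{k}),\star)$ as the representation category of an affine group scheme, and then identify that group with $G^\vee$. The first task is to make the convolution product precise and check it is well-behaved. Using the convolution diagram $LG\times^{L^+G}\mathrm{Gr}\xrightarrow{\,m\,}\mathrm{Gr}$ and the twisted external product, one sets $\mathcal{F}_1\star\mathcal{F}_2:=m_*(\mathcal{F}_1\,\widetilde{\boxtimes}\,\mathcal{F}_2)$. One must verify that $\star$ preserves perversity; this follows from the fact that $m$ is \emph{stratified semismall} on the relevant supports, which in turn is a consequence of the dimension estimates for the intersections $\overline{X_\lambda}\cap S_\mu$ of Schubert varieties with the semi-infinite orbits $S_\mu=LU\cdot[\mu]$. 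With this, $(\mathrm{Perv}_{L^+G}(\mathrm{Gr},\mathbf{k}),\star)$ becomes a monoidal abelian category, with unit the skyscraper sheaf at the base point $[0]$.

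Next I would produce the fiber functor. Set $\mathsf{F}:=\bigoplus_{n}H^n(\mathrm{Gr},-)$ and prove it is a faithful exact monoidal functor. Exactness and faithfulness both come from the \emph{weight functors}: one shows $\mathsf{F}\simeq\bigoplus_{\mu\in\mathbb{X}^\vee}\mathsf{F}_\mu$, where $\mathsf{F}_\mu(\mathcal{F}):=H^{\langle 2\rho,\mu\rangle}_c(S_\mu,\mathcal{F})$, together with the vanishing $H^k_c(S_\mu,\mathcal{F})=0$ for $k\neq\langle 2\rho,\mu\rangle$ whenever $\mathcal{F}$ is perverse (proved from the same dimension estimates, or via Braden's hyperbolic-localization theorem for the torus action). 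This exhibits $\mathsf{F}$ as a fiber functor with values in graded vector spaces and provides the weight decomposition of $\mathsf{F}$ that will later pin down a maximal torus.

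The delicate heart of the argument is to equip $\mathsf{F}$ with a \emph{symmetric} monoidal structure. Both the associativity and commutativity constraints are obtained from the \emph{fusion product}: one reinterprets convolution via the Beilinson--Drinfeld Grassmannian $\mathrm{Gr}_{X^2}$ over a smooth curve $X$, whose fibre over the diagonal recovers $\mathrm{Gr}$ and whose fibres away from the diagonal recover $\mathrm{Gr}\times\mathrm{Gr}$; flatness of this family (and the fact that one may exchange the two points of $X$) forces the isomorphisms $\mathsf{F}(\mathcal{F}_1\star\mathcal{F}_2)\simeq\mathsf{F}(\mathcal{F}_1)\otimes\mathsf{F}(\mathcal{F}_2)$ to be associative and commutative. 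A subtle point is that the naive commutativity constraint must be modified by the sign $(-1)^{\langle 2\rho,\mu\rangle}$ on the weight spaces, so that Tannakian reconstruction yields an honest group and not a super-group. Once these constraints are in place, $(\mathrm{Perv}_{L^+G}(\mathrm{Gr},\mathbf{k}),\star,\mathsf{F})$ is a neutral Tannakian category over $\mathbf{k}$, hence monoidally equivalent to $\mathrm{Rep}_\mathbf{k}(\widetilde{G})$ for $\widetilde{G}:=\underline{\mathrm{Aut}}^{\otimes}(\mathsf{F})$.

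It remains to identify $\widetilde{G}$ with the Langlands dual group. First one shows $\widetilde{G}$ is connected (the Satake category is $\star$-generated by a single object) and reductive (using semisimplicity of $\mathrm{Perv}_{L^+G}(\mathrm{Gr},\mathbf{k})$ in characteristic $0$; in positive characteristic one rules out unipotent normal subgroups using the cocharacter torus action). The weight decomposition $\mathsf{F}=\bigoplus_\mu\mathsf{F}_\mu$ then exhibits a maximal torus $T^\vee\subset\widetilde{G}$ whose character lattice is $\mathbb{X}^\vee$. Restricting to rank-one Levi subgroups — which correspond geometrically to closed embeddings of $\mathrm{Gr}(\mathrm{SL}_2)$ or $\mathrm{Gr}(\mathrm{PGL}_2)$ — and to minuscule or quasi-minuscule coweights, one computes that the roots of $(\widetilde{G},T^\vee)$ are the coroots $\mathfrak{R}^\vee$ and the coroots are the roots $\mathfrak{R}$; that is, the root datum of $\widetilde{G}$ is dual to that of $G$, so $\widetilde{G}\cong G^\vee$. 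I expect the construction of the fusion product and the corrected commutativity constraint (paragraph three) to be the main obstacle, with the root-datum computation in the last step the second most technical point.
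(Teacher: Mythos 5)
The paper does not prove this statement: it is imported verbatim as Theorem 14.1 of Mirković--Vilonen \cite{Mirkovic2004GeometricLD}, and your outline is precisely the Tannakian strategy of that reference (convolution via semismallness, weight functors on semi-infinite orbits, fusion over the Beilinson--Drinfeld Grassmannian with the sign-corrected commutativity constraint, and identification of the Tannakian group by its root datum). Your sketch is a faithful and correct account of the cited proof, so there is nothing to compare beyond noting that the paper simply cites it.
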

	\begin{rem}
		Let $\lambda\in\mathbb{X}^\vee_+$. It is well known that the category $\mathrm{Rep}_\mathbf{k}(G^\vee)$ also admits a highest weight structure, and we denote by $\Delta_\lambda$, resp. $\nabla_\lambda$, resp. $L_\lambda$, the standard object, resp. the costandard object, resp. the simple module, associated with $\lambda$. Then, the previous equivalence of categories sends $\nabla_\lambda^{\mathrm{sph}}$, resp. $\Delta_\lambda^{\mathrm{sph}}$, resp. $\mathrm{IC}_\lambda^{\mathrm{sph}}$, on $\nabla_\lambda$, resp. $\Delta_\lambda$, resp. $L_\lambda$ (cf. \cite[Proposition 13.1]{Mirkovic2004GeometricLD}). We will say that this equivalence is an \textit{equivalence of highest weight categories}.
	\end{rem}

	\subsection{Iwahori-Whittaker variant}\label{IW variant} Recall the construction of $D^b_{\mathcal{IW}}(\mathrm{Gr},\mathbf{k})$ from subsection \ref{section partial affine flag}. If we denote by $Y_\lambda$ the orbit of $[\lambda]$ under the action of $\mathrm{Iw}_u^+$, then we have a decomposition
	$$(\mathrm{Gr})_{\mathrm{red}}=\bigsqcup_{\lambda\in \mathbb{X}^\vee}Y_\lambda,$$
	where each $Y_\lambda$ is a finite dimensional affine space over $\mathbb{F}$. One can show that an orbit $Y_\lambda$ supports a non-zero Iwahori-Whittaker local system iff $\lambda$ is strictly dominant, and that in this case there exists exactly one (up to isomorphism) such local system of rank one on $Y_\lambda$, which we will denote by $\mathcal{L}^\lambda_{\mathrm{AS}}$ (here $\mathrm{AS}$ stands for Artin-Schreier).

	Once again, the category $D^b_{\mathcal{IW}}(\mathrm{Gr},\mathbf{k})$ admits a canonical perverse $t$-structure, and we will denote by $\mathrm{Perv}_{\mathcal{IW}}(\mathrm{Gr},\mathbf{k})$ its heart. Thanks to the fact that the $\mathrm{Iw}_u^+$-orbits are affine spaces over $\mathbb{F}$, this category of perverse sheaves admits a transparent highest weight structure (cf. \cite[Corollary 3.6]{JEP_2019__6__707_0}). Namely, the weight poset is given by $\mathbb{X}^\vee_{++}$, and the standard, costandard and simple objects associated with some $\lambda\in\mathbb{X}^\vee_{++}$ are respectively given by
	$$\Delta^{\mathcal{IW}}_\lambda:=j^\lambda_{ !}\mathcal{L}^\lambda_{\mathrm{AS}}[\mathrm{dim}(Y_\lambda)],\qquad \nabla^{\mathcal{IW}}_\lambda:=j^\lambda_{ *}\mathcal{L}^\lambda_{\mathrm{AS}}[\mathrm{dim}(Y_\lambda)],\qquad \mathrm{IC}_\lambda^{\mathcal{IW}},$$
	where $j^\lambda:Y_\lambda\hookrightarrow\mathrm{Gr}$ is the inclusion, and $\mathrm{IC}_\lambda^{\mathcal{IW}}$ is obtained as the image of the canonical morphism $\Delta^{\mathcal{IW}}_\lambda\to \nabla^{\mathcal{IW}}_\lambda$. In the sequel, we will denote by $\mathrm{Tilt}_{\mathcal{IW}}(\mathrm{Gr},\mathbf{k})$ the category of tilting objects associated with this highest weight category, and by $\mathscr{T}^{\mathcal{IW}}_{\lambda}$ the indecomposable tilting object of highest weight $\lambda\in\mathbb{X}^\vee_{++}$.
	
	Although the category $D^b_{\mathcal{IW}}(\mathrm{Gr},\mathbf{k})$ is not endowed with a canonical convolution product making it a monoidal category, it admits a right action of the monoidal category $D^b_{L^+G}(\mathrm{Gr},\mathbf{k})$. The following result was found by the authors of \cite{JEP_2019__6__707_0}, and gives another incarnation of the category $\mathrm{Rep}_\mathbf{k}(G^\vee)$. Note that by our assumptions on $G$, the element $\rho^\vee$, defined as the half-sum of positive coroots, belongs to $\mathbb{X}^\vee$.
	\begin{thm}\label{IW equivalence}
		The functor  $\mathcal{F}\mapsto \nabla^{\mathcal{IW}}_{\rho^\vee}\star \mathcal{F}$ induces an equivalence of highest weight categories 
		$$\mathrm{Perv}_{L^+G}(\mathrm{Gr},\mathbf{k})\xrightarrow{\sim}\mathrm{Perv}_{\mathcal{IW}}(\mathrm{Gr},\mathbf{k}).$$
	\end{thm}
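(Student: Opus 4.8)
The plan is to denote by $F := \nabla^{\mathcal{IW}}_{\rho^\vee}\star(-)\colon D^b_{L^+G}(\mathrm{Gr},\mathbf{k})\to D^b_{\mathcal{IW}}(\mathrm{Gr},\mathbf{k})$ the convolution functor afforded by the right action of the monoidal category $D^b_{L^+G}(\mathrm{Gr},\mathbf{k})$ on $D^b_{\mathcal{IW}}(\mathrm{Gr},\mathbf{k})$ recalled above, and to establish three points: \textbf{(i)} $F$ is $t$-exact for the perverse $t$-structures, hence restricts to an exact functor $\mathrm{Perv}_{L^+G}(\mathrm{Gr},\mathbf{k})\to\mathrm{Perv}_{\mathcal{IW}}(\mathrm{Gr},\mathbf{k})$; \textbf{(ii)} $F$ sends $\Delta^{\mathrm{sph}}_\lambda$, $\nabla^{\mathrm{sph}}_\lambda$ and $\mathrm{IC}^{\mathrm{sph}}_\lambda$ respectively to $\Delta^{\mathcal{IW}}_{\rho^\vee+\lambda}$, $\nabla^{\mathcal{IW}}_{\rho^\vee+\lambda}$ and $\mathrm{IC}^{\mathcal{IW}}_{\rho^\vee+\lambda}$, compatibly with the order isomorphism $\mathbb{X}^\vee_+\xrightarrow{\sim}\mathbb{X}^\vee_{++}$, $\lambda\mapsto\lambda+\rho^\vee$; \textbf{(iii)} any exact functor between highest weight categories doing this is automatically an equivalence of highest weight categories. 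Point \textbf{(iii)} is formal: since the source $D^b$ is generated as a triangulated category by the standard objects and $\mathrm{Hom}^\bullet(\Delta_\lambda,\nabla_\mu)$ is $\mathbf{k}$ in degree $0$ when $\lambda=\mu$ and vanishes otherwise on both sides, $F$ is fully faithful by dévissage; it is essentially surjective because the simple objects, being the $\mathrm{IC}^{\mathcal{IW}}_\mu=\mathrm{im}(\Delta^{\mathcal{IW}}_\mu\to\nabla^{\mathcal{IW}}_\mu)=F(\mathrm{IC}^{\mathrm{sph}}_{\mu-\rho^\vee})$, lie in the essential image.

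For \textbf{(i)} I would use the standard fact that, in a highest weight category of finite type, ${}^pD^{\leq 0}$ is generated under extensions and the shifts $[n]$ with $n\geq 0$ (and retracts) by the standard objects, and dually ${}^pD^{\geq 0}$ by the costandard objects; concretely, any perverse sheaf admits a finite left resolution by $\Delta$-filtered objects and a finite right resolution by $\nabla$-filtered objects after truncating to a finite subposet. Thus $t$-exactness of $F$ follows once we know $F(\Delta^{\mathrm{sph}}_\lambda)\in{}^pD^{\leq 0}$ and $F(\nabla^{\mathrm{sph}}_\lambda)\in{}^pD^{\geq 0}$ for all $\lambda\in\mathbb{X}^\vee_+$, and this is subsumed by \textbf{(ii)}. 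To compute $F(\nabla^{\mathrm{sph}}_\lambda)$ I would work with the convolution diagram $\mathrm{Gr}\,\widetilde{\times}\,\mathrm{Gr}\xrightarrow{m}\mathrm{Gr}$ and the $\mathrm{Iw}_u^+$-orbit stratifications: because $\nabla^{\mathcal{IW}}_{\rho^\vee}=j^{\rho^\vee}_{*}\mathcal{L}^{\rho^\vee}_{\mathrm{AS}}[\dim Y_{\rho^\vee}]$ and $\nabla^{\mathrm{sph}}_\lambda=i^\lambda_{*}\underline{\mathbf{k}}_{X_\lambda}[\dim X_\lambda]$ are both $*$-extensions, $F(\nabla^{\mathrm{sph}}_\lambda)=m_{*}(\nabla^{\mathcal{IW}}_{\rho^\vee}\,\widetilde{\boxtimes}\,\nabla^{\mathrm{sph}}_\lambda)$ is a $*$-extension from the open $\mathrm{Iw}_u^+$-orbit $Y_{\rho^\vee+\lambda}$ of its restriction there (a rank-one local system, suitably shifted), and a dimension count on the fibres of $m$ over the other strata identifies it with $\nabla^{\mathcal{IW}}_{\rho^\vee+\lambda}$. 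The statement $F(\Delta^{\mathrm{sph}}_\lambda)\cong\Delta^{\mathcal{IW}}_{\rho^\vee+\lambda}$ then follows by Verdier duality, which intertwines $F$ with its analogue and exchanges standards and costandards (here one uses that $\mathbf{k}$ is a field); exactness of $F$ and the computation on simples drop out.

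The main obstacle is the convolution computation in \textbf{(ii)}, i.e. controlling $\nabla^{\mathcal{IW}}_{\rho^\vee}\star\nabla^{\mathrm{sph}}_\lambda$: one must check simultaneously that it has no perverse cohomology outside degree $0$ and that its only nonzero layer is $\nabla^{\mathcal{IW}}_{\rho^\vee+\lambda}$, the key structural point being that adding the \emph{strictly} dominant $\rho^\vee$ to the dominant $\lambda$ keeps everything strictly dominant so that no cancellation can occur. I would handle this either by Mirković--Vilonen-style transversality of the $\mathrm{Iw}_u^+$-orbits to the semi-infinite orbits together with hyperbolic localization, or, following the original treatment, by first proving the cleaner identities $\nabla^{\mathcal{IW}}_\mu\star\nabla^{\mathrm{sph}}_\nu\cong\nabla^{\mathcal{IW}}_{\mu+\nu}$ for $\mu$ strictly dominant and $\nu$ dominant (reducing $\nu$ to a fundamental or minuscule coweight and iterating) and specializing to $\mu=\rho^\vee$. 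A fallback route that sidesteps part of this geometry is to identify the composite of $F$ with the global sections functor on the $\mathcal{IW}$-side with the fibre functor $\bigoplus_n H^n(\mathrm{Gr},-)$ underlying geometric Satake, deducing exactness and full faithfulness from that. In all cases, once \textbf{(i)} and \textbf{(ii)} are in place, \textbf{(iii)} is routine, and the theorem follows; it moreover yields that $F$ matches $\mathrm{Tilt}_{\mathcal{IW}}(\mathrm{Gr},\mathbf{k})$ with the tilting subcategory of $\mathrm{Perv}_{L^+G}(\mathrm{Gr},\mathbf{k})$ and sends the indecomposable spherical tilting of highest weight $\lambda$ to $\mathscr{T}^{\mathcal{IW}}_{\rho^\vee+\lambda}$.
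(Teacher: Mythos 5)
The paper does not actually prove this statement: Theorem \ref{IW equivalence} is imported wholesale from \cite{JEP_2019__6__707_0} (``The following result was found by the authors of...''), so there is no internal proof to compare against. Your outline is, in substance, a faithful reconstruction of the strategy of that reference: establish that convolution with $\nabla^{\mathcal{IW}}_{\rho^\vee}$ sends (co)standard spherical objects to (co)standard Iwahori--Whittaker objects compatibly with $\lambda\mapsto\lambda+\rho^\vee$, deduce $t$-exactness from the generation of ${}^pD^{\lessgtr 0}$ by (co)standards, and conclude by the Beilinson-type d\'evissage on $\mathrm{Hom}^\bullet(\Delta,\nabla)$. Your point (iii) and the deduction of (i) from (ii) are indeed formal, and your closing remark about tilting objects is exactly how the present paper uses the theorem.

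The one place where your write-up understates the difficulty is the sentence asserting that, \emph{because} $\nabla^{\mathcal{IW}}_{\rho^\vee}$ and $\nabla^{\mathrm{sph}}_\lambda$ are $*$-extensions, their convolution is automatically a $*$-extension from $Y_{\rho^\vee+\lambda}$: pushforward along $m$ does not preserve the property of being a $*$-extension in general, and this identification (equivalently, that $\nabla^{\mathcal{IW}}_{\rho^\vee}\star\nabla^{\mathrm{sph}}_\lambda$ is costandardly filtered with a single layer) is precisely the nontrivial geometric content of the cited result. You do acknowledge this as ``the main obstacle'' and the remedies you list --- reduction to minuscule/quasi-minuscule coweights, hyperbolic localization against the semi-infinite orbits, or comparison of $H^\bullet(\mathrm{Gr},\nabla^{\mathcal{IW}}_{\rho^\vee}\star-)$ with the Satake fibre functor --- are the ones actually used in the literature, so this is a correctly identified gap rather than an error. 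Two smaller points deserve a line of care in a full write-up: the Verdier duality step lands a priori in the Iwahori--Whittaker category for the inverse Artin--Schreier character, so one must either fix a self-dual normalization or invoke the equivalence between the two character twists; and to get $F(\mathrm{IC}^{\mathrm{sph}}_\lambda)\simeq\mathrm{IC}^{\mathcal{IW}}_{\lambda+\rho^\vee}$ one should note that the exact functor $F$ cannot kill the canonical map $\Delta^{\mathrm{sph}}_\lambda\to\nabla^{\mathrm{sph}}_\lambda$ (e.g.\ by an induction on the weight poset showing $F$ kills no simple object).
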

	\begin{rem}\label{rem1}
		At this stage, we can note that the action of $\mathbb{G}_\mathrm{m}$ on $\mathrm{Gr}$ by rescaling $z$ stabilizes each $\mathrm{Iw}_u^+$-orbit. This allows us to consider the loop rotation equivariant Iwahori-Whittaker derived category of $\mathbf{k}$-sheaves
		$$D^b_{\mathcal{IW},\mathbb{G}_\mathrm{m}}(\mathrm{Gr},\mathbf{k}),$$
		which comes with a natural $t$-exact forgetful functor to $D^b_{\mathcal{IW}}(\mathrm{Gr},\mathbf{k})$ (cf. \cite[§5.2]{RW22}). It is then not difficult to show (cf. \cite[Lemma 5.2]{RW22}) that the forgetful functor 
		\begin{equation}\label{eq3}
			\mathrm{Perv}_{\mathcal{IW},\mathbb{G}_\mathrm{m}}(\mathrm{Gr},\mathbf{k})\to \mathrm{Perv}_{\mathcal{IW}}(\mathrm{Gr},\mathbf{k})
		\end{equation}
		is an equivalence of categories. These considerations will become useful in section \ref{smith section}.
	\end{rem}
	\subsection{Fixed points of the affine Grassmannian and connected components}\label{Fixed points of the affine Grassmannian and connected components} As in subsection \ref{Notations}, we fix an integer $n\geq1$. We denote by $\mathbf{\mu}_n$ the finite group scheme of $n$-th roots of unity, which acts on $L^+G$ and $LG$ by rescaling the indeterminate $z$; in particular, $\mu_n$ acts on $\mathrm{Gr}$. The following fact, which will only be used with $n=\ell$, is one of the fundamental tools used by the authors of \cite{RW22} (cf. \cite[Proposition 4.7]{RW22}). For any $\lambda\in\overline{\mathbf{a}_n}\cap\mathbb{X}^\vee$, we will denote by $\mathbf{g}_\lambda\subset\overline{\mathbf{a}_n}$ the facet which contains $\lambda$.
	\begin{prop}\label{connex}
		For any $\lambda\in\overline{\mathbf{a}_n}\cap\mathbb{X}^\vee$, the map $g\mapsto g\cdot [\lambda]$ factors through an open and closed embedding 
		$$\mathrm{Fl}^{n,\circ}_{\mathbf{g}_\lambda}\hookrightarrow (\mathrm{Gr})^{\mu_n}$$ and the induced map
		$$\bigsqcup_{\lambda\in \overline{\mathbf{a}_n}\cap\mathbb{X}^\vee}\mathrm{Fl}^{n,\circ}_{\mathbf{g}_\lambda}\to (\mathrm{Gr})^{\mu_n}$$
		is an isomorphism of ind-schemes.\end{prop}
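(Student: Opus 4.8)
The plan is to realise $(\mathrm{Gr})^{\mu_n}$ by Galois descent along the degree-$n$ totally ramified extension $\mathcal{K}=\mathbb{F}((z))$ of $\mathcal{K}_n=\mathbb{F}((z^n))$, whose ``Galois group'' is $\mu_n$ (this is sensible because $n$ is prime to $p$ and $\mathbb{F}$ is algebraically closed). At the level of loop groups the descent is transparent: grading $R((z))$, resp. $R[[z]]$, by powers of $z$ identifies the functor of $\mu_n$-invariants of $LG$, resp. $L^+G$, with $L_nG$, resp. $L_n^+G$. Since $\mathrm{Gr}=LG/L^+G$ as an fppf quotient and $\mu_n$ acts through $LG$, the fixed locus $(\mathrm{Gr})^{\mu_n}$ is governed by $L_nG$-orbits twisted by the (non-abelian) cohomology of $\mu_n$ with values in $L^+G$; the content of the proposition is that, after this twisting, everything is accounted for by the base points $[\lambda]$, $\lambda\in\overline{\mathbf{a}_n}\cap\mathbb{X}^\vee$.

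First I would set up the embeddings. For $\zeta\in\mu_n$ one computes $\zeta\cdot[\lambda]=\lambda(\zeta)\,z^\lambda L^+G=z^\lambda L^+G=[\lambda]$, because $\lambda(\zeta)\in T(\mathbb{F})\subset L^+G$ commutes with $z^\lambda$; hence every $[\lambda]$ with $\lambda\in\mathbb{X}^\vee$ lies in $(\mathrm{Gr})^{\mu_n}$, and $\mathrm{Stab}_{L_nG}([\lambda])=L_nG\cap z^\lambda L^+Gz^{-\lambda}$. Here Bruhat--Tits theory enters: the group scheme $z^\lambda L^+Gz^{-\lambda}$ is the stabilizer of the vertex $\lambda$ in the $\mathcal{K}$-building, intersecting with $L_nG$ amounts to passing to the $\mathcal{K}_n$-building, on which $\mathbf{g}_\lambda\subset\overline{\mathbf{a}_n}$ is the facet through $\lambda$, and unwinding the conventions recalled in subsection \ref{The affine Weyl group and some Bruhat-Tits theory} identifies this intersection with $L_n^+P_{\mathbf{g}_\lambda}$. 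The orbit map $g\mapsto g\cdot[\lambda]$ thus factors as a monomorphism $L_nG/L_n^+P_{\mathbf{g}_\lambda}\hookrightarrow(\mathrm{Gr})^{\mu_n}$, and restricting to the connected component through $[\lambda]$ — which is sent into the component of $\mathrm{Gr}$ labelled by the image of $\lambda$ in $\mathbb{X}^\vee/\mathbb{Z}\mathfrak{R}^\vee$ — yields the asserted embedding $\mathrm{Fl}^{n,\circ}_{\mathbf{g}_\lambda}\hookrightarrow(\mathrm{Gr})^{\mu_n}$, which is closed because $\mathrm{Fl}^{n,\circ}_{\mathbf{g}_\lambda}$ is ind-proper.

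Next I would assemble these into $f\colon\bigsqcup_{\lambda\in\overline{\mathbf{a}_n}\cap\mathbb{X}^\vee}\mathrm{Fl}^{n,\circ}_{\mathbf{g}_\lambda}\to(\mathrm{Gr})^{\mu_n}$ and argue that $f$ is (i) a monomorphism, (ii) surjective on $\overline{\mathbb{F}}$-points and (iii) a morphism of reduced ind-schemes; since a proper monomorphism is a closed immersion and a surjective closed immersion with reduced target is an isomorphism, this gives the result, and in particular the ``open and closed'' statement for each summand (these being the connected components of the source). For (i) one combines the stabilizer computation with the fact that distinct $[\lambda]$, $\lambda\in\overline{\mathbf{a}_n}\cap\mathbb{X}^\vee$, lie in distinct $L_nG$-orbits. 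For (ii) one notes that a $\mu_n$-fixed point of $\mathrm{Gr}$ is a $z\mapsto\zeta z$-stable lattice, equivalently a point of $G(\mathcal{K})/G(\mathcal{O})$ in the image of $G(\mathcal{K}_n)$, and applies the Cartan-type decomposition of $G(\mathcal{K}_n)$ relative to its parahoric subgroups, together with the fact (recalled in subsection \ref{The affine Weyl group and some Bruhat-Tits theory}) that $\overline{\mathbf{a}_n}\cap\mathbb{X}^\vee$ is a fundamental domain for the box action $\car_n$ of $W$, to move it into the $L_n^+P_{\mathbf{g}_\lambda}$-orbit of a unique such $[\lambda]$.

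The main obstacle will be the two genuinely scheme-theoretic inputs hidden above: identifying $L_nG\cap z^\lambda L^+Gz^{-\lambda}$ with $L_n^+P_{\mathbf{g}_\lambda}$ \emph{as group schemes} — a Bruhat--Tits descent statement sensitive to the usual parahoric-versus-connected-stabilizer subtleties — and establishing that $(\mathrm{Gr})^{\mu_n}$ is reduced, i.e. that the $\mu_n$-fixed locus carries no nilpotents; this last point (true because $n$ is prime to $p$) is what upgrades the bijection on geometric points to an isomorphism of ind-schemes. Once these are in hand the remainder is bookkeeping.
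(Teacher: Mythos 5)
This proposition is not proved in the paper at all: it is quoted verbatim from Riche--Williamson (\cite[Proposition 4.7]{riche2020smithtreumann}), so your proposal is really competing with the proof in that reference. Structurally your sketch follows the same outline (fixed base points $[\lambda]$, stabilizer computation via Bruhat--Tits theory, a Cartan-type decomposition of $G(\mathcal{K}_n)$ for surjectivity), but it has a genuine gap at the final step.

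The problem is your reduction to geometric points. You conclude by asserting that $(\mathrm{Gr})^{\mu_n}$ is reduced ``because $n$ is prime to $p$'' and then invoking ``a surjective closed immersion with reduced target is an isomorphism.'' But $G$ is adjoint throughout this paper, and for adjoint (more generally, non-simply-connected semisimple) groups the affine Grassmannian and the partial affine flag varieties are \emph{not} reduced as ind-schemes --- this is precisely why the paper systematically writes $(\mathrm{Gr})_{\mathrm{red}}$ and $(\mathrm{Fl}^{\circ}_{\mathbf{g}})_{\mathrm{red}}$ when describing the orbit stratifications, and it is the content of Pappas--Rapoport's reducedness criterion. Since the right-hand side of the asserted isomorphism is a disjoint union of partial affine flag varieties of the adjoint group $G$ over $\mathcal{O}_n$, it is itself non-reduced; if the proposition is true, so is $(\mathrm{Gr})^{\mu_n}$. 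Linear reductivity of $\mu_n$ gives smoothness of fixed loci inside smooth ambient schemes, not reducedness of the fixed locus of a non-reduced ind-scheme. Consequently a bijection on $\overline{\mathbb{F}}$-points plus ind-properness cannot yield an isomorphism of ind-schemes here: you must compare functors of points over arbitrary $\mathbb{F}$-algebras $R$, which is what Riche--Williamson do (using linear reductivity of $\mu_n$ to split the fixed part of $\mu_n$-equivariant $L^+G$-torsors and the vanishing of the relevant non-abelian $H^1(\mu_n,-)$ for the pro-unipotent kernels of $L^+G\to G$).

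Separately, the two ``obstacles'' you flag at the end --- the identification $L_nG\cap z^\lambda L^+Gz^{-\lambda}\simeq L_n^+P_{\mathbf{g}_\lambda}$ \emph{as group schemes}, and the descent along $\mathcal{K}/\mathcal{K}_n$ --- are not peripheral bookkeeping but the actual mathematical content of the cited result, and they are left unproved. (If you do carry out the stabilizer computation, mind the sign convention: the paper's footnote in subsection \ref{The affine Weyl group and some Bruhat-Tits theory} records that $P_{\mathbf{g}}(\mathcal{O}_n)$ stabilizes $-\mathbf{g}$.) As written, the proposal is an accurate roadmap of the known proof with its hardest steps deferred and its concluding step resting on a false reducedness claim.
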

	
	If we denote by $\mathrm{Iw}_{u,\ell}^+$ the inverse image of $U^+$ under the evaluation map $L_\ell^+G\to G,~z^\ell\mapsto 0$, then we get $(\mathrm{Iw}_u^+)^{\mu_\ell}=\mathrm{Iw}_{u,\ell}^+$, and the orbits of $(\mathrm{Gr})^{\mu_\ell}$ under the action of $\mathrm{Iw}_{u,\ell}^+$ are still parametrized by $\mathbb{X}^\vee$ (cf. \cite[Lemma 4.8]{RW22}). Thus, using the morphism $\mathrm{Iw}_{u,\ell}^+\to \mathbb{G}_\mathrm{a}$ induced by $\chi$, we can take back the constructions of subsection \ref{IW variant} to define the category $D^b_{\mathcal{IW}_\ell}(Y,\mathbf{k})$, where $Y\subset (\mathrm{Gr})^{\mu_\ell}$ is a finite locally closed union of $\mathrm{Iw}_{u,\ell}^+$-orbits. The theory of parity complexes also adapts to the present context: a complex $\mathcal{F}\in D^b_{\mathcal{IW}_\ell}(Y,\mathbf{k})$ is called $*$-even, resp. $!$-even, if for any\footnote{Notice that, if $\lambda\notin\mathbb{X}^\vee_{++}$, the restriction and co-restriction of $\mathcal{F}$ to $(Y_\lambda)^{\mu_\ell}$ is zero (because this orbit supports an Iwahori-Whittaker local system only when $\lambda\in\mathbb{X}^\vee_{++}$), so that we can restrict ourselves to the case where $\lambda\in\mathbb{X}^\vee_{++}$.} $\lambda\in\mathbb{X}^\vee_{++}$ such that $(Y_\lambda)^{\mu_\ell}\subset Y$ the complex $(j_{\mu_\ell}^\lambda)^*\mathcal{F}$, resp. $(j_{\mu_\ell}^\lambda)^!\mathcal{F}$, is concentrated in even degrees, where $j_{\mu_\ell}^\lambda:(Y_\lambda)^{\mu_\ell}\hookrightarrow Y$ is the inclusion. The definition for $*$-odd and $!$-odd complexes is similar and one says that a complex is even, resp. odd, if it is both $*$-even and $!$-even, resp. $*$-odd and $!$-odd. As usual, a complex is called parity if it is a direct sum of even and odd complexes. 
	
	Defining the category $D^b_{\mathcal{IW}_\ell}((\mathrm{Gr})^{\mu_\ell},\mathbf{k})$ (resp. $D^b_{\mathcal{IW}_\ell}(\mathrm{Fl}^{\ell,\circ}_{\mathbf{g}},\mathbf{k})$ for a facet $\mathbf{g}\subset\overline{\mathbf{a}_\ell}$) as a direct limit, it then makes sense to talk about even and odd objects in this category, and we will denote by $\mathrm{Par}_{\mathcal{IW}_\ell}((\mathrm{Gr})^{\mu_\ell},\mathbf{k})$ (resp. $\mathrm{Par}_{\mathcal{IW}_\ell}(\mathrm{Fl}^{\ell,\circ}_{\mathbf{g}},\mathbf{k})$) the additive full subcategory consisting of parity objects. By Proposition \ref{connex}, it is clear that the additive category $\mathrm{Par}_{\mathcal{IW}_\ell}((\mathrm{Gr})^{\mu_\ell},\mathbf{k})$ splits into a direct sum of subcategories of the form $\mathrm{Par}_{\mathcal{IW}_\ell}(\mathrm{Fl}^{\ell,\circ}_{\mathbf{g}},\mathbf{k})$, where $\mathbf{g}$ runs through the facets inside $\overline{\mathbf{a}_\ell}$. 
	
	It is important to note that we have a bijection $\mathbf{g}\mapsto \ell\cdot\mathbf{g}$ between facets inside $\overline{\mathbf{a}_1}$ associated with the action $\car_1$ of $W$ on $E$ and facets inside $\overline{\mathbf{a}_\ell}$ associated with the action $\car_\ell$ of $W$ on $E$, and that a simple change of variable (namely, replacing $z$ with $z^\ell$) induces canonical isomorphisms of ind-schemes $\mathrm{Fl}^{\circ}_{\mathbf{g}}:=\mathrm{Fl}^{1,\circ}_{\mathbf{g}}\simeq \mathrm{Fl}^{\ell,\circ}_{\ell\cdot\mathbf{g}}$, $\mathrm{Iw}_{u}^+\simeq \mathrm{Iw}_{u,\ell}^+$, sending $\mathrm{Iw}_{u}^+$-orbits in $\mathrm{Fl}^{\circ}_{\mathbf{g}}$ to $\mathrm{Iw}_{u,\ell}^+$-orbits in $\mathrm{Fl}^{\ell,\circ}_{\ell\cdot\mathbf{g}}$. For any facet $\mathbf{g}\subset \overline{\mathbf{a}_1}$, we thus have a canonical equivalence of categories
	\begin{equation*}
		D^b_{\mathcal{IW}}(\mathrm{Fl}^{\circ}_{\mathbf{g}},\mathbf{k})\simeq D^b_{\mathcal{IW}_\ell}(\mathrm{Fl}^{\ell,\circ}_{\ell\cdot\mathbf{g}},\mathbf{k}),
	\end{equation*}
	restricting to an equivalence between the corresponding categories of parity complexes:
	\begin{equation}\label{parity l vs 1}
		\mathrm{Par}_{\mathcal{IW}}(\mathrm{Fl}^{\circ}_{\mathbf{g}},\mathbf{k})\simeq \mathrm{Par}_{\mathcal{IW}_\ell}(\mathrm{Fl}^{\ell,\circ}_{\ell\cdot\mathbf{g}},\mathbf{k}).
	\end{equation}
	In the sequel, we will denote by $\mathcal{E}_{\ell,w}^{\ell\cdot\mathbf{g}}$ the indecomposable parity complex in $\mathrm{Par}_{\mathcal{IW}_\ell}(\mathrm{Fl}^{\ell,\circ}_{\ell\cdot\mathbf{g}},\mathbf{k})$ corresponding to $\mathcal{E}^{\mathbf{g}}_w$ via the equivalence (\ref{parity l vs 1}). It is clear that, up to a shift, all of the indecomposable parity complexes in $\mathrm{Par}_{\mathcal{IW}_\ell}(\mathrm{Fl}^{\ell,\circ}_{\ell\cdot\mathbf{g}},\mathbf{k})$ (coming from Proposition \ref{par prop}) arise in this way.
	
	\subsection{Smith category and the linkage principle}
	Since the action of $\mathbb{G}_\mathrm{m}$ on $\mathrm{Gr}$ stabilizes the fixed points $(\mathrm{Gr})^{\mu_\ell}$, we can take back the construction recalled in Remark \ref{rem1} to define the category $D^b_{\mathcal{IW}_\ell,\mathbb{G}_\mathrm{m}}(Y,\mathbf{k})$, where $Y\subset (\mathrm{Gr})^{\mu_\ell}$ is a locally closed finite union of $\mathrm{Iw}_{u,\ell}^+$-orbits. We will denote by $D^b_{\mathcal{IW}_\ell,\mathbb{G}_\mathrm{m}}(Y,\mathbf{k})_{\mu_\ell-\text{perf}}$ the full subcategory whose objects are the $\mathcal{F}$ for which the object $\mathrm{Res}^{\mathbb{G}_\mathrm{m}}_{\mu_\ell}(\mathcal{F})$ has perfect geometric stalks in the sense of \cite[§3.3]{RW22}. The \textit{Smith category} $\mathrm{Sm}_{\mathcal{IW}}(Y,\mathbf{k})$ on $Y$ is by definition the Verdier quotient
	$$D^b_{\mathcal{IW}_\ell,\mathbb{G}_\mathrm{m}}(Y,\mathbf{k})/D^b_{\mathcal{IW}_\ell,\mathbb{G}_\mathrm{m}}(Y,\mathbf{k})_{\mu_\ell-\text{perf}}. $$ 
	
	Let $X\subset\mathrm{Gr}$ be a locally closed finite union of $\mathrm{Iw}_u^+$-orbits, and $i_X:(X)^{\mu_\ell}\hookrightarrow X$ denote the inclusion. We define the functor $$i_X^{!*}:\mathrm{Perv}_{\mathcal{IW},\mathbb{G}_\mathrm{m}}(X,\mathbf{k})\to \mathrm{Sm}_{\mathcal{IW}}((X)^{\mu_\ell},\mathbf{k})$$ as the composition of the inverse image $i_X^*$ with the canonical quotient map $$Q:D^b_{\mathcal{IW}_\ell,\mathbb{G}_\mathrm{m}}((X)^{\mu_\ell},\mathbf{k})\to \mathrm{Sm}_{\mathcal{IW}}((X)^{\mu_\ell},\mathbf{k}).$$ A crucial result says that taking $i_X^!$ instead of $i_X^*$ in the previous construction gives isomorphic functors (cf. \cite[§6.2]{RW22}), whence the notation. It then takes a bit more work (cf. \cite[Lemma 6.1]{RW22}) to prove that, for two locally closed finite unions of $\mathrm{Iw}_{u,\ell}^+$-orbits $Z\subset Y$, the canonical functor $f^{\mathrm{Sm}}_*:\mathrm{Sm}_{\mathcal{IW}}(Z,\mathbf{k})\to \mathrm{Sm}_{\mathcal{IW}}(Y,\mathbf{k})$ induced by the direct image in the corresponding derived categories is fully faithful, and fits into the following commutative diagram
	$$\xymatrix{
		D^b_{\mathcal{IW}_\ell,\mathbb{G}_\mathrm{m}}(Z,\mathbf{k})\ar[r]^{f_*} \ar[d] &  D^b_{\mathcal{IW}_\ell,\mathbb{G}_\mathrm{m}}(Y,\mathbf{k})  \ar[d] \\
		\mathrm{Sm}_{\mathcal{IW}}(Z,\mathbf{k}) \ar[r]^{f^{\mathrm{Sm}}_*} &  \mathrm{Sm}_{\mathcal{IW}}(Y,\mathbf{k}),
	}$$
	where the vertical arrows are the quotient maps. One can thus define the category $\mathrm{Sm}_{\mathcal{IW}}((\mathrm{Gr})^{\mu_\ell},\mathbf{k})$ as a direct limit indexed by finite closed unions of $\mathrm{Iw}_{u,\ell}^+$-orbits, and consider the functor
	$$i^{!*}_{\mathrm{Gr}}:\mathrm{Perv}_{\mathcal{IW},\mathbb{G}_\mathrm{m}}(\mathrm{Gr},\mathbf{k})\to \mathrm{Sm}_{\mathcal{IW}}((\mathrm{Gr})^{\mu_\ell},\mathbf{k}).$$
	
	The following statement is \cite[Theorem 7.4]{RW22}.
	\begin{thm}\label{equivalence1}
		The composition of functors
		$$\mathrm{Perv}_{\mathcal{IW}}(\mathrm{Gr},\mathbf{k})\xrightarrow{\text{Remark}~ \ref{rem1}}\mathrm{Perv}_{\mathcal{IW},\mathbb{G}_\mathrm{m}}(\mathrm{Gr},\mathbf{k})\xrightarrow{i^{!*}_{\mathrm{Gr}}}\mathrm{Sm}_{\mathcal{IW}}((\mathrm{Gr})^{\mu_\ell},\mathbf{k})$$
		restricts to a fully faithful functor $\Phi:\mathrm{Tilt}_{\mathcal{IW}}(\mathrm{Gr},\mathbf{k})\to \mathrm{Sm}_{\mathcal{IW}}((\mathrm{Gr})^{\mu_\ell},\mathbf{k})$.
	\end{thm}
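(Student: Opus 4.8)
The plan is to run the standard Smith-theory machinery. Write $i\colon(\mathrm{Gr})^{\mu_\ell}\hookrightarrow\mathrm{Gr}$ for the closed immersion of the fixed locus and $j$ for the open complementary immersion. First I would check that $i^{!*}_{\mathrm{Gr}}$ is well defined, i.e.\ independent of whether one starts from $i^*$ or from $i^!$: applying $i^*$ to the recollement triangle $i_*i^!\mathcal F\to\mathcal F\to j_*j^*\mathcal F\xrightarrow{+1}$ produces a triangle $i^!\mathcal F\to i^*\mathcal F\to i^*j_*j^*\mathcal F\xrightarrow{+1}$, and since $\mu_\ell$ acts freely on $\mathrm{Gr}\setminus(\mathrm{Gr})^{\mu_\ell}$ the object $j_*j^*\mathcal F$ has perfect geometric stalks along $(\mathrm{Gr})^{\mu_\ell}$ whenever $\mathcal F$ carries a loop-rotation equivariant structure; hence $i^*j_*j^*\mathcal F$ is killed by the quotient functor $Q$, and $Q\circ i^!\simeq Q\circ i^*$. (Upgrading this to an isomorphism of functors, not merely of objects, is already part of the work and is needed to make $\Phi$ functorial.)

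Next I would reduce full faithfulness to a statement about parity objects. Since every $\mathrm{Iw}_u^+$-orbit in $\mathrm{Gr}$ is an affine space, the indecomposable objects of $\mathrm{Tilt}_{\mathcal{IW}}(\mathrm{Gr},\mathbf k)$ are, up to a cohomological shift, exactly the indecomposable parity objects of $D^b_{\mathcal{IW}}(\mathrm{Gr},\mathbf k)$; so it is enough to prove that for parity objects $\mathcal E,\mathcal E'$ the map
\[
\mathrm{Hom}_{D^b_{\mathcal{IW}}(\mathrm{Gr},\mathbf k)}(\mathcal E,\mathcal E')\longrightarrow\mathrm{Hom}_{\mathrm{Sm}_{\mathcal{IW}}((\mathrm{Gr})^{\mu_\ell},\mathbf k)}(\Phi\mathcal E,\Phi\mathcal E')
\]
is bijective. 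From the first step, $\Phi\mathcal E$ is simultaneously $*$-parity (computed through $i^*$, which sends $*$-parity to $*$-parity because each $(Y_\lambda)^{\mu_\ell}$ is again an affine space and restriction of a rank-one local system stays in a single parity) and $!$-parity (through $i^!$), hence it is a parity object in the Smith category; in particular $\Phi$ carries the standard objects $\Delta^{\mathcal{IW}}_\lambda$ to $Qi^*\Delta^{\mathcal{IW}}_\lambda$ and the costandard objects $\nabla^{\mathcal{IW}}_\mu$ to $Qi^!\nabla^{\mathcal{IW}}_\mu$, which are ``Smith-standard'' and ``Smith-costandard'' objects supported on the fixed-point orbits $(Y_\lambda)^{\mu_\ell}$, respectively $(Y_\mu)^{\mu_\ell}$.

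The argument then proceeds by d\'evissage along the stratifications: filter $\mathcal E$ by standards and $\mathcal E'$ by costandards, do the same on the Smith side, and compare. The key local input is $\mathrm{Hom}^\bullet_{\mathrm{Sm}}(Qi^*\Delta^{\mathcal{IW}}_\lambda,Qi^!\nabla^{\mathcal{IW}}_\mu)$. For $\lambda\neq\mu$ this vanishes by the usual adjunction argument (one object is extended by zero from its open orbit, the other has vanishing stalk/costalk there). For $\lambda=\mu$ it reduces, via adjunction on the orbit $(Y_\lambda)^{\mu_\ell}$, to the Smith-category endomorphisms of a rank-one local system on an affine space carrying the loop-rotation $\mathbb{G}_\mathrm{m}$-action, which one computes to have one-dimensional degree-zero part. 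Tracking how the cohomological shift relating perverse parity objects to tilting objects interacts with this computation, one obtains that the degree-zero Smith-$\mathrm{Hom}$ between the images recovers $\mathrm{Hom}_{D^b}(\mathcal E,\mathcal E')$ exactly, giving full faithfulness.

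The main obstacle is this last step: one must control the Verdier quotient defining $\mathrm{Sm}_{\mathcal{IW}}$ finely enough to know it neither destroys nor creates degree-zero morphisms between the images of standards and costandards, and that the (co)standard filtrations of $\Phi\mathcal E$ and $\Phi\mathcal E'$ interact with the equivariant/periodic grading of the Smith category so that the shift bookkeeping matches on the nose — in particular the ``off-diagonal'' boundary terms in the d\'evissage must be shown to cancel. All of this rests essentially on the freeness of the $\mu_\ell$-action away from the fixed points and on the affineness of every orbit, and is the technical heart of the statement; in the body of the paper this is the reason the theorem is quoted from \cite{riche2020smithtreumann} rather than reproved.
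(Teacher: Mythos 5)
The paper does not prove this statement at all: it is quoted verbatim from Riche--Williamson (\cite[Theorem 7.4]{riche2020smithtreumann}), so there is no internal proof to compare against. Your sketch does reproduce the architecture of the actual argument in \emph{loc.\ cit.}: the isomorphism $Q\circ i^*\simeq Q\circ i^!$ coming from freeness of the $\mu_\ell$-action off the fixed locus, the identification of indecomposable tiltings with indecomposable parity objects (valid here because every $\mathrm{Iw}_u^+$-orbit is an affine space), and the d\'evissage of $\mathrm{Hom}(\mathscr T,\mathscr T')$ against $\sum_\lambda(\mathscr T:\Delta^{\mathcal{IW}}_\lambda)(\mathscr T':\nabla^{\mathcal{IW}}_\lambda)$ matched with the analogous count in the Smith category.

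There is, however, a genuine gap precisely at the step you flag as ``bookkeeping''. The Smith category is $2$-periodic: the graded endomorphisms of the constant sheaf on a $\mu_\ell$-fixed point are the Tate cohomology of $\mathbb Z/\ell$, hence nonzero in \emph{every} degree, not concentrated in degree zero. So your claim that $\mathrm{Hom}^\bullet_{\mathrm{Sm}}(Qi^*\Delta^{\mathcal{IW}}_\lambda,Qi^!\nabla^{\mathcal{IW}}_\lambda)$ ``has one-dimensional degree-zero part'' is not a formal consequence of adjunction on an affine orbit; it requires (a) a precise statement of which graded pieces survive the Verdier quotient, and (b) the parity input that for all $\lambda$ occurring in the filtrations, $\dim Y_\lambda$ and $\dim(Y_\lambda)^{\mu_\ell}$ have a uniform parity relationship, so that all diagonal contributions land in even degree and no odd-degree terms pollute the count. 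Without (b) the d\'evissage could produce extra classes in the degree-zero Smith $\mathrm{Hom}$, destroying fullness or faithfulness. This parity lemma (a computation with $\ell$-dilated alcoves and the dimensions of $\mathrm{Iw}_{u,\ell}^+$-orbits) is the actual technical heart of the Riche--Williamson proof and is the missing ingredient in your proposal; the rest of your outline is sound.
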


	We can now reformulate the proof of the linkage principle from \cite[Theorem 8.5]{RW22}. Thanks to the decomposition of $(\mathrm{Gr})^{\mu_\ell}$ into its connected components from Proposition \ref{connex}, we deduce from Theorem \ref{equivalence1} above that two strictly dominant weights $\lambda$ and $\mu$ which are not in the same orbit for the box action cannot be in relation for $\mathscr{R}_2$ (seen as a relation on the weight poset $\mathbb{X}^\vee_{++}$ of the highest weight category $\mathrm{Perv}_{\mathcal{IW}}(\mathrm{Gr},\mathbf{k})$, cf. section \ref{relation section}). Indeed, let $\delta$, resp. $\gamma$, be the element of $W\car_\ell\lambda\cap\overline{\mathbf{a}_\ell}$, resp. of $W\car_\ell\mu\cap\overline{\mathbf{a}_\ell}$. We have
	$$\mathrm{Hom}_{\mathrm{Tilt}_{\mathcal{IW}}(\mathrm{Gr},\mathbf{k})}(\mathscr{T}_\lambda^{\mathcal{IW}},\mathscr{T}_\mu^{\mathcal{IW}})\simeq \mathrm{Hom}_{\mathrm{Sm}_{\mathcal{IW}}((\mathrm{Gr})^{\mu_\ell},\mathbf{k})}(\Phi(\mathscr{T}_\lambda^{\mathcal{IW}}),\Phi(\mathscr{T}_\mu^{\mathcal{IW}})),$$
	and since the object $\Phi(\mathscr{T}_\lambda^{\mathcal{IW}})$, resp. $\Phi(\mathscr{T}_\mu^{\mathcal{IW}})$, is indecomposable (thanks to the full faithfulness of $\Phi$), its support must be contained in a single connected component of $(\mathrm{Gr})^{\mu_\ell}$, which is easily seen to be the parametrized by $\delta$, resp.  $\gamma$, in the isomorphism of Proposition \ref{connex} applied to the case $n=\ell$.
	
	Thanks to Theorem \ref{IW equivalence}, this implies that two dominant weights $\lambda',\mu'$ which are not in the same orbit for the dot action cannot be in relation for $\mathscr{R}_2$ (where we now consider the highest weight category $\mathrm{Perv}_{L^+G}(\mathrm{Gr},\mathbf{k})$). Finally, the geometric Satake equivalence (Theorem \ref{satake equivalence}) and Theorem \ref{relations thm} allow to prove the linkage principle for $G^\vee$ (cf. \cite[Corollary 6.17, Part II]{jantzen2003representations}):
	\begin{equation}\label{linkage principle}
		\forall \lambda',\mu'\in \mathbb{X}^\vee_{+}~\text{such that}~W\bullet_{\ell}\lambda'\neq W\bullet_{\ell}\mu',~\text{we have}~\mathrm{Ext}_{G^\vee}^1(L_{\lambda'},L_{\mu'})=0. 
	\end{equation}

	\subsection{Consequences on equivalence relations}\label{Consequences on equivalence relations}
			Recall the definition of the ``dot'' action of $W$, which acts on $\mathbb{X}^\vee$ by $$w\bullet_{\ell}\mu:=w\car_\ell (\mu+\rho^\vee)-\rho^\vee,$$ for any $w\in W$, $\mu\in\mathbb{X}^\vee$. Let $\mu\in\mathbb{X}^\vee\cap\overline{\mathbf{a}_\ell}$ and $\mathbf{g}_\mu$ be the facet (for $\car_\ell$) containing $\mu$. Therefore, $\ell^{-1}\cdot\mathbf{g}_\mu\subset\overline{\mathbf{a}_1}$ is a facet for $\car_1$. The assignment $w\mapsto w\car_\ell\mu$, resp. $w\mapsto w\bullet_\ell(\mu-\rho^\vee)$, induces a bijection 
	\begin{equation}\label{param dom weights}
	    {_\mathrm{f}W}^{\ell^{-1}\cdot\mathbf{g}_{\mu}}\xrightarrow{\sim}W\car_\ell\mu\cap\mathbb{X}^\vee_{++}, ~\text{resp.}~{_\mathrm{f}W}^{\ell^{-1}\cdot\mathbf{g}_\mu}\xrightarrow{\sim}W\bullet_\ell(\mu-\rho^\vee)\cap\mathbb{X}^\vee_{+}
	\end{equation} 
	(this follows from the fact that, if we denote by $W_{\mathbf{g}_\mu,\car_\ell}\subset W$ the stabilizer of $\mathbf{g}_\mu$ for $\car_\ell$, then we have an equality $W_{\mathbf{g}_\mu,\car_\ell}=W_{\ell^{-1}\cdot\mathbf{g}_\mu}$).
	
	In the sequel, we will want to determine an exact description of the blocks in $\mathbb{X}^\vee_{++}$ (seen as the weight poset of $\mathrm{Perv}_{\mathcal{IW}}(\mathrm{Gr},\mathbf{k})$) for $\mathscr{R}_2$ (cf. subsection \ref{Equivalence relations on Lambda}). The following results will help us to do so (cf. the proof of \cite[Theorem 8.9]{RW22}). 
	\begin{prop}\label{iso parity} Let $\mu\in\mathbb{X}^\vee\cap\overline{\mathbf{a}_\ell}$, $\mathbf{g}_\mu$ be the facet (for $\car_\ell$) containing $\mu$, and  $w,w'$ be elements of ${_\mathrm{f}W}^{\ell^{-1}\cdot\mathbf{g}_\mu}$. We have an isomorphism
		$$\mathrm{Hom}_{\mathrm{Tilt}_\mathcal{IW}(\mathrm{Gr},\mathbf{k})}(\mathscr{T}^\mathcal{IW}_{w\car_\ell\mu},\mathscr{T}^\mathcal{IW}_{w'\car_\ell\mu})\simeq \mathrm{Hom}^\bullet_{\mathrm{Par}_{\mathcal{IW}_\ell}(\mathrm{Fl}^{\ell,\circ}_{\mathbf{g}_\mu},\mathbf{k})}(\mathcal{E}_{\ell,w}^{\mathbf{g}_\mu},\mathcal{E}_{\ell,w'}^{\mathbf{g}_\mu}).$$
	\end{prop}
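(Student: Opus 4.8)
The plan is to deduce the isomorphism from the Smith--Treumann machinery of \cite{riche2020smithtreumann} recalled above, following the argument behind \cite[Theorem 8.9]{riche2020smithtreumann}. First I would pass to the Smith category. By the equivalence of Remark \ref{rem1} the indecomposable tilting objects $\mathscr{T}^{\mathcal{IW}}_{w\car_\ell\mu}$ and $\mathscr{T}^{\mathcal{IW}}_{w'\car_\ell\mu}$ lift canonically to $\mathbb{G}_\mathrm{m}$-equivariant tilting perverse sheaves, and the full faithfulness of $\Phi$ (Theorem \ref{equivalence1}) yields
\[
\mathrm{Hom}_{\mathrm{Tilt}_\mathcal{IW}(\mathrm{Gr},\mathbf{k})}(\mathscr{T}^{\mathcal{IW}}_{w\car_\ell\mu},\mathscr{T}^{\mathcal{IW}}_{w'\car_\ell\mu})\simeq \mathrm{Hom}_{\mathrm{Sm}_{\mathcal{IW}}((\mathrm{Gr})^{\mu_\ell},\mathbf{k})}\bigl(\Phi(\mathscr{T}^{\mathcal{IW}}_{w\car_\ell\mu}),\Phi(\mathscr{T}^{\mathcal{IW}}_{w'\car_\ell\mu})\bigr).
\]
Since $\Phi$ is fully faithful and all the categories in sight are Krull--Schmidt, $\Phi(\mathscr{T}^{\mathcal{IW}}_{w\car_\ell\mu})$ is indecomposable, hence, by the decomposition of $(\mathrm{Gr})^{\mu_\ell}$ into connected components of Proposition \ref{connex}, supported on a single piece $\mathrm{Fl}^{\ell,\circ}_{\mathbf{g}_\lambda}$. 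The open $\mathrm{Iw}_u^+$-orbit of the support of $\mathscr{T}^{\mathcal{IW}}_{w\car_\ell\mu}$ is $Y_{w\car_\ell\mu}$; this orbit is $\mathbb{G}_\mathrm{m}$-stable, and its $\mu_\ell$-fixed points form an $\mathrm{Iw}_{u,\ell}^+$-orbit in the component attached to the representative of $W\car_\ell\mu$ in $\overline{\mathbf{a}_\ell}\cap\mathbb{X}^\vee$, which is $\mu$; thus that component is $\mathrm{Fl}^{\ell,\circ}_{\mathbf{g}_\mu}$, and likewise for $w'$. So the right-hand side above can be computed inside $\mathrm{Sm}_{\mathcal{IW}}(\mathrm{Fl}^{\ell,\circ}_{\mathbf{g}_\mu},\mathbf{k})$.

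Next I would identify $\Phi(\mathscr{T}^{\mathcal{IW}}_{w\car_\ell\mu})$ with the image in $\mathrm{Sm}_{\mathcal{IW}}(\mathrm{Fl}^{\ell,\circ}_{\mathbf{g}_\mu},\mathbf{k})$ of the indecomposable parity complex $\mathcal{E}_{\ell,w}^{\mathbf{g}_\mu}$, up to a cohomological shift that plays no role here. The point is that $\mathscr{T}^{\mathcal{IW}}_{w\car_\ell\mu}$ admits both a standard and a costandard filtration, so the $*$- and $!$-restrictions of its $\mathbb{G}_\mathrm{m}$-equivariant lift to the fixed-point locus are parity; the constructions of \cite{riche2020smithtreumann} then show that $i^{!*}_{\mathrm{Gr}}$ sends this lift to the image in the Smith category of a $\mathbb{G}_\mathrm{m}$-equivariant parity complex. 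That image remains indecomposable (an indecomposable parity complex has no direct summand with perfect geometric stalks), it is supported on the closure of the $\mathrm{Iw}_{u,\ell}^+$-orbit of $\mathrm{Fl}^{\ell,\circ}_{\mathbf{g}_\mu}$ coming from $Y_{w\car_\ell\mu}$, and its restriction to that orbit is the expected rank-one Iwahori--Whittaker local system (up to shift); by the uniqueness statement in Proposition \ref{par prop} it must therefore be $\mathcal{E}_{\ell,w}^{\mathbf{g}_\mu}$ up to shift.

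It then remains to compute the $\mathrm{Hom}$-space in $\mathrm{Sm}_{\mathcal{IW}}(\mathrm{Fl}^{\ell,\circ}_{\mathbf{g}_\mu},\mathbf{k})$ between the Smith images of $\mathcal{E}_{\ell,w}^{\mathbf{g}_\mu}$ and $\mathcal{E}_{\ell,w'}^{\mathbf{g}_\mu}$; this is the technical heart of the argument, and is exactly what Smith--Treumann theory in \cite{riche2020smithtreumann} provides. In the Smith quotient the loop-rotation grading shift becomes isomorphic, up to a sign, to a cohomological shift, so the quotient functor out of the $\mathbb{G}_\mathrm{m}$-equivariant derived category ``collapses'' the internal grading of a $\mathbb{G}_\mathrm{m}$-equivariant lift of a non-equivariant parity complex into the cohomological one. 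Consequently
\begin{align*}
\mathrm{Hom}_{\mathrm{Sm}_{\mathcal{IW}}(\mathrm{Fl}^{\ell,\circ}_{\mathbf{g}_\mu},\mathbf{k})}\bigl(i^{!*}\mathcal{E}_{\ell,w}^{\mathbf{g}_\mu},i^{!*}\mathcal{E}_{\ell,w'}^{\mathbf{g}_\mu}\bigr)&\simeq \bigoplus_{n\in\mathbb{Z}}\mathrm{Hom}_{D^b_{\mathcal{IW}_\ell}(\mathrm{Fl}^{\ell,\circ}_{\mathbf{g}_\mu},\mathbf{k})}\bigl(\mathcal{E}_{\ell,w}^{\mathbf{g}_\mu},\mathcal{E}_{\ell,w'}^{\mathbf{g}_\mu}[n]\bigr)\\
&=\mathrm{Hom}^\bullet_{\mathrm{Par}_{\mathcal{IW}_\ell}(\mathrm{Fl}^{\ell,\circ}_{\mathbf{g}_\mu},\mathbf{k})}(\mathcal{E}_{\ell,w}^{\mathbf{g}_\mu},\mathcal{E}_{\ell,w'}^{\mathbf{g}_\mu}),
\end{align*}
and chaining together the displayed isomorphisms gives the proposition. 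The main obstacle is precisely this last step --- proving that passing to the Smith quotient turns the loop-rotation grading shift into a cohomological shift and keeping track of the resulting $\mathrm{Hom}$-spaces between images of parity complexes; everything else is a formal consequence once the framework of \cite{riche2020smithtreumann} is in place. Via the equivalence (\ref{parity l vs 1}) one may equivalently phrase the right-hand side in terms of the facet $\ell^{-1}\cdot\mathbf{g}_\mu\subset\overline{\mathbf{a}_1}$, which is the form used in the rest of the paper.
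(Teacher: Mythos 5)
Your argument is correct and is essentially the route the paper takes: the paper's proof simply cites \cite[Proposition 8.11]{riche2020smithtreumann} (transported through the equivalence $\mathrm{Rep}_{\mathbf{k}}(G^\vee)\simeq \mathrm{Perv}_{\mathcal{IW}}(\mathrm{Gr},\mathbf{k})$), and what you have written is an accurate unpacking of the Smith--Treumann argument behind that citation --- full faithfulness of $\Phi$, localization to the connected component $\mathrm{Fl}^{\ell,\circ}_{\mathbf{g}_\mu}$, identification of $\Phi(\mathscr{T}^{\mathcal{IW}}_{w\car_\ell\mu})$ with the Smith image of $\mathcal{E}^{\mathbf{g}_\mu}_{\ell,w}$, and the collapse of the grading on $\mathrm{Hom}$-spaces between parity objects in the Smith quotient. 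The technical step you correctly isolate as the heart of the matter is exactly what is delegated to \cite{riche2020smithtreumann} in the paper as well.
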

	\begin{proof}
		This is a direct consequence of \cite[Proposition 8.11]{RW22} and of the equivalence of highest weight categories $\mathrm{Rep}_{\mathbf{k}}(G^\vee)\simeq \mathrm{Perv}_{\mathcal{IW}}(\mathrm{Gr},\mathbf{k})$, which sends the indecomposable tilting module of highest weight $w\bullet_\ell (\mu-\rho^\vee)$, resp. $w'\bullet_\ell (\mu-\rho^\vee)$, to $\mathscr{T}^\mathcal{IW}_{w\car_\ell\mu}$, resp. $\mathscr{T}^\mathcal{IW}_{w'\car_\ell\mu}$.
	\end{proof}
	Let $\lambda\in\overline{\mathbf{a}_\ell}\cap\mathbb{X}^\vee$. Note that, thanks to the equivalence (\ref{parity l vs 1}), we have 
	$$\mathrm{Hom}^\bullet_{\mathrm{Par}_{\mathcal{IW}}(\mathrm{Fl}^{\circ}_{\ell^{-1}\cdot\mathbf{g}_\lambda},\mathbf{k})}(\mathcal{E}^{\ell^{-1}\cdot\mathbf{g}_\lambda}_{w},\mathcal{E}^{\ell^{-1}\cdot\mathbf{g}_\lambda}_{w'})\simeq \mathrm{Hom}^\bullet_{\mathrm{Par}_{\mathcal{IW}_\ell}(\mathrm{Fl}^{\ell,\circ}_{\mathbf{g}_\lambda},\mathbf{k})}(\mathcal{E}^{\mathbf{g}_\lambda}_{\ell,w},\mathcal{E}^{\mathbf{g}_\lambda}_{\ell,w'}). $$
	Therefore Proposition \ref{iso parity} yields
	\begin{equation}\label{eq relation W}
		\forall w,w'\in {_\mathrm{f}W}^{\ell^{-1}\cdot\mathbf{g}_\lambda},\qquad	w \mathscr{R}_{\ell^{-1}\cdot\mathbf{g}_\lambda} w'\Longleftrightarrow (w\car_\ell\lambda)\mathscr{R}_2 (w'\car_\ell\lambda).
	\end{equation}
	\subsection{Dilating weights by $\ell$}
	Smith-Treumann theory allows us to understand the effect of dilating dominant weights by $\ell$ on the equivalence relation $\sim$.
	\begin{prop}\label{red to non special}
		Let $\lambda,\mu\in\mathbb{X}^\vee_{++}$. We have an isomorphism
		$$\mathrm{Hom}_{\mathrm{Tilt}_\mathcal{IW}(\mathrm{Gr},\mathbf{k})}(\mathscr{T}^\mathcal{IW}_{\ell\cdot\lambda},\mathscr{T}^\mathcal{IW}_{\ell\cdot\mu})\simeq \mathrm{Hom}_{\mathrm{Tilt}_\mathcal{IW}(\mathrm{Gr},\mathbf{k})}(\mathscr{T}^\mathcal{IW}_{\lambda},\mathscr{T}^\mathcal{IW}_{\mu}).$$
	\end{prop}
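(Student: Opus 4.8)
\emph{Strategy.} The plan is to transport the left--hand side to the $\mu_\ell$--fixed points via the Smith--Treumann functor $\Phi$ of Theorem~\ref{equivalence1}, to recognise the relevant connected component of $(\mathrm{Gr})^{\mu_\ell}$ as the partial affine flag variety attached to a \emph{special vertex} --- hence, after the change of variable $z^\ell\rightsquigarrow z$ and the use of the symmetry $\Omega$, a copy of a connected component of the affine Grassmannian --- and then to invoke on $\mathrm{Gr}$ the coincidence of indecomposable Iwahori--Whittaker parity complexes with indecomposable Iwahori--Whittaker tilting perverse sheaves.

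First I would dispose of the split case. If $W\car_1\lambda\neq W\car_1\mu$ then $\lambda\not\equiv\mu$ modulo $\mathbb{Z}\mathfrak{R}^\vee$, so $\mathscr{T}^{\mathcal{IW}}_\lambda$ and $\mathscr{T}^{\mathcal{IW}}_\mu$ lie in different connected components of $\mathrm{Gr}$ and the right--hand side is zero; likewise $W\car_\ell(\ell\cdot\lambda)=\ell\cdot(W\car_1\lambda)$ and $W\car_\ell(\ell\cdot\mu)=\ell\cdot(W\car_1\mu)$ have distinct representatives in $\overline{\mathbf{a}_\ell}\cap\mathbb{X}^\vee$, so by Proposition~\ref{connex} the objects $\Phi(\mathscr{T}^{\mathcal{IW}}_{\ell\cdot\lambda})$ and $\Phi(\mathscr{T}^{\mathcal{IW}}_{\ell\cdot\mu})$ are supported in different connected components of $(\mathrm{Gr})^{\mu_\ell}$, and full faithfulness of $\Phi$ forces the left--hand side to vanish as well. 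So I may assume $W\car_1\lambda=W\car_1\mu$, with common representative $\nu\in\overline{\mathbf{a}_1}\cap\mathbb{X}^\vee$. Since $\nu\in\mathbb{X}^\vee$ and $\nu\in\overline{\mathbf{a}_1}$ we have $\langle\nu,\alpha\rangle\in\{0,1\}$ for every $\alpha\in\mathfrak{R}_+$, so $\{\nu\}$ is a special vertex of $\overline{\mathbf{a}_1}$; correspondingly $\ell\cdot\nu$ represents $W\car_\ell(\ell\cdot\lambda)=W\car_\ell(\ell\cdot\mu)$ in $\overline{\mathbf{a}_\ell}\cap\mathbb{X}^\vee$ and $\mathbf{g}_{\ell\cdot\nu}=\ell\cdot\{\nu\}$.

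Let $w,w'\in {_\mathrm{f}W}^{\{\nu\}}$ be the elements with $w\car_1\nu=\lambda$ and $w'\car_1\nu=\mu$, equivalently $w\car_\ell(\ell\cdot\nu)=\ell\cdot\lambda$ and $w'\car_\ell(\ell\cdot\nu)=\ell\cdot\mu$. Applying Proposition~\ref{iso parity} to the coweight $\ell\cdot\nu$, together with the equivalence $(\ref{parity l vs 1})$, gives
$$\mathrm{Hom}_{\mathrm{Tilt}_\mathcal{IW}(\mathrm{Gr},\mathbf{k})}(\mathscr{T}^{\mathcal{IW}}_{\ell\cdot\lambda},\mathscr{T}^{\mathcal{IW}}_{\ell\cdot\mu})\simeq \mathrm{Hom}^\bullet_{\mathrm{Par}_{\mathcal{IW}}(\mathrm{Fl}^{\circ}_{\{\nu\}},\mathbf{k})}(\mathcal{E}^{\{\nu\}}_w,\mathcal{E}^{\{\nu\}}_{w'}).$$
Because $\{\nu\}$ is special, $P_{\{\nu\}}$ is a maximal parahoric and $\mathrm{Fl}^{\circ}_{\{\nu\}}$ is a copy of the connected component of $\mathrm{Gr}$ indexed by $[\nu]$: all special vertices of $\overline{\mathbf{a}_1}$ form a single $\Omega$--orbit, and an element of $\Omega$ carrying $\{\nu\}$ to $\{0\}$ induces an isomorphism of ind--schemes from $\mathrm{Fl}^{\circ}_{\{\nu\}}$ onto that component, compatible with the $\mathrm{Iw}_u^+$--action and the Iwahori--Whittaker structure, which sends $\mathcal{E}^{\{\nu\}}_w$ to the indecomposable Iwahori--Whittaker parity complex on $\mathrm{Gr}$ with dense open stratum $Y_{w\car_1\nu}$. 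By the Iwahori--Whittaker incarnation of the geometric Satake equivalence (Theorem~\ref{IW equivalence}), those parity complexes are precisely the indecomposable Iwahori--Whittaker tilting perverse sheaves on $\mathrm{Gr}$ (perversely normalised), the one with dense open stratum $Y_\kappa$ being $\mathscr{T}^{\mathcal{IW}}_\kappa$, and $\mathrm{Hom}^\bullet$ between tilting perverse sheaves is concentrated in degree $0$. Hence the right--hand side above equals
$$\mathrm{Hom}_{\mathrm{Tilt}_\mathcal{IW}(\mathrm{Gr},\mathbf{k})}(\mathscr{T}^{\mathcal{IW}}_{w\car_1\nu},\mathscr{T}^{\mathcal{IW}}_{w'\car_1\nu})=\mathrm{Hom}_{\mathrm{Tilt}_\mathcal{IW}(\mathrm{Gr},\mathbf{k})}(\mathscr{T}^{\mathcal{IW}}_{\lambda},\mathscr{T}^{\mathcal{IW}}_{\mu}),$$
which is the claim.

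The main obstacle is this last identification: it rests on the coincidence of parity and tilting sheaves on the affine Grassmannian, a genuine geometric input provided by the Iwahori--Whittaker model of the Satake category (as in \cite{JEP_2019__6__707_0} and \cite{riche2020smithtreumann}) rather than by a formal argument, and it requires carefully matching the two labellings of the strata of $\mathrm{Fl}^{\circ}_{\{\nu\}}$ --- by ${_\mathrm{f}W}^{\{\nu\}}$ on one side and by strictly dominant coweights on the other --- through the $\Omega$--twist, so as to certify that $\mathcal{E}^{\{\nu\}}_w$ is sent exactly to $\mathscr{T}^{\mathcal{IW}}_{w\car_1\nu}$ and not to a translate of it.
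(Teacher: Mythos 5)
Your argument is correct, and it reaches the statement by a different decomposition than the paper. The paper's proof works with the whole sub-ind-scheme $L_\ell G/L_\ell^+G\subset(\mathrm{Gr})^{\mu_\ell}$ at once: a change of variable gives an equivalence $R$ from $\mathrm{Tilt}_{\mathcal{IW}}(\mathrm{Gr},\mathbf{k})$ to the tilting objects on $L_\ell G/L_\ell^+ G$ (sending $\mathscr{T}^{\mathcal{IW}}_\lambda$ to the object labelled $\ell\cdot\lambda$), one observes that these tilting objects are parity complexes so that the composite $Q\circ R$ into the Smith category is fully faithful by the argument of \cite[Theorem 7.4]{riche2020smithtreumann}, and one then matches $Q\circ R(\mathscr{T}^{\mathcal{IW}}_\lambda)$ with $\Phi(\mathscr{T}^{\mathcal{IW}}_{\ell\cdot\lambda})$; no case analysis and no $\Omega$-twist are needed, because all the special components are treated simultaneously as a single copy of $\mathrm{Gr}$. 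You instead take Proposition \ref{iso parity} (i.e.\ \cite[Proposition 8.11]{riche2020smithtreumann}) as the engine, which forces you to fix a single orbit $W\car_1\lambda=W\car_1\mu$ with representative $\nu$ and hence to dispose separately of the split case, and to identify the single component $\mathrm{Fl}^{\circ}_{\{\nu\}}$ with a component of $\mathrm{Gr}$ via conjugation by an element of $\Omega$ --- exactly the manoeuvre the paper performs later, in the characteristic-zero part of the proof of Proposition \ref{facette speciale}. The bookkeeping you flag at the end (that right multiplication by $\dot\omega$ carries $\mathscr{X}^{\{\nu\}}_w$ onto $Y_{w\car_1\nu}$, so that $\mathcal{E}^{\{\nu\}}_w$ goes to the perversely normalised indecomposable parity complex on $\overline{Y_\lambda}$) does check out, and the remaining input --- that indecomposable Iwahori--Whittaker parity complexes on $\mathrm{Gr}$ are the indecomposable tilting perverse sheaves, with $\mathrm{Hom}^\bullet$ between tiltings concentrated in degree $0$ --- is available from \cite{JEP_2019__6__707_0} and is also invoked by the paper. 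The trade-off: your route leans on the already-packaged Proposition \ref{iso parity} and so is shorter on Smith-theoretic input, at the cost of the case split and the $\Omega$-conjugation; the paper's route is uniform in $\lambda,\mu$ and produces the isomorphism directly as a comparison of two fully faithful functors into the same Smith category.
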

	\begin{proof}
	    First recall that, by full-faithfulness of $\Phi$, we have an isomorphism
	    \begin{equation}\label{fulfaith1}
	        \mathrm{Hom}_{\mathrm{Tilt}_\mathcal{IW}(\mathrm{Gr},\mathbf{k})}(\mathscr{T}^\mathcal{IW}_{\ell\cdot\lambda},\mathscr{T}^\mathcal{IW}_{\ell\cdot\mu})\simeq\mathrm{Hom}_{\mathrm{Sm}_{\mathcal{IW}}((\mathrm{Gr})^{\mu_\ell},\mathbf{k})}(\Phi(\mathscr{T}^\mathcal{IW}_{\ell\cdot\lambda}),\Phi(\mathscr{T}^\mathcal{IW}_{\ell\cdot\mu})). 
	    \end{equation}
	    Next, notice that we have an embedding $$L_\ell G/L^+_\ell G\hookrightarrow (\mathrm{Gr})^{\mu_\ell}$$ thanks to \cite[Remark 4.8]{RW22}, and that the left-hand side identifies (via Proposition \ref{connex}) with the union of the connected components $\mathrm{Fl}^{\ell,\circ}_{\mathbf{g}_\lambda}$, with $\lambda$ running through $\overline{\mathbf{a}_\ell}\cap\ell\cdot\mathbb{X}^\vee$.  Moreover, the $L^+_\ell G$-orbits in $L_\ell G/L^+_\ell G$ are parametrized by $\ell\cdot\mathbb{X}_+^\vee$, and a simple change of variable $z\mapsto z^\ell$ together with (\ref{eq3}) induce an equivalence of categories $$R:\mathrm{Tilt}_{\mathcal{IW}}(\mathrm{Gr},\mathbf{k})\simeq \mathrm{Tilt}_{\mathcal{IW}_\ell,\mathbb{G}_\mathrm{m}}(L_\ell G/L^+_\ell G,\mathbf{k}),$$
	    which sends $\mathscr{T}^\mathcal{IW}_{\lambda}$ (resp. $\mathscr{T}^\mathcal{IW}_{\mu}$) to the indecomposable tilting object associated with $\ell\cdot\lambda$ (resp. $\ell\cdot\mu$).  Therefore, the indecomposable tilting objects in $\mathrm{Tilt}_{\mathcal{IW}_\ell,\mathbb{G}_\mathrm{m}}(L_\ell G/L^+_\ell G,\mathbf{k})$ are parity complexes (see \cite[Proposition 4.12]{JEP_2019__6__707_0}), so that we can take back the same arguments as in the proof of \cite[Theorem 7.4]{RW22} to show that restricting
	    the functor 
	    $$Q:D^b_{\mathcal{IW}_\ell,\mathbb{G}_\mathrm{m}}((\mathrm{Gr})^{\mu_\ell},\mathbf{k})\to \mathrm{Sm}_{\mathcal{IW}}((\mathrm{Gr})^{\mu_\ell},\mathbf{k})$$
	    to $L_\ell G/L^+_\ell G$ and composing it with $R$ yields a fully-faithful functor  $$Q\circ R:\mathrm{Tilt}_{\mathcal{IW}}(\mathrm{Gr},\mathbf{k})\to \mathrm{Sm}_{\mathcal{IW}}(L_\ell G/L^+_\ell G,\mathbf{k}), $$ which satisfies the following isomorphisms:
	    $$Q\circ R(\mathscr{T}^\mathcal{IW}_{\lambda})\simeq \Phi(\mathscr{T}^\mathcal{IW}_{\ell\cdot\lambda})~\text{and}~Q\circ R(\mathscr{T}^\mathcal{IW}_{\mu})\simeq \Phi(\mathscr{T}^\mathcal{IW}_{\ell\cdot\mu}).$$
	    The desired result thus follows from comparing (\ref{fulfaith1}) with the following isomorphism 
	    \begin{equation*}\label{fulfaith2}
	        \mathrm{Hom}(\mathscr{T}^\mathcal{IW}_{\lambda},\mathscr{T}^\mathcal{IW}_{\mu})\simeq\mathrm{Hom}(Q\circ R(\mathscr{T}^\mathcal{IW}_{\lambda}),Q\circ R(\mathscr{T}^\mathcal{IW}_{\mu})). 
	    \end{equation*}
	\end{proof}
 In the next corollary, we consider the equivalence relation $\sim$ on the set $\mathbb{X}^\vee_{++}$, seen as the weight poset of the highest weight category $\mathrm{Perv}_{\mathcal{IW}}(\mathrm{Gr},\mathbf{k})$ (cf. subsection \ref{Equivalence relations on Lambda}).
	\begin{coro}\label{inclusion classes}
	For any $\gamma\in\mathbb{X}^\vee_{++}$, denote by $\overline{\gamma}$ the equivalence class of $\gamma$ for the equivalence relation $\sim$. Then we have $$\forall \lambda\in \mathbb{X}^\vee_{++},\quad\ell\cdot\overline{\lambda}\subset\overline{\ell\cdot\lambda}.$$
	\end{coro}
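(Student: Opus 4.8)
The statement should follow almost immediately from Proposition \ref{red to non special} together with the identification of $\sim$ with the relation generated by $\mathscr{R}_2$. First I would invoke Theorem \ref{relations thm}: on the weight poset $\mathbb{X}^\vee_{++}$ of the highest weight category $\mathrm{Perv}_{\mathcal{IW}}(\mathrm{Gr},\mathbf{k})$, the relation $\sim$ coincides with $\sim_2$, i.e.\ it is the equivalence relation generated by
$$\gamma\,\mathscr{R}_2\,\gamma'\Longleftrightarrow \mathrm{Hom}_{\mathrm{Tilt}_\mathcal{IW}(\mathrm{Gr},\mathbf{k})}(\mathscr{T}^\mathcal{IW}_{\gamma},\mathscr{T}^\mathcal{IW}_{\gamma'})\neq 0,$$
since $T_\gamma=\mathscr{T}^\mathcal{IW}_{\gamma}$ here and $\mathrm{Tilt}_\mathcal{IW}(\mathrm{Gr},\mathbf{k})$ is a full subcategory. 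Thus it is enough to control what multiplication by $\ell$ does to the generating relation $\mathscr{R}_2$.

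The key reduction is: for $\gamma,\gamma'\in\mathbb{X}^\vee_{++}$, the non-vanishing $\mathrm{Hom}_{\mathrm{Tilt}_\mathcal{IW}(\mathrm{Gr},\mathbf{k})}(\mathscr{T}^\mathcal{IW}_{\gamma},\mathscr{T}^\mathcal{IW}_{\gamma'})\neq 0$ implies $\mathrm{Hom}_{\mathrm{Tilt}_\mathcal{IW}(\mathrm{Gr},\mathbf{k})}(\mathscr{T}^\mathcal{IW}_{\ell\cdot\gamma},\mathscr{T}^\mathcal{IW}_{\ell\cdot\gamma'})\neq 0$, i.e.\ $\gamma\,\mathscr{R}_2\,\gamma'\Rightarrow \ell\cdot\gamma\,\mathscr{R}_2\,\ell\cdot\gamma'$. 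This is exactly the content of Proposition \ref{red to non special}, which provides an isomorphism
$$\mathrm{Hom}_{\mathrm{Tilt}_\mathcal{IW}(\mathrm{Gr},\mathbf{k})}(\mathscr{T}^\mathcal{IW}_{\ell\cdot\gamma},\mathscr{T}^\mathcal{IW}_{\ell\cdot\gamma'})\simeq \mathrm{Hom}_{\mathrm{Tilt}_\mathcal{IW}(\mathrm{Gr},\mathbf{k})}(\mathscr{T}^\mathcal{IW}_{\gamma},\mathscr{T}^\mathcal{IW}_{\gamma'}).$$
(One small point of hygiene: $\mathscr{R}_2$ need not be symmetric, but Proposition \ref{red to non special} is a genuine isomorphism, so it applies verbatim to the pair $(\gamma',\gamma)$ as well; and in any case we only ever need to pass to the equivalence relation \emph{generated} by $\mathscr{R}_2$, where this is irrelevant.)

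Finally I would propagate along chains: given $\lambda\in\mathbb{X}^\vee_{++}$ and $\mu\in\overline{\lambda}$, pick a finite sequence $\mu=\gamma_0,\gamma_1,\dots,\gamma_n=\lambda$ in $\mathbb{X}^\vee_{++}$ with $\gamma_i\,\mathscr{R}_2\,\gamma_{i+1}$ or $\gamma_{i+1}\,\mathscr{R}_2\,\gamma_i$ for every $i$; applying the previous step link by link, the sequence $\ell\cdot\gamma_0,\dots,\ell\cdot\gamma_n$ witnesses $\ell\cdot\mu\sim\ell\cdot\lambda$, i.e.\ $\ell\cdot\mu\in\overline{\ell\cdot\lambda}$, whence $\ell\cdot\overline{\lambda}\subset\overline{\ell\cdot\lambda}$. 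I do not anticipate any genuine obstacle here: all the real work has been done in Proposition \ref{red to non special} (which is where Smith–Treumann theory enters) and in Theorem \ref{relations thm}; this corollary is purely a formal bookkeeping step transporting those facts across a chain.
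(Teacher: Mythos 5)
Your argument is correct and is exactly the paper's proof, spelled out in more detail: the paper likewise deduces the corollary from Proposition \ref{red to non special} combined with the fact that $\sim$ is the equivalence relation generated by $\mathscr{R}_2$, propagating the Hom-space isomorphism along a chain linking $\mu$ to $\lambda$. No gaps.
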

	\begin{proof}
	 This is a direct consequence of Proposition \ref{red to non special} and of the fact that $\sim$ is generated by $\mathscr{R}_2$ (cf. section \ref{relation section}).
	\end{proof}
	\begin{coro}\label{red to non special2}
	Let $\lambda,\mu\in\mathbb{X}^\vee_{++}$. We have an equality 
	$$(\mathscr{T}^\mathcal{IW}_{\lambda}:\nabla^\mathcal{IW}_{\mu})=(\mathscr{T}^\mathcal{IW}_{\ell\cdot \lambda}:\nabla^\mathcal{IW}_{\ell\cdot\mu} ).$$
	\end{coro}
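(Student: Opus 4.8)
The plan is to deduce Corollary \ref{red to non special2} from Proposition \ref{red to non special} by means of the elementary fact that, in a highest weight category with a duality, the multiplicities of costandard objects in the indecomposable tilting objects are completely determined by the dimensions of the $\mathrm{Hom}$-spaces between those tilting objects, and then to check that this ``reconstruction'' is compatible with the operation $\lambda\mapsto\ell\cdot\lambda$.

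First I would set up the bookkeeping in the highest weight category $\mathrm{Perv}_{\mathcal{IW}}(\mathrm{Gr},\mathbf{k})$, with weight poset $\Lambda:=\mathbb{X}^\vee_{++}$ and its highest weight order $\leq$ (for which each down-set $\{\nu\in\Lambda:\nu\leq\lambda\}$ is finite). Since indecomposable tilting objects are self-dual for the natural duality — which exchanges $\Delta^{\mathcal{IW}}_\bullet$ and $\nabla^{\mathcal{IW}}_\bullet$ — one has $(\mathscr{T}^{\mathcal{IW}}_\lambda:\Delta^{\mathcal{IW}}_\nu)=(\mathscr{T}^{\mathcal{IW}}_\lambda:\nabla^{\mathcal{IW}}_\nu)$, so the standard formula for $\mathrm{Hom}$ between a $\Delta$-filtered object and a $\nabla$-filtered object gives
\[
\dim_{\mathbf{k}}\mathrm{Hom}_{\mathrm{Tilt}_{\mathcal{IW}}(\mathrm{Gr},\mathbf{k})}(\mathscr{T}^{\mathcal{IW}}_\lambda,\mathscr{T}^{\mathcal{IW}}_\mu)=\sum_{\nu\in\Lambda}(\mathscr{T}^{\mathcal{IW}}_\lambda:\nabla^{\mathcal{IW}}_\nu)\,(\mathscr{T}^{\mathcal{IW}}_\mu:\nabla^{\mathcal{IW}}_\nu).
\]
Set $a_{\lambda\mu}:=(\mathscr{T}^{\mathcal{IW}}_\lambda:\nabla^{\mathcal{IW}}_\mu)$ and $b_{\lambda\mu}:=(\mathscr{T}^{\mathcal{IW}}_{\ell\cdot\lambda}:\nabla^{\mathcal{IW}}_{\ell\cdot\mu})$ for $\lambda,\mu\in\Lambda$. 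Both matrices $A=(a_{\lambda\mu})$ and $B=(b_{\lambda\mu})$ are row-finite and unitriangular for $\leq$: indeed $a_{\lambda\lambda}=b_{\lambda\lambda}=1$, $a_{\lambda\mu}=0$ unless $\mu\leq\lambda$, and $b_{\lambda\mu}\neq 0$ forces $\ell\cdot\mu\leq\ell\cdot\lambda$, hence $\mu\leq\lambda$ since $\ell\cdot(\lambda-\mu)$ is a non-negative integer combination of the simple coroots while $\lambda-\mu$ itself lies in the coroot lattice. Moreover, by the linkage principle for $\mathrm{Perv}_{\mathcal{IW}}(\mathrm{Gr},\mathbf{k})$ (the discussion following Theorem \ref{equivalence1}), any $\nu\in\Lambda$ with $(\mathscr{T}^{\mathcal{IW}}_{\ell\cdot\lambda}:\nabla^{\mathcal{IW}}_\nu)\neq 0$ lies in the same block as $\ell\cdot\lambda$, hence in $W\car_\ell(\ell\cdot\lambda)\cap\mathbb{X}^\vee_{++}\subseteq\ell\cdot\mathbb{X}^\vee_{++}$ (as $W\car_\ell$ preserves $\ell\cdot\mathbb{X}^\vee$); consequently the displayed formula, applied to the weights $\ell\cdot\lambda$ and $\ell\cdot\mu$, reads $\dim_{\mathbf{k}}\mathrm{Hom}(\mathscr{T}^{\mathcal{IW}}_{\ell\cdot\lambda},\mathscr{T}^{\mathcal{IW}}_{\ell\cdot\mu})=\sum_{\nu\in\Lambda}b_{\lambda\nu}b_{\mu\nu}$.

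Finally I would conclude as follows. Proposition \ref{red to non special} identifies the two left-hand sides, so $\sum_{\nu}a_{\lambda\nu}a_{\mu\nu}=\sum_{\nu}b_{\lambda\nu}b_{\mu\nu}$ for all $\lambda,\mu\in\Lambda$, i.e.\ $AA^{\mathrm{t}}=BB^{\mathrm{t}}$. Fixing $\lambda\in\Lambda$ and restricting $A$, $B$ and $AA^{\mathrm{t}}$ to the finite down-set $\Lambda_{\leq\lambda}$ — which is legitimate by row-finiteness and unitriangularity — we obtain an equality $A_{\leq\lambda}A_{\leq\lambda}^{\mathrm{t}}=B_{\leq\lambda}B_{\leq\lambda}^{\mathrm{t}}$ of finite symmetric positive-definite matrices; since such a matrix admits a unique factorization $UU^{\mathrm{t}}$ with $U$ upper unitriangular (uniqueness of the Cholesky decomposition, applied after reversing the order on $\Lambda_{\leq\lambda}$), this forces $A_{\leq\lambda}=B_{\leq\lambda}$, whence $a_{\lambda\mu}=b_{\lambda\mu}$ for every $\mu\leq\lambda$. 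As $\lambda$ was arbitrary, $A=B$, which is exactly the asserted equality $(\mathscr{T}^{\mathcal{IW}}_{\lambda}:\nabla^{\mathcal{IW}}_{\mu})=(\mathscr{T}^{\mathcal{IW}}_{\ell\cdot \lambda}:\nabla^{\mathcal{IW}}_{\ell\cdot\mu})$. The one delicate point is this last step — passing from equality of $\mathrm{Hom}$-dimensions back to equality of multiplicities — which genuinely uses both the triangularity of the two multiplicity matrices (hence the divisibility remark on the $\ell\cdot$-side) and the linkage principle, the latter being what guarantees that reconstructing $\mathscr{T}^{\mathcal{IW}}_{\ell\cdot\lambda}$ from costandards only involves weights in $\ell\cdot\mathbb{X}^\vee_{++}$.
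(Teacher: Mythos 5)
Your proof is correct and follows essentially the same route as the paper's: both rest on the identity $\dim_{\mathbf{k}}\mathrm{Hom}(\mathscr{T}^{\mathcal{IW}}_\lambda,\mathscr{T}^{\mathcal{IW}}_\mu)=\sum_\nu(\mathscr{T}^{\mathcal{IW}}_\lambda:\nabla^{\mathcal{IW}}_\nu)(\mathscr{T}^{\mathcal{IW}}_\mu:\nabla^{\mathcal{IW}}_\nu)$, on Proposition \ref{red to non special}, and on the linkage principle to confine the costandard constituents of $\mathscr{T}^{\mathcal{IW}}_{\ell\cdot\lambda}$ to weights in $\ell\cdot\mathbb{X}^\vee_{++}$. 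Your Cholesky-uniqueness step is merely a repackaging of the paper's induction on the recursion $(\mathscr{T}^{\mathcal{IW}}_\lambda:\nabla^{\mathcal{IW}}_\mu)=\dim_{\mathbf{k}}\mathrm{Hom}(\mathscr{T}^{\mathcal{IW}}_\lambda,\mathscr{T}^{\mathcal{IW}}_\mu)-\sum_{\nu<\mu}(\mathscr{T}^{\mathcal{IW}}_\lambda:\nabla^{\mathcal{IW}}_\nu)(\mathscr{T}^{\mathcal{IW}}_\mu:\nabla^{\mathcal{IW}}_\nu)$.
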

	\begin{proof}
	For any $\gamma\in\mathbb{X}^\vee_{++}$, let us denote by $n(\gamma)$ the number of elements $\nu\in\mathbb{X}^\vee_{++}$ such that $\nu< \gamma$.  We may and will assume that $\mu\leq \lambda$ to prove the desired equality, and proceed by induction on $n(\ell\cdot\mu)$. Standard arguments on highest weight categories (and the fact that Verdier duality sends $\Delta^\mathcal{IW}_\nu$, resp. $\mathscr{T}^\mathcal{IW}_{\nu}$, to $\nabla^\mathcal{IW}_{\nu}$, resp. $\mathscr{T}^\mathcal{IW}_{\nu}$, for all $\nu\in\mathbb{X}^\vee_{++}$; namely, we use the same arguments as the one used in \cite[§6.2]{achar2016modular}) imply that we have 
	\begin{multline}\label{induction multiplicities}
	    (\mathscr{T}^\mathcal{IW}_{\lambda}:\nabla^\mathcal{IW}_{\mu})=\mathrm{dim}_\mathbf{k}\mathrm{Hom}(\mathscr{T}^\mathcal{IW}_{\lambda},\mathscr{T}^\mathcal{IW}_{\mu})-\\ \sum_{\nu<\mu,~\nu\in\mathbb{X}^\vee_{++}}(\mathscr{T}^\mathcal{IW}_{\lambda}:\nabla^\mathcal{IW}_{\nu})\cdot (\mathscr{T}^\mathcal{IW}_{\mu}:\nabla^\mathcal{IW}_{\nu}).
	\end{multline}
	Now when $n(\ell\cdot\mu)=0$, we get that $n(\mu)=0$ and so
	\begin{align*}
	    (\mathscr{T}^\mathcal{IW}_{\lambda}:\nabla^\mathcal{IW}_{\mu})&=\mathrm{dim}_\mathbf{k}\mathrm{Hom}(\mathscr{T}^\mathcal{IW}_{\lambda},\mathscr{T}^\mathcal{IW}_{\mu})\\ &=\mathrm{dim}_\mathbf{k}\mathrm{Hom}(\mathscr{T}^\mathcal{IW}_{\ell\cdot\lambda},\mathscr{T}^\mathcal{IW}_{\ell\cdot\mu})\\ &=(\mathscr{T}^\mathcal{IW}_{\ell\cdot\lambda}:\nabla^\mathcal{IW}_{\ell\cdot\mu}), 
	\end{align*}
	where the second equality is due to Proposition \ref{red to non special}. This proves the first step of the induction. The induction step still follows from using (\ref{induction multiplicities}) and Proposition \ref{red to non special}, together with the fact that  for any $\nu\in\mathbb{X}^\vee_{++}$ such that $(\mathscr{T}^\mathcal{IW}_{\ell\cdot\mu}:\nabla^\mathcal{IW}_{\nu})\neq 0$, we have that $\nu\in\ell\cdot\mathbb{X}^\vee_{++}$ (this is because we must have $\nu\in W\car_\ell(\ell\cdot\mu)$ by the linkage principle).
	\end{proof}
	
	\subsection{The general case for equivalence relations on ${_\mathrm{f}W}^\mathbf{g}$}\label{section special}
	In this subsection, we apply the equivalence (\ref{eq relation W}) and Proposition \ref{red to non special} to finish the study of section \ref{Determination of the blocks} by treating the case of special facets. The arguments that we have used until now do not apply when $\mathbf{g}$ is a special facet (notice that Corollary \ref{lem boite} is false in this case). With good reason: we are going to see that the set ${_\mathrm{f}W}^\mathbf{g}$ splits into infinitely many classes when some component of $\mathbf{g}$ is a special facet. As in subsections \ref{Facets which are not points} and \ref{Facets which are points}, we are first going to deal with the case where the root system is irreducible before generalizing. 
	
	Let $\mathbf{g}\subset\overline{\mathbf{a}_1}$ be a facet, $w\in {_\mathrm{f}W}^\mathbf{g}$, and recall that if $\mathbf{g}$ special, then $w\mathbf{g}$ can be written as $w\mathbf{g}=\{\lambda\}$ for a unique coweight $\lambda$. We will denote by $\overline{w}$ the equivalence class of $w$ for $\sim_\mathbf{g}$, and define $r_\mathbf{g}(w)$ to be equal to $-1$ if $\mathbf{g}$ is \textit{not} special, and to be the unique (non-negative) integer such that $$\lambda\in \ell^{r_\mathbf{g}(w)}\mathbb{X}^\vee\backslash~ \ell^{r_\mathbf{g}(w)+1}\mathbb{X}^\vee,~\text{with}~w\mathbf{g}=\{\lambda\}$$ if $\mathbf{g}$ is special. For any positive integer $r$, we also define the group $W^{(r)}:=W_0\ltimes \ell^r\mathbb{Z}\mathfrak{R}^\vee$, which is a subgroup of $W$ isomorphic to it, with $W^{(0)}=W$. Finally, notice that for any $\lambda\in\mathbb{X^\vee}$ and positive integer $r$, we have $\ell^r\cdot W\car_\ell\lambda=W^{(r)}\car_\ell (\ell^r\cdot\lambda)$. We start with an easy lemma.
	\begin{lem}\label{param Wr}
	    Let $\nu\in\overline{\mathbf{a}_1}\cap\mathbb{X}^\vee$, $\mathbf{g}\subset\overline{\mathbf{a}_1}$ be the facet containing $\nu$, $w\in {_\mathrm{f}W}^\mathbf{g}$ and $r$ be a positive integer. If we set $\lambda:=w\car_1\nu$, then the application $W\to \mathbb{X}^\vee,~u\mapsto u\car_\ell(\ell\cdot\nu)$ induces a bijection  $$(W^{(r)}wW_\mathbf{g})\cap {_\mathrm{f}W}^\mathbf{g}\simeq W^{(r)}\car_\ell (\ell\cdot\lambda)\cap\mathbb{X}^{\vee}_{++}.$$ 
	\end{lem}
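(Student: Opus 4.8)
The claim is purely combinatorial, comparing two parametrizations of the same finite set. First I would recall the setup: $\nu\in\overline{\mathbf{a}_1}\cap\mathbb{X}^\vee$ lies in the facet $\mathbf{g}\subset\overline{\mathbf{a}_1}$, and multiplying everything by $\ell$ sends $\mathbf{g}$ to the facet $\ell\cdot\mathbf{g}\subset\overline{\mathbf{a}_\ell}$ containing $\ell\cdot\nu$, with the stabilizer identification $W_{\ell\cdot\mathbf{g},\,\car_\ell}=W_{\mathbf{g}}$ already noted in subsection \ref{Consequences on equivalence relations}. So the map $u\mapsto u\car_\ell(\ell\cdot\nu)$ on all of $W$ has fibers exactly the right cosets $uW_{\mathbf{g}}$, and (by the argument recalled around \eqref{param dom weights}, or directly by the theory of minimal/maximal coset representatives together with Proposition \ref{prop min coset}) it restricts to a bijection from ${_\mathrm{f}W}^{\mathbf{g}}$ onto $W\car_\ell(\ell\cdot\nu)\cap\mathbb{X}^\vee_{++}$. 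The point of the lemma is to cut this bijection down to the sublattice $W^{(r)}=W_0\ltimes\ell^r\mathbb{Z}\mathfrak{R}^\vee$.

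Next I would unwind what $W^{(r)}wW_{\mathbf{g}}\cap{_\mathrm{f}W}^{\mathbf{g}}$ corresponds to under the bijection above. Since the full bijection ${_\mathrm{f}W}^{\mathbf{g}}\xrightarrow{\sim}W\car_\ell(\ell\cdot\nu)\cap\mathbb{X}^\vee_{++}$ sends $w\mapsto\lambda:=w\car_\ell(\ell\cdot\nu)$ — wait, here one must be careful with the normalization: the lemma sets $\lambda:=w\car_1\nu$, so I would first observe $w\car_\ell(\ell\cdot\nu)=\ell\cdot(w\car_1\nu)=\ell\cdot\lambda$, using that for $w\in W$ the box action commutes with scaling ($w\car_\ell(\ell\cdot x)=\ell\cdot(w\car_1 x)$, immediate from $wt_\mu\car_n y=w(y+n\mu)$). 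Thus $w$ maps to $\ell\cdot\lambda$. Now an element $u'\in{_\mathrm{f}W}^{\mathbf{g}}$ lies in $W^{(r)}wW_{\mathbf{g}}$ iff $u'=t\,w\,h$ for some $t\in W^{(r)}$, $h\in W_{\mathbf{g}}$; applying $\car_\ell(\ell\cdot\nu)$ and using that $W_{\mathbf{g}}$ fixes $\ell\cdot\mathbf{g}$ (so $wh\car_\ell(\ell\cdot\nu)=w\car_\ell(\ell\cdot\nu)=\ell\cdot\lambda$), this becomes $u'\car_\ell(\ell\cdot\nu)=t\car_\ell(\ell\cdot\lambda)$ with $t\in W^{(r)}$, i.e. $u'\car_\ell(\ell\cdot\nu)\in W^{(r)}\car_\ell(\ell\cdot\lambda)$. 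Conversely any element of $W^{(r)}\car_\ell(\ell\cdot\lambda)\cap\mathbb{X}^\vee_{++}$ is $t\car_\ell(\ell\cdot\lambda)$ for some $t\in W^{(r)}$, hence equals $(tw)\car_\ell(\ell\cdot\nu)$ with $tw\in W^{(r)}wW_{\mathbf{g}}$; taking the maximal representative in $twW_{\mathbf{g}}$ lands in ${_\mathrm{f}W}^{\mathbf{g}}$ by Proposition \ref{dominant facet} (the image is a dominant cocharacter, so the facet is dominant), and that representative still lies in $W^{(r)}wW_{\mathbf{g}}$ and still maps to the same cocharacter. Finally, since $W^{(r)}\car_\ell(\ell\cdot\lambda)=\ell\cdot\bigl(W\car_\ell\lambda\bigr)$ — exactly the identity $\ell^r\cdot W\car_\ell x=W^{(r)}\car_\ell(\ell^r\cdot x)$ recalled in the text, specialized to $r=1$ and $x=\lambda$ — one can rewrite the target as $W^{(r)}\car_\ell(\ell\cdot\lambda)\cap\mathbb{X}^\vee_{++}$, matching the statement.

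The only genuine thing to check carefully is injectivity and surjectivity of the restricted map, i.e. that the two descriptions of the subset agree bijectively, not just that one contains the other; this amounts to noting that $u\mapsto u\car_\ell(\ell\cdot\nu)$ is already injective on ${_\mathrm{f}W}^{\mathbf{g}}$ (being a restriction of a bijection) and that the preimage computation above is an equivalence at each step. I expect the main (minor) obstacle to be bookkeeping the normalizations $\car_1$ versus $\car_\ell$ consistently and making sure the coset-representative argument (maximal in $xW_{\mathbf{g}}$, minimal in $W_0x$) is invoked correctly via Propositions \ref{prop min coset} and \ref{dominant facet}; there is no hard content beyond that. I would present the proof as: (i) reduce to $\lambda=w\car_1\nu$ and $w\mapsto\ell\cdot\lambda$; (ii) show $u'\in W^{(r)}wW_{\mathbf{g}}\cap{_\mathrm{f}W}^{\mathbf{g}}\iff u'\car_\ell(\ell\cdot\nu)\in W^{(r)}\car_\ell(\ell\cdot\lambda)$; (iii) conclude using the already-established bijection ${_\mathrm{f}W}^{\mathbf{g}}\xrightarrow{\sim}W\car_\ell(\ell\cdot\nu)\cap\mathbb{X}^\vee_{++}$ and the dilation identity for $W^{(r)}$.
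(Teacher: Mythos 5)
Your proof is correct and follows essentially the same route as the paper: reduce to the known bijection ${_\mathrm{f}W}^{\mathbf{g}}\simeq W\car_\ell(\ell\cdot\nu)\cap\mathbb{X}^\vee_{++}$ of (\ref{param dom weights}), observe $w\car_\ell(\ell\cdot\nu)=\ell\cdot\lambda$, and use that $W_{\mathbf{g}}$ fixes $\ell\cdot\nu$ for $\car_\ell$; you merely spell out the coset bookkeeping that the paper leaves implicit. The only blemish is the closing appeal to the dilation identity $\ell^r\cdot W\car_\ell x=W^{(r)}\car_\ell(\ell^r\cdot x)$ ``specialized to $r=1$'': it is unnecessary (the target is already written as $W^{(r)}\car_\ell(\ell\cdot\lambda)$ for the given $r$) and as stated does not match the general $r$ of the lemma, but dropping that remark changes nothing.
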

	\begin{proof}
	 Notice that $\ell\cdot\lambda=w\car_\ell(\ell\cdot\nu)$, so that $W^{(r)}\car_\ell (\ell\cdot\lambda)=W^{(r)}w\car_\ell (\ell\cdot\nu)$. The desired isomorphism then follows from the fact that $W_\mathbf{g}$ is the stabilizer of $\ell\cdot\nu$ for $\car_\ell$, and from the well-known fact that $u\mapsto u\car_\ell (\ell\cdot\nu)$ induces a bijection 
	 $${_\mathrm{f}W}^\mathbf{g}\simeq W\car_\ell (\ell\cdot\nu)\cap\mathbb{X}^{\vee}_{++}. $$
	\end{proof}
	\begin{prop}\label{facette speciale}
		Assume that $\mathfrak{R}^\vee$ is irreducible. For any facet $\mathbf{g}\subset\overline{\mathbf{a}_1}$ and $w\in {_\mathrm{f}W}^\mathbf{g}$, we have 
		\begin{equation*}
			\overline{w}=\begin{cases}
			\{w\} & \text{if $\mathbf{g}$ is special and}\ \mathrm{char}(\mathbf{k})=0\\
			(W^{(r_\mathbf{g}(w)+1)}wW_\mathbf{g})\cap {_\mathrm{f}W}^\mathbf{g} & \text{otherwise}.
			\end{cases}
		\end{equation*}
		
	\end{prop}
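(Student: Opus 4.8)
The proof splits according to whether $\mathbf{g}$ is special. If $\mathbf{g}$ is not special then $r_{\mathbf{g}}(w)=-1$, so $W^{(r_{\mathbf{g}}(w)+1)}=W^{(0)}=W$ and $(W^{(r_{\mathbf{g}}(w)+1)}wW_{\mathbf{g}})\cap{_\mathrm{f}W}^{\mathbf{g}}={_\mathrm{f}W}^{\mathbf{g}}$; the assertion is then exactly Theorem~\ref{cas general}. If $\mathbf{g}$ is special and $\mathrm{char}(\mathbf{k})=0$, then $\mathrm{Rep}_{\mathbf{k}}(G^{\vee})$ is semisimple, hence so is $\mathrm{Perv}_{\mathcal{IW}}(\mathrm{Gr},\mathbf{k})$ by Theorems~\ref{satake equivalence} and~\ref{IW equivalence}; moreover, since $\mathbf{g}$ is special, $\mathrm{Fl}^{\circ}_{\mathbf{g}}$ is an affine Grassmannian, so its indecomposable Iwahori-Whittaker parity complexes are, up to shift, the indecomposable tilting objects of $\mathrm{Perv}_{\mathcal{IW}}(\mathrm{Gr},\mathbf{k})$ (as recalled in the proof of Proposition~\ref{red to non special}). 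As tilting objects have no positive self-extensions, $\mathrm{Hom}^{\bullet}(\mathcal{E}^{\mathbf{g}}_{w},\mathcal{E}^{\mathbf{g}}_{w'})$ reduces to $\mathrm{Hom}$ between tilting objects, which vanishes for $w\neq w'$ by semisimplicity; hence $\overline{w}=\{w\}$.

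It remains to treat the case where $\mathbf{g}=\{\delta\}$ is special and $\mathrm{char}(\mathbf{k})=\ell$. Then $\ell\cdot\mathbf{g}=\{\ell\delta\}=\mathbf{g}_{\ell\delta}\subset\overline{\mathbf{a}_{\ell}}$ and $\ell^{-1}\cdot\mathbf{g}_{\ell\delta}=\mathbf{g}$. For $v\in{_\mathrm{f}W}^{\mathbf{g}}$ set $\lambda_{v}:=v\car_1\delta$, so that $v\car_1\mathbf{g}=\{\lambda_{v}\}$ and a direct computation gives $v\car_\ell(\ell\delta)=\ell\lambda_{v}$. By~\eqref{param dom weights}, $\phi\colon v\mapsto v\car_\ell(\ell\delta)=\ell\lambda_{v}$ is a bijection from ${_\mathrm{f}W}^{\mathbf{g}}$ onto $W\car_\ell(\ell\delta)\cap\mathbb{X}^\vee_{++}$, and~\eqref{eq relation W} identifies $\mathscr{R}_{\mathbf{g}}$ with $\mathscr{R}_{2}$ on the weight poset $\mathbb{X}^\vee_{++}$ of $\mathrm{Perv}_{\mathcal{IW}}(\mathrm{Gr},\mathbf{k})$. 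Since the linkage principle~\eqref{linkage principle} keeps every $\mathscr{R}_{2}$-chain issued from a weight $\gamma$ inside $W\car_\ell\gamma$, the bijection $\phi$ carries the $\sim_{\mathbf{g}}$-class of $w$ onto the block $\overline{\phi(w)}$ of $\phi(w)$. Writing $r:=r_{\mathbf{g}}(w)\ge 0$, we have $\lambda_{w}=\ell^{r}\lambda'$ with $\lambda'\in\mathbb{X}^\vee_{++}$ and $\lambda'\notin\ell\mathbb{X}^\vee$, so $\phi(w)=\ell^{r+1}\lambda'$, and we are reduced to identifying $\overline{\phi(w)}$.

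I will use two facts. First, $\overline{\lambda'}=W\car_\ell\lambda'\cap\mathbb{X}^\vee_{++}$: let $\nu'$ be the representative of $\lambda'$ in $\overline{\mathbf{a}_{\ell}}\cap\mathbb{X}^\vee$; the $\car_1$-facet $\mathbf{q}:=\ell^{-1}\cdot\mathbf{g}_{\nu'}$ is non-special because $\nu'\notin\ell\mathbb{X}^\vee$, so ${_\mathrm{f}W}^{\mathbf{q}}$ is a single $\sim_{\mathbf{q}}$-class by Theorem~\ref{cas general}, and transporting this through~\eqref{eq relation W} together with linkage gives the claim. Second, an $\ell$-divisibility property of blocks: if $\gamma\in\mathbb{X}^\vee_{++}$ satisfies $\gamma\sim\eta$ for some $\eta\in\ell^{s}\mathbb{X}^\vee\cap\mathbb{X}^\vee_{++}$, then $\gamma\in\ell^{s}\mathbb{X}^\vee$ and $\ell^{-s}\gamma\sim\ell^{-s}\eta$. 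Indeed, by the linkage principle every $\mathscr{R}_{2}$-chain from $\gamma$ to $\eta$ lies in $W\car_\ell\eta=\ell\cdot\bigl(W\car_1(\ell^{-1}\eta)\bigr)\subset\ell\mathbb{X}^\vee$, hence inside $\ell\mathbb{X}^\vee_{++}$; dividing this chain by $\ell$ and applying Proposition~\ref{red to non special} produces an $\mathscr{R}_{2}$-chain between $\ell^{-1}\gamma$ and $\ell^{-1}\eta$, and one concludes by induction on $s$. Applying this with $\eta=\phi(w)=\ell^{r+1}\lambda'$ gives $\overline{\phi(w)}\subseteq\ell^{r+1}\,\overline{\lambda'}$, while iterating Proposition~\ref{red to non special} $(r+1)$ times gives $\ell^{r+1}\,\overline{\lambda'}\subseteq\overline{\phi(w)}$. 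Using the identity $\ell^{r+1}\cdot W\car_\ell\lambda'=W^{(r+1)}\car_\ell(\ell^{r+1}\lambda')$, this reads $\overline{\phi(w)}=W^{(r+1)}\car_\ell(\phi(w))\cap\mathbb{X}^\vee_{++}$, and Lemma~\ref{param Wr} (with $\nu=\delta$ and $r$ there replaced by $r+1$) pulls this back along $\phi$ to $(W^{(r+1)}wW_{\mathbf{g}})\cap{_\mathrm{f}W}^{\mathbf{g}}$, as required.

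The main obstacle is the $\ell$-divisibility property: the linkage principle by itself only confines $\overline{\ell^{r+1}\lambda'}$ to the genuinely larger set $W\car_\ell(\ell^{r+1}\lambda')$, so one really has to push an entire $\mathscr{R}_{2}$-chain through the $\ell$-dilation isomorphism of Proposition~\ref{red to non special}, which requires first knowing that the chain remains inside $\ell^{r+1}\mathbb{X}^\vee$.
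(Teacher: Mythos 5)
Your proof is correct and follows essentially the same route as the paper's: the non-special case is Theorem \ref{cas general}; the characteristic-zero special case is handled via semisimplicity of $\mathrm{Perv}_{\mathcal{IW}}(\mathrm{Gr},\mathbf{k})$ after identifying the indecomposable parity complexes on $\mathrm{Fl}^{\circ}_{\mathbf{g}}$ with distinguished perverse sheaves on the affine Grassmannian; and the characteristic-$\ell$ special case combines (\ref{eq relation W}), the linkage principle, Proposition \ref{red to non special}, Theorem \ref{cas general} applied to the non-special facet attached to $\lambda'$, and Lemma \ref{param Wr}, exactly as in the paper. Your ``$\ell$-divisibility of blocks'' lemma is only a mild repackaging (a chain-chasing induction on $s$) of the paper's one-step comparison of $r_{\mathbf{g}}(v)$ with $r_{\mathbf{g}}(w)$ together with Corollary \ref{inclusion classes}; the underlying ingredients are identical.
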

	\begin{proof}
		If $\mathbf{g}$ is not special, then we have $r_\mathbf{g}(w)=-1$ by definition and $$(W^{(r_\mathbf{g}(w)+1)}wW_\mathbf{g})\cap {_\mathrm{f}W}^\mathbf{g}=W\cap {_\mathrm{f}W}^\mathbf{g}={_\mathrm{f}W}^\mathbf{g},$$ which does coincide with $\overline{w}$ thanks to Theorem \ref{cas general}. So we assume from now on that $\mathbf{g}$ is a special facet. We will first treat the case where $\mathrm{char}(\mathbf{k})=\ell$.
		
		Let $w,v\in {_\mathrm{f}W}^\mathbf{g}$, $\nu\in \mathbb{X}^\vee\cap\overline{\mathbf{a}_1}$ be such that $\mathbf{g}=\{\nu\}$, and put $w\mathbf{g}=\{\lambda\}$, $v\mathbf{g}=\{\mu\}$. Notice that  $w\car_\ell(\ell\cdot\nu)=\ell\cdot\lambda$, $v\car_\ell(\ell\cdot\nu)=\ell\cdot\mu$, and recall that, thanks to Proposition \ref{iso parity} coupled with (\ref{parity l vs 1}), we have 
		$$\mathrm{Hom}_{\mathrm{Tilt}_\mathcal{IW}(\mathrm{Gr},\mathbf{k})}(\mathscr{T}^\mathcal{IW}_{\ell\cdot\lambda},\mathscr{T}^\mathcal{IW}_{\ell\cdot\mu})\simeq \mathrm{Hom}^\bullet_{\mathrm{Par}_{\mathcal{IW}}(\mathrm{Fl}^{\circ}_{\mathbf{g}},\mathbf{k})}(\mathcal{E}_w^{\mathbf{g}},\mathcal{E}_{v}^{\mathbf{g}}),$$
		which implies that $w\mathscr{R}_\mathbf{g}v\Leftrightarrow (\ell\cdot\lambda)\mathscr{R}_2 (\ell\cdot\mu)$. Moreover, recall that by the geometric linkage principle (\cite[Theorem 8.5]{RW22}) we have \begin{equation*}
			\forall\gamma\in\mathbb{X}^\vee_{++},~(\ell\cdot\lambda)\mathscr{R}_2 \gamma\Rightarrow\gamma\in W\car_\ell (\ell\cdot\lambda),
		\end{equation*}
		so that in particular \begin{equation}\label{orbit implique l}
			(\ell\cdot\lambda)\mathscr{R}_2 \gamma\Rightarrow\gamma\in\ell\cdot\mathbb{X}^\vee_{++}.
		\end{equation} Therefore, Lemma \ref{param Wr} (applied to $r=r_\mathbf{g}(w)+1$) implies that our claim on $\overline{w}$ is equivalent to showing that $$\overline{\ell\cdot\lambda}=W^{(r_\mathbf{g}(w)+1)}\car_\ell (\ell\cdot\lambda)\cap\mathbb{X}^{\vee}_{++},$$
		where $\overline{\ell\cdot\lambda}$ denotes the equivalence class of $\ell\cdot\lambda$ for the equivalence relation $\sim$ on $\mathbb{X}^\vee_{++}$.
		
		By definition of $r_\mathbf{g}(w)$ and $r_\mathbf{g}(v)$, we know that there exist $\lambda',\mu'\in \mathbb{X}^\vee_{++}$ such that $\ell^{r_\mathbf{g}(w)}\cdot\lambda'=\lambda$ and $\ell^{r_\mathbf{g}(v)}\cdot\mu'=\mu$. Assume (without any loss of generality, up to switching the roles of $\lambda$ and $\mu$) that $r_\mathbf{g}(v)\geq r_\mathbf{g}(w)$, so that Proposition \ref{red to non special} yields
		\begin{equation}\label{div par l}
			\mathrm{Hom}_{\mathrm{Tilt}_\mathcal{IW}(\mathrm{Gr},\mathbf{k})}(\mathscr{T}^\mathcal{IW}_{\ell\cdot\lambda},\mathscr{T}^\mathcal{IW}_{\ell\cdot\mu})\simeq \mathrm{Hom}_{\mathrm{Tilt}_\mathcal{IW}(\mathrm{Gr},\mathbf{k})}(\mathscr{T}^\mathcal{IW}_{\lambda'},\mathscr{T}^\mathcal{IW}_{\ell^{r_\mathbf{g}(v)-r_\mathbf{g}(w)}\cdot\mu'}).
		\end{equation}
		If we denote by $\mathbf{g}'\subset \overline{\mathbf{a}_\ell}$ the facet (for $\car_\ell$) containing the element of $W\car_\ell\lambda'\cap\overline{\mathbf{a}_\ell}$, then $\ell^{-1}\cdot\mathbf{g}'$ is a facet (for $\car_1$) contained in $\overline{\mathbf{a}_1}$ which is \textit{not} special (because otherwise we could write $\mathbf{g}'=\{\ell\cdot\lambda''\}$ for some weight $\lambda''$, and  $\lambda'= w''\car_\ell(\ell\cdot\lambda'')=\ell\cdot( w''\car_1\lambda'')$ for some $w''\in W$, contradicting the definition of $r_\mathbf{g}(w)$). So thanks to (\ref{eq relation W}) and Theorem \ref{cas general} we get 
		$$\overline{\lambda'}=W\car_\ell\lambda'\cap\mathbb{X}^\vee_{++}. $$
		In particular, if $r_\mathbf{g}(v)>r_\mathbf{g}(w)$, then we easily see that $\ell^{r_\mathbf{g}(v)-r_\mathbf{g}(w)}\cdot\mu'\notin W\car_\ell\lambda'$, so the right-hand side in (\ref{div par l}) is non-zero only if $r_\mathbf{g}(v)=r_\mathbf{g}(w)$. This observation, together with (\ref{div par l}) and (\ref{orbit implique l}), allows us to deduce that $\overline{\ell^{r_\mathbf{g}(w)+1}\cdot\lambda'}\subset\ell^{r_\mathbf{g}(w)+1}\cdot\overline{\lambda'}$. The reversed inclusion $\ell^{r_\mathbf{g}(w)+1}\cdot\overline{\lambda'}\subset \overline{\ell^{r_\mathbf{g}(w)+1}\cdot\lambda'}$ is obtained by applying Corollary \ref{inclusion classes}. We finally get the desired equality:
		\begin{align*}
		    \overline{\ell\cdot\lambda}=\overline{\ell^{r_\mathbf{g}(w)+1}\cdot\lambda'}&=\ell^{r_\mathbf{g}(w)+1}\cdot\overline{\lambda'}\\ &=\ell^{r_\mathbf{g}(w)+1}\cdot W\car_\ell\lambda'\cap\mathbb{X}^\vee_{++}\\ &=W^{(r_\mathbf{g}(w)+1)}\car_\ell (\ell\cdot\lambda)\cap\mathbb{X}^{\vee}_{++}.
		\end{align*}
		
		Now we pass to the case $\mathrm{char}(\mathbf{k})=0$. We want to show that for any $w'\in {_\mathrm{f}W}^\mathbf{g}$ such that $w'\neq w$, we have 
		\begin{equation}\label{eq0}
			\mathrm{Hom}^\bullet_{D^b_{\mathcal{IW}}(\mathrm{Fl}^\circ_{\mathbf{g}},\mathbf{k})}(\mathcal{E}^\mathbf{g}_{w'},\mathcal{E}^\mathbf{g}_w)=0.
		\end{equation}
		We claim that there exists $\omega\in \Omega$ such that $\omega\{0\}=\mathbf{g}$. Indeed, the map $u\mapsto u\{0\}$ is actually a bijection from $\Omega$ to $\overline{\mathbf{a}_1}\cap\mathbb{X}^\vee$ thanks to the first point of the remark in \cite[Ch. VI, §2.3]{cdi_springer_books_10_1007_978_3_540_34491_9}. We then have an isomorphism $L^+P_\mathbf{g}\simeq \dot{\omega}L^+G\dot{\omega}^{-1} $, and conjugation by $\dot{\omega}$ yields an isomorphism
		$$\mathrm{Fl}^\circ_{\{0\}}\simeq \mathrm{Fl}^\circ_{\mathbf{g}}. $$
		Since $\dot{\omega}$ belongs to $\Omega$, conjugation by $\dot{\omega}$ preserves $\mathrm{Iw}_u^+$, so that we get an equivalence of categories \begin{equation}\label{preuve1}
		    D^b_{\mathcal{IW}}(\mathrm{Fl}^\circ_{\mathbf{g}},\mathbf{k})\simeq D^b_{\mathcal{IW}}(\mathrm{Fl}^\circ_{\{0\}},\mathbf{k}).
		\end{equation} This equivalence implies that we only need to prove (\ref{eq0}) with $\mathbf{g}$ replaced by $\{0\}$. Now, recall that $\mathrm{Fl}_{\{0\}}\simeq\mathrm{Gr}$ and let $\lambda,\lambda'\in\mathbb{X}^\vee_{++}$ be such that $w\mathbf{g}=\{\lambda\}$, $w'\mathbf{g}=\{\lambda'\}$. By \cite[Remark 3.5]{JEP_2019__6__707_0}, the complex $\mathcal{E}_w$ is isomorphic to $\mathrm{IC}^{\mathcal{IW}}_{\lambda}$ when $\mathrm{char}(\mathbf{k})=0$ (which denotes the intersection cohomology complex on $\mathrm{Gr}$ associated with $\lambda$, cf. subsection \ref{IW variant}). Therefore in characteristic zero we have an isomorphism 
		$$\mathrm{Hom}^\bullet_{D^b_{\mathcal{IW}}(\mathrm{Fl}^\circ_{\{0\}},\mathbf{k})}(\mathcal{E}_{w'},\mathcal{E}_w)\simeq \mathrm{Hom}^\bullet_{D^b_{\mathcal{IW}}(\mathrm{Gr},\mathbf{k})}(\mathrm{IC}^{\mathcal{IW}}_{\lambda'},\mathrm{IC}^{\mathcal{IW}}_{\lambda}).$$
		But we know from \cite[§3.2]{JEP_2019__6__707_0} that we have an equivalence of triangulated categories $D^b_{\mathcal{IW}}(\mathrm{Gr},\mathbf{k})\simeq D^b\mathrm{Perv}_{\mathcal{IW}}(\mathrm{Gr},\mathbf{k})$, and that $\mathrm{Perv}_{\mathcal{IW}}(\mathrm{Gr},\mathbf{k})$ is semi-simple when $\mathrm{char}(\mathbf{k})=0$ thanks to \cite[Corollary 3.6]{JEP_2019__6__707_0}. Therefore the right-hand side above is zero when $\lambda\neq\lambda'$ and $\mathrm{char}(\mathbf{k})=0$, which is equivalent to $w\neq w'$. \footnote{Instead of using the equivalence (\ref{preuve1}), we could have argued by saying that $\mathrm{Fl}^\circ_{\mathbf{g}}$ is a connected component of $\mathrm{Gr}$.}\end{proof}
	This last result coupled with Proposition \ref{reduction to irred} now enables us to deal with the case where $\mathfrak{R}^\vee$ splits into irreducible root systems, inducing decompositions $W=W_1\times\cdots\times W_r$ and $\mathbf{g}=\mathbf{g}_1\times\cdots\times\mathbf{g}_r$ (see the beginning of subsection \ref{Facets which are not points}).
	\begin{thm}\label{cas final}
		Assume that $\mathrm{char}(\mathbf{k})=\ell$. For any facet $\mathbf{g}\subset\overline{\mathbf{a}_1}$ and $w=(w_1,\cdots,w_r)\in {_\mathrm{f}W}^\mathbf{g}$, we have
		\begin{equation*}
			\overline{w}=\prod_{i=1}^rW_i^{(r(w_i)+1)}w_iW_{\mathbf{g}_i}\cap {_\mathrm{f}W_i}^{\mathbf{g}_i}.
		\end{equation*}
		When $\mathrm{char}(\mathbf{k})=0$ and $r(w_i)\geq 0$ for some $i$, one replaces the $i$'th component in the above product by $\{w_i\}$.
	\end{thm}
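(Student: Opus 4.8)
The plan is to deduce the theorem from the irreducible case by a purely formal argument on equivalence relations, using the product decomposition of Proposition~\ref{reduction to irred}. First I would recall the decompositions induced by $\mathfrak{R}^\vee=\mathfrak{R}^\vee_1\sqcup\cdots\sqcup\mathfrak{R}^\vee_r$: one has $W=W_1\times\cdots\times W_r$, $E=E_1\times\cdots\times E_r$, a compatible decomposition of $\mathbf{a}_1$, $\mathbf{g}=\mathbf{g}_1\times\cdots\times\mathbf{g}_r$, $W_\mathbf{g}=W_{\mathbf{g}_1}\times\cdots\times W_{\mathbf{g}_r}$, and ${_\mathrm{f}W}^\mathbf{g}={_\mathrm{f}W_1}^{\mathbf{g}_1}\times\cdots\times{_\mathrm{f}W_r}^{\mathbf{g}_r}$ (the last equality is recorded just before Proposition~\ref{reduction to irred}). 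I would also note that each relation $\mathscr{R}_{\mathbf{g}_i}$ is reflexive, since the identity of $\mathcal{E}_{w_i}^{\mathbf{g}_i}$ is a nonzero element of $\mathrm{Hom}^\bullet(\mathcal{E}_{w_i}^{\mathbf{g}_i},\mathcal{E}_{w_i}^{\mathbf{g}_i})$.

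The heart of the argument is the identity $\overline{w}=\overline{w_1}\times\cdots\times\overline{w_r}$, where $\overline{w_i}$ denotes the $\sim_{\mathbf{g}_i}$-class of $w_i$; equivalently, $w\sim_\mathbf{g}w'$ if and only if $w_i\sim_{\mathbf{g}_i}w'_i$ for every $i$. The forward implication follows by projecting a chain realizing $w\sim_\mathbf{g}w'$ coordinate by coordinate and invoking the equivalence (\ref{351}) of Proposition~\ref{reduction to irred}, which characterizes $\mathscr{R}_\mathbf{g}$ (and hence its opposite relation) coordinatewise. For the reverse implication, choose for each $i$ a chain $w_i=z_i^0,z_i^1,\ldots$ realizing $w_i\sim_{\mathbf{g}_i}w'_i$; I would then move the coordinates one at a time, starting from $w$. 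Reflexivity of the $\mathscr{R}_{\mathbf{g}_j}$ ensures, via (\ref{351}), that each elementary step along the $i$-th chain, performed while all other coordinates are held fixed, is an $\mathscr{R}_\mathbf{g}$-step (or the opposite of one). Concatenating these moves for $i=1$, then $i=2$, and so on, produces a chain from $w$ to $w'$, proving $w\sim_\mathbf{g}w'$.

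It then remains to insert the irreducible description. Applying Proposition~\ref{facette speciale} to each component $\mathfrak{R}^\vee_i$ (and Theorem~\ref{cas general} for the non-special ones) gives, when $\mathrm{char}(\mathbf{k})=\ell$, the uniform formula $\overline{w_i}=(W_i^{(r(w_i)+1)}w_iW_{\mathbf{g}_i})\cap{_\mathrm{f}W_i}^{\mathbf{g}_i}$, where $r(w_i)$ is the integer attached to $\mathbf{g}_i$ and $w_i$ as in Proposition~\ref{facette speciale}; indeed, for a non-special $\mathbf{g}_i$ one has $r(w_i)=-1$, so $W_i^{(0)}=W_i$ and the right-hand side is all of ${_\mathrm{f}W_i}^{\mathbf{g}_i}$, which is exactly $\overline{w_i}$ by Theorem~\ref{cas general}. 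Taking the product over $i$ yields the stated formula. When $\mathrm{char}(\mathbf{k})=0$, Proposition~\ref{facette speciale} instead forces $\overline{w_i}=\{w_i\}$ for every special component $\mathbf{g}_i$ (that is, whenever $r(w_i)\geq 0$), the non-special components remaining governed by Theorem~\ref{cas general}; replacing those factors by singletons in the product gives the final assertion. I do not anticipate a genuine obstacle: the only non-formal point is the coordinatewise description of $\sim_\mathbf{g}$, and the one subtlety there — the ability to vary the coordinates independently — is precisely what the reflexivity of the relations $\mathscr{R}_{\mathbf{g}_i}$ supplies.
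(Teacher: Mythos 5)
Your argument is correct and is exactly the deduction the paper intends: the paper states Theorem \ref{cas final} as an immediate consequence of Proposition \ref{facette speciale} combined with Proposition \ref{reduction to irred}, without writing out the details you supply. Your identification of reflexivity of the $\mathscr{R}_{\mathbf{g}_i}$ as the one point needed to pass from the coordinatewise description of $\mathscr{R}_\mathbf{g}$ in (\ref{351}) to the coordinatewise description of the generated equivalence relation $\sim_\mathbf{g}$ is the right (and only) non-formal step.
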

	
	\section{Applications to representation theory}\label{Applications to representation theory}
	\subsection{Preliminaries on alcoves}
	Just as in subsection \ref{geometry}, the action $\bullet_\ell$ of $W$ on $E$ defines a hyperplane arrangement $\mathscr{H}'$, where the shift by $\rho^\vee$ induces a bijection between $\mathscr{H}'$ and $\mathscr{H}$. We will call a connected component of $E\backslash \mathscr{H}'$ an alcove for $\bullet_\ell$, and a facet contained in the closure of an alcove for $\bullet_\ell$ will be called a facet for $\bullet_\ell$; once again shifting by $\rho^\vee$ induces a bijection between facets for $\bullet_\ell$ and facets for $\car_\ell$, and composing this bijection with the dilation by $\ell^{-1}$ gives a bijection with facets for $\car_1$.  We put $C_\ell:=\mathbf{a}_\ell-\rho^\vee$. The bijection between sets of alcoves allows us to define right and left actions of $W$ on these sets, together with the Bruhat order $\leq$ and the periodic order $\preceq$, by transport of structure from the set of alcoves for $\car_1$ (or equivalently, by seeing every alcove for $\bullet_\ell$, resp. for $\car_\ell$, as a $W$-translate of $C_\ell$, resp. of $\mathbf{a}_\ell$). Note that if $A,B$ are two alcoves for $\bullet_\ell$, then we have $A\preceq B\Leftrightarrow A\uparrow B$, where the order $\uparrow$ is defined in \cite[§6]{jantzen2003representations}: this is an immediate consequence of  \cite[II, §6.6, (4)]{jantzen2003representations} (which says that $\uparrow$ is invariant under translation) together with \cite[II, Lemma 10.1]{achar2018reductive} (which implies that $\uparrow$ coincides with the Bruhat order inside of $\mathscr{C}^+_0-\rho^\vee$, and thus with $\preceq$). For any alcove $C$ for $\bullet_\ell$, we define the alcove $\hat{C}$ for $\bullet_\ell$, resp. the integer $d(C)$, by transport of structure using once again the bijection between alcoves for $\bullet_\ell$ and $\car_1$ (notice that  $d(C)$ is then the number of hyperplanes of $\mathscr{H}'$ separating $C$ from $C_\ell$).
	
	For any facet $\mathbf{h}$ for $\bullet_\ell$, we will denote by $W_{\mathbf{h},\bullet_\ell}\subset W$ the stabilizer of $\mathbf{h}$ for $\bullet_\ell$. One can easily check that we have 
	$$W_{\mathbf{h},\bullet_\ell}=W_{\mathbf{h}+\rho^\vee,\car_\ell}=W_{\ell^{-1}\cdot(\mathbf{h}+\rho^\vee)}, $$
	where the second and third sets denote stabilizers for $\car_\ell$ and $\car_1$ respectively.	
	
	Let $\lambda\in\mathbb{X}_{+}^\vee$ and $\mathbf{h}\subset E$ be the facet for $\bullet_\ell$ containing $\lambda$, which is of the form 
	\begin{multline*}
		\mathbf{h}=\{\mu\in E~|~\langle\mu+\rho^\vee,\alpha\rangle=\ell\cdot n_\alpha~\forall\alpha\in\mathfrak{R}^0_+(\mathbf{h}),\\
		\ell\cdot(n_\alpha-1)<\langle\mu+\rho^\vee,\alpha\rangle<\ell\cdot n_\alpha~\forall\alpha\in\mathfrak{R}^1_+(\mathbf{h})\} 
	\end{multline*} for suitable integers $n_\alpha$ and a partition $\mathfrak{R}_+=\mathfrak{R}^0_+(\mathbf{h})\sqcup\mathfrak{R}^1_+(\mathbf{h})$. If we let $C_\lambda$ be the alcove for $\bullet_\ell$ defined by the integers $(n_\alpha)_{\alpha\in\mathfrak{R}_+}$, then $C_\lambda$ is the only alcove satisfying
	$$\lambda\in \widetilde{C}_\lambda:=\{\mu\in E~|~
	\ell\cdot(n_\alpha-1)<\langle\mu+\rho^\vee,\alpha\rangle\leq\ell\cdot n_\alpha~\forall\alpha\in\mathfrak{R}_+\}.$$
	The set $\widetilde{C}_\lambda$ (which is denoted by $\widehat{C}_\lambda$ in \cite{jantzen2003representations}) is called the upper closure of $C_\lambda$. The alcove $C_\lambda$ can be characterized as the only alcove containing $\mathbf{h}$ in its closure which is minimal in $W_{\mathbf{h},\bullet_\ell}\bullet_\ell C_\lambda$ for the order $\preceq$ (cf. \cite[§6.11]{jantzen2003representations}). Now, let us denote by $\mu$ the element of $W\bullet_\ell\lambda$ contained in $\overline{C_\ell}$, by $\mathbf{g}'\subset\overline{\mathbf{a}_\ell}$ the facet containing $\mu+\rho^\vee$ and by $w$ the element of ${_\mathrm{f}W}^{\ell^{-1}\cdot\mathbf{g}'}$ such that $w\car_\ell(\mu+\rho^\vee)=\lambda+\rho^\vee$ (cf. the first isomorphism of (\ref{param dom weights})). Then the alcove $A:=\rho^\vee+C_\lambda$ is the only alcove for $\car_\ell$ containing $\lambda+\rho^\vee$ in its closure which is minimal in 
	$$W_{\mathbf{h}+\rho^\vee,\car_\ell} A=W_{w\mathbf{g}',\car_\ell}A=AW_{\mathbf{g}',\car_\ell}$$ 
	for $\preceq$. Likewise, the alcove $A_w:=\ell^{-1}\cdot A$ is the only alcove (for $\car_1$) containing $\ell^{-1}\cdot (\mathbf{h}+\rho^\vee)$ in its closure which is minimal in $A_wW_{\mathbf{g}}$ for $\preceq$, where $\mathbf{g}:=\ell^{-1}\cdot\mathbf{g}'$. By definition of the operation $\hat{}$ on the set of alcoves for $\bullet_\ell$, we have $\hat{C}_\lambda=\ell\cdot\hat{A}_w-\rho^\vee$, and so $\hat{C}_\lambda-\ell\cdot\rho^\vee=\ell\cdot(\hat{A}_w-\rho^\vee)-\rho^\vee$. In particular (once again by definition of the application $d(\cdot)$ on the set of alcoves for $\bullet_\ell$), we have
	\begin{equation}\label{distance}
		d(\hat{C}_\lambda-\ell\cdot\rho^\vee)=d(\hat{A}_w-\rho^\vee). 
	\end{equation}
	\subsection{A new proof of Donkin's Theorem}\label{A new proof of Donkin's Theorem}
	As an application of the study that was made in the previous sections, we can give a new proof of the description of blocks of $\mathrm{Rep}_\mathbf{k}(G^\vee)$. Unless specified otherwise, $G$ is assumed to be a semi-simple algebraic group of adjoint type over $\mathbb{F}$ (so $G^\vee$ is simply connected). For any $\lambda\in\mathbb{X}^\vee$, we define $r(\lambda)$ to be the unique non-negative integer such that $$\lambda\in\ell^{r(\lambda)}\cdot\mathbb{X}^\vee\backslash~ \ell^{r(\lambda)+1}\cdot\mathbb{X}^\vee.$$
	If we let $\lambda'$ be the $W$-conjugate (for the action $\car_\ell$) of $\lambda$ contained in $\overline{\mathbf{a}_\ell}$, $\mathbf{g}_{\lambda'}\subset \overline{\mathbf{a}_\ell}$ be the facet containing $\lambda'$, put $\mathbf{g}:=\ell^{-1}\cdot\mathbf{g}_{\lambda'}$ and let  $w$ be the unique element of  ${_\mathrm{f}W}^{\mathbf{g}}$ such that $w\car_\ell\lambda'=\lambda$ (cf. (\ref{param dom weights})), then one can easily check that $$r(\lambda)=r_\mathbf{g}(w)+1,$$ where $r_\mathbf{g}(w)$ is as it was defined in subsection \ref{section special}.
	
	We take back the setting of the beginning of subsection \ref{Facets which are not points} for the next statement: since $G^\vee$ is semi-simple and simply connected, we get a decomposition $G^\vee=G^\vee_1\times\cdots\times G_t^\vee$ into simply connected simple algebraic groups, each $G^\vee_i$ admitting $\mathfrak{R}^\vee_i$ as a root system; this decomposition induces a decomposition of the root system, affine Weyl group and dominant characters attached to $G^\vee$. Moreover, for every $i$, we will denote by $\rho_i^\vee\in\mathbb{X}^\vee_{i,+}$ the half sum of positive coroots relative to $G^\vee_i$ and, for any positive integer $r$, we will denote by $W_i^{(r)}$ the subgroup of $W_i$ whose translation part has been dilated by $\ell^r$ (cf. subsection \ref{section special} for the precise definition).
	\begin{thm}\label{thm Donkin}
	Let $\mu=(\mu_1,\cdots,\mu_r)\in \mathbb{X}^\vee_+=\prod_i \mathbb{X}^\vee_{i,+}$, and denote by $\overline{\mu}$ the equivalence class of $\mu$ for the equivalence relation $\sim$ (cf. section \ref{relation section}) on $\mathbb{X}^\vee_+$ (seen as the weight poset of $\mathrm{Rep}_\mathbf{k}(G^\vee)$). We have $$\overline{\mu}=\prod_{i=1}^r W_i^{(r(\mu_i+\rho^\vee_i))}\bullet_{\ell}\mu_i\cap\mathbb{X}^\vee_{i,+}.$$ 
	\end{thm}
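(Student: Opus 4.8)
The plan is to obtain Theorem~\ref{thm Donkin} by transporting the combinatorial description of the $\sim_\mathbf{g}$-classes given in Theorem~\ref{cas final} through the chain of equivalences of highest weight categories set up in Section~\ref{smith section}; throughout I assume $\mathrm{char}(\mathbf{k})=\ell$ (in characteristic $0$ the category $\mathrm{Rep}_\mathbf{k}(G^\vee)$ is semisimple). First I would recall that, by Theorem~\ref{relations thm}, the blocks of $\mathrm{Rep}_\mathbf{k}(G^\vee)$ are indexed by the classes on its weight poset $\mathbb{X}^\vee_+$ for the relation $\sim$ generated by $\mathscr{R}_2$ (non-vanishing of $\mathrm{Hom}$ between indecomposable tilting objects). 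Composing the geometric Satake equivalence (Theorem~\ref{satake equivalence}) with the Iwahori--Whittaker equivalence (Theorem~\ref{IW equivalence}) yields an equivalence of highest weight categories $\mathrm{Rep}_\mathbf{k}(G^\vee)\xrightarrow{\sim}\mathrm{Perv}_{\mathcal{IW}}(\mathrm{Gr},\mathbf{k})$ whose induced bijection of weight posets is the shift $\lambda\mapsto\lambda+\rho^\vee$ (using $\rho^\vee\in\mathbb{X}^\vee$ and $\mathbb{X}^\vee_{++}=\rho^\vee+\mathbb{X}^\vee_+$). As equivalences of highest weight categories preserve indecomposable tilting objects, this identifies $\sim$ on $\mathbb{X}^\vee_+$ with $\sim$ on $\mathbb{X}^\vee_{++}$, so it suffices to compute the $\sim$-class $\overline{\lambda}$ of $\lambda:=\mu+\rho^\vee$ inside $\mathbb{X}^\vee_{++}$ and then translate the answer back by $-\rho^\vee$.

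Next I would transport the question to ${_\mathrm{f}W}^\mathbf{g}$. Let $\nu$ be the unique element of $W\car_\ell\lambda\cap\overline{\mathbf{a}_\ell}$ (it lies in $\mathbb{X}^\vee$), let $\mathbf{g}_\nu\subset\overline{\mathbf{a}_\ell}$ be the facet for $\car_\ell$ containing $\nu$, and set $\mathbf{g}:=\ell^{-1}\cdot\mathbf{g}_\nu\subset\overline{\mathbf{a}_1}$; recall that $W_\mathbf{g}$ is also the $\car_\ell$-stabilizer of $\nu$. By the first bijection in (\ref{param dom weights}), $u\mapsto u\car_\ell\nu$ is a bijection ${_\mathrm{f}W}^\mathbf{g}\xrightarrow{\sim}W\car_\ell\nu\cap\mathbb{X}^\vee_{++}$, and the linkage principle in the Iwahori--Whittaker form used to derive (\ref{linkage principle}) guarantees $\overline{\lambda}\subseteq W\car_\ell\nu\cap\mathbb{X}^\vee_{++}$. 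Letting $w\in{_\mathrm{f}W}^\mathbf{g}$ be the preimage of $\lambda$, the equivalence (\ref{eq relation W}) says that this bijection carries $\mathscr{R}_\mathbf{g}$ to $\mathscr{R}_2$ restricted to the relevant orbit, hence carries the $\sim_\mathbf{g}$-class $\overline{w}$ onto $\overline{\lambda}$.

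Then I would apply Theorem~\ref{cas final}. Writing $\mathfrak{R}^\vee=\mathfrak{R}^\vee_1\sqcup\cdots\sqcup\mathfrak{R}^\vee_r$ with the induced decompositions $W=\prod_iW_i$, $\mathbf{g}=\prod_i\mathbf{g}_i$, $\nu=(\nu_i)$, $w=(w_i)$, and setting $\lambda_i:=w_i\car_\ell\nu_i=\mu_i+\rho^\vee_i$, Theorem~\ref{cas final} gives $\overline{w}=\prod_i\bigl(W_i^{(r_{\mathbf{g}_i}(w_i)+1)}w_iW_{\mathbf{g}_i}\cap{_\mathrm{f}W_i}^{\mathbf{g}_i}\bigr)$. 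Two elementary identifications then finish the proof. First, the identity $r(\lambda_i)=r_{\mathbf{g}_i}(w_i)+1$ observed in the paragraph preceding the statement (applied componentwise) gives $W_i^{(r_{\mathbf{g}_i}(w_i)+1)}=W_i^{(r(\mu_i+\rho^\vee_i))}$. Second, by the same computation as in Lemma~\ref{param Wr} (using that $W_{\mathbf{g}_i}$ stabilizes $\nu_i$ for $\car_\ell$), the bijection $u_i\mapsto u_i\car_\ell\nu_i$ restricts, for every $s$, to a bijection $W_i^{(s)}w_iW_{\mathbf{g}_i}\cap{_\mathrm{f}W_i}^{\mathbf{g}_i}\xrightarrow{\sim}W_i^{(s)}\car_\ell\lambda_i\cap\mathbb{X}^\vee_{i,++}$. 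Combining these with the previous paragraph yields $\overline{\lambda}=\prod_i\bigl(W_i^{(r(\mu_i+\rho^\vee_i))}\car_\ell\lambda_i\cap\mathbb{X}^\vee_{i,++}\bigr)$. Finally, translating by $-\rho^\vee$ and invoking (\ref{boxvsbullet}) componentwise (valid for every element of the subgroup $W_i^{(s)}\subseteq W_i$, so that $x\mapsto x-\rho^\vee_i$ intertwines $\car_\ell$ with $\bullet_\ell$, sends $\lambda_i$ to $\mu_i$ and $\mathbb{X}^\vee_{i,++}$ onto $\mathbb{X}^\vee_{i,+}$) produces the desired equality $\overline{\mu}=\prod_i\bigl(W_i^{(r(\mu_i+\rho^\vee_i))}\bullet_\ell\mu_i\cap\mathbb{X}^\vee_{i,+}\bigr)$.

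The argument carries no new hard content: the Smith--Treumann input ((\ref{eq relation W}), Proposition~\ref{iso parity}), the Kazhdan--Lusztig and periodic-polynomial combinatorics (Theorems~\ref{cas general} and \ref{cas final}), and the dilation-by-$\ell$ step (Proposition~\ref{red to non special}) are all already established. The only point requiring genuine care is the dictionary between data: one must keep track simultaneously of the three parametrizations of a block (dominant weights of $G^\vee$, strictly dominant weights for $\mathrm{Perv}_{\mathcal{IW}}(\mathrm{Gr},\mathbf{k})$, and elements of ${_\mathrm{f}W}^\mathbf{g}$) and of the two actions $\bullet_\ell$ and $\car_\ell$, and in particular check that the dilated subgroup $W_i^{(r)}$ attached by Theorem~\ref{cas final} to $w_i$ is exactly the one attached intrinsically to $\mu_i+\rho^\vee_i$ — i.e. the identity $r(\lambda_i)=r_{\mathbf{g}_i}(w_i)+1$ — rather than an off-by-one variant.
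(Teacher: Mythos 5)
Your proposal is correct and follows essentially the same route as the paper's proof: reduce via geometric Satake and Theorem \ref{IW equivalence} to $\mathrm{Perv}_{\mathcal{IW}}(\mathrm{Gr},\mathbf{k})$, transport $\sim$ to $\sim_{\mathbf{g}}$ via (\ref{eq relation W}) and the linkage principle, apply Theorem \ref{cas final} together with the identity $r(\lambda_i)=r_{\mathbf{g}_i}(w_i)+1$, and translate back using Lemma \ref{param Wr}. The only cosmetic difference is that the paper explicitly re-invokes Proposition \ref{reduction to irred} for the componentwise splitting, whereas you use the product form already built into Theorem \ref{cas final}; this changes nothing of substance.
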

	\begin{proof}
		Recall that thanks to the geometric Satake equivalence (Theorem \ref{satake equivalence}) and Theorem \ref{IW equivalence}, we have an equivalence of highest weight categories
		\begin{equation}\label{IW satake}
			\mathrm{Rep}_\mathbf{k}(G^\vee)\xrightarrow{\sim}\mathrm{Perv}_{\mathcal{IW}}(\mathrm{Gr},\mathbf{k})
		\end{equation}
		sending a tilting module $T_\mu$ to the tilting Iwahori-Whittaker perverse sheaf $\mathscr{T}^{\mathcal{IW}}_{\lambda}$, where $\lambda:=\mu+\rho^\vee$. Therefore, proving the claim is equivalent to showing that for any $\lambda\in\mathbb{X}^\vee_{++}=\mathbb{X}^\vee_{+}+\rho^{\vee}=\prod_i(\mathbb{X}^\vee_{i,+}+\rho^\vee_i)$, we have the equality
		\begin{equation}\label{equation equivalente}
			\overline{\lambda}=\prod_{i=1}^rW_i^{(r(\lambda_i))}\car_{\ell}\lambda_i\cap\mathbb{X}^\vee_{i,++},
		\end{equation}
		where $\overline{\lambda}$ now denotes the equivalence class of $\lambda=(\lambda_1,\cdots,\lambda_r)$ for the equivalence relation $\sim$ on $\mathbb{X}^\vee_{++}$, seen as the weight poset of $\mathrm{Perv}_{\mathcal{IW}}(\mathrm{Gr},\mathbf{k})$.
		
		Denote by $\lambda'$ the unique $W$-conjugate (for the dilated box action $\car_\ell$) of $\lambda$ which is contained in $\overline{\mathbf{a}_\ell}$, by $\mathbf{g}_{\lambda'}\subset \overline{\mathbf{a}_\ell}$ the facet containing $\lambda'$, put $\mathbf{g}:=\ell^{-1}\cdot \mathbf{g}_{\lambda'}\subset \overline{\mathbf{a}_1}$ and let $w$ the element of ${_\mathrm{f}W}^{\mathbf{g}}$ such that $w\car_\ell\lambda'=\lambda$. Recall that ${_\mathrm{f}W}^{\mathbf{g}}\car_\ell\lambda'=W\car_\ell\lambda'\cap\mathbb{X}^\vee_{++}$. By (\ref{eq relation W}), we have
		$$(w\car_\ell\lambda')\sim (w'\car_\ell\lambda')\Longleftrightarrow w\sim_\mathbf{g}w',~\forall w,w'\in  {_\mathrm{f}W}^{\mathbf{g}},$$
		and by Proposition \ref{reduction to irred} we have 
		$$w\sim_\mathbf{g}w'\Longleftrightarrow w_i\sim_{\mathbf{g}_i}w'_i~\forall i, $$
		where $w=(w_1,\cdots,w_r)$, $w'=(w'_1,\cdots,w'_r)$.
		
		On the other hand, the linkage principle tells us that 
		$$(w\car_\ell\lambda')\sim \mu\Rightarrow \mu\in W\car_\ell\lambda',$$
		from which we deduce (also using Lemma \ref{param Wr}) that proving (\ref{equation equivalente}) is equivalent to proving that 
		$$\overline{w}=\prod_{i=1}^rW_i^{(r(\lambda_i))}w_iW_{\mathbf{g}_i}\cap {_\mathrm{f}W_i}^{\mathbf{g}_i}.$$
		But this last equality was proved in Theorem \ref{cas final} (because $r(\lambda_i)=r_{\mathbf{g}_i}(w_i)+1$ for all $i$).
	\end{proof}
We can also apply our Corollary \ref{bound} to give a bound on the length of a minimum chain linking two weights in the same block. We start with a lemma.
	\begin{lem}\label{lem dilation}
		Let $\lambda,\mu\in \mathbb{X}_+^\vee$, $r\in\mathbb{Z}_{\geq0}$, and put $\lambda':=\ell^r\cdot(\lambda+\rho^\vee)-\rho^\vee$, $\mu':=\ell^r\cdot(\mu+\rho^\vee)-\rho^\vee$. We have 
		$$(T_{\lambda'}:\nabla_{\mu'})= (T_{\lambda}:\nabla_{\mu}).$$
	\end{lem}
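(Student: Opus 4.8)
The plan is to transport the statement, via the geometric Satake and Iwahori-Whittaker equivalences of highest weight categories, to the category $\mathrm{Perv}_{\mathcal{IW}}(\mathrm{Gr},\mathbf{k})$, and then reduce the dilation by $\ell^r$ to $r$ successive applications of the already-established case $r=1$. Concretely, the equivalence $\mathrm{Rep}_\mathbf{k}(G^\vee)\xrightarrow{\sim}\mathrm{Perv}_{\mathcal{IW}}(\mathrm{Gr},\mathbf{k})$ of Theorems \ref{satake equivalence} and \ref{IW equivalence} sends $T_\nu$ to $\mathscr{T}^\mathcal{IW}_{\nu+\rho^\vee}$ and $\nabla_\nu$ to $\nabla^\mathcal{IW}_{\nu+\rho^\vee}$, hence sends multiplicities $(T_\nu:\nabla_{\nu'})$ to the Iwahori-Whittaker multiplicities $(\mathscr{T}^\mathcal{IW}_{\nu+\rho^\vee}:\nabla^\mathcal{IW}_{\nu'+\rho^\vee})$. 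Writing $\widehat\lambda:=\lambda+\rho^\vee$, $\widehat\mu:=\mu+\rho^\vee\in\mathbb{X}^\vee_{++}$, one has $\lambda'+\rho^\vee=\ell^r\cdot\widehat\lambda$ and $\mu'+\rho^\vee=\ell^r\cdot\widehat\mu$, so the claimed equality becomes
$$(\mathscr{T}^\mathcal{IW}_{\ell^r\cdot\widehat\lambda}:\nabla^\mathcal{IW}_{\ell^r\cdot\widehat\mu})=(\mathscr{T}^\mathcal{IW}_{\widehat\lambda}:\nabla^\mathcal{IW}_{\widehat\mu}).$$

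The key step is then an induction on $r$, the base case $r=0$ being trivial and the case $r=1$ being exactly Corollary \ref{red to non special2}, which asserts $(\mathscr{T}^\mathcal{IW}_{\widehat\lambda}:\nabla^\mathcal{IW}_{\widehat\mu})=(\mathscr{T}^\mathcal{IW}_{\ell\cdot\widehat\lambda}:\nabla^\mathcal{IW}_{\ell\cdot\widehat\mu})$ for arbitrary strictly dominant weights. For the inductive step I would apply Corollary \ref{red to non special2} with $\widehat\lambda$ and $\widehat\mu$ replaced by $\ell^{r-1}\cdot\widehat\lambda$ and $\ell^{r-1}\cdot\widehat\mu$ (these still lie in $\mathbb{X}^\vee_{++}$ since $\rho^\vee$ is strictly dominant and $\mathbb{X}^\vee_{++}$ is stable under dilation by positive integers), giving
$$(\mathscr{T}^\mathcal{IW}_{\ell^{r-1}\cdot\widehat\lambda}:\nabla^\mathcal{IW}_{\ell^{r-1}\cdot\widehat\mu})=(\mathscr{T}^\mathcal{IW}_{\ell\cdot(\ell^{r-1}\cdot\widehat\lambda)}:\nabla^\mathcal{IW}_{\ell\cdot(\ell^{r-1}\cdot\widehat\mu)})=(\mathscr{T}^\mathcal{IW}_{\ell^{r}\cdot\widehat\lambda}:\nabla^\mathcal{IW}_{\ell^{r}\cdot\widehat\mu}),$$
and composing with the inductive hypothesis $(\mathscr{T}^\mathcal{IW}_{\widehat\lambda}:\nabla^\mathcal{IW}_{\widehat\mu})=(\mathscr{T}^\mathcal{IW}_{\ell^{r-1}\cdot\widehat\lambda}:\nabla^\mathcal{IW}_{\ell^{r-1}\cdot\widehat\mu})$ closes the induction. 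Translating back through the equivalence of highest weight categories yields $(T_{\lambda'}:\nabla_{\mu'})=(T_\lambda:\nabla_\mu)$.

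There is essentially no genuine obstacle here: the only points requiring a word of care are that the equivalence of Theorems \ref{satake equivalence}–\ref{IW equivalence} is an equivalence of highest weight categories (so it matches standard/costandard/tilting objects and hence the multiplicities $(M:\nabla_s)=\dim_\mathbf{k}\mathrm{Hom}(\Delta_s,M)$), and that Corollary \ref{red to non special2} is stated for all strictly dominant $\lambda,\mu$, so it can be iterated. The mild subtlety — and the reason the statement is phrased with $\lambda,\mu\in\mathbb{X}^\vee_+$ rather than in $\mathbb{X}^\vee_{++}$ — is purely bookkeeping: one must pass from the $\bullet_\ell$-dilation on $\mathbb{X}^\vee_+$ to the $\car_\ell$-dilation on $\mathbb{X}^\vee_{++}$ via $\nu\mapsto\nu+\rho^\vee$, using $\ell^r\cdot(\lambda+\rho^\vee)-\rho^\vee+\rho^\vee=\ell^r\cdot(\lambda+\rho^\vee)$, which is immediate. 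So the proof is short; I expect it to occupy only a few lines once Corollary \ref{red to non special2} and the equivalences are invoked.
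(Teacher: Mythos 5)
Your proposal is correct and follows essentially the same route as the paper: transport the multiplicity through the highest weight equivalence $\mathrm{Rep}_\mathbf{k}(G^\vee)\simeq\mathrm{Perv}_{\mathcal{IW}}(\mathrm{Gr},\mathbf{k})$ and then apply Corollary \ref{red to non special2}. The only difference is that you spell out the induction on $r$ explicitly, whereas the paper applies the corollary (which treats a single dilation by $\ell$) $r$ times implicitly in one step.
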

	\begin{proof}
		By the geometric Satake equivalence (Theorem \ref{satake equivalence}) coupled with Theorem \ref{IW equivalence}, we have an equivalence of highest weight categories $$\mathrm{Rep}_\mathbf{k}(G^\vee)\xrightarrow{\sim}\mathrm{Perv}_{\mathcal{IW}}(\mathrm{Gr},\mathbf{k}),$$
		sending $T_\gamma$ to $\mathscr{T}^\mathcal{IW}_{\gamma+\rho^\vee}$ for every $\gamma\in\mathbb{X}^\vee_+$. We can thus apply Corollary \ref{red to non special2}:
		$$(T_{\lambda'}:\nabla_{\mu'})=(\mathscr{T}^\mathcal{IW}_{\ell^r\cdot(\lambda+\rho^\vee)}:\nabla^\mathcal{IW}_{\ell^r\cdot (\mu+\rho^\vee)})\overset{\ref{red to non special2}}{=}(\mathscr{T}^\mathcal{IW}_{\lambda+\rho^\vee},\nabla^\mathcal{IW}_{\mu+\rho^\vee} )= (T_{\lambda}:\nabla_{\mu}).$$
	\end{proof}
	\begin{prop}\label{bound group}
		Assume that $G^\vee$ is a simple and simply connected algebraic group over $\mathbf{k}$. Let $\lambda,\lambda'$ be two elements of $\mathbb{X}^\vee_+$ in the same equivalence class for $\sim$, and denote by $\tilde{\lambda}$ (resp. $\tilde{\lambda'}$) the unique element of $\mathbb{X}^\vee_+$ which satisfies $\lambda+\rho^\vee=\ell^r\cdot(\tilde{\lambda}+\rho^\vee)$ (resp. $\lambda'+\rho^\vee=\ell^r\cdot(\tilde{\lambda'}+\rho^\vee)$), where $r=r(\lambda+\rho^\vee)$ (notice that $\lambda=\tilde{\lambda}$ when $r=0$). Also denote by $C_{\tilde{\lambda}}$ (resp. $C_{\tilde{\lambda'}}$) the alcove containing ${\tilde{\lambda}}$ (resp. ${\tilde{\lambda'}}$) in its upper closure. Then there exists a chain of dominant characters 
		$$\lambda_s=\lambda,\lambda_{s-1},\cdots,\lambda_0=\lambda' $$
		such that, for all $i\in\llbracket 0,s-1\rrbracket$, there exists an indecomposable $G^\vee$-module $M_i$ satisfying
		$$[M_i:L_{\lambda_i}]\neq 0~\text{and}~[M_i:L_{\lambda_{i+1}}]\neq 0 $$
		and such that 
		$$s\leq 2+d(\hat{C}_{\tilde{\lambda}}-\ell\cdot\rho^\vee)+d(\hat{C}_{\tilde{\lambda'}}-\ell\cdot\rho^\vee). $$
		
	\end{prop}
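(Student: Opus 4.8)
The plan is to translate the representation-theoretic statement into the language of Iwahori-Whittaker perverse sheaves and parity complexes via the equivalence (\ref{IW satake}), apply \textbf{Corollary \ref{bound}} on the combinatorial side, and then transport the resulting chain back to $\mathrm{Rep}_\mathbf{k}(G^\vee)$. Concretely, first I would use the equivalence of highest weight categories $\mathrm{Rep}_\mathbf{k}(G^\vee)\simeq \mathrm{Perv}_{\mathcal{IW}}(\mathrm{Gr},\mathbf{k})$ sending $T_\gamma$ to $\mathscr{T}^{\mathcal{IW}}_{\gamma+\rho^\vee}$ and $L_\gamma$ to $\mathrm{IC}^{\mathcal{IW}}_{\gamma+\rho^\vee}$, so that $[M_i:L_{\lambda_i}]\neq 0$ becomes a statement about composition factors of a perverse sheaf, and the condition $\lambda\sim\lambda'$ becomes $\mu=\lambda+\rho^\vee\sim\lambda'+\rho^\vee=\mu'$ for $\sim$ on $\mathbb{X}^\vee_{++}$. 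Next, using \textbf{Proposition \ref{red to non special}} (which is the sheaf-theoretic incarnation of \textbf{Lemma \ref{lem dilation}}) one reduces from $\mu$ to $\tilde\lambda+\rho^\vee$, i.e.\ to the case $r(\mu)=0$: the number of chain-links is unchanged under dilation by $\ell^r$ since the $\mathrm{Hom}$-spaces between the relevant indecomposable tilting objects are isomorphic, so it suffices to prove the bound for $\tilde\lambda,\tilde\lambda'$ and then replace every weight in the chain by its $\ell^r$-dilate.

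Once we are in the case $r(\mu)=0$, I would pass to the parity-complex side. Let $\gamma$ be the element of $W\car_\ell(\tilde\lambda+\rho^\vee)\cap\overline{\mathbf{a}_\ell}$, let $\mathbf{g}_\gamma\subset\overline{\mathbf{a}_\ell}$ be the facet containing $\gamma$, set $\mathbf{g}:=\ell^{-1}\cdot\mathbf{g}_\gamma\subset\overline{\mathbf{a}_1}$ (a non-special facet because $r(\mu)=0$), and let $v,v'\in{_\mathrm{f}W}^{\mathbf{g}}$ be such that $v\car_\ell\gamma=\tilde\lambda+\rho^\vee$, $v'\car_\ell\gamma=\tilde\lambda'+\rho^\vee$. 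By (\ref{eq relation W}) and \textbf{Theorem \ref{cas general}} these lie in the same equivalence class for $\sim_{\mathbf{g}}$, and \textbf{Corollary \ref{bound}} gives a chain $w_s=v,w_{s-1},\dots,w_0=v'$ in ${_\mathrm{f}W}^{\mathbf{g}}$ with $u_i\in{_\mathrm{f}W}^{\mathbf{g}}$ satisfying $n_{w_i,u_i}(1)\neq 0$ and $n_{w_{i+1},u_i}(1)\neq 0$, and with $s\leq 2+d(\hat A_v-\rho^\vee)+d(\hat A_{v'}-\rho^\vee)$, where $A_v$ is the alcove for $\car_1$ minimal in $A_vW_{\mathbf{g}}$ containing $v\mathbf{g}$ in its closure, and similarly for $A_{v'}$. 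Now I would match this bound with the statement: tracking through subsection~\ref{Preliminaries on alcoves} and the identity (\ref{distance}), the alcove $A_v$ is exactly $\ell^{-1}\cdot(\rho^\vee+C_{\tilde\lambda})$ (using that $C_{\tilde\lambda}$ is the unique alcove for $\bullet_\ell$ containing the facet of $\tilde\lambda$ in its closure and minimal in its $W_{\mathbf{h},\bullet_\ell}$-coset), whence $d(\hat A_v-\rho^\vee)=d(\hat C_{\tilde\lambda}-\ell\cdot\rho^\vee)$ and likewise for $v'$; so $s\leq 2+d(\hat C_{\tilde\lambda}-\ell\cdot\rho^\vee)+d(\hat C_{\tilde\lambda'}-\ell\cdot\rho^\vee)$, the claimed bound.

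It remains to produce the modules $M_i$ from the relations $n_{w_i,u_i}(1)\neq 0$. For this I would use \textbf{Corollary \ref{anti spherical implies R}} together with \textbf{Proposition \ref{iso parity}} (and (\ref{parity l vs 1})): $n_{w_i,u_i}(1)\neq 0$ implies $\mathrm{Hom}^\bullet_{\mathrm{Par}_{\mathcal{IW}_\ell}(\mathrm{Fl}^{\ell,\circ}_{\mathbf{g}_\gamma},\mathbf{k})}(\mathcal{E}^{\mathbf{g}_\gamma}_{\ell,w_i},\mathcal{E}^{\mathbf{g}_\gamma}_{\ell,u_i})\neq 0$, hence $\mathrm{Hom}_{\mathrm{Tilt}_{\mathcal{IW}}(\mathrm{Gr},\mathbf{k})}(\mathscr{T}^{\mathcal{IW}}_{w_i\car_\ell\gamma},\mathscr{T}^{\mathcal{IW}}_{u_i\car_\ell\gamma})\neq 0$; transporting through (\ref{IW satake}) this says $\mathrm{Hom}_{G^\vee}(T_{\lambda_i^{(0)}},T_{\mu_i^{(0)}})\neq 0$ for the corresponding dominant weights, where $\lambda_i^{(0)}=w_i\car_\ell\gamma-\rho^\vee$ and $\mu_i^{(0)}=u_i\car_\ell\gamma-\rho^\vee$. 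A nonzero hom between indecomposable tilting modules forces a common composition factor, and by the highest-weight combinatorics (the image of $\Delta\hookrightarrow T$) one extracts an indecomposable summand $M$ of a suitable tilting module — or just $T_{\mu_i^{(0)}}$ itself — with $[M:L_{\lambda_i^{(0)}}]\neq 0$ and $[M:L_{\mu_i^{(0)}}]\neq 0$; doing this for $w_i$ and $w_{i+1}$ against the common $u_i$ and using $\mu_i^{(0)}$ as the apex gives $M_i$ with $[M_i:L_{\lambda_i^{(0)}}]\neq0$, $[M_i:L_{\lambda_{i+1}^{(0)}}]\neq0$. Finally dilate: apply \textbf{Lemma \ref{lem dilation}} (or rather its proof) to see that $\ell^r$-dilating all weights and tilting modules preserves these multiplicity conditions, producing the chain $\lambda_s=\lambda,\dots,\lambda_0=\lambda'$ and modules $M_i$ in the non-dilated picture. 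The main obstacle I anticipate is the bookkeeping in identifying $A_v$ with $\ell^{-1}\cdot(\rho^\vee+C_{\tilde\lambda})$ and hence justifying (\ref{distance}) — i.e.\ checking carefully that the "minimal alcove in the $W_{\mathbf g}$-coset containing a given facet in its closure" on the box side corresponds under the $\rho^\vee$-shift and $\ell^{-1}$-dilation to Jantzen's alcove $C_{\tilde\lambda}$ attached to $\tilde\lambda$ via the upper closure; everything else is a routine transport through the equivalences already established.
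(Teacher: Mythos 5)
Your overall strategy coincides with the paper's proof: reduce to the case $r=0$ via the $\ell$-dilation invariance (Proposition \ref{red to non special} / Lemma \ref{lem dilation}), apply Corollary \ref{bound} to the elements $v,v'\in{_\mathrm{f}W}^{\mathbf g}$ corresponding to $\tilde\lambda+\rho^\vee$ and $\tilde\lambda'+\rho^\vee$, and convert the combinatorial bound into the stated one via (\ref{distance}). The bookkeeping you flag as the main obstacle --- identifying the minimal alcove $A_v$ in its $W_{\mathbf g}$-coset with $\ell^{-1}\cdot(\rho^\vee+C_{\tilde\lambda})$ --- is precisely what the paper carries out in the subsection on alcoves preceding the proof, culminating in (\ref{distance}), so that part of your plan is sound.

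The one step that does not work as written is the production of the modules $M_i$. From $n_{w_i,u_i}(1)\neq 0$ you pass to a nonzero $\mathrm{Hom}$-space between the indecomposable tilting modules $T_{\lambda_i^{(0)}}$ and $T_{\mu_i^{(0)}}$ and then assert that this forces $[T_{\mu_i^{(0)}}:L_{\lambda_i^{(0)}}]\neq 0$. That implication is false in general: since $\dim_\mathbf{k}\mathrm{Hom}(T_\lambda,T_\mu)=\sum_\nu(T_\lambda:\Delta_\nu)(T_\mu:\nabla_\nu)$, a nonzero Hom only guarantees a common composition factor $L_\nu$ for \emph{some} $\nu$, not that $L_\lambda$ occurs in $T_\mu$ (already for $\mathrm{SL}_2$ one finds indecomposable tiltings with nonzero Hom between them where the containment of composition factors fails in one direction). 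What you actually need is the costandard multiplicity itself, and this is available without any detour: by Proposition \ref{car 0} one has ${}^\ell n_{w_i,u_i}(1)\geq n_{w_i,u_i}(1)>0$, and the character formula for tilting modules of \cite{riche2020smithtreumann} then gives $(T_{u_i\bullet_\ell\mu}:\nabla_{w_i\bullet_\ell\mu})\neq 0$ directly, whence $[T_{u_i\bullet_\ell\mu}:L_{w_i\bullet_\ell\mu}]\neq 0$ because $[\nabla_{w_i\bullet_\ell\mu}:L_{w_i\bullet_\ell\mu}]=1$; the same applies with $w_{i+1}$ in place of $w_i$. Substituting this for your Hom-space argument (so $M_i=T_{u_i\bullet_\ell\mu}$, $\lambda_i=w_i\bullet_\ell\mu$, followed by the $\ell^r$-dilation via Lemma \ref{lem dilation}) repairs the proof and brings it into line with the paper's.
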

	\begin{proof}
		Denote by $\mu$ the $W$-conjugate (for $\bullet_\ell$) of $\lambda$ which is included in $\overline{C_\ell}$. We will first deal with the case where $r=0$, which means that the facets (for $\car_\ell$) containing $\lambda+\rho^\vee$ and $\lambda'+\rho^\vee$ are not special (notice that, since $\lambda$ and $\lambda'$ are in the same equivalence class, we must have $r(\lambda+\rho^\vee)=r(\lambda'+\rho^\vee)$ thanks to Theorem \ref{thm Donkin}). 
		
		We denote by $\mathbf{g}_\mu\subset\overline{\mathbf{a}_\ell}$ the facet containing $\mu+\rho^\vee$, put $\mathbf{g}:=\ell^{-1}\cdot \mathbf{g}_\mu$ and let $w,w'$ be the elements of ${_\mathrm{f}W^{\mathbf{g}}}$ satisfying $w\car_\ell(\mu+\rho^\vee)=\lambda+\rho^\vee$, $w'\car_\ell(\mu+\rho^\vee)=\lambda'+\rho^\vee$. The facet $\mathbf{g}$ for $\car_1$ is a non-special facet included in the closure of $\overline{\mathbf{a}_1}$. Let us also denote by $A_w$, resp. $A_{w'}$, the alcove containing $w\mathbf{g}$, resp. $w'\mathbf{g}$, in its closure and which is minimal in $A_wW_\mathbf{g}$, resp. $A_{w'}W_\mathbf{g}$, for the order $\preceq$. Thus, we can apply Corollary \ref{bound}, and pick a chain of elements of ${_\mathrm{f}W}^\mathbf{g}$
		$$w_s=w,w_{s-1},\cdots,w_0=w' $$
		as in this corollary, with $s\leq 2+d(\hat{A}_w-\rho^\vee)+d(\hat{A}_{w'}-\rho^\vee)$. Thanks to the equation (\ref{distance}), we have that $d(\hat{A}_w-\rho^\vee)=d(\hat{C}_{\lambda}-\ell\cdot\rho^\vee)$ and $d(\hat{A}_{w'}-\rho^\vee)=d(\hat{C}_{\lambda'}-\ell\cdot\rho^\vee)$, so that $s$ is bounded by the desired integer.
		This same corollary tells us that for all $i\in\llbracket 0,s-1\rrbracket$, there exists an element $u_i\in {_\mathrm{f}W}^\mathbf{g}$ satisfying 
		$$n_{w_i,u_i}(1)\neq 0~\text{and}~n_{w_{i+1},u_i}(1)\neq 0. $$
		By Proposition \ref{car 0}, we can replace $n$ with ${^\ell n}$ in the above, so that the character formula of tilting modules given in \cite[Theorem 8.9]{RW22} yields 
		$$(T_{u_i\bullet_\ell\mu}:\nabla_{w_i\bullet_\ell\mu})\neq0~\text{and}~(T_{u_i\bullet_\ell\mu}:\nabla_{w_{i+1}\bullet_\ell\mu})\neq0, $$
		which implies that 
		$$[T_{u_i\bullet_\ell\mu}:L_{w_i\bullet_\ell\mu}]\neq0~\text{and}~[T_{u_i\bullet_\ell\mu}:L_{w_{i+1}\bullet_\ell\mu}]\neq0. $$
		So we get the result by putting $\lambda_i=w_i\bullet_\ell\mu$ and $M_i=T_{u_i\bullet_\ell\mu}$.
		
		Now we assume that $r>0$. Since $r(\tilde{\lambda}+\rho)=r(\tilde{\lambda'}+\rho)=0$, we can apply the previous step and find two sequences of dominant characters $(\tilde{\lambda}_i),~(\tilde{\nu}_i)$ of the desired length such that $\tilde{\lambda}_s=\tilde{\lambda},~\tilde{\lambda}_0=\tilde{\lambda'}$ and 
		$$(T_{\tilde{\nu}_i}:\nabla_{\tilde{\lambda}_i})\neq0~\text{and}~(T_{\tilde{\nu}_i}:\nabla_{\tilde{\lambda}_{i+1}})\neq0. $$
		By Lemma \ref{lem dilation}, we see that if we put $\nu_i:=\ell^r\cdot(\tilde{\nu}_i+\rho)-\rho$ and $\lambda_i:=\ell^r\cdot(\tilde{\lambda}_i+\rho)-\rho$, we get:
		$$(T_{\nu_i}:\nabla_{\lambda_i})=(T_{\tilde{\nu}_i}:\nabla_{\tilde{\lambda}_i})\neq0~\text{and}~(T_{\nu_i}:\nabla_{\lambda_{i+1}})=(T_{\tilde{\nu}_i}:\nabla_{\tilde{\lambda}_{i+1}})\neq0, $$
		from which we deduce that
		$$[T_{\nu_i}:L_{\lambda_i}]\neq0~\text{and}~[T_{\nu_i}:L_{\lambda_{i+1}}]\neq0. $$
		This concludes the proof.
	\end{proof}
We will now use the process described in  \cite[§II.7.3]{jantzen2003representations} to deduce from Theorem \ref{thm Donkin} the block decomposition of $\mathrm{Rep}_\mathbf{k}(G^\vee)$ in the general case where $G^\vee$ is a reductive group\footnote{We warn the reader that the statement (3) given in \cite[§II.7.3]{jantzen2003representations} is wrong, since one needs to assume the semi-simple group of \textit{loc. cit.} to be simple for it to be correct.}, starting with a lemma.
	\begin{lem}\label{central extension}
		Let $H_1,H_2$ be reductive algebraic groups over $\mathbf{k}$, with $T_1\subset B_1$, resp. $T_2\subset B_2$, a maximal torus and a Borel subgroup of $H_1$, resp. of $H_2$, and $\varphi:H_1\to H_2$ a central isogeny such that $\varphi(T_1)=T_2$, $\varphi(B_1)=B_2$. Denote by $\mathbb{X}(T_1)_+\subset\mathbb{X}(T_1)$ (resp. $\mathbb{X}(T_2)_+\subset\mathbb{X}(T_2)$) the dominant characters and characters associated with $T_1\subset B_1$ (resp. $T_2\subset B_2$). 
		
		Then $\varphi$ induces an injective morphism  $\mathbb{X}(T_2)_+\hookrightarrow\mathbb{X}(T_1)_+$, and the blocks of $H_2$ are the blocks of $H_1$ contained in $\mathbb{X}(T_2)_+$.
	\end{lem}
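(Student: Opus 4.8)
The plan is to realise $\mathrm{Rep}_{\mathbf{k}}(H_2)$ as an extension-closed Serre subcategory of $\mathrm{Rep}_{\mathbf{k}}(H_1)$ and to transport the block decomposition of Section~\ref{relation section} through this identification. First I would record the relevant facts about $\varphi$. Its scheme-theoretic kernel $N:=\ker\varphi$ is a finite subgroup scheme of the centre of $H_1$, hence lies in the maximal torus $T_1$, and is therefore diagonalisable; $\bar{\varphi}$ identifies $H_2$ with the quotient $H_1/N$, and restricting to tori identifies $T_2$ with $T_1/N$. Exactness of the character functor on diagonalisable groups then shows that $\varphi^{*}\colon\mathbb{X}(T_2)\to\mathbb{X}(T_1)$ is injective with finite cokernel isomorphic to $\mathbb{X}(N)$, and that $\varphi^{*}\mathbb{X}(T_2)$ is precisely the subgroup of characters of $T_1$ that are trivial on $N$. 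A central isogeny induces a bijection between the coroots of $H_1$ and those of $H_2$, compatible with $\varphi^{*}$, and $\varphi(B_1)=B_2$ makes it preserve positivity; consequently $\langle\varphi^{*}\lambda,\alpha^{\vee}\rangle$ equals the value of $\lambda$ on the corresponding coroot of $T_2$, so $\varphi^{*}$ restricts to the asserted injection $\mathbb{X}(T_2)_+\hookrightarrow\mathbb{X}(T_1)_+$, with $\varphi^{*}\mathbb{X}(T_2)_+=\varphi^{*}\mathbb{X}(T_2)\cap\mathbb{X}(T_1)_+$. It also follows that the root lattices agree: $\mathbb{Z}\mathfrak{R}(H_1)=\varphi^{*}\bigl(\mathbb{Z}\mathfrak{R}(H_2)\bigr)\subseteq\varphi^{*}\mathbb{X}(T_2)$.

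Next I would invoke the standard description of representations of a quotient group (\cite[\S I.6]{jantzen2003representations}): pullback along $\varphi$ gives an exact, fully faithful functor $\varphi^{\#}\colon\mathrm{Rep}_{\mathbf{k}}(H_2)\xrightarrow{\ \sim\ }\mathcal{C}$, where $\mathcal{C}\subseteq\mathrm{Rep}_{\mathbf{k}}(H_1)$ is the full subcategory of modules on which $N$ acts trivially. Since $N$ is diagonalisable and contained in $T_1$, acting trivially on a module is equivalent to all of its $T_1$-weights lying in $\varphi^{*}\mathbb{X}(T_2)$; in particular $\mathcal{C}$ is a Serre subcategory which is moreover closed under extensions, because the condition ``all $T_1$-weights lie in the fixed sublattice'' passes to sub-objects, quotients and extensions. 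An exact fully faithful functor sends simple objects to simple objects, so $\varphi^{\#}$ sends $L^{H_2}_{\lambda}$ to $L^{H_1}_{\varphi^{*}\lambda}$ (the highest weight being read off), and using $\mathbb{Z}\mathfrak{R}(H_1)\subseteq\varphi^{*}\mathbb{X}(T_2)$ one checks that the simple objects of $\mathcal{C}$ are exactly the $L^{H_1}_{\mu}$ with $\mu\in\varphi^{*}\mathbb{X}(T_2)_+$. Thus $\varphi^{\#}$ identifies $\mathrm{Rep}_{\mathbf{k}}(H_2)$ with $\mathcal{C}$ as highest weight categories over the poset $\mathbb{X}(T_2)_+\hookrightarrow\mathbb{X}(T_1)_+$.

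The block comparison then proceeds in two steps. Since $\mathcal{C}$ is closed under extensions, for $M,N\in\mathcal{C}$ every extension of $M$ by $N$ in $\mathrm{Rep}_{\mathbf{k}}(H_1)$ lies in $\mathcal{C}$, whence $\mathrm{Ext}^{1}_{\mathcal{C}}(M,N)=\mathrm{Ext}^{1}_{\mathrm{Rep}_{\mathbf{k}}(H_1)}(M,N)$; applied to simple objects this shows that the relation $\mathscr{R}_1$, hence the equivalence relation $\sim$, on $\mathbb{X}(T_2)_+$ is the restriction of the corresponding relation on $\mathbb{X}(T_1)_+$. Secondly, a $\sim$-class of $\mathbb{X}(T_1)_+$ that meets $\varphi^{*}\mathbb{X}(T_2)_+$ is contained in it: indeed $\mathrm{Ext}^{1}_{H_1}(L_{\lambda},L_{\mu})\neq 0$ forces $\lambda-\mu\in\mathbb{Z}\mathfrak{R}(H_1)$ (a consequence of the central character decomposition of $\mathrm{Rep}_{\mathbf{k}}(H_1)$, which is also subsumed by the linkage principle), and $\mathbb{Z}\mathfrak{R}(H_1)\subseteq\varphi^{*}\mathbb{X}(T_2)$ by the first paragraph, while dominance is cut out by the same coroot inequalities for $H_1$ and $H_2$; so once one weight of a $\sim$-class lies in $\mathbb{X}(T_2)_+$ they all do. Combining the two steps, the blocks of $H_1$ that meet $\mathbb{X}(T_2)_+$ are precisely the blocks of $H_2$, which is the claim. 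The step deserving the most care — and the main potential obstacle — is the identification in the second paragraph of the condition ``$N$ acts trivially'' with the weight condition: in positive characteristic $N$ need not be reduced, so one cannot argue on $\mathbf{k}$-points and must instead use that $N$ is of multiplicative type and decompose each $H_1$-module according to its $\mathbb{X}(N)$-grading.
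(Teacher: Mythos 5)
Your argument is correct, but it runs through the other characterization of blocks from Section \ref{relation section}, so it is worth comparing the two routes. The paper's own proof works with tilting modules and the relation $\mathscr{R}_2$: it quotes the fact that $\varphi^*$ sends the indecomposable tilting module of highest weight $\lambda$ to the indecomposable tilting module of the same highest weight (Jantzen, \S E.7), observes that there is no nonzero morphism between an indecomposable tilting module in the essential image of $\varphi^*$ and one outside it, and concludes by Theorem \ref{relations thm}. You instead work with simple modules and the relation $\mathscr{R}_1$: you identify $\mathrm{Rep}_\mathbf{k}(H_2)$ with the extension-closed Serre subcategory of $H_1$-modules on which the diagonalisable central kernel acts trivially, deduce that $\mathrm{Ext}^1$ between simples is computed identically in the two categories, and then use the central character decomposition (equivalently, $\mathbb{Z}\mathfrak{R}(H_1)\subseteq\varphi^*\mathbb{X}(T_2)$) to confine every block of $H_1$ that meets $\varphi^*\mathbb{X}(T_2)_+$ inside it; this confinement step is exactly what guarantees that chains generating $\sim$ do not leave the sublattice, and it plays the role of the paper's Hom-vanishing between tiltings inside and outside the essential image. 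Your version is more self-contained, since it avoids the (nontrivial) input that pullback preserves indecomposable tilting modules, at the cost of the scheme-theoretic care you rightly flag about infinitesimal kernels in positive characteristic. One phrasing to tighten: ``an exact fully faithful functor sends simple objects to simple objects'' is false in general; it holds here precisely because you have already shown the essential image is closed under subobjects and quotients, so the justification should be routed through that observation (or through the standard description of simples for a quotient group) rather than stated as a general principle.
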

	\begin{proof}
		Since $\varphi$ is a central isogeny, the morphism induced on root data identifies the root systems of $H_1$ and $H_2$, (cf. \cite[§II.1.17]{jantzen2003representations}). Therefore, the injection $\mathbb{X}(T_2)\hookrightarrow\mathbb{X}(T_1)$ (induced by $\varphi|_{T_1}$) induces an injection $\mathbb{X}(T_2)_+\hookrightarrow\mathbb{X}(T_1)_+$. On the other hand, pulling back by $\varphi$ induces a fully-faithful functor $\varphi^*:\mathrm{Rep}_\mathbf{k}(H_2)\to\mathrm{Rep}_\mathbf{k}(H_1)$, which sends an indecomposable tilting module of highest weight $\lambda$ to the indecomposable tilting module of same highest weight (cf. \cite[§E.7]{jantzen2003representations}), and such that there exists no non-zero morphism between an indecomposable tilting module in the essential image of $\varphi^*$ and an indecomposable tilting module of $\mathrm{Rep}_\mathbf{k}(H_1)$ which is not in this essential image. The conclusion follows easily, thanks to the description of blocks via tilting modules (cf. Theorem \ref{relations thm}).
	\end{proof}

	\begin{coro}\label{coro 44}
Assume that $G$ is a general reductive group. Denote by $\mathcal{D}G^\vee$ the derived subgroup of $G^\vee$, by $T_2$ the reduced part of the neutral connected component of the centre of $G^\vee$ and let $H_1,\cdots,H_t$ be the simply connected covers of the minimal closed connected normal subgroups of positive dimension of $\mathcal{D}G^\vee$. For each $i\in \llbracket 1,t\rrbracket$, also denote by $T^\vee_i$ the split maximal torus of $H_i$ determined by the split maximal torus of $G^\vee$ which is Langlands dual to $T$, by $W_i$ (resp. by $\mathbb{X}^\vee_{i,+}$, resp. by $\rho^\vee_i$) the affine Weyl group (resp. the set of dominant characters of $T^\vee_i$ determined by $\mathfrak{R}_+^\vee$, resp. the half-sum of positive roots) associated with $H_i$, seen as a subgroup of the affine Weyl group associated with $\mathcal{D}G^\vee$, and by $\mathbb{X}(T_2)$ the group of characters of $T_2$. Then there is a central isogeny
		$$\varphi:\mathcal{D}G^\vee\times T_2\to G^\vee, $$
		which induces an injection $\mathbb{X}^\vee_+\hookrightarrow\prod_i\mathbb{X}^\vee_{i,+}\times \mathbb{X}(T_2)$ and such that, for every $$\lambda=(\lambda_1,\cdots,\lambda_t,\lambda_{t+1})\in\mathbb{X}^\vee_+ ,$$ we have
		$$\overline{\lambda}=\prod_{i=1}^tW_i^{(r(\lambda_i+\rho_i^\vee))}\bullet_\ell\lambda_i\cap \mathbb{X}^\vee_{i,+}\times\{\lambda_{t+1}\}.$$
	\end{coro}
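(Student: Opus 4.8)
The plan is to follow the reduction sketched in \cite[§II.7.3]{jantzen2003representations}, feeding in Lemma~\ref{central extension} and Theorem~\ref{thm Donkin}. First I would produce the isogeny. Since $G^\vee$ is reductive, the multiplication map $\mathcal{D}G^\vee\times T_2\to G^\vee$ is a central isogeny, and precomposing with the central isogeny $H:=H_1\times\cdots\times H_t\to\mathcal{D}G^\vee$ coming from the simply connected cover yields a central isogeny
\[
\psi\colon H\times T_2\longrightarrow G^\vee
\]
(a composite of central isogenies), which factors through the $\varphi$ of the statement and can be arranged to carry the chosen maximal torus (resp.\ Borel) of $H\times T_2$ onto the maximal torus $T^\vee$ (resp.\ Borel) of $G^\vee$. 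Since $\psi$ is a central isogeny, the induced map on root data identifies the root systems (\cite[§II.1.17]{jantzen2003representations}), so $\psi^*$ embeds $\mathbb{X}^\vee=\mathbb{X}(T^\vee)$ into $\prod_i\mathbb{X}^\vee_i\times\mathbb{X}(T_2)$, restricting to an isomorphism $\mathbb{Z}\mathfrak{R}^\vee\xrightarrow{\sim}\prod_i\mathbb{Z}\mathfrak{R}^\vee_i\times\{0\}$ and to an injection $\mathbb{X}^\vee_+\hookrightarrow\prod_i\mathbb{X}^\vee_{i,+}\times\mathbb{X}(T_2)$; from now on I identify a weight of $G^\vee$ with its image.

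Next I would compute the blocks of $H\times T_2$. As $T_2$ is a torus, $\mathrm{Rep}_\mathbf{k}(T_2)$ is semisimple, so every block of $\mathrm{Rep}_\mathbf{k}(H\times T_2)$ has the form $\mathfrak{b}\times\{\nu\}$ with $\nu\in\mathbb{X}(T_2)$ and $\mathfrak{b}$ a block of $\mathrm{Rep}_\mathbf{k}(H)$; concretely, this is the block description via tilting modules of Theorem~\ref{relations thm} combined with the Künneth-type factorisation $\mathrm{Hom}(T^H_\mu\boxtimes M_\nu,T^H_{\mu'}\boxtimes M_{\nu'})\cong\mathrm{Hom}(T^H_\mu,T^H_{\mu'})\otimes\mathrm{Hom}(M_\nu,M_{\nu'})$, exactly as in Jantzen's reduction and in the spirit of Proposition~\ref{reduction to irred}. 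Now $H=H_1\times\cdots\times H_t$ is semisimple and simply connected, hence the Langlands dual of the adjoint semisimple group $H_1^\vee\times\cdots\times H_t^\vee$, so Theorem~\ref{thm Donkin} applies and tells us that, for $\lambda=(\lambda_1,\dots,\lambda_t)\in\prod_i\mathbb{X}^\vee_{i,+}$, the block of $\mathrm{Rep}_\mathbf{k}(H)$ through $\lambda$ is $\prod_i W_i^{(r(\lambda_i+\rho^\vee_i))}\bullet_\ell\lambda_i\cap\mathbb{X}^\vee_{i,+}$. Thus the block of $\mathrm{Rep}_\mathbf{k}(H\times T_2)$ containing $\lambda=(\lambda_1,\dots,\lambda_t,\lambda_{t+1})$ is
\[
\mathfrak{b}_\lambda:=\Bigl(\prod_{i=1}^t W_i^{(r(\lambda_i+\rho^\vee_i))}\bullet_\ell\lambda_i\cap\mathbb{X}^\vee_{i,+}\Bigr)\times\{\lambda_{t+1}\}.
\]

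Finally I would invoke Lemma~\ref{central extension} for $\psi$, whose conclusion is that the blocks of $\mathrm{Rep}_\mathbf{k}(G^\vee)$ are exactly the blocks of $\mathrm{Rep}_\mathbf{k}(H\times T_2)$ that lie inside $\mathbb{X}^\vee_+$. Hence the only thing left is to check that $\mathfrak{b}_\lambda\subseteq\mathbb{X}^\vee_+$ whenever $\lambda\in\mathbb{X}^\vee_+$. For each $i$ and each $w\in W_i^{(r)}$ one computes $w\bullet_\ell\lambda_i-\lambda_i=(w_0-1)(\lambda_i+\rho^\vee_i)+\ell^{r+1}w_0\xi$ for the relevant $w_0$ in the finite Weyl group of $\mathfrak{R}_i$ and $\xi\in\mathbb{Z}\mathfrak{R}^\vee_i$; both summands lie in $\mathbb{Z}\mathfrak{R}^\vee_i$ (using that $\lambda_i+\rho^\vee_i\in\mathbb{X}^\vee_i$ since $H_i$ is simply connected), so every element of $\mathfrak{b}_\lambda$ differs from $\lambda$ by an element of $\prod_i\mathbb{Z}\mathfrak{R}^\vee_i\times\{0\}$, which is the image of $\mathbb{Z}\mathfrak{R}^\vee\subseteq\mathbb{X}^\vee$. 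Therefore $\mathfrak{b}_\lambda$ lies in the image of $\mathbb{X}^\vee$, and as all its members are dominant by construction it lies in $\mathbb{X}^\vee_+$; by Lemma~\ref{central extension}, $\mathfrak{b}_\lambda$ is then a block of $\mathrm{Rep}_\mathbf{k}(G^\vee)$, necessarily $\overline{\lambda}$, which is the asserted formula. I expect this last ``saturation'' step — showing the block of $H\times T_2$ through a $G^\vee$-weight does not escape $\mathbb{X}^\vee_+$ — to be the main (if modest) obstacle, since it is precisely what prevents an unwanted intersection from appearing in the final description; everything else is bookkeeping with root data and citations of the results already established.
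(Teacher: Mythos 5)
Your proposal is correct and follows essentially the same route as the paper: construct the central isogeny from $H_1\times\cdots\times H_t\times T_2$ to $G^\vee$, compute the blocks of the cover via Theorem \ref{thm Donkin} and the semisimplicity of $\mathrm{Rep}_\mathbf{k}(T_2)$, and descend with Lemma \ref{central extension}. The only (harmless) differences are that you apply Lemma \ref{central extension} once to the composite isogeny rather than in two stages through $\mathcal{D}G^\vee$, and that you spell out explicitly the saturation check ($w\bullet_\ell\lambda_i-\lambda_i\in\mathbb{Z}\mathfrak{R}^\vee_i$) that the paper disposes of in a parenthetical remark.
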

	\begin{proof}
		The fact that $\varphi$ exists follows from \cite[II.1.18]{jantzen2003representations}. Moreover, we know that $\mathcal{D}G^\vee$ is a semi-simple algebraic group, so that we have a central isogeny $H'_1\times\cdots\times H'_t\to \mathcal{D}G^\vee$, where $H'_1,\cdots,H'_t$ are the minimal closed normal subgroups of positive dimension of $\mathcal{D}G^\vee$, which are simple algebraic groups. We then get a central isogeny $H_1\times\cdots\times H_t\to \mathcal{D}G^\vee$ by replacing each $H_i'$ with its simply connected cover. Therefore, Lemma \ref{central extension} and Theorem \ref{thm Donkin} tell us that for any dominant character $\lambda'$ of the maximal torus $T\cap\mathcal{D}G^\vee$ of $\mathcal{D}G^\vee$, the associated block of $\mathrm{Rep}_\mathbf{k}(\mathcal{D}G^\vee)$ is equal to 
		$$\prod_{i=1}^tW_i^{(r(\lambda'_i+\rho^\vee_i))}\bullet_\ell\lambda'_i\cap \mathbb{X}^\vee_{i,+}, $$
		where we see $\lambda'$ as the element $(\lambda'_1,\cdots,\lambda'_t)$ of $\prod_i\mathbb{X}^\vee_{i,+}$ (notice that the above set is included in the set of dominant characters of $T\cap\mathcal{D}G^\vee$, because each $W_i$ is a subgroup of the affine Weyl group associated with $\mathcal{D}G^\vee$). 
		
		Finally, since $T_2$ is a torus (in particular, the category $\mathrm{Rep}_\mathbf{k}(T_2)$ is semi-simple), one can easily check that the block of $\lambda$ in $\mathcal{D}G^\vee\times T_2$ is equal to the product of blocks associated to $(\lambda_1,\cdots,\lambda_t)$ and $\lambda_{t+1}$ in $\mathrm{Rep}_\mathbf{k}(\mathcal{D}G^\vee)$ and $\mathrm{Rep}_\mathbf{k}(T_2)$ respectively, i.e. to
		$$\prod_{i=1}^tW_i^{(r(\lambda_i+\rho^\vee_i))}\bullet_\ell\lambda_i\cap \mathbb{X}^\vee_{i,+}\times\{\lambda_{t+1}\}. $$
		This concludes the proof by Lemma \ref{central extension}, since the above set is included in $\mathbb{X}^\vee_+$.
	\end{proof}
	\begin{rem}
		We take back the context of Corollary \ref{coro 44}, and let $\lambda,\lambda'$ be dominant characters in the same equivalence class. The result of Proposition \ref{bound group} can also be generalized to the case where $G^\vee$ is a general reductive group. Indeed, using the central isogeny $\mathcal{D}G^\vee\times T_2\to G^\vee$, one is reduced to proving it for $\mathcal{D}G^\vee$, and since $\mathcal{D}G^\vee$ is a central isogeny of the product of $t$ simple subgroups (because $\mathcal{D}G^\vee$ is semi-simple), we can further reduce to the case where $G^\vee$ is a product of simply connected simple groups $H_1\times\cdots\times H_t$. Finally, the equality (\ref{352}) of Proposition \ref{reduction to irred} allows us to bound the length of a minimum chain linking two weights $(\lambda_1,\cdots,\lambda_t)$ and $(\lambda_1',\cdots,\lambda_t')$ in the same block by $\mathrm{max}\{s_i,~i\in \llbracket1,t\rrbracket\}$, where $s_i$ is the bound obtained in  Proposition \ref{bound group} for the length of a minimum chain linking $\lambda_i$ and $\lambda'_i$.
	\end{rem}

	\subsection{Block decomposition for a quantum group}\label{Block decomposition of a quantum group}
	In this subsection, $G$ is semi-simple of adjoint type. We assume that $\mathrm{char}(\mathbf{k})=0$, and that there exists a primitive $\ell$-th root of unity $q$ in $\mathbf{k}$. Moreover, we assume that $\ell$ is odd, greater than the Coxeter number of $\mathfrak{R}^\vee$ and not equal to $3$ if $\mathfrak{R}^\vee$ has a component of type $G_2$. We then denote by $U_{q,\mathbf{k}}$ Lusztig's quantized enveloping algebra specialized at $q$ and associated with $G^\vee$ (we take the conventions of \cite[§H]{jantzen2003representations}). The category of finite dimensional representations of $U_{q,\mathbf{k}}$, which we will denote by $\mathrm{Rep}(U_{q,\mathbf{k}})$, has many features in common with the category $\mathrm{Rep}(G^\vee_{\mathbf{k}'})$, where $G^\vee_{\mathbf{k}'}$ denotes the Langlands dual group of $G$ over some field $\mathbf{k}'$ of characteristic $\ell$. In particular, $\mathrm{Rep}(U_{q,\mathbf{k}})$ is a highest weight category with weight poset $\mathbb{X}^\vee_+$; for any $\lambda\in\mathbb{X}^\vee_+$, we will denote by $T_q(\lambda)$, resp. $\nabla_q(\lambda)$, the indecomposable tilting module, resp. the costandard object, with highest weight $\lambda$. 
	
	We fix a weight $\lambda\in\overline{C}_\ell\cap\mathbb{X}^\vee$, let $\mathbf{g}'$ be the facet for $\car_\ell$ containing $\lambda+\rho^\vee$ and $w,w'$ be elements of ${_\mathrm{f}W}^{\mathbf{g}}$, where $\mathbf{g}:=\ell^{-1}\cdot\mathbf{g}'$ (recall the bijections (\ref{param dom weights})). Most importantly for us, the multiplicity of costandard objects in tilting objects is known.
	\begin{prop}\label{character quantiq}
	We have
	$$(T_q(w\bullet_\ell\lambda):\nabla_q(w'\bullet_\ell\lambda))=n_{w',w}(1). $$
	Moreover, we have that $(T_q(w\bullet_\ell\lambda):\nabla_q(\lambda'))=0$ whenever $\lambda'\notin W\bullet_\ell\lambda$.
	\end{prop}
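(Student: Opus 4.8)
The first statement is the analogue, for the quantum group, of the tilting character formula \cite[Theorem 8.9]{riche2020smithtreumann}, and the plan is to deduce it from the classical representation theory of $U_{q,\mathbf{k}}$ rather than from Smith-Treumann theory. The second statement is immediate from the linkage principle for quantum groups (due to Andersen-Polo-Wen): the indecomposable tilting module $T_q(w\bullet_\ell\lambda)$ lies in a single block of $\mathrm{Rep}(U_{q,\mathbf{k}})$, every highest weight occurring in a costandard filtration of it lies in that block, and the block of $w\bullet_\ell\lambda$ is contained in $W\bullet_\ell(w\bullet_\ell\lambda)=W\bullet_\ell\lambda$; hence $(T_q(w\bullet_\ell\lambda):\nabla_q(\lambda'))=0$ for $\lambda'\notin W\bullet_\ell\lambda$.

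For the first formula I would first treat the \emph{regular} case, $\lambda\in C_\ell$ (so $\mathbf{g}'=\mathbf{a}_\ell$, $\mathbf{g}=\mathbf{a}_1$ and $w,w'\in{_\mathrm{f}W}$). Under our hypotheses on $\ell$ ($\ell$ odd, $\ell$ greater than the Coxeter number of $\mathfrak{R}^\vee$, and $\ell\neq 3$ when $\mathfrak{R}^\vee$ has a factor of type $G_2$), a regular block of $\mathrm{Rep}(U_{q,\mathbf{k}})$ is equivalent to the anti-spherical block of the affine category $\mathcal{O}$ at negative level — via the Kazhdan-Lusztig and Kashiwara-Tanisaki equivalences — and Soergel's combinatorial description of tilting modules there (\cite{Soergel1997KazhdanLusztigPA}; the combinatorial inputs being Lemma \ref{lem askl1} and Proposition \ref{period and antisph} above) identifies $(T_q(w\bullet_\ell\lambda):\nabla_q(w'\bullet_\ell\lambda))$ with $n_{w',w}(1)$, the anti-spherical Kazhdan-Lusztig polynomial of \cite[Theorem 3.1]{Soergel1997KazhdanLusztigPA} — which, as recalled just before Proposition \ref{car 0}, is exactly the polynomial ${^0 n}_{w',w}$. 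For a general $\lambda\in\overline{C}_\ell\cap\mathbb{X}^\vee$ I would fix a regular dominant weight $\lambda_0\in C_\ell$ and use the translation functors $T^{\mathrm{out}}$ (out of the wall $\mathbf{g}'$) and $T^{\mathrm{in}}$ (into it) relating the block of $\lambda$ to that of $\lambda_0$, which are exact, biadjoint and available under the same hypotheses: $T^{\mathrm{out}}$ carries $T_q(w\bullet_\ell\lambda)$ to $T_q(w\bullet_\ell\lambda_0)$ (here one uses that $w$ is maximal in $wW_{\mathbf{g}}$), one has $T^{\mathrm{in}}T^{\mathrm{out}}\cong\mathrm{id}^{\oplus|W_{\mathbf{g}}|}$ on the block of $\lambda$, and $T^{\mathrm{in}}$ takes a costandard filtration to a costandard filtration in which $\nabla_q(u\bullet_\ell\lambda_0)$, for $u\in w'W_{\mathbf{g}}\cap{_\mathrm{f}W}$, contributes $\nabla_q(w'\bullet_\ell\lambda)$. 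Combining these with the regular formula gives
\[
|W_{\mathbf{g}}|\cdot(T_q(w\bullet_\ell\lambda):\nabla_q(w'\bullet_\ell\lambda))=\sum_{r\in W_{\mathbf{g}}}(T_q(w\bullet_\ell\lambda_0):\nabla_q(w'r\bullet_\ell\lambda_0))=\sum_{r\in W_{\mathbf{g}}}n_{w'r,w}(1),
\]
and the right-hand side equals $|W_{\mathbf{g}}|\cdot n_{w',w}(1)$ by the $\ell=0$ case of Proposition \ref{prop max coset}(1) (legitimate since ${^0 n}=n$), whence $(T_q(w\bullet_\ell\lambda):\nabla_q(w'\bullet_\ell\lambda))=n_{w',w}(1)$.

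I expect the main work to lie not in the linkage step but in assembling the quantum-group inputs in the precise form required: the Kazhdan-Lusztig equivalence together with Soergel's regular tilting character formula, the exactness and biadjointness of the quantum translation functors and the identity $T^{\mathrm{in}}T^{\mathrm{out}}\cong\mathrm{id}^{\oplus|W_{\mathbf{g}}|}$, and — most delicate — checking that the normalisations of the anti-spherical Kazhdan-Lusztig polynomials and the conventions for maximal coset representatives in ${_\mathrm{f}W}^{\mathbf{g}}$ used by Soergel agree with those of the present paper (the order-theoretic part of this being already handled in subsection \ref{geometry}). All of these ingredients are, however, classical under our standing hypotheses on $\ell$.
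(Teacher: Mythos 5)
Your treatment of the second claim (via the quantum linkage principle) and your wall-crossing argument for reducing the singular case to the regular one both match the paper in substance: the paper cites Andersen for linkage and delegates the reduction to the regular case to the second point of \cite[Remark 7.2]{Soergel1997KazhdanLusztigPA}, which encodes exactly the translation-functor computation you spell out (and your use of Proposition \ref{prop max coset}(1) to evaluate $\sum_{r\in W_{\mathbf g}}n_{w'r,w}(1)$ is legitimate, given the convention $n_{x,y}=0$ for $x\notin{_\mathrm{f}W}$).

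Where you genuinely diverge is the regular case, and here there is a caveat you should not gloss over. You propose to obtain the regular tilting character formula from the Kazhdan--Lusztig equivalence with affine category $\mathcal O$ at negative level, Kashiwara--Tanisaki localization, and Soergel's description of tilting modules there. That is precisely the chain carried out in \cite{soergel1998character}, and --- as the paper points out in the remark immediately following this proposition --- that chain was only established there for $\ell>33$, which is strictly stronger than the standing hypotheses ($\ell$ odd, greater than the Coxeter number, $\neq 3$ in type $G_2$). To make your route work in the stated generality you would need to import the Kazhdan--Lusztig equivalence in its full modern generality, a substantial additional input that you do not cite. The paper avoids this entirely: it instead uses the Arkhipov--Bezrukavnikov--Ginzburg equivalence $\mathrm{Rep}_{\overline 0}(U_{q,\mathbf k})\simeq \mathrm{Perv}_{(\mathrm{Iw})}(\mathrm{Gr}^\circ,\mathbf k)$, Yun's computation of $\pi_!\mathscr T_w$ for the projection $\pi:\mathrm{Fl}^\circ_{\mathbf a_1}\to\mathrm{Gr}^\circ$ to get $(T^{(\mathrm{Iw})}_u:\nabla^{(\mathrm{Iw})}_v)=\sum_{z\in W_0}(-1)^{l(z)}h_{vz,u}(1)$, and then Soergel's purely combinatorial identity expressing this alternating sum as the anti-spherical polynomial $n_{v^{-1},u^{-1}}(1)$. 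So your argument is correct in outline but, as written, only proves the proposition under a more restrictive bound on $\ell$; either strengthen the citation for the Kazhdan--Lusztig equivalence or replace the regular-case input by the geometric one.
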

	\begin{proof}
	 The second claim is due to the linkage principle for quantum groups, see \cite[§8]{andersen1991representations}.
	 
	Let us denote by $W_{(0)}$ the subset of $W$ consisting of elements $w$ which are minimal in $wW_0$, and recall that the action of $\mathrm{Iw}$ by left multiplication on $\mathrm{Fl}_{\mathbf{a}_1}^\circ$, resp. on $\mathrm{Gr}^\circ$, yields stratifications 
	$$(\mathrm{Fl}_{\mathbf{a}_1}^\circ)_{\mathrm{red}}=\bigsqcup_{u\in W}X_u,\qquad(\mathrm{Gr}^\circ)_{\mathrm{red}}=\bigsqcup_{v\in W_{(0)}}Y_v, $$
	where $Y_u:=\mathrm{Iw}\cdot\dot{u}$, resp. $X_v:=\mathrm{Iw}\cdot\dot{v}$, is an affine $\mathbb{F}$-space of dimension $l(u)$, resp. $l(v)$. Moreover, the canonical projection $\pi:\mathrm{Fl}_{\mathbf{a}_1}^\circ\to \mathrm{Gr}^\circ$ is ind-proper, $\mathrm{Iw}$-equivariant and satisfies $\pi^{-1}(Y_w)=\bigsqcup_{z\in W_0}X_{wz}$ for every $w\in W_{(0)}$. We denote by $\mathrm{Perv}_{(\mathrm{Iw})}(\mathrm{Fl}_{\mathbf{a}_1}^\circ,\mathbf{k})$, resp. $\mathrm{Perv}_{(\mathrm{Iw})}(\mathrm{Gr}^\circ,\mathbf{k})$, the category of perverse sheaves on $\mathrm{Fl}_{\mathbf{a}_1}^\circ$, resp. $\mathrm{Gr}^\circ$, constant along the $\mathrm{Iw}$-orbits, and by $\mathrm{Rep}_{\overline{0}}(U_{q,\mathbf{k}})$ the Serre subcategory of $\mathrm{Rep}(U_{q,\mathbf{k}})$ generated by the simple objects whose highest weight belongs to $W\bullet_\ell0\cap\mathbb{X}^\vee_{+}$ (the fact that $\mathrm{Rep}_{\overline{0}}(U_{q,\mathbf{k}})$ is truly a block will follow from Proposition \ref{block quantique}). 
	
	Thanks to \cite{arkhipov2004quantum}, we have an equivalence of highest weight categories 
	$$\mathrm{Rep}_{\overline{0}}(U_{q,\mathbf{k}})\simeq \mathrm{Perv}_{(\mathrm{Iw})}(\mathrm{Gr}^\circ,\mathbf{k}),$$
	sending $T_q(x\bullet_\ell 0)$ (resp. $\nabla_q(y\bullet_\ell0)$) to the indecomposable tilting object $T^{(\mathrm{Iw})}_{x^{-1}}$ (resp. to the costandard object $\nabla^{(\mathrm{Iw})}_{y^{-1}}$) associated with $x$, for any $x\in{_\mathrm{f}W}$ (resp. $y\in{_\mathrm{f}W}$)\footnote{Notice that $w\mapsto w^{-1}$ induces a bijection ${_\mathrm{f}W}\simeq W_{(0)}$.}. 
	
	For any $u\in W$, let us denote by $\mathscr{T}_u\in \mathrm{Perv}_{(\mathrm{Iw})}(\mathrm{Fl}_{\mathbf{a}_1}^\circ,\mathbf{k})$ the indecomposable tilting object associated with $u$; by \cite[Proposition 3.4.1]{yun2009weights}, we have an isomorphism $\pi_!\mathscr{T}_w\simeq T^{(\mathrm{Iw})}_{w}$ for any $w\in W_{(0)}$. Therefore, combining \cite[(3.4.1)]{yun2009weights} with \cite[Theorem 5.3.1]{yun2009weights}, we get 
	$$(T^{(\mathrm{Iw})}_{u}: \nabla^{(\mathrm{Iw})}_{v})=\sum_{z\in W_0}(-1)^{l(z)}h_{vz,u}(1)~ \forall u,v\in W_{(0)}, $$
	where $(h_{x,y},~x,y\in W)$ denotes the ordinary Kazhdan-Lusztig polynomials associated with $W$, as described in \cite{Soergel1997KazhdanLusztigPA}. Using \cite[Proposition 3.4]{Soergel1997KazhdanLusztigPA} together with the fact that $h_{x,y}=h_{x^{-1},y^{-1}}$ for all $x,y\in W$ (which is a consequence of the fact that the anti-automorphism $i$ from the proof of \cite[Theorem 2.7]{Soergel1997KazhdanLusztigPA} commutes with the involution $d:\mathcal{H}\to \mathcal{H}$ defined in \textit{loc. cit.}) show that the right-hand side in the above equation coincides with $n_{v^{-1},u^{-1}}(1)$. So we get 
	$$(T_q(w\bullet_\ell0):\nabla_q(w'\bullet_\ell0))=(T^{(\mathrm{Iw})}_{w^{-1}}: \nabla^{(\mathrm{Iw})}_{w'^{-1}})=n_{w',w}(1),$$
	which proves our claim in the case where $\lambda=0$. Using the second point of \cite[Remark 7.2]{Soergel1997KazhdanLusztigPA}, one can deduce from this the general case. \end{proof}
	\begin{rem}
	The character formula of Proposition \ref{character quantiq} was originally stated as a conjecture in \cite[Conjecture 7.1]{Soergel1997KazhdanLusztigPA}, and proved in \cite{soergel1998character} for $\ell>33$.
	\end{rem}
	In particular, the second assertion of Proposition \ref{character quantiq} tells us that $\overline{\mu}\subset W\bullet_\ell\mu$ for any $\mu\in\mathbb{X}^\vee_+$.
	\begin{coro}
    We have an equality
	$$\mathrm{dim}_\mathbf{k}\mathrm{Hom}_{\mathrm{Rep}(U_{q,\mathbf{k}})}(T_q(w\bullet_\ell\lambda),T_q(w'\bullet_\ell\lambda))=\mathrm{dim}_\mathbf{k}\mathrm{Hom}^\bullet_{\mathrm{Par}_{\mathcal{IW}}(\mathrm{Fl}^{\circ}_{\mathbf{g}},\mathbf{k})}(\mathcal{E}_w^{\mathbf{g}},\mathcal{E}_{w'}^{\mathbf{g}}).$$
	\end{coro}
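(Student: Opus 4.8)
The plan is to show that both sides of the asserted equality are equal to the single explicit number
\begin{equation*}
\sum_{v\in{_\mathrm{f}W}^{\mathbf{g}}}n_{v,w}(1)\,n_{v,w'}(1),
\end{equation*}
where the $n_{x,y}$ are the ordinary anti-spherical Kazhdan--Lusztig polynomials. On the representation-theoretic side, I would use that $\mathrm{Rep}(U_{q,\mathbf{k}})$ is a highest weight category carrying a contravariant duality fixing each indecomposable tilting module $T_q(\gamma)$ and exchanging $\Delta_q(\gamma)$ with $\nabla_q(\gamma)$; combined with $\mathrm{Ext}^{>0}(T_q(\mu),\nabla_q(\gamma))=0$ and the identities $\dim_\mathbf{k}\mathrm{Hom}(T_q(\mu),\nabla_q(\gamma))=(T_q(\mu):\Delta_q(\gamma))=(T_q(\mu):\nabla_q(\gamma))$ (the last equality by self-duality of $T_q(\mu)$), this yields, for all $\mu,\nu\in\mathbb{X}^\vee_+$,
\begin{equation*}
\dim_\mathbf{k}\mathrm{Hom}_{\mathrm{Rep}(U_{q,\mathbf{k}})}(T_q(\mu),T_q(\nu))=\sum_{\gamma\in\mathbb{X}^\vee_+}(T_q(\mu):\nabla_q(\gamma))\,(T_q(\nu):\nabla_q(\gamma)).
\end{equation*}
Applying this with $\mu=w\bullet_\ell\lambda$ and $\nu=w'\bullet_\ell\lambda$, the second assertion of Proposition~\ref{character quantiq} kills every term with $\gamma\notin W\bullet_\ell\lambda$; by the bijection ${_\mathrm{f}W}^{\mathbf{g}}\xrightarrow{\sim}W\bullet_\ell\lambda\cap\mathbb{X}^\vee_+$, $v\mapsto v\bullet_\ell\lambda$, of~(\ref{param dom weights}) the sum then runs over $v\in{_\mathrm{f}W}^{\mathbf{g}}$, and the first assertion of Proposition~\ref{character quantiq} identifies the surviving multiplicities with $n_{v,w}(1)$ and $n_{v,w'}(1)$.

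On the parity side, I would first compute $\dim_\mathbf{k}\mathrm{Hom}^\bullet_{\mathrm{Par}_{\mathcal{IW}}(\mathrm{Fl}^{\circ}_{\mathbf{g}},\mathbf{k})}(\mathcal{E}^{\mathbf{g}}_w,\nabla^{\mathbf{g}}_v)$ for $v\in{_\mathrm{f}W}^{\mathbf{g}}$. Since $v\in W^{\mathbf{g}}\cap{_\mathrm{f}W}$, Proposition~\ref{prop min coset} gives $vw_\mathbf{g}\in{_\mathrm{f}W}$, and writing $vw_\mathbf{g}=v\cdot w_\mathbf{g}$ with $v$ maximal in $vW_\mathbf{g}$ and $w_\mathbf{g}$ the longest (hence involutive) element of $W_\mathbf{g}$, Lemma~\ref{lem1}(3) gives $\pi_{\mathbf{g}*}\nabla_{vw_\mathbf{g}}\simeq\nabla^{\mathbf{g}}_v$. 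Combining this with the isomorphism $\pi_{\mathbf{g}}^*[N_\mathbf{g}]\mathcal{E}^{\mathbf{g}}_w\simeq\mathcal{E}_w$ of Lemma~\ref{lem1}(2) and the adjunction $(\pi_{\mathbf{g}}^*,\pi_{\mathbf{g}*})$ — exactly as in the proof of Proposition~\ref{proj form} — gives
\begin{equation*}
\mathrm{Hom}^\bullet_{D^b_{\mathcal{IW}}(\mathrm{Fl}^{\circ}_{\mathbf{g}},\mathbf{k})}(\mathcal{E}^{\mathbf{g}}_w,\nabla^{\mathbf{g}}_v)\simeq\mathrm{Hom}^\bullet_{D^b_{\mathcal{IW}}(\mathrm{Fl}^{\circ}_{\mathbf{a}_1},\mathbf{k})}(\mathcal{E}_w,\nabla_{vw_\mathbf{g}}[N_\mathbf{g}]),
\end{equation*}
whose dimension is ${^\ell n}_{vw_\mathbf{g},w}(1)$ by~(\ref{l-KL}), hence ${^\ell n}_{v,w}(1)$ by Proposition~\ref{prop max coset}(1), which in characteristic $0$ is just $n_{v,w}(1)$. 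Next, since $\mathrm{char}(\mathbf{k})=0$, the perversely shifted indecomposable parity complexes on $\mathrm{Fl}^{\circ}_{\mathbf{g}}$ are intersection cohomology complexes, hence Verdier self-dual up to replacing the Artin--Schreier local system by its dual (which does not affect Hom-space dimensions), so $\dim_\mathbf{k}\mathrm{Hom}^\bullet(\Delta^{\mathbf{g}}_v,\mathcal{E}^{\mathbf{g}}_{w'})=\dim_\mathbf{k}\mathrm{Hom}^\bullet(\mathcal{E}^{\mathbf{g}}_{w'},\nabla^{\mathbf{g}}_v)=n_{v,w'}(1)$. Finally, the standard parity-complex bookkeeping — a $\Delta^{\mathbf{g}}$-parity filtration of $\mathcal{E}^{\mathbf{g}}_w$, a $\nabla^{\mathbf{g}}$-parity filtration of $\mathcal{E}^{\mathbf{g}}_{w'}$, and the orthogonality $\mathrm{Hom}^\bullet(\Delta^{\mathbf{g}}_x,\nabla^{\mathbf{g}}_y)=\delta_{x,y}\mathbf{k}$, which holds because the strata $\mathscr{X}^{\mathbf{g}}_x$ are affine spaces (cf.~\cite{2014}) — gives
\begin{equation*}
\dim_\mathbf{k}\mathrm{Hom}^\bullet_{\mathrm{Par}_{\mathcal{IW}}(\mathrm{Fl}^{\circ}_{\mathbf{g}},\mathbf{k})}(\mathcal{E}^{\mathbf{g}}_w,\mathcal{E}^{\mathbf{g}}_{w'})=\sum_{v\in{_\mathrm{f}W}^{\mathbf{g}}}n_{v,w}(1)\,n_{v,w'}(1).
\end{equation*}

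Comparing the two computations proves the claim. As an alternative route avoiding the parabolic bookkeeping, one can instead use the proof of Proposition~\ref{proj form} to get $\dim_\mathbf{k}\mathrm{Hom}^\bullet_{\mathrm{Fl}^{\circ}_{\mathbf{a}_1}}(\mathcal{E}_w,\mathcal{E}_{w'})=|W_\mathbf{g}|\cdot\dim_\mathbf{k}\mathrm{Hom}^\bullet_{\mathrm{Fl}^{\circ}_{\mathbf{g}}}(\mathcal{E}^{\mathbf{g}}_w,\mathcal{E}^{\mathbf{g}}_{w'})$, compute the left-hand side as $\sum_{x\in{_\mathrm{f}W}}n_{x,w}(1)n_{x,w'}(1)$ via~(\ref{l-KL}) and Verdier duality, and regroup that sum over right $W_\mathbf{g}$-cosets using Propositions~\ref{prop min coset} and~\ref{prop max coset}(1): each coset $vW_\mathbf{g}$ with $v\in{_\mathrm{f}W}^{\mathbf{g}}$ lies entirely in ${_\mathrm{f}W}$ and contributes $|W_\mathbf{g}|$ equal summands $n_{v,w}(1)n_{v,w'}(1)$, recovering the same number. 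The highest weight category manipulation is routine; the step needing care is the parity side — specifically the identification $\dim_\mathbf{k}\mathrm{Hom}^\bullet(\mathcal{E}^{\mathbf{g}}_w,\nabla^{\mathbf{g}}_v)=n_{v,w}(1)$, which I would pin down through Lemma~\ref{lem1}, and the passage from a $\Delta$-Hom to a $\nabla$-Hom by Verdier self-duality of parity $=$ intersection cohomology complexes in characteristic $0$, where one must check that the twist of the Iwahori--Whittaker local system under Verdier duality leaves the dimensions involved unchanged.
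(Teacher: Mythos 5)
Your proposal is correct and follows essentially the same route as the paper: both sides are identified with $\sum_{v\in{_\mathrm{f}W}^{\mathbf{g}}}n_{v,w}(1)\,n_{v,w'}(1)$, using Proposition \ref{character quantiq} plus standard highest-weight arguments on the quantum side, and the identity $\dim_\mathbf{k}\mathrm{Hom}^\bullet(\mathcal{E}^{\mathbf{g}}_x,\nabla^{\mathbf{g}}_y)=n_{y,x}(1)$ (via Lemma \ref{lem1} and (\ref{l-KL})) together with the parity-complex Hom formula of \cite[Proposition 2.6]{2014} on the geometric side. Your extra unwinding of the $\pi_{\mathbf{g}*}\nabla_{vw_\mathbf{g}}\simeq\nabla^{\mathbf{g}}_v$ step and of the Verdier-duality twist is exactly what the paper's citations compress.
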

	\begin{proof}
	 Proposition \ref{character quantiq} and standard arguments (see \cite[§6.2]{achar2016modular}) show that we have
	\begin{equation}\label{last}
	    \mathrm{dim}_\mathbf{k}\mathrm{Hom}_{\mathrm{Rep}(U_{q,\mathbf{k}})}(T_q(w\bullet_\ell\lambda),T_q(w'\bullet_\ell\lambda))=\sum_{y\in {_\mathrm{f}W}^\mathbf{g}}n_{y,w}(1)\cdot n_{y,w'}(1). 
	\end{equation}
	But we also know that 
	$$\mathrm{dim}_\mathbf{k}~\mathrm{Hom}^\bullet_{D^b_{\mathcal{IW}}(\mathrm{Fl}^{\circ}_{\mathbf{g}},\mathbf{k})}(\mathcal{E}_x^\mathbf{g},\nabla^\mathbf{g}_y)=n_{y,x}(1)~\forall x,y\in{_\mathrm{f}W}^\mathbf{g}$$
	thanks to (\ref{l-KL}) together with Lemma \ref{lem1}. So by \cite[Proposition 2.6]{2014} the right-hand side in (\ref{last}) coincides with $$\mathrm{dim}_\mathbf{k}\mathrm{Hom}^\bullet_{\mathrm{Par}_{\mathcal{IW}}(\mathrm{Fl}^{\circ}_{\mathbf{g}},\mathbf{k})}(\mathcal{E}_w^{\mathbf{g}},\mathcal{E}_{w'}^{\mathbf{g}}).$$
	\end{proof}
Therefore, we get a very similar situation as the one in subsection \ref{Consequences on equivalence relations}:
$$w\bullet_\ell\lambda\sim w'\bullet_\ell\lambda \Longleftrightarrow w\sim_\mathbf{g} w'.$$
For any $\mu\in\mathbb{X}^\vee_+$, recall the definition of $r(\mu)$ from subsection \ref{A new proof of Donkin's Theorem}, and put $$\delta(\mu):=\begin{cases}
				1 & \text{if}\ r(\mu)=0 \\
				0 & \text{othewise.}\
			\end{cases}$$ 
			We define $W^\delta$ to be equal to $W$ when $\delta=1$, and to $\{\mathrm{id}\}$ otherwise. The next result is obtained from Theorem \ref{cas final} in the same way that Theorem \ref{thm Donkin} was, from which we take back the notations.
\begin{prop}\label{block quantique}
Let $\mu=(\mu_1,\cdots,\mu_r)\in \mathbb{X}^\vee_+=\prod_i \mathbb{X}^\vee_{i,+}$, and denote by $\overline{\mu}$ the equivalence class of $\mu$ in $\mathbb{X}^\vee_+$ (seen as the weight poset of $\mathrm{Rep}(U_{q,\mathbf{k}})$) for the equivalence relation $\sim$ from  section \ref{relation section}. We have $$\overline{\mu}=\prod_{i=1}^r W_i^{\delta(\mu_i+\rho^\vee_i)}\bullet_{\ell}\mu_i\cap\mathbb{X}^\vee_{i,+}.$$ 
\end{prop}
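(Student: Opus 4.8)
The plan is to run the argument of Theorem~\ref{thm Donkin} with $\mathrm{Rep}_{\mathbf{k}}(G^\vee)$ replaced by $\mathrm{Rep}(U_{q,\mathbf{k}})$, substituting the quantum linkage principle and the quantum character formula for their modular analogues. Let $\lambda$ be the unique element of $W\bullet_\ell\mu$ lying in $\overline{C_\ell}\cap\mathbb{X}^\vee$, let $\mathbf{g}'\subset\overline{\mathbf{a}_\ell}$ be the facet for $\car_\ell$ containing $\lambda+\rho^\vee$, put $\mathbf{g}:=\ell^{-1}\cdot\mathbf{g}'\subset\overline{\mathbf{a}_1}$, and let $w\in{_\mathrm{f}W}^{\mathbf{g}}$ be the element with $w\bullet_\ell\lambda=\mu$ provided by the bijection ${_\mathrm{f}W}^{\mathbf{g}}\xrightarrow{\sim}W\bullet_\ell\lambda\cap\mathbb{X}^\vee_{+}$ of (\ref{param dom weights}). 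By the equivalence
$$w\bullet_\ell\lambda\sim w'\bullet_\ell\lambda\Longleftrightarrow w\sim_{\mathbf{g}}w'$$
established just before the statement, together with the inclusion $\overline{\nu}\subset W\bullet_\ell\nu$ (valid for every $\nu\in\mathbb{X}^\vee_+$ by the second assertion of Proposition~\ref{character quantiq}), the class $\overline{\mu}$ is precisely the image under $w'\mapsto w'\bullet_\ell\lambda$ of the class $\overline{w}$ of $w$ for $\sim_{\mathbf{g}}$. So it suffices to describe $\overline{w}$ inside ${_\mathrm{f}W}^{\mathbf{g}}$.

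Next I would decompose into simple factors. Writing $\mathfrak{R}^\vee=\mathfrak{R}^\vee_1\sqcup\cdots\sqcup\mathfrak{R}^\vee_r$, $\mathbf{g}=\mathbf{g}_1\times\cdots\times\mathbf{g}_r$ and $w=(w_1,\dots,w_r)$, Proposition~\ref{reduction to irred} shows that $\mathscr{R}_{\mathbf{g}}$ factors componentwise; since each $\mathscr{R}_{\mathbf{g}_i}$ is reflexive, the equivalence relation $\sim_{\mathbf{g}}$ on ${_\mathrm{f}W}^{\mathbf{g}}=\prod_i{_\mathrm{f}W_i}^{\mathbf{g}_i}$ satisfies $\overline{w}=\prod_{i=1}^r\overline{w_i}$, where $\overline{w_i}\subset{_\mathrm{f}W_i}^{\mathbf{g}_i}$ is the class of $w_i$ for $\sim_{\mathbf{g}_i}$. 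We are thus reduced to the case of an irreducible root system. If $\mathbf{g}_i$ is non-special, Theorem~\ref{cas general} gives $\overline{w_i}={_\mathrm{f}W_i}^{\mathbf{g}_i}$; if $\mathbf{g}_i$ is special, the characteristic-zero assertion of Theorem~\ref{cas final} gives $\overline{w_i}=\{w_i\}$. Both theorems are applied here with the characteristic-zero coefficient field $\mathbf{k}$ of this subsection: their proofs depend only on Corollary~\ref{anti spherical implies R}, on the formalism of indecomposable parity complexes, and --- for the special case --- on the semisimplicity of $\mathrm{Perv}_{\mathcal{IW}}(\mathrm{Gr},\mathbf{k})$, none of which requires $\mathrm{char}(\mathbf{k})=\ell$ (indeed ${^0n}_{x,y}=n_{x,y}$ when $\mathrm{char}(\mathbf{k})=0$).

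It then remains to match the two cases with the claimed product. As noted in subsection~\ref{A new proof of Donkin's Theorem}, $r(\mu_i+\rho^\vee_i)=r_{\mathbf{g}_i}(w_i)+1$, so $\delta(\mu_i+\rho^\vee_i)=1$ exactly when $r_{\mathbf{g}_i}(w_i)=-1$, i.e. exactly when $\mathbf{g}_i$ is non-special, and $\delta(\mu_i+\rho^\vee_i)=0$ exactly when $\mathbf{g}_i$ is special. In the non-special case $W_i^{\delta(\mu_i+\rho^\vee_i)}=W_i$ and the bijection $w'_i\mapsto w'_i\bullet_\ell\lambda_i$ of (\ref{param dom weights}) for $G^\vee_i$ carries ${_\mathrm{f}W_i}^{\mathbf{g}_i}$ onto $W_i\bullet_\ell\lambda_i\cap\mathbb{X}^\vee_{i,+}=W_i^{\delta(\mu_i+\rho^\vee_i)}\bullet_\ell\mu_i\cap\mathbb{X}^\vee_{i,+}$; in the special case $W_i^{\delta(\mu_i+\rho^\vee_i)}=\{\mathrm{id}\}$ and $\{w_i\}$ is carried onto $\{\mu_i\}=W_i^{\delta(\mu_i+\rho^\vee_i)}\bullet_\ell\mu_i\cap\mathbb{X}^\vee_{i,+}$. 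Reassembling the factors gives $\overline{\mu}=\prod_{i=1}^rW_i^{\delta(\mu_i+\rho^\vee_i)}\bullet_\ell\mu_i\cap\mathbb{X}^\vee_{i,+}$, which is the assertion.

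No genuinely new difficulty arises here: the argument is an assembly of results already at hand. The one place demanding care is the claim that the results of Section~\ref{Determination of the blocks}, chiefly Theorems~\ref{cas general} and~\ref{cas final}, remain valid over a field of characteristic zero, together with the bookkeeping of the several parametrizing bijections and of the identity $r(\mu_i+\rho^\vee_i)=r_{\mathbf{g}_i}(w_i)+1$ used to translate $\delta$ into $r_{\mathbf{g}}$.
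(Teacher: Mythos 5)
Your proposal is correct and follows exactly the route the paper intends: the paper's "proof" is the single sentence that the result "is obtained from Theorem \ref{cas final} in the same way that Theorem \ref{thm Donkin} was", and your argument is precisely that transposition — reduce via the equivalence $w\bullet_\ell\lambda\sim w'\bullet_\ell\lambda\Leftrightarrow w\sim_{\mathbf{g}}w'$ and the quantum linkage principle to describing $\overline{w}$, split into irreducible factors via Proposition \ref{reduction to irred}, and invoke Theorem \ref{cas general} (non-special) and the characteristic-zero case of Proposition \ref{facette speciale}/Theorem \ref{cas final} (special), with the bookkeeping $\delta(\mu_i+\rho^\vee_i)=1\Leftrightarrow r_{\mathbf{g}_i}(w_i)=-1$. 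Your explicit check that the needed ingredients survive in characteristic zero matches the paper's own remark that Smith–Treumann theory is not required here.
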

\begin{rems}\begin{enumerate}
    \item The proof of the previous proposition does not require Smith-Treumann theory (because the proof of Theorem \ref{cas final} does not require it when $\mathrm{char}(\mathbf{k})=0$).
    \item One can use Corollary \ref{bound} to give a bound for the length of a minimal chain linking two weights in the same block for $\mathrm{Rep}(U_{q,\mathbf{k}})$, of the same kind as Proposition \ref{bound group}. 
    \item Another proof of Proposition \ref{block quantique} was found in \cite{thams1994blocks} (under the same assumptions on $\ell$). However, the results of \textit{loc. cit.} use the proof of Donkin from \cite{donkin1980blocks}, so it does not allow to give a bound as in the previous point.
\end{enumerate}

\end{rems}

	\bibliographystyle{plain}
	\bibliography{biblio1}

\end{document}